\let\LaTeXStandardTableOfContents\tableofcontents
\renewcommand{\tableofcontents}{%
\begingroup%
\renewcommand{\bfseries}{\relax}%
\LaTeXStandardTableOfContents%
\endgroup%
}%
\newtheorem{theorem}{Theorem}[section]
\newtheorem{lemma}[theorem]{Lemma}
\newtheorem{corollary}[theorem]{Corollary}
\newtheorem{proposition}[theorem]{Proposition}
\theoremstyle{definition}
\newtheorem{definition}[theorem]{Definition}
\newtheorem{example}[theorem]{Example}
\newtheorem{examples}[theorem]{Examples}
\theoremstyle{remark}
\newtheorem{remark}[theorem]{Remark}
\newtheorem{remarks}[theorem]{Remarks}
\theoremstyle{convention}
\numberwithin{equation}{section}
\numberwithin{equation}{subsection}
\newcommand{\be}
  {\protect\setcounter{equation}{\value{subsubsection}}}  
  \newcommand{\ee}%
   {\protect\setcounter{subsubsection}{\value{equation}}}
\def \A{{\bold A}}
\def \bA {\bold A}
\def \rmA{\rm A}
\def \B{\bold B}
\def \bB{\bold B}
\def \rmB{\rm B}
\def \rmC{\rm C}
\def \bC{\bold C}
\def \colimK.{\underset {\underset K^.  \rightarrow}  {\hbox {lim}}}
\def \colimU.{\underset {\underset U_.  \rightarrow}  {\hbox {lim}}}
\def \compl{\, \, {\widehat {}}}
\def \rmCone{\rm Cone}
\def \rmD{\rm D}
\def \E{{\mathcal E}}
\def \EG1{E{(G \times {\mathbb C}^*)}{\underset {G\times {\mathbb C}^*}  \times}}
\def \EZ(s)1{E{(Z(s) \times {\mathbb C}^*)}{\underset {(Z(s)\times {\mathbb C}^*)}  \times}}
\def \EM(u){EM(u){\underset {M(u)}  \times}}
\def \EM(us){EM(u,s){\underset {M(u, s)}  \times}}
\def \rmE{\rm E}
\def \f{{\bf f}}
\def \rmf{\rm f}
\def \itf{\it f}
\def \rmF{\rm F}
\def \g{{\bf g}}
\def \rmG{\bold G}
\def \bw\rmG{w{\bold G}}
\def \rmG{\rm G}
\def \rmH{\rm H}
\def \rmI{\rm I}
\def \invlim1{\underset {\infty \leftarrow q}  {\hbox {lim}}^1}
\def \rmK{\rm K}
\def \L3{\Lambda \times \Lambda \times \Lambda}
\def \L2{\Lambda \times \Lambda}
\def \longright2arrow{{\overset \longrightarrow  {\overset {}  \longrightarrow}}}
\def \L{\mathcal L}
\def \rmL{\rm L}
\def \rmM{\rm M}
\def \rmN{\rm N}
\def \rmNerve{\rm Nerve}
\def \O{{\mathcal O}}
\def \rmQ{\rm Q}
\def \rmP{\rm P}
\def \rmpr_2{{\rm pr}_2}
\def \PwS{{\rm P}{\it w}{\rmS}}
\def \rmp{\rm p}
\def \Q{{\mathbb  Q}}
\def \ra{\rightarrow}
\def \Ra{\Rightarrow}
\def \RG^{R(G)^{\hat {}}\ }
\def \res{respectively}
\def \r{\it r}
\def \rmR{\rm R}
\def \S{\mathcal S}
\def \rmS{\rm S}
\def \s{\it s}
\def \topGcoh*{^{top, *} _{G}}
\def \topGho*{ _{top,*} ^{G}}
\def \rmT{\rm T}
\def \Tot{\rm Tot}
\def \rmU{\rm U}
\def \rmV{\rm V}
\def \wG{{\it w}{\rm G}}
\def\wS{{\it w}{\rm S}}
\def \rmX{\rm X}
\def \x{\it x}
\def \rmY{\rm Y}
\def \Z(s){Z(s) \times {\mathbb C}^*}
\def \Z{\mathbb Z}
\begin{document}

\title{$\lambda$-ring structures on the K-Theory of algebraic stacks} 

\author{Roy Joshua}
\address{Department of Mathematics, Ohio State University, Columbus, Ohio,
43210, USA.}
\email{joshua.1@math.osu.edu}
\author{Pablo Pelaez}
\address{Instituto de Matem\'aticas, Ciudad Universitaria, UNAM, DF 04510, M\'exico.}
\email{pablo.pelaez@im.unam.mx}
\thanks{The first author thanks the Simons Foundation for support.}

\maketitle

\setcounter{tocdepth}{1}

\begin{abstract}
In this paper  we consider the K-theory of  smooth algebraic stacks, establish $\lambda$ and $\gamma$ 
 operations, and show that the higher K-theory of such stacks is always  a  pre-$\lambda$-ring, and is a 
$\lambda$-ring if every coherent sheaf is the quotient of a vector bundle. As a consequence, we are 
able to define Adams operations and absolute cohomology for smooth algebraic stacks satisfying this hypothesis. 
We also obtain a comparison of the absolute cohomology with the equivariant higher Chow groups in certain special cases.

\end{abstract}

\tableofcontents
\input xy
\vfill \eject
\vskip .2cm

\section{\bf Introduction.}
In the case of smooth schemes of finite type over a field, the existence of $\lambda$-operations on algebraic K-theory enables one to define
absolute cohomology as the eigenspace for the Adams operations on rational algebraic K-theory. In this paper we investigate the corresponding situation for algebraic stacks, beginning with $\lambda$-operations.
\vskip .2cm 
To begin with, it ought to be pointed out that  it has been an open question whether there exist $\lambda$ and Adams operations on the
 higher K-theory of algebraic stacks. 
The first result in this paper is an affirmative answer to this question, at least for many smooth quotient stacks; in fact, we show that the higher K-groups of smooth algebraic stacks are pre-$\lambda$-rings, and that
for algebraic stacks where every coherent sheaf is the quotient of a vector bundle, they are in fact $\lambda$-rings. Though the last {\it resolution property} is closely related to being a quotient stack, our proof is stack-theoretic in that it does not require an explicit presentation of the stack as a quotient stack. In fact, finding a presentation for a stack with the above resolution property as a quotient stack may be quite involved depending on the situation. Moreover,
invoking  a minimal number of background results proved on the K-theory and G-theory of algebraic stacks as in \cite{J-6}, the proofs here are quite straightforward and only a little background on stacks is required.
\vskip .2cm
Before we proceed any further, it seems important to point out why it is essential to work with the K-theory of perfect complexes, using the 
 machinery of categories with cofibrations and weak equivalences in the sense of \cite{Wald}. The main difficulty is that the 
 Quillen K-theory of the category of vector bundles even on a general smooth scheme, let alone on an algebraic space or algebraic stack, does 
 not have good properties like Poincar\'e duality or Mayer-Vietoris property. Here Poincar\'e duality refers to the identification  between 
 the K-theory of the exact category of vector bundles with the K-theory of the exact category of coherent sheaves when the stack is regular or smooth.
 We begin with a few basic
definitions so as to be able to state this in a precise manner.
\vskip .2cm
\begin{definition} 
\label{def1.1}
(See \cite[p. 240]{Kr-2}, or \cite[p. 98]{Wei}.) \footnote{Please note that what we call a pre-$\lambda$-ring is called a $\lambda$-ring and
what we call a $\lambda$-ring is called a special $\lambda$-ring in \cite{Wei}.} A {\it pre-$\lambda$}-ring $\rmR$ is a commutative ring with unit and provided with
maps $\lambda^i:\rmR\ra \rmR$, $i \ge 0$, (which are in general not homomorphisms) so that 
(i) $\lambda ^0 (r) =1$, for all $r \in \rmR$, (ii) $\lambda ^1 =id$, and (iii) 
$\lambda^n (r+s) = \Sigma _{i=0}^n \lambda ^i(r) \cdot\lambda ^{n-i}(s)$ for all $r, s \in \rmR$.
A {\it pre-$\lambda$-ring without unit} is a commutative ring $\rmR$ without a unit element and provided
with maps $\lambda^i: \rmR \ra \rmR$, $i >0$ satisfying the conditions (ii) and (iii). 
(For (iii) to make sense,  we use the convention that $\lambda^0(r)\cdot\lambda^n(s) = \lambda^n(s)$ and   $\lambda^n(r) \cdot\lambda^0(s) =\lambda^n(r)$, for all $r \in \rmR$, $s \in \rmR$.)
Given a pre-$\lambda$ ring $\rmR$, a {\it pre-$\lambda$-algebra} over $\rmR$ is a commutative ring $\rmS$ (not
necessarily with a unit) provided with the structure of a module over $\rmR$, and so that $\rmR \oplus \rmS$
gets the structure of a pre-$\lambda$-ring with the following operations:
\vskip .2cm \noindent
$\bullet$ the sum on $\rmR\oplus \rmS$ is the obvious sum induced by the sum on $\rmR$ and $\rmS$, 
\vskip .2cm \noindent
$\bullet$ the product on $\rmR \oplus \rmS$ is defined by $(r, s) \cdotp (r', s') = (r\cdotp r', r\cdotp s'+r'\cdotp s +s\cdotp s')$, where
the products $r\cdotp s'$ and $r'\cdotp s$ are formed using the module structure of $\rmS$ over $\rmR$, and the product $s \cdotp s'$
is formed using the product on $\rmS$, and
\vskip .2cm \noindent
$\bullet$ one is given maps  $\lambda^i: \rmR \oplus \rmS \ra \rmR \oplus \rmS$, $i \ge 0$ so that the following hold:
\begin{enumerate}[\rm (i)]
\item for all $i \ge0$, $\lambda^i$ restricted to
$\rmR$ identifies with the given operation $\lambda^i$ on $\rmR$, 
\item for all $i>0$, $\lambda^i$ restricted to $\rmS$ maps to $\rmS$, and
\item $\lambda^n(r, s) = (\lambda ^n(r), \Sigma _{i=0}^{n-1}\lambda ^i(r) \cdot \lambda ^{n-i}(s))$ for all
$r \in \rmR$, $s \in \rmS$.
\end{enumerate}
The product $\lambda^i(r). \lambda^{n-i}(s)$ uses the $\rmR$-module structure of
$\rmS$.
\vskip .2cm
A pre-$\lambda$-ring $\rmR$ is a {\it $\lambda$-ring}, if $\lambda ^n(1) =0$ for $n > 1$, and for certain universal polynomials  $\rmP_{\it k, l}$ and $\rmP_{\it k}$ with integral coefficients 
defined as in \cite[p. 258]{AT} the following equations hold:
\be \begin{align}
\label{lambda.ids}
\lambda^{\it k}(r \cdotp s) &= \rmP_{\it k}(\lambda^1(\r), \cdots, \lambda^{\it k}(\r); \lambda^1(\s), \cdots, \lambda^{\it k}(\s)) \ and \\
\lambda^{\it k}(\lambda^l (r )) &= \rmP_{\it k,l}(\lambda^1(\r), \cdots, \lambda^{\it k\cdotp l}(\r)), \, {\it r, s} \in \rmR.\notag \end{align} \ee
\vskip .2cm \noindent
One defines a {\it $\lambda$-ring without unit} to be a pre-$\lambda$-ring without unit satisfying the relations in ~\eqref{lambda.ids}. If $\rmR$ is a $\lambda$-ring and $\rmS$ is a pre-$\lambda$-algebra over $\rmR$, we say
$\rmS$ is a {\it $\lambda$-algebra} over $\rmR$ if the relations above also hold for $\lambda ^{\it k} (\lambda ^l(r+s))$, and
for $\lambda^{\it k}((r+s)\cdotp (r'+s'))$ if $r, r' \in \rmR$ and $s, s' \in S$, that is, $\rmR \oplus \rmS$ is a $\lambda$-ring with the operations defined above. 
\end{definition}
 Given an algebraic stack $\S$, ${\rm K}(\S)$ will denote the space obtained by applying the constructions of Waldhausen
 (see \cite{Wald}) to the category of perfect complexes on the stack $\S$: see Definition ~\ref{K.G.def}. 
 For a closed algebraic substack $\S'$ of $\S$, 
 $\rmK_{\S'}(\S)$ will denote the {\it space} defining the higher algebraic K-theory of $\S$ with supports in $\S'$ as in Definition ~\ref{K.G.def}.
 Then we obtain the theorem stated below, which is one of the main results of this paper. 
\vskip .2cm
\begin{theorem}
\label{mainthm} (i) Let $\S$ denote a smooth algebraic stack of finite type over a regular Noetherian base scheme $S$.
Then $\pi_0({\rm K}(\S))$ is a pre-$\lambda$-ring. 
\vskip .1cm
(ii) For $\S'$ denoting a closed algebraic sub-stack, $\pi_n(\rmK_{\S'}(\S))$, for each fixed $n \ge 0$, is a  pre-$\lambda$-algebra over the pre-$\lambda$-ring $\pi_0(\rmK (\S))$.
\vskip .2cm 
 The above pre-$\lambda$-ring structure is compatible with pull-backs: that is,
if $f: \tilde \S \ra \S$ is a map of smooth algebraic stacks and $\tilde \S' = \tilde \S {\underset {\S} \times} \S'$, then 
the induced map $f^*:\pi_0(\rmK (\S)) \ra \pi_0(\rmK (\tilde \S))$ is a map of pre-$\lambda$-rings, and the induced map
$f^*: \pi_n(\rmK_{\S'}(\S)) \ra \pi_n(\rmK_{\tilde \S'}(\tilde \S))$, for each fixed $n \ge 0$, is a map of  pre-$\lambda$-algebras over $\pi_0(\rmK (\S))$. 
The $\lambda$-operations are homomorphisms on $\pi_n(\rmK_{\S'}(\S))$ for all $n>0$.
\vskip .2cm
(iii) In case every coherent sheaf on the smooth stack $\S$ is the quotient of a vector bundle, each $\pi_n(\rmK_{\S'}(\S))$, for $n \ge 0$, is a {\it $\lambda$-algebra} 
over $\pi_0(\rmK (S))$ in the above sense.
\end{theorem}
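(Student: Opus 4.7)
The plan is to define $\lambda$-operations on $\rmK(\S)$ via exterior powers of perfect complexes, verify the axioms on $\pi_0$ via the classical Whitney sum formula, extend them to higher homotopy and to K-theory with supports through the external pairing, and invoke a splitting principle in case (iii) which is made available by the resolution property.

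First I would define, for each $i\ge 0$, a functor $\Lambda^i$ on the Waldhausen category of perfect complexes used to model $\rmK(\S)$. On a vector bundle $V$ it is the usual exterior power, and on a bounded complex of vector bundles it is the Dold--Puppe / Koszul-signed termwise exterior power, which preserves quasi-isomorphisms and perfectness; smoothness of $\S$ enters to ensure that perfect complexes are locally quasi-isomorphic to strict complexes of vector bundles. The classical obstacle is that $\Lambda^i$ is not exact for short exact sequences, so Waldhausen additivity does not apply directly. The standard remedy, going back to Hiller and Grayson, is to model $\rmK(\S)$ by binary acyclic complexes (or to use the $S_{\bullet}$-construction with symmetric-group data) so that $\lambda^i$ is produced from the natural filtration on $\Lambda^n(E\oplus F)$ whose associated graded is $\bigoplus_{p+q=n}\Lambda^p E\otimes\Lambda^q F$. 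This machinery has to be set up in the stacky setting using only the K-theory and G-theory facts established in \cite{J-6}.

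Once the operations are in hand, parts (i) and (ii) are essentially bookkeeping. The axioms $\lambda^0(r)=1$ and $\lambda^1=\mathrm{id}$ are built into the definition, and the Whitney sum identity on $\pi_0$ is precisely the image of the filtration above. For (ii), the action of $\pi_0\rmK(\S)$ on $\pi_n\rmK_{\S'}(\S)$ comes from the external pairing $\rmK(\S)\wedge\rmK_{\S'}(\S)\to\rmK_{\S'}(\S)$ (using that exterior powers of perfect complexes acyclic off $\S'$ remain acyclic off $\S'$), and the pre-$\lambda$-algebra axioms reduce to writing $\lambda^n$ on $\pi_0\rmK(\S)\oplus\pi_n\rmK_{\S'}(\S)$ in the form of Definition \ref{def1.1}(iii) and verifying the Whitney formula there. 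The assertion that $\lambda^i$ is a homomorphism on $\pi_n$ for $n>0$ follows because in any cross-term $\lambda^p(x)\cdot\lambda^{n-p}(y)$ with $x,y\in\pi_n\rmK_{\S'}(\S)$ the graded product sends $\pi_n\otimes\pi_n$ into $\pi_{2n}$, which contributes nothing to the $\pi_n$-piece when $n>0$. Compatibility with $f^*$ is automatic since derived pullback commutes with $\Lambda^i$ up to canonical isomorphism.

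For (iii), the further identities \eqref{lambda.ids} are universal polynomial identities of Atiyah--Tall type, and the standard route is the splitting principle: it suffices to check them on sums of classes of line bundles. The resolution property is exactly what enables this reduction on $\S$, since every K-theory class then becomes a difference of classes of honest vector bundles, after which one runs the usual flag-bundle construction over $\S$ (arguing \'etale-locally and descending in K-theory) to reduce each bundle to a sum of line bundles, where the universal identities hold trivially. Extension from $\pi_0$ to higher $\pi_n$ and to the module structure once more uses the positive-degree vanishing of the correction terms above. The main obstacle I foresee is still the first step: engineering the operations $\lambda^i$ on the K-theory \emph{space} of perfect complexes (not merely on $\pi_0$), compatibly with supports and with pullback, given that $\Lambda^i$ fails to be exact; once that spectrum-level construction is carried out in the stacky setting, parts (i)--(iii) follow by transposing the classical Atiyah--Tall / Grothendieck arguments from bundles to perfect complexes.
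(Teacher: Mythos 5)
Your plan is in the right spirit (exterior powers on complexes via Dold--Puppe, Whitney formula, module structure via the external pairing), but it omits the central technical device of the paper and proposes a genuinely different route for (iii) that would need independent justification.

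The key missing idea for (i)--(ii) is the Bloch--Lichtenbaum-style reduction of higher K-groups to \emph{relative Grothendieck groups}. You treat the verification of the Whitney formula on $\pi_n$ as ``bookkeeping'' once operations on the K-theory space are in hand, but the paper does not verify the formula at the space level; it proves the isomorphism
$\pi_n(\rmK_{\S'}(\S)) \cong \pi_0\bigl(\rmK_{\S'\times\Delta[n]}(\S\times\Delta[n], \S\times\delta\Delta[n])\bigr)$
(Proposition~\ref{reduct.pi0}), thereby collapsing each $\pi_n$ to a $\pi_0$ of a relative theory, and then verifies the Whitney formula there. This reduction is exactly where smoothness enters, via the homotopy property of $\rmG$-theory from \cite{J-6}; your proposal does not explain where smoothness is used. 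Moreover, making this $\pi_0$-computation work forces the construction of a \emph{relative} $\rmG$-construction modelling the homotopy fiber ${\it w}\rmG({\bf f})$, with $\lambda$-operations acting on it (Sections 3--5), plus an additivity theorem for the mapping cone (Corollary~\ref{additivity.cor}). That apparatus is what licenses the identity in \eqref{lambda.on.sum}, together with the suspension trick of ~\ref{neg.classes} for additive inverses; neither step is in your plan. Note also that the paper explicitly declines to use binary complexes, so the ``standard remedy'' you point to is a route the authors chose to avoid precisely because it had not been made to work in the equivariant/stacky setting at the time of writing.

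For (iii) your proposal uses a flag-bundle splitting principle, while the paper instead adapts the Gillet--Soul\'e representation-theoretic argument: one forms the functor $\rmT_{\rmE}$ attached to representations $\rmE$ of $\rmM_N\times\rmM_N$, extends it to pairs of strictly perfect complexes on the stack (Proposition~\ref{TE.st}), and pulls back the $\lambda$-ring identities that already hold in $\rmR_{\mathbb Z}(\rmM_N\times\rmM_N)$ via the homomorphisms $\alpha$, $\beta$ of Proposition~\ref{Gillet-Soule-prop3}. The resolution property is used to get strictly perfect complexes, on which $\rmT_{\rmE}$ is defined --- not, as in your plan, to reduce to line bundles. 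A splitting-principle proof might be possible, but you would need a projective (or full flag) bundle formula for K-theory of Artin stacks with the required injectivity, and you would have to run it in the \emph{relative} setting $\pi_0(\rmK_{\S'\times\Delta[n]}(\cdot,\cdot))$; none of that is immediate. Indeed the paper does use a splitting argument, but only in the narrow setting of Proposition~\ref{lambda.ring.2} to handle cross-terms $[\L]\cdot x$ with $\L$ a line bundle on $\S$, where only the absolute $\pi_0$-factor is being split --- a much weaker use than your global reduction. As stated, your reduction to line bundles for the relative classes $x$ is a gap: those classes are represented by perfect complexes acyclic off $\S'$, not by vector bundles, and the universal identities for $\lambda^k(\lambda^l(x))$ are precisely what was the open problem the paper solves by the Gillet--Soul\'e route.
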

\begin{remark}
 The proof of Theorem ~\ref{mainthm} is split into two parts: the first part that discusses the proofs of the two statements (i) and (ii)
 appear at the end of section 5. All of section 6 is devoted to a proof of statement (iii).
 \end{remark}
\vskip .2cm
The following is a quick summary of the techniques adopted in this paper to prove the above theorem and Theorems ~\ref{thm.2}
and ~\ref{loc.abs.coh} discussed below.
First, we invoke the technique in \cite{BL}, whereby
higher K-groups can be reduced to certain relative Grothendieck groups. This needs to make use of the homotopy property
for the K-theory of algebraic stacks (see \cite[Theorem 5.17]{J-6}), which makes it necessary to restrict to smooth stacks.
But then we need to interpret the relative Grothendieck groups in terms of a relative form of the Gillet-Grayson
un-delooping adapted to the Waldhausen setting. Considerable effort (in fact, all of section 3) is needed to carry this out. These, together with some well-known arguments due to
Grayson (see \cite[section 7]{Gray}), suffice to put a pre-$\lambda$-ring structure on the higher K-theory of all {\it smooth} algebraic stacks of finite type over any regular Noetherian base scheme. 
\vskip .2cm
In order to verify that the higher
K-theory of algebraic stacks form $\lambda$-rings, we are forced to restrict to smooth stacks that have the {\it resolution property}, namely 
where every coherent sheaf is the quotient of a vector bundle. In fact, the Quillen K-theory of vector bundles and the K-theory of perfect complexes are known to be isomorphic {\it only when
 they have the resolution property: namely, the property that every coherent sheaf is the quotient of a vector bundle}. See \cite[8.6. Exercise]{T-T}
 for the example of a scheme $\rmX$, which is the union of two copies of the affine $n$-space ${\mathbb A}^n$, for $n\ge 2$, glued along ${\mathbb A}^n - \{0\}$. This scheme does not have
  the resolution property, as the resolution property would imply that the diagonal morphism is affine, and it is not in this case. 
  More details on the resolution property may be found in \cite{Tot}: see also \cite[Proposition 2.8]{J-6}.
  One may also see \cite[Expos\'e II]{SGA6} and \cite{BS} for related results.
Finally we adapt certain arguments of Gillet and Soul\'e (see \cite{GS}) to prove that the corresponding relative Grothendieck groups
are $\lambda$-rings.
\vskip .2cm
\label{comparison}
Here is a comparison of our results with other related results in the literature. There is a great deal of literature on the 
$\lambda$-ring structure and related operations on the higher K-theory of schemes, including schemes that are possibly singular: see \cite{Kr-1}, \cite{Kr-2}, \cite{Lev}, \cite{GS2} and \cite{Lec}. We will not discuss such results any further, except to point out that 
when restricted to the category of (suitably nice schemes) our results in the present paper reduce to these.
While it has been known for sometime, especially after \cite{Gray}, 
how to define $\lambda$-operations for the Quillen K-theory of exact categories or the Waldhausen analogue of it, it was not clear that the
required relations are satisfied. For example, in \cite{Gray}, $\lambda$-operations are defined for the K-theory of the exact category of vector bundles, but it was left open whether they satisfied the required properties to define a $\lambda$-ring in general. 
Even for quotient stacks, or equivalently for schemes provided with an action by a smooth group-scheme, it has not been known till very recently if
there exists a $\lambda$-ring structure on their  higher K-theory. In fact, this was posed as a conjecture in the literature.
In \cite[(2.5) Proposition]{K}, it was shown that the Higher K-theory of quotient stacks is a pre-$\lambda$-ring and satisfies the first
 relation in ~\eqref{lambda.ids}. It was conjectured there (see \cite[(2.7) Conjecture]{K} that the second relation in ~\eqref{lambda.ids} is also satisfied
 by the higher K-theory of quotient stacks and it remained open till the very recent preprint \cite{KZ}.
\vskip .1cm
Most of the recent progress in this area in the literature follows the relatively recent results of Grayson defining higher K-theory using binary 
complexes as in \cite{Gray12}. Making use of this approach, the authors of \cite{HKT} prove the existence of a $\lambda$-ring structure 
for the higher K-theory of schemes, including ones that are possibly singular, but still left open the corresponding question for equivariant algebraic K-theory or for the algebraic K-theory 
of (quotient) stacks. (There is also the work of
\cite{Riou} and \cite{Z}, which provide a $\lambda$-ring structure on the higher K-theory of certain classes of schemes, including ones that are possibly singular: while these also involve a reduction to Grothendieck groups,
this approach does not seem to extend to any larger category than schemes.) In the very recent preprint, \cite[Theorem 5.1]{KZ}, the authors provide a proof that the second relation in ~\eqref{lambda.ids} is also satisfied
 by higher equivariant K-theory, making essential use of binary complexes. 

\vskip .1cm
Therefore, the corresponding question for the Higher Algebraic K-theory of Algebraic stacks in general has not been even looked at in the literature so far. \footnote{$\lambda$-operations can be defined at the level of 
Grothendieck groups for quotient stacks quite easily. See \cite{EJK} where they define such operations on the Grothendieck groups of certain inertia stacks
associated to smooth quotient stacks that are Deligne-Mumford. In case the stack is a quotient stack of the form 
$[\rmX/\rmT]$ for a split torus $\rmT$, the inertia stack is a disjoint union of quotient stacks of the form $[\rmX^{\it t}/\rmT]$, $t \in \rmT$: see \cite{St}. 
In this case, Theorem ~\ref{mainthm} would extend the $\lambda$-operations of \cite{EJK} to the higher K-theory of these inertia stacks, even when
the stack $[\rmX/\rmT]$ is not Deligne-Mumford.} \footnote{Quotient stacks of the form $[\rmX/\rmG]$, for a
scheme $\rmX$ and an affine group scheme $\rmG$ are quite special, and there are as many algebraic stacks which are not such global quotient stacks.}
Here are some of the main features of our work.
\vskip .1cm
$\bullet$ Our first result in Theorem ~\ref{mainthm}, proving the existence of a pre-$\lambda$-ring structure on the higher Algebraic K-theory of 
all smooth Algebraic stacks satisfying certain mild finiteness conditions therefore is the first positive result for smooth algebraic stacks in general. 
\vskip .1cm
Secondly, the second statement in Theorem ~\ref{mainthm}
has the following features: 
\vskip .1cm
$\bullet$ We prove the existence of  a $\lambda$-ring structure on the higher K-theory of smooth algebraic stacks satisfying the
{\it resolution property}. While this property is closely related to the stack being a quotient stack, it is not always equivalent to being a quotient stack (see
\cite[Theorems 1.1 and 1.2]{Tot} as well as the discussion following \cite[Proposition 1.3]{Tot} for a precise comparison), and our proof does {\it not} require the stack to be a quotient stack. 
\vskip .1cm
$\bullet$ Even when
the resolution property holds and one knows the given stack is a quotient stack, finding an explicit presentation for the stack as a quotient stack for
the action of an affine group scheme on a scheme is rather involved, and our techniques do not require knowing explicitly any such  presentation.
\vskip .1cm
$\bullet$ Moreover, as shown in Theorem ~\ref{mainthm}(iii), our results on $\lambda$-algebra structures hold for 
{\it the relative case} also, that is, also for the higher K-theory with supports in a closed substack. (The results of 
\cite[Theorem 5.1]{KZ}, as stated are only for the absolute case.) This is important, 
as we are then able to obtain certain long-exact sequences in the 
associated absolute cohomology as in Theorem ~\ref{loc.abs.coh}.
\vskip .1cm
$\bullet$ Finally, our techniques do not make use of the definition of higher Algebraic K-theory using binary complexes, but instead use more traditional
(topological) methods, such as a relative form of the Gillet-Grayson $\rmG_{\bullet}$-construction. The price we pay for this may be that we have to restrict to smooth stacks, so that the higher K-theory of these stacks have the homotopy property, and we 
are able to invoke the methods of \cite{BL} to reduce higher K-theory to certain relative Grothendieck groups.

\vskip .2cm
Making use of Theorem ~\ref{mainthm}, we define {\it $\gamma$ and Adams operations} on the higher K-groups of algebraic stacks that satisfy the property that every coherent sheaf is the quotient of a vector bundle;
further, making use of these operations we are also able to define the {\it absolute cohomology with $\Q$-coefficients} for such algebraic stacks. 
These results may be summarized in the following theorems.

\begin{theorem} 
\label{thm.2}
Let $\S$ denote a smooth algebraic stack as in Theorem ~\ref{mainthm} having the resolution property, and let $\S'$ denote a closed algebraic sub-stack.
\vskip .2cm
(i) Then there are $\gamma$ and Adams operations on each $\pi_n(\rmK_{\S'}(\S))$ that satisfy the (usual) relations:  
\[\gamma ^1 =id, \gamma^{\it k}(K ) = \Sigma _{k'+k''=k}\gamma^{k'}(K') . \gamma^{k''}(K''),\]
if $K = K' +K''$ in the $\lambda$-ring $\pi_n(\rmK_{\S'}(\S))$, and if $\S'=\S$, then $\gamma^0(K) = [\O_{\S}]=$ the class of the structure sheaf $\O_{\S}$ for any $K \in w{\rm {Perf}}_{fl}(\S)$. 
Moreover, for certain universal polynomials $\rmQ_{\it k, l}$ and $\rmQ_{\it k}$ with integral coefficients (see \cite[p. 262]{AT}) the following relations hold:
\[\gamma^{\it k}(\gamma^l (\alpha )) = \rmQ_{\it k,l}(\gamma^1(\alpha), \cdots, \gamma^{\it k.l}(\alpha)) \mbox{ and}\] 
\[\gamma^{\it k}(\alpha . \beta) = \rmQ_{\it k}(\gamma^1(\alpha), \cdots, \gamma^{\it k}(\alpha); \gamma^1(\beta), \cdots, \gamma^{\it k}(\beta)),\]
\vskip .2cm \noindent
for $\alpha$, $\beta \in \pi_n(\rmK_{\S'}(\S))$. 
\vskip .1cm
 The Adams operations $\psi^{\it k}$ preserve the
additive and multiplicative structures on $\pi_*(\rmK_{\S'}(\S))$. The Adams operations and the $\gamma$-filtration are natural with respect to pull-back. 
The graded piece \\ $gr_n(\pi_*(\rmK_{\S'}(\S))\otimes \Q)$ is the eigenspace for the induced action of $\psi^{\it k}$ with eigenvalue $k^n$.
\vskip .2cm
(ii) In case there exists a coarse moduli space ${\mathfrak M}$ (${\mathfrak M}'$) for the stack $\S$(the sub-stack $\S'$, \res) as an algebraic space, the $\gamma$-filtration on $\pi_*(\rmK_{\S'}(\S)) \otimes \Q$ above
is compatible with the $\gamma$-filtration on $\pi_*(\rmK_{{\mathfrak M'}}({\mathfrak M})) \otimes \Q$. 
 \end{theorem}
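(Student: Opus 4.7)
The plan is to bootstrap entirely from Theorem~\ref{mainthm}. Once the $\lambda$-algebra structure on $\pi_n(\rmK_{\S'}(\S))$ over $\pi_0(\rmK(\S))$ of Theorem~\ref{mainthm}(iii) is in hand, the $\gamma$-operations, the Adams operations, and all identities, filtrations, and eigenspace decompositions asserted in (i) are produced by purely formal $\lambda$-ring manipulations in the style of Atiyah--Tall; the only genuinely new input for (ii) is the pull-back naturality already supplied by Theorem~\ref{mainthm}.

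For (i), I would define $\gamma^k$ through the generating series
\[
\gamma_t(x)\;=\;\sum_{k\ge 0}\gamma^k(x)\,t^k\;=\;\lambda_{t/(1-t)}(x),
\]
and define $\psi^k(x)$ recursively by the Newton formula applied to the sequence $\{\lambda^i(x)\}$. The normalization $\gamma^0(K)=[\O_\S]$, the identity $\gamma^1=\mathrm{id}$, the addition formula $\gamma^k(x+y)=\sum_{k'+k''=k}\gamma^{k'}(x)\gamma^{k''}(y)$, and the composition/product relations expressed through $\rmQ_{k,l}$ and $\rmQ_k$ all follow by evaluating the corresponding universal polynomials against the $\lambda$-ring axioms of~\eqref{lambda.ids}. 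Additivity of $\psi^k$ is immediate from the Newton recursion combined with $\lambda^n(x+y)=\sum\lambda^i(x)\lambda^{n-i}(y)$; multiplicativity $\psi^k(xy)=\psi^k(x)\psi^k(y)$ depends precisely on the first identity in~\eqref{lambda.ids}, which is the axiom promoting a pre-$\lambda$-ring to a $\lambda$-ring and is available to us by Theorem~\ref{mainthm}(iii). Pull-back commutes with $\gamma^k$, $\psi^k$, and with the $\gamma$-filtration $F^n_\gamma$ automatically, since each is given by a universal polynomial in the $\lambda^i$ and the $\lambda^i$ commute with $f^*$ by Theorem~\ref{mainthm}. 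The eigenspace assertion then reduces to the purely formal computation, valid in any augmented $\lambda$-ring after tensoring with $\Q$, that $\psi^k(x)\equiv k^n x\pmod{F^{n+1}_\gamma}$ for $x\in F^n_\gamma$, which identifies $gr^n(\pi_*(\rmK_{\S'}(\S))\otimes\Q)$ as the $k^n$-eigenspace of $\psi^k$.

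For part (ii), the coarse moduli maps $\pi:\S\to{\mathfrak M}$ and $\pi':\S'\to{\mathfrak M}'$ induce a pull-back $\pi^*:\pi_*(\rmK_{{\mathfrak M}'}({\mathfrak M}))\to\pi_*(\rmK_{\S'}(\S))$ on perfect complexes. Viewing ${\mathfrak M}$ as an algebraic space, Theorem~\ref{mainthm}(i)--(ii) already equips the source with a pre-$\lambda$-ring structure, which suffices to define the $\gamma$-filtration on the rationalisation. By the naturality of the $\lambda^i$ recorded in Theorem~\ref{mainthm}, the map $\pi^*$ commutes with every $\gamma^k$, hence sends $F^n_\gamma$ of the target into $F^n_\gamma$ of the source, which is precisely the stated compatibility after tensoring with $\Q$.

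The main obstacle I anticipate is not in the $\lambda$-formalism itself---this is entirely standard once Theorem~\ref{mainthm} is in place---but in ensuring that the coarse moduli space ${\mathfrak M}$, which need not be smooth even when $\S$ is, fits into the framework of Theorem~\ref{mainthm} well enough to carry a $\gamma$-filtration; my strategy is to keep the claim of (ii) at the level of filtrations alone, so that the pre-$\lambda$-ring structure of Theorem~\ref{mainthm}(i)--(ii) on the moduli side is sufficient, without requiring the full resolution property there.
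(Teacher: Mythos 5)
Your proposal follows essentially the same route as the paper: Theorem~\ref{mainthm}(iii) gives the $\lambda$-ring structure on $\pi_0(\rmK(\S))\oplus\pi_n(\rmK_{\S'}(\S))$, and all of part (i) then unwinds formally from the Atiyah--Tall $\lambda$-ring calculus (your $\gamma_t=\lambda_{t/(1-t)}$ is the same as the paper's $\gamma^n(\alpha,\beta)=\lambda^n(\alpha+(n-1)\O_{\S},\beta)$ after expansion), while part (ii) reduces to naturality of $\lambda^i$ under $\pi^*$. You have been appropriately cautious about the gap in (ii) concerning the coarse moduli space $\mathfrak M$: the paper itself leaves this implicit (its proof of (ii) is a one-line appeal to compatibility under pull-back), and your observation that $\mathfrak M$ need not be smooth so that Theorem~\ref{mainthm} may not directly apply to it is a legitimate concern; in practice one must either assume $\mathfrak M$ is regular or invoke the separate $\lambda$-structures known for possibly singular schemes and algebraic spaces (cf. \cite{Lev}, \cite{GS2}) to make the source of $\pi^*$ carry a $\gamma$-filtration at all, which the paper does not spell out.
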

\vskip .2cm
One important distinction from the corresponding situation for schemes is that the $\gamma$-filtration is almost
never {\it nilpotent} for algebraic stacks, as may be seen in remark ~\ref{non.nilp}. Though the hypothesis that {\it the resolution property} holds seems strong, it is clearly satisfied by
many quotient stacks thanks to the work of Thomason (see \cite{T-1} through \cite{T-3}) and the work of Totaro (see \cite{Tot}). 
 
\vskip .2cm
We will define {\it absolute cohomology} by $\rmH^i_{\S', abs}(\S, \Q(j)) =
gr^j(\pi_{2j-i}(\rmK_{\S'}(\S)) \otimes \Q)$. (See Definition ~\ref{def.abs.coh} for more details.)
\begin{theorem} (Localization theorem for absolute cohomology)
\label{loc.abs.coh}
Let $\S$ denote a smooth algebraic stack as in Theorem ~\ref{mainthm}, and let $\S'_0 \subseteq \S_1'$ denote two closed algebraic sub-stacks. We will further assume that every coherent sheaf on the stack $\S$ is the quotient of
a vector bundle. Then one obtains the long exact sequence of absolute cohomology groups:
\[\cdots \ra \rmH^n_{\S_0', abs}(\S, \Q(i)) \ra \rmH^n_{\S_1', abs}(\S, \Q(i)) \ra \rmH^n_{\S_1'-\S_0', abs}(\S - \S_0', \Q(i)) \ra \rmH^{n+1}_{\S_0', abs}(\S, \Q(i)) \ra \cdots.\]
\end{theorem}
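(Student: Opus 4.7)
The starting point is the localization fiber sequence for the K-theory of perfect complexes with supports. Since $\S$ is smooth and $\S_0' \subseteq \S_1'$ are closed algebraic substacks, the results on K-theory and G-theory of algebraic stacks from \cite{J-6} (together with the resolution hypothesis, which identifies K-theory of perfect complexes with K-theory of vector bundles and hence with G-theory) supply a homotopy fiber sequence
\[
\rmK_{\S_0'}(\S) \;\lra\; \rmK_{\S_1'}(\S) \;\lra\; \rmK_{\S_1'-\S_0'}(\S - \S_0').
\]
Taking homotopy groups, tensoring with $\Q$, and appealing to Theorem~\ref{mainthm}(iii) together with Theorem~\ref{thm.2}(i), each term in the resulting long exact sequence carries Adams operations $\psi^k$, which are diagonalizable over $\Q$ with the eigenspace of eigenvalue $k^j$ on $\pi_{2j-n}$ equal to $\rmH^n_{\S',\mathrm{abs}}(\S,\Q(j))$.

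The plan is therefore to show that the long exact sequence is compatible with the Adams operations, and then to extract the $k^j$-eigenspace. The first map $\rmK_{\S_0'}(\S) \to \rmK_{\S_1'}(\S)$ is induced by the inclusion of the Waldhausen category of perfect complexes acyclic off $\S_0'$ into those acyclic off $\S_1'$; since the $\lambda$-operations are constructed internally on perfect complexes (via the Koszul/exterior power constructions lifted to the Waldhausen setting through the relative $\rmG_\bullet$-construction of section~3), this inclusion respects $\lambda^i$ at the level of the relative Grothendieck groups and hence on homotopy groups. The second map is pull-back along the open immersion $\S - \S_0' \hookrightarrow \S$, which is compatible with $\lambda$-operations by the naturality statement in Theorem~\ref{mainthm}. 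Hence, in the long exact sequence, the two non-boundary maps commute with $\psi^k$.

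The main obstacle is the connecting homomorphism
\[
\partial : \pi_n(\rmK_{\S_1'-\S_0'}(\S - \S_0')) \;\lra\; \pi_{n-1}(\rmK_{\S_0'}(\S)),
\]
for which compatibility with $\psi^k$ is not formal, because the Adams operations in this paper are constructed on homotopy groups rather than as endomorphisms at the spectrum level. To handle this, I would combine the construction of the $\lambda$-operations via the relative Gillet--Grayson $\rmG_\bullet$-construction (section~3) with the localization triple, realizing the localization sequence itself as a sequence of Grothendieck groups of relative Waldhausen categories in the sense of \cite{BL}. In that picture, the connecting map is induced by a functorial construction on perfect complexes supported in $\S_1'$ that are acyclic off $\S_0'$, and the Koszul-type exterior power operations defining $\lambda^i$ commute with this construction up to the usual natural isomorphisms. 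This suffices to show $\partial \circ \psi^k = \psi^k \circ \partial$, so that the long exact sequence in rational K-theory with supports decomposes as a direct sum of long exact sequences indexed by the Adams eigenvalues.

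Finally, extracting the $\psi^k = k^j$ eigenspace and reindexing via $\rmH^n_{\S',\mathrm{abs}}(\S,\Q(j)) = gr^j(\pi_{2j-n}(\rmK_{\S'}(\S))\otimes\Q)$ yields precisely the advertised long exact sequence. I expect the verification that $\partial$ commutes with $\psi^k$ to be the technical heart of the argument; the rest is a standard eigenspace decomposition together with naturality results already established in Theorems~\ref{mainthm} and~\ref{thm.2}.
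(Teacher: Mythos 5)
Your overall strategy — start from the localization fibration sequence for $\rmK$ with supports, tensor with $\Q$, and extract information from the Adams/$\gamma$-structure — is the right one, and you correctly identify that compatibility of the boundary map with that structure is the point that needs work. However, the crucial step ``the long exact sequence in rational K-theory with supports decomposes as a direct sum of long exact sequences indexed by the Adams eigenvalues'' does not go through for algebraic stacks, and this is precisely the pitfall the paper warns about in Remark~\ref{non.nilp}: for stacks the $\gamma$-filtration on $\pi_*(\rmK_{\S'}(\S)) \otimes \Q$ is typically not locally nilpotent (already for $\S = [\mathrm{Spec}\,k/\rmG]$ with $\rmG$ finite the filtration stabilizes after two steps modulo torsion), so $\pi_*(\rmK_{\S'}(\S)) \otimes \Q$ need not be the direct sum of the $k^j$-eigenspaces, and $\psi^k$ need not be diagonalizable. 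Theorem~\ref{thm.2}(i) only identifies $gr^j$ with the $k^j$-eigenspace; it does not give a global eigenspace decomposition. Thus you cannot split the long exact sequence by eigenvalue and ``extract'' the desired piece.

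The paper's argument is structured to avoid exactly this. It forms the commutative ladder of $\gamma$-filtered long exact sequences (diagram~\eqref{exact.diagm}), with vertical maps the injective inclusions $\rmF^{i+1} \hookrightarrow \rmF^i$, so that the absolute cohomology groups $\rmH^n_{abs} = gr^i(\pi_{2i-n})$ appear as the cokernels of the columns. The technical heart is then proving exactness of each filtered row: given $b \in \rmF^i$ with $\beta^i(b) = 0$, one lifts to some $a \in \pi_{2i-n}(\rmK_{\S_0'}(\S)) \otimes \Q$ (not necessarily in $\rmF^i$), projects to $a' \in \rmF^i$, and shows $\alpha(a - a')$ has trivial component in $\rmF^i$ by an eigenvalue argument — but this argument only decomposes $a - a'$, which lives in a complement of $\rmF^i$ isomorphic to the \emph{finitely filtered} quotient $\pi_*/\rmF^i$, into Adams eigenvectors. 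On such a quotient $\psi^k$ really is diagonalizable (the minimal polynomial divides $\prod_{j<i}(t - k^j)$), so the argument closes. The result then follows from the homological algebra of a ladder of exact sequences with injective verticals (cf.\ \cite[Proposition 1.4]{Iv}). So the fix for your plan is to replace the global eigenspace decomposition with this filtration-level argument, and to note that the only diagonalizability ever used is on the finite-length quotients $\pi_*/\rmF^i$.
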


\vskip .2cm
Here is the layout of the paper. Section 2 is a quick review of the
basic properties of the K-theory and G-theory of algebraic stacks proved in \cite{J-5} and \cite{J-6}. We make a special effort here
in order to make
the paper accessible to readers who are primarily interested in the case of quotient stacks.
\vskip .2cm
Section 3
introduces a {\it key technique}: we obtain an explicit description of 
relative K-theory in terms of a {\it relative version} of the Gillet-Grayson G-construction (see \cite{GG} and \cite{Gray}), adapted to the setting of
categories with cofibrations and weak equivalences  by the methods of \cite{GSVW} and \cite{GSch}. (Though there is another description of relative K-theory due to Waldhausen (see \cite[Definition 1.5.4]{Wald}), that description 
 does not use the G-construction and therefore we cannot use it in our context.) 
\vskip .1cm
A non-trivial issue that shows up in this section is the difficulty of finding a categorical model for path spaces, that is,
a categorical construction whose nerve gives the usual path space. This is possible with the Waldhausen $\rmS_{\bullet}$-construction, as
is shown in \cite[section 2]{GSVW}, but does not extend to the $\rmG_{\bullet}$-construction. We circumvent this issue by
defining the path space only after topological realization. On the other hand, the mapping cone construction (which is
in a sense dual to the path space construction) readily extends to functors between simplicial categories. We invoke this
construction in section 4 to establish an additivity theorem for relative K-theory defined using a relative form of the
$\rmG_{\bullet}$-construction.
 \vskip .1cm
Section 5 is a key section, where we start by defining $\lambda$-operations on the Waldhausen K-theory of perfect complexes
in the framework of the $\rmG_{\bullet}$-construction. Since $\lambda$-operations are non-additive, we make use of
simplicial methods to do this, as in \cite{DP}, and \cite{GS}. Making use of techniques developed in the earlier sections, this
is then extended to relative K-theory defined using the relative $\rmG_{\bullet}$-construction. 
 The above techniques, along with the technique of reducing higher K-theory to relative Grothendieck groups (as in \cite{BL})  enable us to prove the first part of the main theorem. This puts a pre-$\lambda$-ring structure on
 the higher K-theory of all smooth algebraic stacks that are of finite type over any regular Noetherian base scheme, thereby completing the proofs
 of the first two statements in Theorem ~\ref{mainthm}.
\vskip .1cm
 In order to verify that the higher
K-groups of algebraic stacks form $\lambda$-rings, we are forced to restrict to smooth algebraic stacks that have the {\it resolution property}, namely 
where every coherent sheaf is the quotient of a vector bundle. Finally we adapt certain arguments of Gillet and Soul\'e (see \cite{GS}) to prove that the corresponding relative Grothendieck groups
are $\lambda$-rings.
 These occupy all of section 6 and complete the proof of the last statement of Theorem ~\ref{mainthm}. 
 \vskip .1cm
 In section 7 we define and study $\gamma$-operations and absolute cohomology for algebraic stacks. This section also contains
 the proofs of Theorems ~\ref{thm.2} and ~\ref{loc.abs.coh}. We conclude with several explicit examples in section 8. This section already has a brief comparison of absolute cohomology
  with the equivariant higher Chow groups in a few special cases. As pointed out here, the relationship with the equivariant higher Chow groups
 needs the machinery of derived completion (as in \cite{CJ23}) in general. Therefore, we have decided to explore this
 in a sequel, where we also plan to discuss Riemann-Roch theorems. A couple of short appendices are added to make
the paper self-contained. Appendix A summarizes the main results of Waldhausen K-theory. Appendix B summarized some well-known relations between
simplicial objects, cosimplicial objects and chain complexes in abelian categories.
\vskip .2cm
{\bf Quick summary of the notational terminology.} The basic terminology on algebraic stacks as well as algebraic K-theory
is discussed in the beginning of section 2: therefore, we do not repeat them here. The Gillet-Grayson G-construction will be denoted 
${\rm G}_{\bullet}$, while the Waldhausen S-construction will be denoted ${\rm S}_{\bullet}$:  these are discussed in section 3. ${\rm Nerve}$  denotes
the functor sending a small category to the simplicial set which is its nerve: this appears in sections 3 and 4. Given an exact category ${\mathbf A}$, ${\rm Cos.mixt}({\mathbf A})$
will denote the category of cosimplicial-simplicial objects in ${\mathbf A}$: this set-up is used in the definition of the derived functors 
of the exterior power operation in section 5. Given a simplicial set $\rmX_{\bullet}$, ${\rm sub}_k\rmX_{\bullet}$ produces a
multi-simplicial set of order $k$: this is discussed in \cite[section 4]{Gray} and recalled in section 5 along with certain other functors such as $\Xi$. 

\vskip .2cm 
{\bf Acknowledgments}. We would like to thank  Zig Fiedorowicz, 
 (the late) Rainer Vogt and an anonymous reviewer for various discussions related to a much earlier draft of 
the paper. We also owe an enormous thanks to Dan Grayson for undertaking a critical reading of the paper, suggesting several improvements in exposition, and also for several e-mail 
 exchanges that clarified the Gillet-Grayson ${\rmG}_{\bullet}$-construction to us. We also thank Bernhard Koeck for 
 undertaking a critical reading and providing several helpful comments which we have incorporated into the current version. Finally the 
 authors are very grateful to the referee for undertaking a careful and detailed reading of the manuscript and for making a number of very helpful
 comments and suggestions which surely have improved the paper.
\section{\bf K-theory and G-theory of Quotient stacks and Algebraic stacks: a quick review}
This section is a quick summary of the basic results on the K-theory and G-theory of algebraic stacks proved in \cite{J-5} and \cite{J-6}. Assuming these results, there is very little new stack-theoretic material needed in the later sections so that several of the basic results of this paper are no harder to state and prove
for general Artin stacks than for the special case of quotient stacks. 
\vskip .1cm
We will fix a regular Noetherian base scheme $\rmS$ throughout the paper and will consider only objects defined and finitely presented 
over $\rmS$. 
\begin{definition} (i) An {\it algebraic stack} $\S$ will mean an algebraic stack (of Artin type) 
which is finitely presented over a regular Noetherian base scheme $\rmS$. 
 An {\it action }
of a group scheme $\rmG$ on a stack $\S$ will mean morphisms $\mu, \rmpr_2: \rmG \times \S \ra \S$ and 
$e:\S \ra \rmG \times \S$ satisfying the usual relations. 
\vskip .1cm
(ii) A quotient stack $[\rmX/\rmG]$ will denote the Artin stack associated to the action of a smooth affine group-scheme
$\rmG$ on an algebraic space $\rmX$, both defined over $\rmS$. \footnote{In fact, the reader may observe as in the discussion in Definition ~\ref{eq.case.0} that working with quotient stacks corresponds 
 to working in the equivariant framework, and does not require any special knowledge of stack-theoretic machinery.}
\end{definition}
It is shown in \cite[Appendix]{J-4} that if $\rmG$ is 
a smooth group scheme acting on an algebraic stack $\S$, a quotient stack $[\S /G]$ exists
as an algebraic stack. In this case, there is
 an equivalence between the category of  
 $\rmG$-equivariant ${\mathcal O}_{\S}$-modules on $\S$ and the category of ${\mathcal O}_{[\S /G]}$-modules.
(See \cite[Appendix]{J-4}.)  Therefore, one may incorporate
 the equivariant situation into the following discussion by considering quotient stacks of the
 form $[\S /G]$. 
\vskip .2cm 
We have chosen to work mostly with the lisse-\'etale site (see  \cite[Chapter 12]{L-MB}, \cite{Ol}), though it seems
possible to work instead with the smooth site.
 Observe that if $\S$ is an algebraic stack, the underlying
category of $\S_{lis-et}$ is the same as the underlying category of the smooth site
$\S_{smt}$, whose objects are smooth maps $u:\rmU \ra \S$, with $\rmU$ an algebraic space.
The coverings of an object $u:\rmU \ra \S$ in the site $\S_{lis-et}$ are \'etale surjective maps $\{u_i:\rmU_i \ra \rmU|i\}$.
We will provide $\S_{lis-et}$ with the structure sheaf $\O_{\S}$. One defines a sheaf of $\O_{\S}$-modules $M$ on 
$\S_{lis-et}$ to be {\it cartesian} as in \cite[Definition 12.3]{L-MB},  that is, if for each map $\phi: \rmU \ra \rmV$ in
$\S_{lis-et}$, the induced map $\phi^{-1}(M_{|V_{et}})
\ra M_{|\rmU_{et}}$ is an isomorphism. In fact, it suffices to have this property
for all smooth maps $\phi$. {\it In this paper, we will restrict to complexes of 
$\O_{\S}$-modules $M$ whose cohomology sheaves are all cartesian.}
\vskip .2cm 
\begin{definition}
\label{K.G.def}
\begin{enumerate}[\rm (i)]
 \item {\it Throughout the paper, unless explicitly mentioned to the contrary, a complex will mean a cochain complex, that is,
 where the differentials are of degree $+1$.} 
 A bounded complex of $\O _{\S}$-modules $M$ is {\it strictly perfect}, if its  cohomology sheaves are all cartesian and
 locally on the site $\S_{lis-et}$, $M$ is a bounded complex of locally-free coherent
${\mathcal O}_{\S}$-modules. The complex $M$ is 
{\it perfect} if the cohomology sheaves are all cartesian, and 
locally on the site $\S_{lis-et}$, $M$ is quasi-isomorphic to 
a strictly perfect complex of $\O_{\S}$-modules.  
\item 
$M$ is
{\it pseudo-coherent},  if it is locally quasi-isomorphic to a bounded above complex of
${\mathcal O}_{\S}$-modules with bounded coherent cohomology sheaves, which are cartesian. (One may readily prove that if $M$ is perfect, it is pseudo-coherent. Observe that the usual definition of pseudo-coherence as in \cite{SGA6}
 does not require the cohomology sheaves to be bounded; we have included this hypothesis in the
definition of pseudo-coherence mainly for convenience.) 
\item
Let $\S'$ denote a closed algebraic sub-stack of $\S$. Then the category of all perfect (pseudo-coherent, strictly perfect) 
complexes with supports contained in $\S'$, along with quasi-isomorphisms forms a category with cofibrations and weak equivalences (see Definition ~\ref{def.Wald.cat}):
the {\it cofibrations } are those maps that are {\it degree-wise split monomorphisms}. It
will be denoted by ${\rm {Perf}}_{\S'}(\S)$ (${\rm {Pseudo}}_{\S'}(\S)$, ${\rm {StPerf}}_{\S'}(\S)$, \res); the K-theory {\it space}
(G-theory {\it space}) of $\S$ with supports in $\S'$ will be
defined to be the K-theory space of the category with cofibrations and weak equivalences ${\rm {Perf}}_{\S'}(\S)$ (${\rm {Pseudo}}_{\S'}(\S)$, \res) 
and denoted $\rmK_{\S'}(\S)$ ($G_{\S'}(\S)$, \res): the weak equivalences in these categories with cofibrations and weak equivalences  are quasi-isomorphisms. We distinguish these from the corresponding K-theory spectra
which will be denoted ${\mathbf K}_{\S'}(\S)$ (${\mathbf G}_{\S'}(\S)$, \res). 
\vskip .1cm
We also let ${\rm {Perf}}_{fl, \S'}(\S)$ (${\rm {Pseudo}}_{fl, \S'}(\S)$) denote the full subcategory
of ${\rm {Perf}}_{\S'}(\S)$ (${\rm {Pseudo}}_{\S'}(\S)$) consisting of complexes of flat $\O_{\S}$-modules in each degree. Observe from \cite[Chapitre I, Th\'eor\`eme 4.2.1.1]{Ill}  that flat $\O_{\S}$-modules have 
the {\it additional property that they are direct limits of finitely generated flat sub-modules at each stalk}. 
(Observe also that the existence of flat resolutions and the Waldhausen approximation theorem (see Theorem ~\ref{W.app.thm}) 
imply that one obtains a weak equivalence: $\rmK ({\rm {Perf}}_{\S'}(\S)) \simeq \rmK ({\rm {Perf}}_{fl, \S'}(\S))$.)
\end{enumerate}
\end{definition}
\begin{definition} 
\label{quasi.coh} We define a sheaf
of $\O_{\S}$-modules on $\S_{lis-et}$ to be {\it quasi-coherent} with respect to a given atlas, if its
restriction to the \'etale site of {\it the given} atlas for $\S$ is
quasi-coherent. Coherent sheaves and locally free coherent sheaves are defined
similarly. (Observe that this is slightly different from the usage in \cite{L-MB},
where a quasi-coherent sheaf  is also assumed to be cartesian as in \cite{L-MB}
Definition 12.3. However, such a definition would then make it difficult to
define a quasi-coherator that converts a complex of $\O_{\S}$-modules to a
complex of quasi-coherent $\O_{\S}$-modules. This justifies our choice. Since we always restrict to complexes
of $\O_{\S}$-modules whose cohomology sheaves are cartesian, the present definition works
out in practice to be more or less equivalent to the one in \cite{L-MB}.)  An
$\O_{\S}$-module will
always mean a sheaf of $\O_{\S}$-modules on $\S_{lis-et}$. The category of
$\O_{\S}$-modules will be denoted $Mod(\S, \O_{\S})$
(or $Mod(\S_{lis-et}, \O_{\S})$ to be more precise). 
\vskip .1cm
 Let ${\rm {Mod}}(\S, \O_{\S})$ (${\rm {QCoh}}(\S, \O_{\S})$, ${\rm
{Coh}}(\S, \O_{\S})$) denote the category of all $\O_{\S}$-modules (all
quasi-coherent $\O_{\S}$-modules, all coherent $\O_{\S}$-modules, \res).  
\end{definition}
Let $\rmX$ denote a scheme of finite type
over a regular Noetherian base scheme $\rmS$ and let $\rmG$ denote a smooth affine group scheme of finite type over $\rmS$ acting on 
$\rmX$. Then we point out that, if one restricts to quotient stacks of the form $[\rmX/G]$, then we 
may choose to work with the following (somewhat more familiar) choices.
\begin{definition}{\bf The case of quotient stacks}.
\label{eq.case.0} 
Assuming the above framework, 
let ${\rm Pseudo}([\rmX/\rmG]) = {\rm Pseudo}^{\rmG}(\rmX)$ where the right-hand-side denotes the category of bounded above complexes of
$\rmG$-equivariant $\O_{\rmX}$-modules (on the Zariski site of $\rmX$), with bounded coherent cohomology sheaves. Similarly, one may
let ${\rm Perf}([\rmX/\rmG]) = {\rm Perf}^{\rmG}(\rmX)$ denote the category of complexes of $\rmG$-equivariant $\O_{\rmX}$-modules that are locally quasi-isomorphic on
 the Zariski site of $\rmX$ to  bounded complexes of locally free $\O_{\rmX}$-modules with bounded coherent cohomology sheaves.
 \end{definition}
 \vskip .1cm
In this case we may replace ${\rm {Mod}}(\S, \O_{\S})$ (${\rm {QCoh}}(\S, \O_{\S})$, ${\rm
{Coh}}(\S, \O_{\S})$) by the category ${\rm {Mod}}^{\rmG}(\rmX)$ (${\rm {QCoh}}^{\rmG}(\rmX)$, ${\rm {Coh}}^{\rmG}(\rmX)$),
which will denote the category of all $\rmG$-equivariant $\O_{\rmX}$-modules ($\rmG$-equivariant quasi-coherent $\O_{\rmX}$-modules,
$\rmG$-equivariant coherent $\O_{\rmX}$-modules, \res). Moreover, in this context,  {\it cartesian sheaves} of $\O_{\S}$-modules 
 correspond to sheaves of $\O_X$-modules that are $\rmG$-equivariant.
\vskip .2cm
Let ${\mathbf A}$ denote any of the abelian categories considered in Definitions ~\ref{quasi.coh} or ~\ref{eq.case.0}.
Let ${\rm C}^b_{cc}({\mathbf A})$
(${\rm C}^b_{cart}({\mathbf A})$) denote the category of all bounded complexes of
objects in ${\mathbf A}$ with cohomology sheaves that are cartesian and coherent
(cartesian, \res). Similarly, we will let ${\rm C}_{bcc}({\mathbf A})$ denote the full
subcategory of complexes in ${\mathbf A}$ with cohomology sheaves that are
cartesian, coherent and vanish in all but finitely many degrees. These are
all bi-Waldhausen categories (see Definition ~\ref{def.Wald.cat}(iv)) with the same structure as above, that is, with
co-fibrations (fibrations) being maps of complexes that are degree-wise  split
monomorphisms (degree-wise split epimorphisms, \res), and weak equivalences being
maps that are quasi-isomorphisms.
 \vskip .2cm
We summarize in the next theorem several basic properties of the K-theory and G-theory of such stacks proven elsewhere: 
see \cite[section 2]{J-5} and \cite[sections 2, 3 and 5]{J-6}.
\begin{theorem}
\label{K}
(i) $ \S \mapsto \pi_*(\rmK (\S))$ is a contravariant functor from the category of algebraic stacks and morphisms of finite type to the category of graded
rings.
\vskip .2cm
\label{K=G}
(ii)  Let $\S$ denote a {\it smooth} algebraic stack. Then
the natural map $\rmK (\S) \ra \rmG (\S)$ is a weak equivalence. In case $\S'$ is a closed
algebraic sub-stack of $\S$, the natural map $\rmK_{\S'}(\S) \ra \rmG (\S')$ is a weak equivalence.
\vskip .2cm
\label{Gth.1}
(iii) The obvious inclusion functors 
\[{\rm C}^b_{cart}({\rm {Coh}}(\S, \O_{\S})) \ra {\rm C}^b_{cc}({\rm Mod}(\S, \O_{\S})) \ra {\rm C}_{bcc}({\rm Mod}(\S, \O_{\S})) \ra {\rm Pseudo}(\S)\]
induce weak equivalences on taking the 
corresponding K-theory spaces.
\vskip .2cm 
(iv)  Assume that every coherent sheaf on the algebraic stack $\S$ is the quotient of a vector
bundle. Then the obvious map $\rmK_{naive}(\S) \ra \rmK (\S)$ is a weak equivalence, where $\rmK_{naive}(\S) = \rmK ({\rm StPerf}(\S))$.
\end{theorem}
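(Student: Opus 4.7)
The plan is to handle the four parts in turn, each relying on a combination of Waldhausen approximation and local-to-global descent on the lisse-\'etale site.

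For part (i), I would first show that the derived pull-back $Lf^*$ along a morphism $f\colon\tilde\S\to\S$ of finite type preserves perfect complexes. The idea is to replace ${\rm Perf}(\S)$ by the full subcategory ${\rm Perf}_{fl}(\S)$ of complexes that are flat in each degree, a replacement which is a weak equivalence on K-theory by the remark at the end of Definition ~\ref{K.G.def}. On ${\rm Perf}_{fl}(\S)$ the ordinary pull-back $f^*$ is exact, commutes with quasi-isomorphisms, and sends perfect complexes to perfect complexes since perfectness is local on $\S_{lis-et}$ and strictly perfect complexes pull back to strictly perfect complexes. The ring structure on $\pi_*(\rmK(\S))$ then arises from the degree-wise tensor product of flat perfect complexes, which is again perfect, and $f^*$ is multiplicative for the same reason.

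For part (ii), the central tool will be Waldhausen approximation: the plan is to verify that the inclusion ${\rm Perf}(\S)\hookrightarrow{\rm Pseudo}(\S)$ satisfies the approximation hypotheses, which reduces to showing that every pseudo-coherent complex $M$ on the smooth stack $\S$ is quasi-isomorphic to a perfect one. On a smooth atlas $\rmU\to\S$, regularity of $\rmU$ forces every coherent sheaf to have finite Tor-dimension, so $M$ is locally perfect; descent through the cartesian structure of the cohomology sheaves on $\S_{lis-et}$ would then produce a global perfect model. The relative statement for a closed substack $\S'\hookrightarrow\S$ would follow from a localization triangle matching pseudo-coherent complexes on $\S$ and $\S\setminus\S'$ with the corresponding G-theory of $\S'$. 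This step is the main obstacle: glueing the local perfect models across the lisse-\'etale site requires non-trivial control of cartesian cohomology and of flat resolutions on an Artin stack, which is precisely what forces us into the framework developed in \cite{J-5, J-6}.

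For part (iii), each of the three inclusions would be handled by a separate application of Waldhausen approximation. The first, ${\rm C}^b_{cart}({\rm Coh}(\S,\O_\S))\hookrightarrow{\rm C}^b_{cc}({\rm Mod}(\S,\O_\S))$, uses a canonical truncation together with a coherator argument to replace a bounded complex of $\O_\S$-modules with cartesian coherent cohomology by a bounded complex of cartesian coherent modules. The second inclusion is nearly formal once bounded complexes with bounded coherent cartesian cohomology are identified with their canonical truncations. The third, into ${\rm Pseudo}(\S)$, combines the definition of pseudo-coherence with canonical truncations to realize any pseudo-coherent complex as an object of ${\rm C}_{bcc}$ up to quasi-isomorphism. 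Finally, for part (iv), the resolution property permits an inductive construction, on cohomological amplitude, of a global strictly perfect complex quasi-isomorphic to a given perfect complex; Waldhausen approximation then upgrades ${\rm StPerf}(\S)\hookrightarrow{\rm Perf}(\S)$ to a weak equivalence on K-theory, yielding the claimed equivalence $\rmK_{naive}(\S)\simeq\rmK(\S)$.
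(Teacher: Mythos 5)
The paper does not itself prove this theorem. It introduces it with the sentence ``We summarize in the next theorem several basic properties of the K-theory and G-theory of such stacks proven elsewhere: see \cite[section 2]{J-5} and \cite[sections 2, 3 and 5]{J-6},'' so there is no internal argument to compare against. Your sketch is broadly consistent in strategy with what those references do, but the difficulty you single out in part (ii) is misdiagnosed, and it is worth being precise about this because the misdiagnosis points at a non-problem while missing the real one.

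You claim the main obstacle in (ii) is to glue local perfect models over the lisse-\'etale site into a global perfect complex. This is unnecessary. Both perfectness and pseudo-coherence are \emph{local} conditions on $\S_{lis-et}$ by Definition~\ref{K.G.def}. If $\S$ is smooth over a regular Noetherian base, any atlas $\rmU\ra\S$ is a regular Noetherian scheme, and on such a scheme every pseudo-coherent complex with bounded cohomology has finite Tor-amplitude and hence is perfect. Since perfectness is local, the global complex $M$ you started with is therefore \emph{already} an object of ${\rm Perf}(\S)$; there is no gluing step and no need to manufacture a global strictly perfect model. The genuine technical subtlety in the stack setting lies elsewhere: the lisse-\'etale topos is not functorial in $\S$, so defining $f^*$ on unbounded complexes, controlling cartesian cohomology, and building a quasi-coherator all require the machinery of \cite{Ol} and \cite{J-6}. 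That, not descent/gluing, is what forces the heavy framework. The relative statement $\rmK_{\S'}(\S)\simeq\rmG(\S')$ then combines the absolute equivalence with Theorem~\ref{loc.homotopy.prop}(i), which identifies $\rmG(\S')$ with the G-theory of complexes supported in $\S'$.

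Parts (i), (iii) and (iv) are in outline what one expects: (i) needs the flat replacement so that the underived $f^*$ and $\otimes$ are exact, and one must separately check that $f^*$ preserves cartesianness (again a lisse-\'etale issue, not a perfectness issue); (iii) is a chain of Waldhausen approximations via canonical truncations plus a coherator argument, and the nontrivial input is the existence of a suitable coherator on $\S_{lis-et}$; (iv) is the Thomason--Trobaugh inductive construction of a strictly perfect model of bounded amplitude from the resolution property, followed by approximation. None of this is wrong, but you should recognize that the load-bearing background throughout is the cartesian/lisse-\'etale formalism of \cite{J-5}, \cite{J-6}, and \cite{Ol}, rather than a local-to-global gluing argument.
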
 
\vskip .2cm 
\noindent
\textbf{Examples}
\vskip .2cm \noindent
$\bullet$ Assume the base scheme is a field $k$ and $\rmG$ is a linear algebraic group. On a quotient stack $[\rmX / \rmG]$, where the scheme $\rmX$ is assumed to be $\rmG$-quasi-projective (that is, admits a $\rmG$-equivariant locally closed immersion into a projective space ${\mathbb P}^n$ on which $\rmG$ acts linearly), every coherent sheaf is the quotient of a vector bundle. This follows from the work of Thomason: 
see \cite{T-3} (and also Theorem ~\ref{thomason.thm}).
\vskip .2cm \noindent
$\bullet$ Converse (See \cite[Theorems 1.1 and 1.2]{Tot} and \cite[Theorem 2.18]{EHKV}): 
Any smooth Deligne-Mumford stack $\S$ over a Noetherian base scheme, with generically trivial stabilizer is a quotient stack, $[\rmX/\rmG]$ for an algebraic space $\rmX$.
If in addition, the stack $\S$ is defined over a field, and the coarse moduli space is a scheme with affine diagonal, then the stack has the resolution property.
If $\S$ is a normal
Noetherian algebraic stack over $Spec \, {\mathbb Z}$ whose stabilizer groups at closed points are affine, and every coherent sheaf on $\S$ is a quotient of a vector bundle, then $\S$ is a quotient stack.
\begin{theorem} (See \cite[section 5]{J-6}.)
\label{loc.homotopy.prop}
(i) (Closed immersion). Let $i:\S' \ra \S$ denote the closed immersion of an algebraic sub-stack. Then the obvious map
$\rmG (\S') = \rmK ({\rm C}^b_{cart}({\rm {Coh}}(\S'))) 
\ra \rmK ({\rm C}^b_{cart, \S'}({\rm {Coh}}(\S)))$ is a weak equivalence, where ${\rm C}^b_{cart, \S'}({\rm {Coh}}(\S))$ denotes the full subcategory of ${\rm C}^b({\rm {Coh}}(\S))$ of complexes whose cohomology sheaves are cartesian and have supports in $\S'$.
\vskip .2cm 
(ii) (Localization) Let $i: \S'  \ra \S$ denote a closed immersion of  algebraic stacks with open complement
$j:\S'' \ra \S$. Then one obtains the fibration sequence $\rmG (\S') \ra \rmG (\S) \ra \rmG (\S'') \ra \Sigma \rmG (\S')$. 
\vskip .2cm
(iii) (Homotopy Property). Let $\S$ denote an algebraic stack and let $\pi: \S \times {\mathbb A}^1 \ra \S$ denote the obvious
projection. Then $\pi^*: \rmG (\S) \ra \rmG (\S \times {\mathbb A}^1)$ is a weak equivalence.
\end{theorem}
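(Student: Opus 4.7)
My plan for each of the three parts of Theorem~\ref{loc.homotopy.prop}, which is imported from \cite{J-6}, is to reduce via smooth cohomological descent to the corresponding statement for schemes or algebraic spaces, where the Quillen--Thomason--Waldhausen machinery applies directly. Throughout I work with the Waldhausen categories of Definition~\ref{K.G.def} and invoke Theorem~\ref{K}(iii), which identifies $\rmG(\S)$ with the K-theory of ${\rm C}^b_{cart}({\rm Coh}(\S))$, so that every statement can be formulated in terms of bounded complexes with cartesian coherent cohomology.

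For part (i), I would run a Waldhausen version of Quillen's dévissage. A bounded complex in ${\rm C}^b_{cart,\S'}({\rm Coh}(\S))$ has only finitely many cohomology sheaves, each a cartesian coherent sheaf supported on $\S'$ and therefore annihilated by some power of the ideal sheaf $\I_{\S'}$. Each such complex then admits a finite filtration whose associated graded pieces lie in ${\rm C}^b_{cart}({\rm Coh}(\S'))$, and the additivity theorem of Waldhausen (Appendix~A) identifies the K-theory of the filtered complex with that of the associated graded, yielding the desired equivalence. This is the stack-theoretic analogue of Quillen's classical dévissage theorem, transported to the Waldhausen setting of categories of complexes.

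For part (ii), once (i) is in hand, I would apply Waldhausen's fibration (localization) theorem. Endow ${\rm C}^b_{cart}({\rm Coh}(\S))$ with two classes of weak equivalences: the ordinary quasi-isomorphisms, and the larger class $v$ of maps whose restriction along $j:\S''\ra\S$ is a quasi-isomorphism. The $v$-acyclic sub-category is precisely ${\rm C}^b_{cart,\S'}({\rm Coh}(\S))$, whose K-theory is $\rmG(\S')$ by part (i), while the $v$-localized K-theory is identified with $\rmG(\S'')$ via the restriction functor $j^*$. The non-trivial input here is essential surjectivity (up to zig-zags of quasi-isomorphisms) of $j^*$ on bounded cartesian coherent complexes, i.e.\ an extension statement from $\S''$ to $\S$; this is the stack analogue of Thomason--Trobaugh's classical extension result for quasi-compact quasi-separated schemes, and reduces to their theorem along any smooth atlas by combining it with part (i) for the error terms living over $\S'$.

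For part (iii), the starting observation is that pull-back along the flat projection $\pi:\S\times{\mathbb A}^1\ra\S$ is an exact functor of Waldhausen categories of pseudo-coherent complexes, so $\pi^*$ is a well-defined map on G-theory. To show it is a weak equivalence, I would use smooth cohomological descent: choose a smooth surjective atlas $p:U\ra\S$ by an algebraic space, form the simplicial \v{C}ech nerve $U_\bullet$ of iterated fibre products over $\S$, and set up a descent spectral sequence comparing $\rmG(\S)$ to the Bousfield--Kan homotopy limit of $\rmG(U_\bullet)$, with a parallel diagram for $\S\times{\mathbb A}^1$. By Quillen's homotopy invariance for G-theory of Noetherian algebraic spaces, the map $\rmG(U_n)\ra\rmG(U_n\times{\mathbb A}^1)$ is a weak equivalence in each simplicial degree, and descent then delivers the statement for $\S$. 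The main obstacle, and the reason this result genuinely requires the machinery of \cite{J-6}, is setting up cohomological descent for G-theory of cartesian coherent complexes on algebraic stacks in a form that actually yields the desired spectral sequence; once the descent principle is available, parts (i)--(iii) all follow from their scheme or algebraic-space counterparts as outlined above.
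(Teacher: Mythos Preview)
The paper itself gives no proof of this theorem: it is stated in the review section~2 with only the citation ``See \cite[section 5]{J-6}'', so there is no in-paper argument to compare your proposal against. Your sketch is thus a reconstruction of what you expect \cite{J-6} to contain.

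Your outlines for parts (i) and (ii) are the standard Waldhausen-category reformulations of Quillen's d\'evissage and localization theorems, and are correct in spirit; this is very likely close to what \cite{J-6} does. For part (iii), however, your proposed route has a genuine gap. You want to compare $\rmG(\S)$ with the homotopy limit of $\rmG(U_\bullet)$ along a smooth atlas and then invoke Quillen's homotopy invariance level-wise. But $\rmG$-theory does \emph{not} satisfy smooth descent: already for $\S=[\mathrm{Spec}\,k/\rmG]$ with $\rmG$ a linear algebraic group, $\rmG(\S)\cong \rmR(\rmG)\otimes \rmG(\mathrm{Spec}\,k)$ is not recovered from the \v{C}ech nerve of the atlas $\mathrm{Spec}\,k\ra\S$ (the homotopy limit side is a completed object, cf.\ the discussion around \cite{CJ23} and Examples~\ref{egs}). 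So the comparison map you need in order to transfer the level-wise equivalence to $\S$ is not an equivalence, and the argument does not close. You flag descent as ``the main obstacle'', but it is not an obstacle to be overcome so much as a false route.

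The direct approach, and the one most in keeping with Quillen's original proof for Noetherian schemes, is to work entirely inside the abelian category ${\rm Coh}(\S)$ of cartesian coherent sheaves on the stack: one analyzes ${\rm Coh}(\S\times{\mathbb A}^1)$ as modules over the polynomial algebra $\O_\S[t]$ and runs Quillen's graded-module or characteristic-filtration argument intrinsically, without ever leaving the stack. This needs no descent and is presumably what \cite{J-6} does.
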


\section{\bf The $\rmG$-construction and relative K-theory}
The $\rmG$-construction is an un-delooping of K-theory. Recall that the Waldhausen K-theory (defined as in \cite{Wald}) involves
a delooping of K-theory given by the $S_{\bullet}$-construction. However, this means to define the K-groups, one needs to
perform an un-delooping. One way to do this is to simply take the loop-space on the space produced by the $\rmS_{\bullet}$-construction. 
The $\rmG$-construction is a way to perform this instead at the categorical level. Such a construction is in fact needed to obtain
a presentation of (higher) K-theory groups, as well as in being able to define $\lambda$-operations on higher K-theory.
\vskip .2cm
Though the basics of such a construction are outlined in \cite{GSVW} for categories with cofibrations and weak equivalences 
 (and in \cite{GG} for the $\rmQ$-construction on exact categories), their construction cannot be used in this paper, because of the following issue.
In this paper, a key technique we use is to reduce higher K-theory to the Grothendieck group
of a relative K-theory space. We would need a suitable G-construction that applies to this relative K-theory space. The construction
 appearing in \cite[Definition 2.2]{GSVW} (and which is related to the one in \cite{GG}), only applies to
the absolute case. Therefore, we provide a somewhat different, but related $\rmG$-construction that applies to relative K-theory
and is a suitable relative variant of the one considered in \cite[Definition 2.2]{GSVW}. The main difference stems from the
fact that for the $\rmS_{\bullet}$-construction applied to a category with cofibrations and weak equivalences $\B$, a path space may be defined readily by shifting the constituent categories in
the simplicial category $\rmS_{\bullet}(\B)$ by $1$ and throwing away the face map $d_0$. This works fine since $\rmS_0(\B)$ is a point, so that
the resulting path space is simplicially contractible. But with $\rmG_{\bullet}(\B)$, $\rmG_0(\B) = \B \times \B$, so that the above shifting technique
does not define a path space for ${\it w}\rmG_{\bullet}(\B)$. 
\vskip .1cm
Instead, we directly define the homotopy fiber of a map on the $\rmG_{\bullet}$-construction as in ~\eqref{Gwf.1} and make use 
of that to define the relative K-theory space using the $\rmG_{\bullet}$-construction.
\vskip .2cm
We will presently recall the {\it $\rmG$-construction for categories with cofibrations and weak equivalences } from \cite[section 2]{GSVW}. 
\begin{definition}
\label{def.Wald.cat} First, a category
is {\it pointed}, if it is equipped with a distinguished {\it zero object}: this zero object will often be denoted $*$.
Then, {\it a category with cofibrations and weak equivalences} will mean the following throughout the paper (see \cite[1.1]{Wald}):

\begin{enumerate}[\rm(i)]
\item A {\it pointed category} ${\bold A}$, provided with a subcategory ${\it co{\bold A}}$ of cofibrations satisfying the axioms \cite[Cof.1 through Cof.3 in 1.1]{Wald}
and also provided with a subcategory ${w{\bold A}}$ of weak equivalences satisfying the axioms \cite[Weq.1 and Weq.2 in 1.2]{Wald}, as well as the
{\it Saturation and Extension axioms} in \cite[1.2]{Wald}.
We will often refer to this as {\it a Waldhausen category.}\footnote{This terminology is consistent with \cite{T-T}, and is convenient, though we are told Waldhausen personally does not prefer to use it.}
\item
A subcategory of {\it fibrations} will denote a subcategory of the pointed category ${\bold A}$ satisfying the dual of the axioms \cite[Cof.1 through Cof.3 in 1.1]{Wald}.
A {\it bi-Waldhausen category} will denote a pointed category ${\bold A}$ provided with a subcategory of cofibrations, a subcategory of fibrations and 
a subcategory of weak equivalences, satisfying the above axioms, as well as the dual of \cite[Weq.2 in 1.2]{Wald}.
\item A functor ${\mathbf f}: {\bold A} \ra {\bold B}$ between categories with cofibrations (fibrations) and weak equivalences is an {\it exact functor} if it preserves 
the subcategories of cofibrations (fibrations) and weak equivalences.
\end{enumerate}
\end{definition}
\vskip .1cm
Given a category 
${\bold A}$ with cofibrations and weak equivalences, $\wS_{\bullet}({\bold A})$ will denote the simplicial category (that is, a simplicial object in the
category of all small categories), so that the objects of
$\wS_n({\bold A})$  are sequences \xymatrix{{A_{1} \,} \ar@{>->} @<2pt> [r] & {A_2\,}  \ar@{>->} @<2pt> [r] & {\cdots \, \,}  \ar@{>->} @<2pt> [r] & {A_n}} of cofibrations 
in ${\bold A}$ (with a choice of sub-quotients $A_i/A_{i-1}$). A morphism in this category between 
\[\xymatrix{{A_{1}\,} \ar@{>->} @<2pt> [r] & {A_2\,}  \ar@{>->} @<2pt> [r] &  {\cdots \, \,}   \ar@{>->} @<2pt> [r] & {A_n}} \mbox { and } \xymatrix{{B_{1}\,} \ar@{>->} @<2pt> [r] & {B_2\,}  \ar@{>->} @<2pt> [r] &  {\cdots \, \,} \ar@{>->} @<2pt> [r] & {B_n}}\]
is given by a sequence of  weak equivalences $A_i \ra B_i$ and $A_i/A_{i-1} \ra B_i/B_{i-1}$, compatible with the
given cofibrations. The {\it path-object} ${\PwS}_{\bullet}({\bold A})$ associated to $\wS_{\bullet}({\bold A})$ is the simplicial category given by  ${\PwS}_{\bullet}({\bold A})_n = \wS_{n+1}({\bold A})$, together with the functor $d_0:\wS_{n+1}({\bold A}) \ra
\wS_{n}({\bold A})$. The face map $d_i$ (the degeneracy $s_i$) of the simplicial category ${\PwS}_{\bullet}({\bold A})$ 
is the face map $d_{i+1}$ (the degeneracy $s_{i+1}$, \res) of the simplicial category $\wS_{\bullet}({\bold A})$: see \cite[p. 341]{Wald}. Then the {\it G-construction on}  ${\bold A}$ is the simplicial category defined by the 
fibered product ${\it w}\rmG_{\bullet}({\bold A}) =\PwS_{\bullet}({\bold A}){\underset {d_0, {\wS_{\bullet}({\bold A})}, d_0} \times}\PwS_{\bullet}({\bold A})$, 
 with the cofibrations and weak equivalences defined in the obvious manner using these structures on ${\rm PwS}_{\bullet}({\bold A})$ and
$\wS_{\bullet}({\bold A})$. 
\begin{remark}
Recall that the map $d^0:[n] \ra [n+1]$ in $\Delta$ omits $0$ in its image. Therefore, one may
readily see that the description of ${\rm {\it w}\rmG}_n{\bold A}$ as in \cite[section 1, (2)]{GSch} holds, which
is also the same as that in \cite[section 3]{Gray}. An $n$ simplex of this simplicial category is
given by two $n+1$ simplices of $\wS_{\bullet}{\bold A}$ whose successive quotients are provided with
compatible isomorphisms. In particular, a vertex of this simplicial category
is given by a pair of objects in $w{\bold A}$. 
\end{remark}
\vskip .2cm
Let ${\it wCof}$ denote the category whose objects are categories with cofibrations and weak equivalences in the above 
sense.  Since the construction ${\bold A} \mapsto w{\rmG}_{\bullet}({\bold A})$ is 
covariantly functorial, we will view it as a functor
\be \begin{equation}
\label{G.constr}
     w{\rmG}_{\bullet}: {\it wCof} \ra {\Delta^{op}-{\it wCof}},
\end{equation} \ee
\vskip .1cm \noindent
where $\Delta^{op}-{\it wCof}$ denotes the category of all simplicial objects in ${\it wCof}$. 
\vskip .2cm
We will apply these constructions to various categories with cofibrations and weak equivalences  we encounter, for example, the following ones. Let $\S$ denote an algebraic stack and let ${\rm {Perf}}(\S)$ denote the category of
perfect complexes on $\S$. For what follows, one may let $\S'$ denote a closed algebraic sub-stack of the given 
stack $\S$ and consider ${\rm {Perf}}_{\S'}(\S)$ also in the place of ${\rm {Perf}}(\S)$; however, for the most part we will explicitly
discuss only the case where $\S' = \S$.
\vskip .2cm 
One may readily verify 
 that the category ${\rm {Perf}}(\S)$ is pseudo-additive (see
\cite[Definition 2.3]{GSVW}: observe that it suffices to show that if \xymatrix{{A\quad }\ar@{>->} @<2pt> [r] &{C}} is
a degree-wise split monomorphism of complexes, then the natural maps $C{\underset A \oplus} C \ra C \times  C/A \leftarrow C \oplus C/A$ are
 quasi-isomorphisms. In fact, the second map is clearly an isomorphism and one may show that the first map is an isomorphism in each degree as follows. The assumption that \xymatrix{{A\quad }\ar@{>->} @<2pt> [r] &{C}}
 is a degree-wise split monomorphism shows that one has an isomorphism in each degree $n$: $C^n \cong A^n\oplus C^n/A^n$. This then 
 implies that in each degree $n$, one obtains the isomorphism: $C^n {\underset {A^n} \oplus} C^n \cong C^n \oplus (C^n/A^n)$.
 It is shown in \cite[Theorem 2.6]{GSVW} that $|{\it w}\rmG_{\bullet}({\bold A})| \simeq \Omega (|wS_{\bullet}({\bold A})|)$, provided
$\A$ is {\it pseudo-additive}.

\vskip .2cm
Next let ${\bf f}: {\bold A} \ra {\bold B}$ denote an {\it exact functor} of categories with cofibrations and weak equivalences.
We will assume that $\A$ and $\B$ are both pseudo-additive categories. First we let $|{\it w}\rmG_{\bullet}(\A)|$
($|{\it w}\rmG_{\bullet}(\B)|$) denote the topological realization of the diagonal of the bisimplicial set obtained by taking the
nerve of the simplicial category ${\it w}\rmG_{\bullet}(\A)$ (${\it w}\rmG_{\bullet}(\B)$, \res).
Since  ${\it w}\rmG_{\bullet}(\bB)$ is a simplicial category, the nerve functor ${\rm Nerve}$ applied to it produces a 
bisimplicial set. $\Delta{\rm Nerve}({\it w}\rmG_{\bullet}(\B))$ will denote its diagonal.
Now one may observe that each $0$-simplex of the simplicial space $\Delta{\rm Nerve}({\it w}\rmG_{\bullet}(\B))$, which is a pair of objects $(P, Q) \in \B$,
defines a connected component of the  space $|{\it w}\rmG_{\bullet}(\B)|$.
 We will choose for each connected component of $|{\it w}\rmG_{\bullet}(\B)|$ a $0$-simplex  that will remain fixed
throughout the following discussion, and will serve as the 
 base point for that component.
\vskip .1cm
We next consider
the {\it path space} $\rmP(|{\it w}\rmG_{\bullet}(\B)|)$ of pointed paths: it will consist of paths $p:\rmI =|\Delta [1]| \ra 
|{\it w}\rmG_{\bullet}(\B)|$, so that $p(1)$ is at the chosen base point for some connected component of $|{\it w}\rmG_{\bullet}(\B)|$.
Clearly the map sending a path $p$ to $p(0)$ defines a map $\pi: \rmP(|{\it w}\rmG_{\bullet}(\B)|) \ra |{\it w}\rmG_{\bullet}(\B)|$.
We define $w\rmG(\f)$ by the pullback square:
\vskip .1cm
\be \begin{equation}
 \label{Gwf.1}
\xymatrix{ {{\it w}\rmG({\bf f})} \ar@<1ex>[d] \ar@<1ex>[r] & {\rmP(|{\it w}\rmG_{\bullet}(\B)_*|)} \ar@<1ex>[d]^{\pi}\\
           {|{\it w}\rmG_{\bullet}(\A)|} \ar@<1ex>[r]^{|{\it w}\rmG _{\bullet}(\f)|} & {|{\it w}\rmG_{\bullet}(\B)|},}
\end{equation} \ee
\vskip .1cm \noindent 
where ${\rmP(|{\it w}\rmG_{\bullet}(\B)_*|)}$ denotes the path component of ${\rmP(|{\it w}\rmG_{\bullet}(\B)|)}$ that is sent
by $\pi$ to the path component of $|{\it w}\rmG_{\bullet}(\B)|$ containing the
base point $(*, *)$, with $*$ denoting the base point of the category $\B$.
\vskip .2cm
\subsection{The connected components of  ${{\it w}\rmG({\bf f})}$}
\label{path.comp}
\vskip .1cm
Observe that each triple 
$((P, Q), (\bar P, \bar Q), p)$ defines a connected component of ${{\it w}\rmG({\bf f})}$, where $(P, Q)$ is a $0$-simplex of ${\it w}\rmG_{\bullet}(\bA)$,
$(\bar P, \bar Q)$ is a $0$-simplex of ${\it w}\rmG_{\bullet}(\bB)$ in the same connected component of the base point $(*, *)$ of $|{\it w}\rmG_{\bullet}(\bB)|$ 
so that 
\be \begin{equation}
\label{path.p}
{\bf f}(P) = \bar P,  \quad \f(Q) = \bar Q, and \mbox{ p is a path in } |{\it w}\rmG_{\bullet}(\bB)| \mbox{ joining
the vertex } (\bar {\it P}, \bar {\it Q}) \mbox{ to the  base point} (*, *). 
\end{equation} \ee
\vskip .1cm
One may observe that the 1-simplices of the simplicial set $\Delta (\rmNerve({\it w}\rmG_{\bullet}(\bB)))$,
will be given by {\it pairs of commutative squares}:
\be \begin{equation}
\label{1.simpl.Diag}
 \xymatrix{{\bar P^1_0 } \ar@{>->}[d] \ar@<1ex>[r]^{\simeq} & {\bar P^2_0}  \ar@{>->}[d]\\   
            {\bar P^1_1 } \ar@<1ex>[r]^{\simeq} & {\bar P^2_1}},     \xymatrix{{\bar Q^1_0 } \ar@{>->}[d] \ar@<1ex>[r]^{\simeq} & {\bar Q^2_0}  \ar@{>->}[d]\\ 
                                                                               {\bar Q^1_1 } \ar@<1ex>[r]^{\simeq} & {\bar Q^2_1},}
\end{equation} \ee
\vskip .1cm \noindent
{\it together with an isomorphism} $\bar P^j_1/\bar P^j_0 \cong \bar Q^j_1/\bar Q^j_0$, for $j=1, 2$, where $(\bar P^j_i, \bar Q^j_i)$ are $0$-simplices in $w\rmG_{\bullet}(\bB)$, the vertical maps are cofibrations, while
the horizontal maps are weak equivalences.
\vskip .1cm
The path components of the topological space $|{\it w}\rmG_{\bullet}(\bB)|$ correspond to the path components of
the simplicial set $\Delta(\rmNerve({\it w}\rmG_{\bullet}(\bB)))$. Therefore, we obtain the 
following equivalent description of the connected components of ${\it w}\rmG(\f)$ by viewing the path $p$ in ~\eqref{path.p} as a {\it zig-zag}-path (see, for example: \cite[Chapter II, 7.3]{GZ})
in $\Delta(\rmNerve({\it w}\rmG_{\bullet}(\bB)))$
\be \begin{equation}
 \label{zig.zag.path}
 (\bar P, \bar Q) = (\bar P_0, \bar Q_0) \rightarrowtail (\bar P_1, \bar Q_1) \leftarrowtail (\bar P_2, \bar Q_2) \rightarrowtail \cdots \leftarrowtail (\bar P_{m-1}, \bar Q_{m-1}) \rightarrowtail (\bar P_m, \bar Q_m) =(*, *),
\end{equation} \ee
where each arrow $(\bar P_i, \bar Q_i) \rightarrowtail (\bar P_{i+1}, \bar Q_{i+1})$ $( (\bar P_i, \bar Q_i) \leftarrowtail (\bar P_{i+1}, \bar Q_{i+1}))$
is a $1$-simplex of 
$\Delta(\rmNerve({\it w}\rmG_{\bullet}(\bB)))$ in the above sense. (To see this, observe that
such a zig-zag path in $\Delta(\rmNerve({\it w}\rmG_{\bullet}(\bB)))$ corresponds to a simplicial map $q:{\rm I}_n \ra \Delta(\rmNerve({\it w}\rmG_{\bullet}(\bB)))$,
and therefore to a map on the realizations: $p:{\rm I} =|{\rm I}_n| \ra |\Delta(\rmNerve({\it w}\rmG_{\bullet}(\bB)))|$. Here ${\rm I}_n$
is the simplicial set considered in \cite[2.5.1]{GZ}.)
\vskip .1cm

\begin{example}
\label{eg}
 The basic application of the construction in \eqref{Gwf.1} is to the following situation. Let $\S_1$ denote a
smooth algebraic stack of the given stack $\S$ and let $\Delta[n] = Spec (\O_S[x_0, \cdots, x_n]/\Sigma x_i -1)$, where $\rmS$ is the base scheme, which is assumed to be a regular Noetherian scheme.  We may let $\S_0 = \S_1 \times_{\rmS} \Delta[n]$. Since $\Delta[n] \cong {\mathbb A}^n$, $\S_0$ is also smooth. 
Then the following are closed substacks of $\S_0$: 
\begin{enumerate}[\rm(i)]
\item $ \S_1 \times_{\rmS} \delta \Delta[n]$, where $\delta \Delta [n] =
{\underset {i =0, \cdots, n} \cup} \delta ^i \Delta [n]$ with $\delta ^i \Delta[n]$ denoting the $i$-th face of
$\Delta[n]$, 
\item $\S_1 \times_{\rmS} \Sigma$, 
where $\Sigma = {\underset {i=0, \cdots, n-1} \cup} \delta ^i \Delta [n]$, and, 
\item 
$\S_2 \times_{\rmS} \Delta [n]$, where $\S_2$ is any closed algebraic sub-stack of $\S_1$.
\end{enumerate}
In each of the above
 cases, one may let $f$ denote the corresponding closed immersion, and ${\bold f}$ denote the corresponding functor of categories with cofibrations and weak equivalences .
\end{example}
\section{\bf Another model for the homotopy fiber of the G-construction}
It is clearly preferable to obtain a categorical model for the homotopy fiber, whose realization identifies with the homotopy fiber
 of the realizations constructed in the last section. Here the difficulty is with obtaining a suitable model for the path space, 
 which seems to be possible only in special cases, like in the case of the $\rmS_{\bullet}$-construction. On the other hand,
 it is relatively straightforward to obtain a model for the homotopy cofiber, which we proceed to discuss next.
 \vskip .2cm
 For this we begin with a rather general construction. First, a simplicial category will denote a simplicial object in the category
 of all (small) categories, rather than a category that is simplicially enriched. Then a functor
 $f_{\bullet}: \rmS'_{\bullet} \ra \rmS_{\bullet}$ between simplicial categories will denote a collection of functors
 $\{f_n: \rmS'_n \ra \rmS_n|n\}$ so that they commute with the face maps and degeneracies. Let $*$ denote a chosen category with
 just one object denoted $*$, and only one morphism, namely the identity morphism of $*$.
 \begin{definition}
  \label{cat.mapping.cone}
  Let $f: \rmS'_{\bullet} \ra \rmS_{\bullet}$ denote a functor between two simplicial categories. Then we let
  ${\rm Cone}(\itf)_{\bullet}$ denote the simplicial category that is given in degree $n$ by the category:
  \be \begin{equation}
  \label{mapping.cone}
  \rmCone(\itf_{\bullet})_n = * \sqcup (\sqcup_{\alpha \in  {\rm \Delta[1]_n-\{(0, \cdots, 0), (1, \cdots, 1)\}}} \rmS'_n) \sqcup \rmS_n,
  \end{equation} \ee
  where we regard $\rmS_n$ as indexed by $(1, \cdots, 1) \in \Delta[1]_n$ and $*$ as indexed by $(0, \cdots, 0) \in \Delta[1]_n$, with the face maps and degeneracies induced from those of $\rmS'_{\bullet}$, $\rmS_{\bullet}$ and those of $\Delta[1]$.
  More precisely, we define the face map $d_i:\rmCone(\itf)_n \ra \rmCone(\itf)_{n-1}$ by:
  \begin{enumerate}[\rm(i)]
   \item the summand $*$ is sent to the summand * by the identity, 
   \item the summand $\rmS_n$ indexed by $(1, \cdots, 1) \in \Delta[1]_n$ is sent to the summand $\rmS_{n-1}$ indexed by $(1, \cdots, 1) \in \Delta[1]_{n-1}$
   by the face map $d_i^{S_{\bullet}}$,
   \item if $d_i(\alpha) = \alpha' $, with $\alpha \in \Delta[1]_n -\{(0, \cdots, 0), (1, \cdots, 1)\}$ and \\
   $\alpha ' \in \Delta[1]_{n-1}-\{(0, \cdots, 0), (1, \cdots, 1)\}$,
   then $d_i$ sends the summand $\rmS_n'$ indexed by $\alpha$ to the summand $\rmS_{n-1}'$ indexed by $\alpha'$ by the face map $d_i^{\rmS'_{\bullet}}$,
   \item if $d_i(\alpha) = (0, \cdots, 0)$, $\alpha \in \Delta[1]_n -\{(0, \cdots, 0), (1, \cdots, 1)\}$, then
   $d_i$ sends the $\rmS_n'$ indexed by $\alpha$ to $*$, and
   \item if $d_i(\alpha) = (1, \cdots, 1)$, $\alpha \in \Delta[1]_n -\{(0, \cdots, 0), (1, \cdots, 1)\}$, then $d_i$ sends the
   summand $\rmS_n'$ indexed by $\alpha$ to $\rmS_{n-1}$ by $f_{n-1} \circ d_i^{\rmS'_{\bullet}} = d_i^{\rmS_{\bullet}} \circ f_n: \rmS'_n \ra \rmS_{n-1}$.
  \end{enumerate}
  \vskip .1cm
We define the degeneracy $s_i: \rmCone(\itf)_{n-1} \ra \rmCone(\itf)_n$ by:
\begin{enumerate}[\rm(i)]
   \item the summand $*$ is sent to the summand * by the identity, 
   \item the summand $\rmS_{n-1}$ indexed by $(1, \cdots, 1) \in \Delta[1]_{n-1}$ is sent to the summand $\rmS_{n}$ indexed by $(1, \cdots, 1) \in \Delta[1]_{n}$
   by the degeneracy $s_i^{S_{\bullet}}$ and,
   \item  the summand $\rmS'_{n-1}$ indexed by $ \alpha \in \Delta[1]_{n-1} -\{(0, \cdots, 0), (1, \cdots, 1)\}$ is sent 
   to the summand $\rmS_{n}'$ indexed by $s_i(\alpha)\in \Delta[1]_{n} -\{(0, \cdots, 0), (1, \cdots, 1)\}$ by $s_i: \rmS_{n-1}' \ra \rmS_{n}'$.
\end{enumerate}
 \end{definition}
We skip the verification that, so defined, $\rmCone(\itf)_{\bullet}$ is a simplicial category, together with a natural functor
$\rmS_{\bullet} \ra \rmCone(\itf)_{\bullet}$, sending $\rmS_n$ to the summand in $\rmCone(\itf)_n$ indexed by $(1, \cdots, 1) \in \Delta[1]_n$.
In fact, one may also define a bisimplicial category 
\be \begin{equation}
\label{bisimpl.Cone}
\rmCone(\itf)_{\bullet, \bullet}
\end{equation} \ee
so that in bi-degree $(n, m)$ one has 
$\rmS'_m$ ($\rmS_m$) replacing $\rmS'_n$ ($\rmS_n$, respectively) in ~\eqref{mapping.cone}, 
and with the face maps and
degeneracies defined suitably. Then the simplicial category in ~\eqref{mapping.cone} will be the diagonal of this 
bisimplicial category.
\begin{proposition}
\label{cat.cone}
Let $f: \rmS'_{\bullet} \ra \rmS_{\bullet}$ denote a functor of simplicial categories. Let 
$\rmCone(\Delta \rmNerve(\itf))$ denote the
 mapping cone of the map of simplicial sets 
 \[\Delta \rmNerve(\itf) : {\rm \Delta} \rmNerve(\rmS'_{\bullet})\ra \Delta {\rmNerve}(\rmS_{\bullet}),\]
  which is defined as in Definition ~\ref{cat.mapping.cone}, with the simplicial set $\rmNerve(\rmS'_n)$ ($\rmNerve(\rmS_n)$)
 replacing $\rmS'_n$ ($\rmS_n$, \res).
Then 
 $\Delta \rmNerve(\rmCone(\itf)_{\bullet})$ can be identified with $\rmCone(\Delta \rmNerve(\itf))$.
\end{proposition}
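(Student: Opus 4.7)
The strategy is to observe that the nerve functor on small categories preserves coproducts, and that both constructions in the statement are built in exactly the same way from coproducts, the functor $\itf$, and the face/degeneracy operators of $\rmS'_{\bullet}$, $\rmS_{\bullet}$, and $\Delta[1]$. Once this observation is in place, the identification is essentially a compatibility check on the defining formulas.

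First, I would apply $\rmNerve$ degree-wise to the formula ~\eqref{mapping.cone}. Since $\rmNerve$ sends a disjoint union of small categories to the disjoint union of their nerves, one obtains
\[
\rmNerve(\rmCone(\itf)_n) \;=\; * \sqcup \Bigl(\sqcup_{\alpha \in \Delta[1]_n-\{(0,\ldots,0),(1,\ldots,1)\}} \rmNerve(\rmS'_n)\Bigr) \sqcup \rmNerve(\rmS_n).
\]
This exhibits $\rmNerve(\rmCone(\itf)_{\bullet})$ as the bisimplicial set obtained by applying the mapping cone construction of Definition ~\ref{cat.mapping.cone} in the ``external'' direction to the map of bisimplicial sets $\rmNerve(\itf_{\bullet})\colon \rmNerve(\rmS'_{\bullet}) \to \rmNerve(\rmS_{\bullet})$, with the ``internal'' direction supplied by the nerve. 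Each of the five clauses (i)--(v) describing the external face operator of Definition ~\ref{cat.mapping.cone} is built from $\itf$ and the face maps of $\rmS'_{\bullet}$, $\rmS_{\bullet}$, or the identity of $*$; these intertwine with the internal nerve face maps by the naturality of $\rmNerve$, and likewise for degeneracies. Hence $\rmNerve(\rmCone(\itf)_{\bullet})$ is canonically identified with the bisimplicial set $\rmCone(\itf)_{\bullet,\bullet}$ of ~\eqref{bisimpl.Cone} (with $\rmNerve$ applied internally).

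Passing to the diagonal, an $n$-simplex of $\Delta\rmNerve(\rmCone(\itf)_{\bullet})$ is, by definition, an $n$-simplex of $\rmNerve(\rmCone(\itf)_n)$; by the displayed decomposition it is either the base point $*$, an $n$-simplex of $\rmNerve(\rmS_n) = \Delta\rmNerve(\rmS_{\bullet})_n$, or an $n$-simplex of $\rmNerve(\rmS'_n) = \Delta\rmNerve(\rmS'_{\bullet})_n$ labeled by an $\alpha \in \Delta[1]_n-\{(0,\ldots,0),(1,\ldots,1)\}$. This coincides on the nose with the set of $n$-simplices of $\rmCone(\Delta\rmNerve(\itf))$ under the prescription of Definition ~\ref{cat.mapping.cone} applied to the simplicial map $\Delta\rmNerve(\itf)$. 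What remains is a routine case analysis on whether $d_i(\alpha)$ lies in $\{(0,\ldots,0)\}$, in $\{(1,\ldots,1)\}$, or in neither: in every case the face map of $\Delta\rmNerve(\rmCone(\itf)_{\bullet})$ factors as the external face $d_i^{\rmCone(\itf)_{\bullet}}$ followed by the internal nerve face, and this factorization matches the one dictated by Definition ~\ref{cat.mapping.cone} for $\rmCone(\Delta\rmNerve(\itf))$; the only mildly non-trivial case is $d_i(\alpha)=(1,\ldots,1)$, where the identity $\rmNerve(\itf_{n-1}\circ d_i^{\rmS'_{\bullet}}) = \rmNerve(\itf_{n-1}) \circ \rmNerve(d_i^{\rmS'_{\bullet}})$ supplies the required matching. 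Degeneracies are handled identically. I do not expect any genuine obstacle: the proposition is a compatibility statement, and the only place that requires real care is the bookkeeping around clauses (iv)--(v) of Definition ~\ref{cat.mapping.cone}.
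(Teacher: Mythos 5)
Your proof is correct and follows essentially the same route as the paper: the key observation that the nerve preserves coproducts, and the resulting identification of the $n$-simplices. The paper states this identification and then writes "We skip the verification that the structure maps for both simplicial sets are the same"; you have simply carried out that verification explicitly.
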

\begin{proof} One may readily observe from its definition 
that the nerve functor ${\rm Nerve}$ commutes with finite coproducts. 
Therefore, it follows that 
\[ (\Delta {\rm Nerve}(\rmCone(\itf)))_n = {\rm Nerve}_n(*) \sqcup (\sqcup_{\alpha \in {\rm \Delta[1]_n-\{(0, \cdots, 0), (1, \cdots, 1)}\}} {\rm Nerve}_n(\rmS'_n)) \sqcup {\rm Nerve}_n(\rmS_n), \mbox{ }\]
while $\rmCone(\Delta {\rm Nerve}(\itf))_n$ is also given by the same set. We skip the verification that the structure maps for both simplicial sets
$\Delta {\rm Nerve}(\rmCone(\itf))$ and $\rmCone(\Delta \rmNerve(\itf))$ are the same.
\end{proof}
\vskip .1cm
Next let $\f:\bA \ra \bB$ denote an {\it exact} functor between categories with cofibrations and weak equivalences.
\begin{theorem}
 \label{cat.cone.G}
 Let ${\it w}\rmG_{\bullet}(\f): {\it w}\rmG_{\bullet}(\bA) \ra {\it w}\rmG_{\bullet}(\bB)$ denote the induced functor
 of the $\rmG$-constructions. Then one obtains the natural identification:
 \[\Delta \rmNerve(\rmCone({\it w}\rmG_{\bullet}(\f))) = \rmCone (\Delta \rmNerve({\it w}\rmG_{\bullet}(\bA)) {\overset {\Delta \rmNerve({\it w}\rmG_{\bullet}(\f))} \longrightarrow } \Delta \rmNerve({\it w}\rmG_{\bullet}(\bB))).\]
\end{theorem}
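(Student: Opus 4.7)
The plan is to deduce this as an immediate application of Proposition~\ref{cat.cone} to the particular functor of simplicial categories $\itf = {\it w}\rmG_{\bullet}(\f)\colon {\it w}\rmG_{\bullet}(\bA)\to {\it w}\rmG_{\bullet}(\bB)$. First I would observe that the construction ${\it w}\rmG_{\bullet}$ outlined after~\eqref{G.constr} genuinely produces simplicial objects in the category of small categories (as built from the fibered product of $\PwS_{\bullet}$ with itself over $\wS_{\bullet}$), and that an exact functor $\f\colon \bA\to\bB$ induces by functoriality a map of such simplicial categories. Thus the hypotheses of Proposition~\ref{cat.cone} are met with $\rmS'_{\bullet} = {\it w}\rmG_{\bullet}(\bA)$ and $\rmS_{\bullet} = {\it w}\rmG_{\bullet}(\bB)$.

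Applying Proposition~\ref{cat.cone} directly gives the identification
\[
\Delta \rmNerve(\rmCone({\it w}\rmG_{\bullet}(\f))) = \rmCone(\Delta\rmNerve({\it w}\rmG_{\bullet}(\f))),
\]
where the right-hand side is the mapping cone of simplicial sets built using the description in Definition~\ref{cat.mapping.cone} with the simplicial sets $\rmNerve({\it w}\rmG_n(\bA))$ and $\rmNerve({\it w}\rmG_n(\bB))$ in place of the categories in each simplicial degree. Since by definition $\Delta\rmNerve({\it w}\rmG_{\bullet}(\f))$ is exactly the map $\Delta\rmNerve({\it w}\rmG_{\bullet}(\bA))\to \Delta\rmNerve({\it w}\rmG_{\bullet}(\bB))$ that appears on the right-hand side of the theorem, the claimed identification follows.

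There is essentially no substantive obstacle, since the content has been isolated in Proposition~\ref{cat.cone}: the key point there was that the nerve functor commutes with finite coproducts, and that the face and degeneracy structure maps of the bisimplicial category $\rmCone(\itf)_{\bullet,\bullet}$ (introduced in~\eqref{bisimpl.Cone}) pass correctly to nerves. The only thing to watch for is that one should work with the diagonal throughout, and that the distinguished summands indexed by $(0,\ldots,0)$ and $(1,\ldots,1)\in\Delta[1]_n$ are treated compatibly on both sides; both of these are built into the indexing scheme of Definition~\ref{cat.mapping.cone} and are therefore preserved automatically. So the verification reduces to unpacking the definitions and citing Proposition~\ref{cat.cone}.
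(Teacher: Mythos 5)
Your proposal is correct and follows exactly the paper's own proof, which simply cites Proposition~\ref{cat.cone} applied to the functor of simplicial categories $\itf = {\it w}\rmG_{\bullet}(\f)$. The elaboration you give—verifying that ${\it w}\rmG_{\bullet}(\bA)$ and ${\it w}\rmG_{\bullet}(\bB)$ are genuinely simplicial categories and that the right-hand cone is the one built from the nerves—is just an unpacking of what the paper leaves implicit, so there is no substantive difference.
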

\begin{proof} This is clear from Proposition ~\ref{cat.cone}.\end{proof}
\begin{corollary}
\label{additivity.0}
 Assume the same hypotheses as in Theorem ~\ref{cat.cone.G}. Then we obtain the natural weak equivalence:
 \[{\it w}\rmG(\f) \simeq \Omega(|\rmNerve(Cone({\it w}\rmG_{\bullet}(\f)))|),\]
 where ${\it w}\rmG(\f)$ is the space defined in ~\eqref{Gwf.1}.
\end{corollary}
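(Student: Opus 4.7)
The plan is to combine Theorem~\ref{cat.cone.G} with the infinite loop space structure of the $\rmG_\bullet$-construction to reduce the claim to a stable cofiber/fiber identification. By Theorem~\ref{cat.cone.G}, $|\rmNerve(\rmCone({\it w}\rmG_\bullet(\f)))|$ is identified with the topological mapping cone $\rmCone(|{\it w}\rmG_\bullet(\f)|)$, and ${\it w}\rmG(\f)$ is the homotopy fiber of $|{\it w}\rmG_\bullet(\f)|$ by the defining pullback~\eqref{Gwf.1}.  So the corollary reduces to showing $\Omega\,\rmCone(|{\it w}\rmG_\bullet(\f)|) \simeq \mathrm{hofib}(|{\it w}\rmG_\bullet(\f)|)$.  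First I would invoke \cite[Theorem 2.6]{GSVW} to realize $|{\it w}\rmG_\bullet(\bA)| \simeq \Omega|wS_\bullet(\bA)|$, and similarly for $\bB$, which exhibits $|{\it w}\rmG_\bullet(\f)|$ as the $0$-th map of a morphism of connective $\Omega$-spectra.  Stably, the fundamental triangle gives $\mathrm{fib}\simeq\Omega\,\mathrm{cofib}$, and applying $\Omega^{\infty}$, which preserves fibers, yields $\mathrm{hofib}(|{\it w}\rmG_\bullet(\f)|) \simeq \Omega\,\Omega^{\infty}(\mathrm{cofib})$.

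The delicate point is the comparison of the space-level topological mapping cone $\rmCone(|{\it w}\rmG_\bullet(\f)|)$ with the $0$-th space $\Omega^{\infty}(\mathrm{cofib})$ of the spectrum-level cofiber.  In general these differ by a group-completion discrepancy on $\pi_0$, but here $\pi_0$ of each $|{\it w}\rmG_\bullet(-)|$ is already a Grothendieck group (and so already a group), and an extra loop is then taken, absorbing any remaining discrepancy.  To make this rigorous I would exploit the explicit levelwise structure of the simplicial category $\rmCone({\it w}\rmG_\bullet(\f))_\bullet$ from Definition~\ref{cat.mapping.cone} and invoke either Waldhausen's additivity theorem or a direct comparison on homotopy groups in the appropriate connectivity range.

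The main obstacle will be precisely this stable-versus-unstable reconciliation: the topological mapping cone is not literally the $0$-th space of the spectrum-level cofiber, so the identification after one loop must be verified with care.  Once established, one obtains the chain
\[
\Omega|\rmNerve(\rmCone({\it w}\rmG_\bullet(\f)))|\;\simeq\;\Omega\,\rmCone(|{\it w}\rmG_\bullet(\f)|)\;\simeq\;\Omega\,\Omega^{\infty}(\mathrm{cofib})\;\simeq\;\mathrm{hofib}(|{\it w}\rmG_\bullet(\f)|)\;=\;{\it w}\rmG(\f),
\]
completing the proof.
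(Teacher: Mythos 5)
Your overall strategy coincides with the paper's: reduce the corollary, via Theorem~\ref{cat.cone.G}, to the spectrum-level identity that the fiber of a map is $\Omega$ of its cofiber, using \cite[Theorem 2.6]{GSVW} to interpret $|{\it w}\rmG_{\bullet}(-)|$ as the $0$-th space of a connective $\Omega$-spectrum.  You also correctly flag the delicate step.  But your proposed resolution of that step does not hold up, and this is a genuine gap.

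The assertion that the space-level mapping cone $\rmCone(|{\it w}\rmG_\bullet(\f)|)$ and $\Omega^\infty$ of the spectrum-level cofiber ``differ by a group-completion discrepancy on $\pi_0$'' that the extra $\Omega$ absorbs is not accurate.  The discrepancy is in \emph{all} homotopy groups, not only $\pi_0$, and one application of $\Omega$ does not make it disappear.  Consider the extreme case $\bB$ trivial, so $|{\it w}\rmG_\bullet(\bB)|\simeq *$: then $\rmCone(|{\it w}\rmG_\bullet(\f)|)$ is the unreduced suspension of $|{\it w}\rmG_\bullet(\bA)|$, a space whose $\pi_1$ is a \emph{free} group on $\pi_0(|{\it w}\rmG_\bullet(\bA)|)\setminus\{*\} = K_0(\bA)\setminus\{0\}$, whereas $\Omega^\infty(\mathrm{cofib})$ has $\pi_1=K_0(\bA)$.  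After applying $\Omega$ these become $\pi_0$-level statements that disagree whenever $K_0(\bA)$ is nontrivial.  The key reason is that Blakers--Massey/Freudenthal only give the fiber-versus-$\Omega$cofiber comparison in a connectivity range governed by the connectivities of the source and target, and the $G$-construction realizations are only $(-1)$-connected (they are usually disconnected), so the estimate gives nothing at the $G$-level.

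The paper avoids this by never comparing the $G$-level mapping cone directly with $\Omega^\infty$ of the spectrum-level cofiber.  Instead it inserts the whole cofiber sequence into a ladder whose bottom row is the looped $S_\bullet$-construction sequence $\Omega|wS_\bullet(\bA)|\to\Omega|wS_\bullet(\bB)|\to\Omega|\rmNerve(\rmCone(wS_\bullet(\f)))|$ with vertical weak equivalences, and there the identification of the fiber with $\Omega^2$ of the $S$-level cone is what the $\Omega$-spectrum structure of the iterated $S_\bullet$-construction buys: $|wS^{(n)}_\bullet|$ is $(n-1)$-connected, so the Freudenthal/Blakers--Massey comparison becomes an equivalence in the limit (this is what the citation ``See \cite{GSVW} and/or \cite{Wald} for further details'' is pointing at).  Your plan to ``exploit the explicit levelwise structure of $\rmCone({\it w}\rmG_\bullet(\f))_\bullet$'' at the $G$-level would, to be made rigorous, essentially have to reconstruct this passage through the $S_\bullet$-deloopings; it is not enough to observe that $\pi_0(|{\it w}\rmG_\bullet(-)|)$ is a group and take one more loop.
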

\begin{proof} This is clear in view of Theorem ~\ref{cat.cone.G} and the observation that ${\it w}\rmG(\f)$ is in fact an
infinite loop space. Since ${\it w}\rmG(\f)$ is the homotopy fiber of a map induced by 
${\it w}\rmG_{\bullet}(\f): {\it w}\rmG_{\bullet}(\bA) \ra {\it w}\rmG_{\bullet}(\bB)$, it suffices to observe that
$|{\it w}\rmG_{\bullet}(\bA)|$ and $|{\it w}\rmG_{\bullet}(\bB)|$ are both infinite loop spaces. 
\vskip .1cm
Here are some additional details. First, observe that for any category ${\mathbf C}$ with cofibrations and weak equivalences, there is
a natural map
\[ |{\it w}\rmG_{\bullet}({\mathbf C})| \ra \Omega |{\it w} {\rm S}_{\bullet}({\mathbf C})|.\]
Applying this to the functor ${\mathbf f}: {\mathbf A} \ra {\mathbf B}$ of categories with cofibrations and weak equivalences, we obtain the
homotopy commutative diagram:
\[ \xymatrix{{|{\it w} {\rmG}_{\bullet}({\mathbf A})|} \ar@<1ex>[r]^{} \ar@<1ex>[d] & {|{\it w} {\rmG}_{\bullet}({\mathbf B})|} \ar@<1ex>[r] \ar@<1ex>[d] & {|{\rm Nerve}(\rmCone({\it w}\rmG_{\bullet}({\mathbf f})))|} \ar@<1ex>[d] \\
             {\Omega|{\it w} {\rm S}_{\bullet}({\mathbf A})|} \ar@<1ex>[r]^{} & {\Omega|{\it w} {\rm S}_{\bullet}({\mathbf B})|} \ar@<1ex>[r] & {\Omega|{\rm Nerve}(\rmCone ({\it w}{\rm S}_{\bullet} ({\mathbf f})))|},}
\]
where the vertical maps are weak equivalences. The bottom row is clearly a diagram of infinite loop spaces. Therefore, the homotopy fiber of the first map 
in the second row is $ \Omega^2|{\rm Nerve}(\rmCone ({\it w}{\rm S}_{\bullet} ({\mathbf f})))|$. Since $|{\it w}{\rm G}_{\bullet}({\mathbf f})|$ is the homotopy fiber of 
the map $|{\it w}{\rm G}_{\bullet}({\mathbf A})| \ra |{\it w}{\rm G}_{\bullet}({\mathbf B})|$, one sees that it is weakly-equivalent to
$ \Omega^2|{\rm Nerve}(\rmCone ({\it w}{\rm S}_{\bullet} ({\mathbf f})))|$, which is weakly-equivalent to $\Omega|{\rm Nerve}(\rmCone({\it w}\rmG_{\bullet}({\mathbf f})))|$.
(See \cite{GSVW} and/or \cite{Wald} for further details.)
\end{proof}
\vskip .1cm
Next we consider additivity on exact sequences for $\rmCone({\it w}\rmG_{\bullet}(\f))$. Let $\f: \bA \ra \bB$ denote an exact
functor of categories with cofibrations and weak equivalences,  and let 
\be \begin{equation}
\label{map.g}
\g= {\it w}\rmG_{\bullet}(\f): {\it w}\rmG_{\bullet}(\bA) \ra {\it w}\rmG_{\bullet}(\bB)
\end{equation} \ee
\vskip .1cm \noindent
denote the induced functor of simplicial categories. Let $\rmE(\bA)$ ($\rmE(\bB)$) denote the Waldhausen category of cofibration sequences, that is, the category whose objects are short exact sequences of the form $F' \rightarrowtail F \twoheadrightarrow F''$ in $\bA$ ($\bB$, \res). These 
induce the simplicial categories $\rmE({\it w}\rmG_{\bullet}(\bA)) = {\it w}\rmG_{\bullet}(\rmE(\bA))$ and
$\rmE({\it w}\rmG_{\bullet}(\bB)) = {\it w}\rmG_{\bullet}(\rmE(\bB))$. Let $\rmE(\g)= \rmE{\it w}\rmG_{\bullet}(\f): \rmE{\it w}\rmG_{\bullet}(\bA) \ra \rmE{\it w}\rmG_{\bullet}(\bB)$. This is a functor between the
simplicial categories $\rmE{\it w}\rmG_{\bullet}(\bA)$ and $\rmE{\it w}\rmG_{\bullet}(\bB)$.
We consider its mapping cone, $\rmCone(\rmE(\g)) = \rmE(\rmCone(\g))$ as in Definition ~\ref{cat.mapping.cone}. 
Let 
\be \begin{equation}
  \label{ConewG.prdct}
 \rmCone({\it w}\rmG_{\bullet}(\f)) \boxtimes \rmCone({\it w}\rmG_{\bullet}(\f))
    \end{equation} \ee
denote the simplicial category given in degree $n$ by
\be \begin{equation}
     \label{ConewG.prdct.1}
 * \sqcup ( \sqcup_{\alpha \in \Delta[1]_n-\{(0, \cdots, 0), (1, \cdots, 1)\}} ({\it w}\rmG_n(\bA) \times  {\it w}\rmG_n(\bA)) ) \sqcup ({\it w}\rmG_n(\bB) \times {\it w}\rmG_n(\bB) ),
\end{equation} \ee
\vskip .1cm \noindent
with the face maps and degeneracies defined as in Definition ~\ref{cat.mapping.cone}. We define a functor of simplicial categories
\be \begin{equation}
 \label{Phi}
\Phi: \rmCone(\rmE(\g)) = \rmE(\rmCone(\g)) \ra Cone({\it w}\rmG_{\bullet}(\f)) \boxtimes \rmCone({\it w}\rmG_{\bullet}(\f)),
\end{equation} \ee
which will be induced by the projections to either factor, that is, sending a cofiber sequence $(X \rightarrowtail Z \twoheadrightarrow Y) \mapsto (X, Y)$.
\begin{theorem} (Additivity Theorem:I)
 \label{additivity}
 Assuming the above situation, the functor $\Phi$ induces a weak equivalence on taking the Nerve.
\end{theorem}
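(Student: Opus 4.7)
The strategy is to exhibit $\Delta \rmNerve(\Phi)$ as the map induced on two mapping cones of simplicial sets by a natural commutative square whose vertical arrows are the classical additivity comparisons, and then to conclude via Waldhausen's Additivity Theorem together with the homotopy invariance of the mapping cone of simplicial sets.

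I begin by rewriting both sides of $\Phi$ as instances of the general categorical mapping cone of Definition ~\ref{cat.mapping.cone}. On the domain side, $\rmE {\it w}\rmG_{\bullet}(\bA) = {\it w}\rmG_{\bullet}(\rmE(\bA))$ and $\rmE {\it w}\rmG_{\bullet}(\bB) = {\it w}\rmG_{\bullet}(\rmE(\bB))$ as simplicial categories, so $\rmCone(\rmE(\g))$ is the categorical mapping cone of the functor $\rmE(\g):\rmE {\it w}\rmG_{\bullet}(\bA) \to \rmE {\it w}\rmG_{\bullet}(\bB)$ of simplicial categories. On the codomain side, comparing ~\eqref{mapping.cone} with ~\eqref{ConewG.prdct.1} shows that $\rmCone({\it w}\rmG_{\bullet}(\f)) \boxtimes \rmCone({\it w}\rmG_{\bullet}(\f))$ coincides with $\rmCone(\g \times \g)$, where $\g \times \g: {\it w}\rmG_{\bullet}(\bA) \times {\it w}\rmG_{\bullet}(\bA) \to {\it w}\rmG_{\bullet}(\bB) \times {\it w}\rmG_{\bullet}(\bB)$ is the product functor. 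The functor $\Phi$ of ~\eqref{Phi} is then the categorical cone of the commutative square of simplicial categories
\[ \xymatrix{ \rmE {\it w}\rmG_{\bullet}(\bA) \ar[r]^{\rmE(\g)} \ar[d]_{\Psi_{\bA}} & \rmE {\it w}\rmG_{\bullet}(\bB) \ar[d]^{\Psi_{\bB}} \\
              {\it w}\rmG_{\bullet}(\bA) \times {\it w}\rmG_{\bullet}(\bA) \ar[r]^{\g \times \g} & {\it w}\rmG_{\bullet}(\bB) \times {\it w}\rmG_{\bullet}(\bB),} \]
where the vertical arrows $\Psi_{\bA}$ and $\Psi_{\bB}$ send a cofibration sequence $X \rightarrowtail Z \twoheadrightarrow Y$ to the pair $(X, Y)$.

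Now apply $\Delta \rmNerve$ to this square. By Proposition ~\ref{cat.cone}, together with the observations that $\rmNerve$ commutes with finite products of categories and that $\Delta$ commutes with products of bisimplicial sets, $\Delta \rmNerve$ of either horizontal categorical mapping cone identifies naturally with the mapping cone of simplicial sets of the corresponding horizontal arrow after $\Delta \rmNerve$; in particular, $\Delta \rmNerve(\rmCone(\rmE(\g)))$ and $\Delta \rmNerve(\rmCone({\it w}\rmG_{\bullet}(\f)) \boxtimes \rmCone({\it w}\rmG_{\bullet}(\f)))$ identify with the mapping cones of simplicial sets of the top and bottom rows of the square after $\Delta \rmNerve$, and the map $\Delta \rmNerve(\Phi)$ is the induced map on these cones. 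Waldhausen's classical Additivity Theorem, transported from the ${\rm S}_{\bullet}$- to the ${\rm G}_{\bullet}$-construction via the equivalence $|{\it w}\rmG_{\bullet}(\bA)| \simeq \Omega |{\it w}{\rm S}_{\bullet}(\bA)|$ of \cite[Theorem 2.6]{GSVW} (and likewise for $\bB$), asserts that $\Delta \rmNerve(\Psi_{\bA})$ and $\Delta \rmNerve(\Psi_{\bB})$ are weak equivalences of simplicial sets. Since the mapping cone of simplicial sets is a homotopy pushout and is therefore homotopy invariant in the map being coned, the induced map $\Delta \rmNerve(\Phi)$ is a weak equivalence.

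The main subtlety lies in invoking the Additivity Theorem in its strong form as a weak equivalence at the level of (bi)simplicial sets rather than merely after geometric realization; this refined formulation is standard for ${\rm S}_{\bullet}$ and transports to ${\rm G}_{\bullet}$ by the looping comparison. A minor bookkeeping item is the product compatibility used to identify $\Delta \rmNerve(\rmCone({\it w}\rmG_{\bullet}(\f)) \boxtimes \rmCone({\it w}\rmG_{\bullet}(\f)))$ with the mapping cone of $\Delta \rmNerve(\g) \times \Delta \rmNerve(\g)$, which is immediate from Proposition ~\ref{cat.cone} combined with the product-preserving nature of $\rmNerve$ and of the diagonal functor $\Delta$.
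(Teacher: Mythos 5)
Your proof is correct and reduces, exactly as the paper does, to the additivity theorem for the $\rmG_{\bullet}$-construction applied to the two comparison functors $\Psi_{\bA}\colon \rmE{\it w}\rmG_{\bullet}(\bA)\to {\it w}\rmG_{\bullet}(\bA)\times {\it w}\rmG_{\bullet}(\bA)$ and $\Psi_{\bB}$. The difference is in the formal step that converts ``the comparison maps are equivalences'' into ``the induced map on cones is an equivalence.'' The paper first passes to the bisimplicial model $\rmCone(\cdot)_{\bullet,\bullet}$ of ~\eqref{bisimpl.Cone}, so that fixing the cone-index $n$ exhibits $\Phi_{n,\bullet}$ as a finite disjoint union of copies of $\Psi_{\bA}$, $\Psi_{\bB}$ and the identity on $*$, each a weak equivalence, and then appeals implicitly to the realization lemma for bisimplicial sets. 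You instead identify $\Phi$ as the map induced on mapping cones of simplicial sets (via Proposition ~\ref{cat.cone} and the fact that $\rmNerve$ and $\Delta$ preserve finite products) by a commutative square whose vertical arrows are $\Psi_{\bA}$, $\Psi_{\bB}$, and then invoke the fact that the mapping cone of simplicial sets is a homotopy pushout and hence preserves weak equivalences in the coned map. Both routes are standard and equally valid; yours is slightly more ``formal'' (it isolates the right generic statement about mapping cones), while the paper's is slightly more hands-on (it works level-wise in the extra simplicial direction). One small economy worth noting: you cite Waldhausen's additivity for the $\rmS_{\bullet}$-construction and transport it to $\rmG_{\bullet}$ via the delooping comparison of \cite[Theorem 2.6]{GSVW}; the paper cites \cite[Theorem 2.10]{GSVW} directly, which already states additivity in the $\rmG_{\bullet}$-framework, avoiding the detour through loop spaces.
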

\begin{proof} For the proof it is convenient to view the Cone construction for a map of simplicial categories
as first defining a bisimplicial category (as in ~\eqref{bisimpl.Cone}), and then taking its diagonal.
Therefore, we reduce to considering a corresponding functor $\Phi_{\bullet, \bullet}$ of bi-simplicial categories:
then the proof reduces to showing that the corresponding functors ${\it w}\rmG_{\bullet}\rmE(\bA) \ra {\it w}\rmG_{\bullet}(\bA) \times {\it w}\rmG_{\bullet}(\bA)$ and
${\it w}\rmG_{\bullet}\rmE(\bB) \ra {\it w}\rmG_{\bullet}(\bB) \times {\it w}\rmG_{\bullet}(\bB)$
are weak equivalences. But this is proven in \cite[Theorem 2.10]{GSVW}.
\end{proof}
\vskip .2cm
\begin{definition}
 Let $\bA, \bB$ denote categories with cofibrations and weak equivalences. Let $F', F, F'': \bA \ra \bB$ denote
 exact functors. Then $F' \rightarrowtail F \twoheadrightarrow F''$ is a cofibration sequence, if for each object $A \in \bA$,
 $F'(A) \rightarrowtail F(A) \twoheadrightarrow F''(A)$ is a cofibration sequence in $\bB$.
\end{definition}
\vfill \eject
\begin{corollary} (Additivity Theorem for the mapping Cone.)
 \label{additivity.cor}
\end{corollary}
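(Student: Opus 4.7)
The plan is to derive this corollary directly from Theorem~\ref{additivity}. The first step is to reinterpret a cofibration sequence of exact functors $F' \rightarrowtail F \twoheadrightarrow F''$ from $\bA$ to $\bB$ as a single exact functor $\bar{F}: \bA \to \rmE(\bB)$ sending $A$ to the cofibration sequence $F'(A) \rightarrowtail F(A) \twoheadrightarrow F''(A)$. Applying ${\it w}\rmG_{\bullet}(-)$ and then the mapping cone construction of Definition~\ref{cat.mapping.cone} to $\bar{F}$ yields a functor of simplicial categories whose target can be naturally identified with $\rmE(\rmCone({\it w}\rmG_{\bullet}(\f))) = \rmCone(\rmE({\it w}\rmG_{\bullet}(\f)))$, where $\f: \bA \to \bB$ is the ambient exact functor defining the mapping cone.

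Next, composing with the functor $\Phi$ of~\eqref{Phi} lands us in $\rmCone({\it w}\rmG_{\bullet}(\f)) \boxtimes \rmCone({\it w}\rmG_{\bullet}(\f))$. By Theorem~\ref{additivity}, $\Phi$ becomes a weak equivalence after applying the nerve. The two projections of the target pick out the functors induced on $\rmCone({\it w}\rmG_{\bullet}(\f))$ by $F'$ and $F''$ separately, while the ``middle term'' of the cofibration sequence sent through $\bar{F}$ recovers $F$. Thus the fact that $\Phi$ is a weak equivalence translates to saying that the map induced by $F$ on $|\rmNerve(\rmCone({\it w}\rmG_{\bullet}(\f)))|$ is homotopic to the H-space sum of the maps induced by $F'$ and $F''$.

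Finally, invoking Corollary~\ref{additivity.0} to identify $\Omega|\rmNerve(\rmCone({\it w}\rmG_{\bullet}(\f)))|$ with ${\it w}\rmG(\f)$, one transfers this additivity to the homotopy fiber, yielding the desired identity $F_* = F'_* + F''_*$ on $\pi_n({\it w}\rmG(\f))$ for every $n \ge 0$.

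The main obstacle will be the bookkeeping required to verify that the natural identification $\rmCone(\rmE({\it w}\rmG_{\bullet}(\f))) = \rmE(\rmCone({\it w}\rmG_{\bullet}(\f)))$ is an equality of simplicial categories (not just a pointwise identification of objects), so that Theorem~\ref{additivity} applies intact. This should follow because the $\rmE$-construction only modifies the objects in each simplicial degree while leaving the simplicial structure maps intact, and the face/degeneracy formulas of Definition~\ref{cat.mapping.cone} depend only on the combinatorics of $\Delta[1]$, not on which kind of category one is coning; once this identification is in place, the additivity statement reduces formally to Theorem~\ref{additivity} followed by Corollary~\ref{additivity.0}.
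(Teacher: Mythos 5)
Your argument follows the same strategy the paper sketches for part (ii): reinterpret the cofibration sequence of exact functors as a single exact functor into the E-construction, pass through the mapping-cone construction, compose with $\Phi$, and invoke Theorem~\ref{additivity}, tracking the logic of Waldhausen's Proposition 1.3.2. The paper's own proof simply cites Theorem~\ref{additivity} and that argument, so at the level of ideas you have it.

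Three points to tighten. First, the proposal omits part (i) of the corollary (that compatible cofibration sequences in ${\it w}\rmG_{\bullet}(\bA)$ and ${\it w}\rmG_{\bullet}(\bB)$ assemble into a cofibration sequence in $\rmCone({\it w}\rmG_{\bullet}(\f))$); this is immediate from Definition~\ref{cat.mapping.cone} but should still be said. Second, you set up a single cofibration sequence of exact functors $F' \rightarrowtail F \twoheadrightarrow F''$ from $\bA$ to $\bB$, whereas the statement has \emph{two} cofibration sequences of exact functors, one from an auxiliary source $\bC$ into $\bA$ and one from $\bC$ into $\bB$, linked by the compatibility diagram~\eqref{compat.exact.functors}. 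That compatibility is precisely what lets you assemble the pair into a functor landing in $\rmE(\rmCone({\it w}\rmG_{\bullet}(\f))) = \rmCone(\rmE({\it w}\rmG_{\bullet}(\f)))$; with your set-up, ${\it w}\rmG_{\bullet}(\bar F)$ only lands in $\rmE({\it w}\rmG_{\bullet}(\bB))$, and the sentence ``whose target can be naturally identified with $\rmE(\rmCone(\cdots))$'' has no justification. Third, your conclusion, an identity on $\pi_n({\it w}\rmG(\f))$ obtained by invoking Corollary~\ref{additivity.0}, overshoots: the corollary itself asserts the equivalence ${\mathfrak F}(C) \simeq {\mathfrak F}'(C) \vee {\mathfrak F}''(C)$ at the level of the mapping-cone simplicial category; the passage to the homotopy fiber via Corollary~\ref{additivity.0} is the content of the remark that follows the corollary, not of the corollary itself.
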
 Let $\f: \bA \ra \bB$ denote an exact functor of categories with cofibrations and weak equivalences  and let $\g={\it w}\rmG_{\bullet}(\f)$.
\vskip .1cm
(i) Assume $\{X'_m \rightarrowtail Z'_m \twoheadrightarrow Y'_m|m\}$ is a cofibration sequence in ${\it w}\rmG_{\bullet}(\bA)$, (that is,
 an object of $\rmE({\it w}\rmG_{\bullet}(\bA))$) and
$ \{X_m\rightarrowtail Z_m \twoheadrightarrow Y_m|m\}$ is a cofibration sequence in ${\it w}\rmG_{\bullet}(\bB)$, 
(that is, an object of  $\rmE({\it w}\rmG_{\bullet}(\bB))$)
so that they
are compatible under the functor $\g$, that is, $\g(X'_m) \cong X_m$, $\g(Y'_m) \cong Y_m$ and $\g(Z'_m) \cong Z_m$, for all $m$, 
where $\cong$ denotes
isomorphisms. Then the above data provides a cofibration sequence in $\rmCone({\it w}\rmG_{\bullet}(\f))$. 
\vskip .1cm
(ii) Let $F' \rightarrowtail F \twoheadrightarrow F''$ denote a cofibration sequence of exact functors from 
the Waldhausen category $\bC$ to 
 $\bA$, and let $\bar F' \rightarrowtail \bar F \twoheadrightarrow \bar F''$  denote a cofibration sequence of exact functors from the Waldhausen category
$\bC$ to  $\bB$, so that the diagram of functors
 \be \begin{equation}
 \label{compat.exact.functors}
 \xymatrix{{\rmf \circ F'} \ar@{>->}[r] \ar@<1ex>[d]^{\cong} & {\rmf \circ F} \ar@{->>}[r] \ar@<1ex>[d]^{\cong} & {\rmf \circ F''} \ar@<1ex>[d]^{\cong}\\
           {\bar F'} \ar@{>->}[r] & {\bar F} \ar@{->>}[r] & {\bar F''} }
\end{equation} \ee
\vskip .1cm \noindent
commutes. Denoting by ${\mathfrak F}$ (${\mathfrak F}'$, ${\mathfrak F}''$) the induced functor defined on $\bC$ and taking values in  $\rmCone ({\it w}\rmG_{\bullet}(f))$ induced by the
pair $((F, \bar F), (F', \bar F'), (F'', \bar F'')$, we obtain a weak equivalence:
\[{\mathfrak F} \simeq {\mathfrak F}' \vee {\mathfrak F}'',\]
where ${\mathfrak F}' \vee {\mathfrak F}''$ is the functor defined by $({\mathfrak F}' \vee {\mathfrak F}'')(C) = {\mathfrak F}'(C) \bigvee {\mathfrak F''}(C)$ for all objects $C \in \bC$.
 In other words, we obtain a weak equivalence ${\mathfrak F}(C) \simeq {\mathfrak F}'(C) \bigvee {\mathfrak F''}(C)$ for all objects $C \in \bC$.
\begin{proof}
 (i) follows readily from the definition of $\rmCone({\it w}\rmG_{\bullet}(\f))$ as in Definition ~\ref{cat.mapping.cone}. (ii) follows
 from Theorem ~\ref{additivity}, along the same lines as the proof of the corresponding result in \cite[Proposition 1.3.2]{Wald}.
\end{proof}
\begin{remark} Corollary ~\ref{additivity.0} translates the additivity theorem in Corollary ~\ref{additivity.cor} to an additivity theorem for the 
 homotopy fiber of the K-theory spaces associated to an exact functor of categories with cofibrations and weak equivalences. 
 This is then invoked in a key step showing that the relative Higher K-groups of smooth algebraic stacks have a pre-$\lambda$-ring structure: see 
 ~\eqref{lambda.on.sum}.
\end{remark}

\section{\bf Lambda operations on the higher K-theory of algebraic stacks: the pre-lambda ring structure}
\vskip .2cm
We first recall briefly the construction of lambda and Adams operations for Waldhausen style K-theory in \cite{GSch}. This involves first finding an un-delooping of algebraic K-theory at the
categorical level considered first in \cite{GSVW}, and then defining operations corresponding to the
exterior powers at the level of categories with cofibrations and weak equivalences. The definitions of these operations have been given in \cite{GSch} for categories with cofibrations and weak equivalences 
and based on the approach in \cite{Gray} (which is worked out in
the framework of the Quillen K-theory of exact categories), but 
under the assumption that the power operations preserve weak equivalences. To make sure that this hypothesis is satisfied in our context, we consider the left derived functors of these operations in the sense
of \cite{DP} and \cite[Chapitre I]{Ill}. (See also \cite[pp. 22-27]{SABK} for a readable account.) The techniques of the 
last 2 sections, and the technique of \cite{BL} whereby higher K-groups can be reduced to a relative form of Grothendieck groups
then enable us to show that the higher K-groups of all smooth
algebraic stacks are pre-$\lambda$-rings.

\vskip .2cm
We next recall the definition of $\Lambda^{\it k}$ (as in \cite{Gray}). Let ${\rm {Mod}}(\S, \O_{\S})$ denote the category of all
$\O_{\S}$-modules. Let $Filt_{\it k}(\S)$ denote the category whose objects are sequences of split monomorphisms in ${\rm {Mod}}(\S, \O_{\S})$:
\vskip .2cm
\[M = \xymatrix{ {M_{0, 1} \,} \ar@{>->} @<2pt> [r] & {M_{0,2} \,} \ar@{>->} @<2pt> [r]& {{\cdots} {\quad }} \ar@{>->} @<2pt> [r] &{M_{0, k}}}\]
\vskip .2cm \noindent
together with sub-quotients $M_{i,j} = M_{0,j}/M_{0, i} \in {\rm {Mod}}(\S, \O_{\S})$, for $i<j$. We let
\be \begin{equation}
\label{Lambda.0}
\Lambda^{\it k}(M) = M_{0, 1} \Lambda M_{0,2} \Lambda \cdots \Lambda M_{0, k},
\end{equation} \ee
which is defined locally on $\S$ as the quotient of $(M_{0,1} \otimes M_{0,2} \otimes \cdots \otimes M_{0, k})$ by the submodule
generated by terms of the form $m_1 \otimes m_2 \otimes \cdots \otimes m_i \otimes m_{i+1} \otimes \cdots m_n $, with $m_i = m_{i+1}$ for some $i$. One may verify readily that the functor $\Lambda ^{\it k}$ applied to an object $M$ in $Filt_{\it k}(\S)$, where each $M_{0, i}$ is {\it flat} and is the direct limit of its finitely generated flat sub-modules, will produce $\Lambda ^{\it k}(M)$ which is also {\it flat}. (To see this, one may localize on $\S$ to reduce to the case of a local ring, in which case flat and finitely generated implies free.)
\vskip .2cm \noindent
Henceforth, we will denote $\Lambda^{\it k}(M)$ by $\Lambda^{\it k}(M_{0,1}, \cdots, M_{0, k})$. 
Now one may observe that there is a natural map
\be \begin{equation} 
\label{map.to.Lambda}
M_{0,1} \otimes \cdots \otimes M_{0, k} \ra \Lambda ^{\it k}(M_{0,1}, \cdots, M_{0, k}). \footnote{The definitions
~\eqref{Lambda.0} and ~\eqref{map.to.Lambda} are defined on $Filt_{\it k}(\S)$ for convenience. We will be 
in fact applying these often to objects belonging to ${\rm {\wS}}_{\it k}{\rm {Perf}}_{fl, \S_0}(\S)$ which 
are filtered objects satisfying more restrictive conditions.}
\end{equation} \ee
\vskip .2cm
Next we  proceed to consider the derived functor of the exterior power: we follow \cite{DP} or \cite[Chapitre I, section 4]{Ill} in this. Observe that the exterior power $\Lambda ^{\it k}$ is a non-additive
functor, and therefore one needs to use simplicial techniques in defining its derived functors.
Let ${\rm P}_{fl}(\S)$ denote the full subcategory of flat 
${\mathcal O}_{\S}$-modules which are the direct limits of their finitely generated flat sub-modules. 
Let ${\rm {Perf}}_{fl}(\S)$ denote the full subcategory of perfect complexes that
consist of flat ${\mathcal O}_{\S}$-modules in each degree, which are also the direct limits of their finitely generated flat sub-modules. 
If $\S_0$ is a closed
algebraic substack of $\S$, ${\rm Perf}_{fl, \S_0}(\S)$ will denote the full subcategory of ${\rm Perf}_{fl}(\S)$ consisting of
complexes of flat $\O_{\S}$-modules with supports in $\S_0$.
\vskip .1cm
Lemma ~\ref{flat.res} below shows the existence of functorial flat resolutions, so that one may restrict to flat 
$\O_{\S}$-modules without loss of generality.
Then one obtains an imbedding 
\be \begin{equation}
\label{i}
i:{\rm {Perf}}_{fl, \S_0}(\S) \ra {\rm {Cos.mixt}}({\rm P}_{fl}(\S)),
\end{equation} \ee
where ${\rm {Cos.mixt}}({\rm P}_{fl}(\S))$ denotes the category of all cosimplicial-simplicial objects of ${\rm P}_{fl}(\S)$ as follows. (See \cite[Chapitre I, 4.1]{Ill}.) Let
$\cdots \ra K^{-n} \ra K^{-n+1} \ra \cdots \ra K^0 \ra K^1 \ra \cdots \ra K^m\ra \cdots $ denote an object in ${\rm {Perf}}_{fl}(\S)$. One first
sends it to the double complex in the second quadrant with $\cdots \ra K^{-n} \ra \cdots K^0$ along the negative $x$-axis, with $K^0$ in position $(0, 0)$, and
the complex $K^0 \ra K^1 \ra \cdots \ra K^m \ra \cdots $ along the positive $y$-axis. Next one applies de-normalization functors (see Appendix B) that produce the cosimplicial-simplicial object, $i(K)$ in ${\rm P}_{fl}(\S)$ from this, that is,
an object in ${\rm {Cos.mixt}}({\rm P}_{fl}(\S))$. 
\vskip .1cm 
Let $\rmN= \rmN^{\it v} \circ \rmN_{\it h}$ denote the normalization functor as in ~\eqref{NvNh} and let $\Tot$ denote the functor defined in  ~\eqref{Tot}.
Recall $\rmN = \rmN^{\it v} \circ \rmN_{\it h}$ sends a
cosimplicial-simplicial object to a double complex, and $\Tot$ denotes taking the total complex of the
corresponding double complex.
We define a morphism $K' \ra K$ in ${\rm {Cos.mixt}}({\rm P}_{fl}(\S))$ to be a quasi-isomorphism, if the 
induced map on applying the functor $\Tot \circ \rmN$ is a quasi-isomorphism. One may now verify that the composition $\Tot \circ \rmN \circ {\it i} = id$  so that the functor 
$i$ in fact induces a faithful functor of the associated derived categories
obtained by inverting quasi-isomorphisms. (Observe from Appendix B that the normalization functor $\rmN$ and the
functor $i$ preserve flatness.)
\vskip .2cm
Let ${\rm {\wS}}_{\it k}{\rm {Perf}}_{fl, \S_0}(\S)$ denote the category whose objects are sequences of cofibrations
\[\xymatrix{{\rmK_{0,1} \,} \ar@{>->} @<2pt>[r]& {\rmK_{0,2}\,} \ar@{>->} @<2pt>[r] &{\cdots \, \,} \ar@{>->} @<2pt> [r]&{\rmK_{0,k}\,},}\]
together with choices of sub-quotients $\rmK_{i,j} = \rmK_{0,j}/\rmK_{0,i} \in {\rm {wPerf}}_{fl}(\S)$, for $i<j$. One defines
\newline \noindent
${\rm {\wS}}_{\it k}({\rm {Cos.mixt}}({\rm P}_{fl}(\S))$ similarly.
We define
\be \begin{equation}
\label{Lambda.1}
\Lambda^{\it k}_{cs}:{\rm {\wS}}_{\it k}({\rm {Cos.mixt}}({\rm P}_{fl}(\S))) \ra w{\rm {Cos.mixt}}({\rm P}_{fl}(\S)), \quad \Lambda^{\it k}: {\rm {\wS}}_{\it k}{\rm {Perf}}_{fl, \S_0}(\S) \ra {\rm {wPerf}}_{fl, \S_0}(\S)
\end{equation} \ee
as functors of categories with cofibrations and weak equivalences  in the following manner.
\vskip .1cm
Let $K = 
\xymatrix{{\rmK_{0,1}\,} \ar@{>->} @<2pt>[r]& {\rmK_{0,2}\,} \ar@{>->} @<2pt>[r] &{\cdots \, \,} \ar@{>->} @<2pt> [r]&{\rmK_{0,k} \,}} \in {\rm {\wS}}_{\it k}({\rm {Cos.mixt}}({\rm P}_{fl}(\S)))$ ($\in {\rm {\wS}}_{\it k}{\rm {Perf}}_{fl, \S_0}(\S)$, \res). We define $\Lambda^{\it k}_{cs}$ to be the functor
induced by applying $\Lambda^{\it k}:Filt_{\it k}({\rm P}_{fl}(\S)) \ra {\rm P}_{fl}(\S)$ in each cosimplicial-simplicial degree. The functor $\Lambda^{\it k}: {\rm {\wS}}_{\it k}{\rm {Perf}}_{fl, \S_0}(\S) \ra {\rm {wPerf}}_{fl, \S_0}(\S)$ is defined by 
\be \begin{equation}
\label{Lambda1}
\Lambda ^{\it k}(K) = \Tot \circ \rmN(\Lambda^{\it k}_{cs}({\it i}(K))), \quad K \in {\rm {\wS}}_{\it k}{\rm {Perf}}_{fl, \S_0}(\S),
\end{equation} \ee
where $\rmN =\rmN^{\it v} \circ \rmN_{\it h}$ once again.
\vskip .1cm
\begin{lemma} 
\label{flat.res}
(Functorial flat resolutions) Let $\S_0$ denote a closed algebraic sub-stack of the given stack $\S$. 
Then there exists a functor ${\mathfrak F}:{\rm {\wS_{\bullet} Perf}}_{\S_0}(\S) \ra {\rm {\wS_{\bullet}Perf}}_{fl, \S_0}(\S)$ having 
the following properties. Let $\rmU: {\rm {\wS_{\bullet}Perf}}_{{\it fl}, \S_0}(\S) \ra {\rm {\wS_{\bullet}Perf}}_{\S_0}(\S)$ denote the
obvious forgetful functor. Then, there exists a natural transformation $ U \circ {\mathfrak F}   \ra id$ so that for each $M \in {\rm {\wS_{\bullet}Perf}}_{\S_0}(\S)$, the corresponding map $(U \circ  {\mathfrak F})(M) \ra M$
is a quasi-isomorphism.
\end{lemma}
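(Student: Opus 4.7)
The plan is to build $\mathfrak{F}$ from a functorial flat resolution on the abelian category of $\O_{\S}$-modules, extend it termwise to complexes and filtered diagrams, and then truncate at a finite stage to stay inside the perfect world. For the building block I would use the standard free-sheaf comonad: for any $\O_{\S}$-module $N$ on $\S_{lis-et}$, let $F(N)$ be the sheafification of the presheaf $U \mapsto \bigoplus_{s \in N(U)} \O_{\S}|_U$. Then $F(N)$ is a filtered colimit of finitely generated free subsheaves, so it lies in ${\rm P}_{fl}(\S)$, and the canonical counit $\epsilon_N\colon F(N) \twoheadrightarrow N$ is a functorial surjection. Iterating $F$ on the kernel of $\epsilon_N$ produces, via the usual bar (Godement-type) construction, a functorial resolution $F_{\bullet}(N) \to N$ by objects of ${\rm P}_{fl}(\S)$.

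To resolve a perfect complex $M$ with cohomological support in $\S_0$, I apply $F_{\bullet}$ termwise to $M$ and take the total complex of the resulting double complex, then truncate. Because $M$ is perfect it has finite Tor-amplitude $[a,b]$, and the stupid truncation of the flat resolution at a depth $N \gg b-a$ produces a bounded complex of flat sheaves whose last syzygy is automatically flat (the kernel of a surjection between flat sheaves whose quotient has finite flat dimension beyond that depth is itself flat). The resulting complex is quasi-isomorphic to $M$, hence perfect, and its cohomology is still supported in $\S_0$, so it belongs to ${\rm Perf}_{fl,\S_0}(\S)$. For a filtered object $K_{0,1} \rightarrowtail \cdots \rightarrowtail K_{0,k}$ of $\wS_{k}{\rm Perf}_{\S_0}(\S)$ with chosen sub-quotients $K_{i,j}$, I apply the construction term-by-term. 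Exactness of the free-sheaf functor (sheafification is exact and the direct sum indexed by a set of sections is exact on abelian sheaves) guarantees that the short exact sequences $K_{0,i} \rightarrowtail K_{0,j} \twoheadrightarrow K_{i,j}$ are preserved by $F_{\bullet}$ and by the truncation, so the output is a filtered object of $\wS_{k}{\rm Perf}_{fl,\S_0}(\S)$ whose sub-quotients are the resolutions of the $K_{i,j}$.

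The natural transformation $U \circ \mathfrak{F} \to \mathrm{id}$ is the augmentation of the bar resolution; it is termwise surjective, and a quasi-isomorphism by the classical acyclicity of the free-sheaf bar complex (the counit of the comonad provides a contracting homotopy after forgetting to $\O_{\S}$-modules). The main obstacle I anticipate is coordinating the truncation: one must choose the truncation depth uniformly enough to preserve functoriality across morphisms of filtered objects while keeping the terminal kernel flat. I would handle this by letting the truncation depth depend on the largest Tor-amplitude appearing in the filtration $K$ (a finite invariant of the diagram, as each $K_{0,j}$ is perfect). Functoriality of the underlying bar resolution then ensures that any morphism in $\wS_{\bullet}{\rm Perf}_{\S_0}(\S)$ lifts to a morphism of the chosen truncations, after possibly passing to a common (larger) truncation depth, which is compatible with the previous choice via the evident quasi-isomorphism between different truncations of the same flat resolution.
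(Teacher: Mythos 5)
Your overall strategy---build a functorial resolution of $\O_\S$-modules from a ``free''-type surjection, iterate on the kernel of the augmentation, apply termwise to a perfect complex, and check the output lands in ${\rm Perf}_{fl,\S_0}(\S)$---is the same as the paper's. The paper uses the direct sum $\bigoplus_{U\in\S_{lis\text{-}et}}\bigoplus_{\phi\in{\rm Hom}(j_{U!}j_U^*\O_\S,\,M)} j_{U!}j_U^*\O_\S$ as the building block in place of your sheafified free presheaf; both are flat, are direct limits of finitely generated flat submodules at each stalk, and functorially surject onto $M$, so that choice is a legitimate alternative. However, two steps of your argument do not go through as written.

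The assertion that the free-sheaf functor $F$ is exact is false: $F$ is the free $\O_\S$-module functor on an underlying sheaf of sets, so it is not even additive, and a short exact sequence $0\to N'\to N\to N''\to 0$ is not carried to a short exact sequence. What you actually need, and what is true, is that a split monomorphism $N'\rightarrowtail N$ is carried to a split monomorphism: this is automatic because any functor preserves retractions, and is also visible directly since $N'(U)\hookrightarrow N(U)$ induces a direct-summand inclusion of free presheaves and hence of their sheafifications. That $\mathfrak{F}_{\bullet}(K_{0,j})/\mathfrak{F}_{\bullet}(K_{0,i})$ is then quasi-isomorphic to $K_{i,j}$ follows from the augmentations and the five lemma, not from any exactness of $F$. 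So this part of your argument reaches the right conclusion but for the wrong reason.

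The truncation step is the genuine gap. The depth $N(K)$ you pick depends on the object $K$, so different objects of ${\rm {\wS_{\bullet}Perf}}_{\S_0}(\S)$ get resolutions of different lengths, and for a morphism $K\to L$ with $N(K)\neq N(L)$ there is in general no induced chain map between the Schanuel truncations: when $N(K)>N(L)$, the degree $-N(L)$ term of the resolution of $K$ is the full flat term, and the comparison map into $F^L_{N(L)}$ need not factor through the syzygy $Z^L_{N(L)}\subset F^L_{N(L)}$, while the terms in lower degree certainly do not map to zero. ``Passing to a common larger depth'' fixes a single arrow but not the functor, since a functor must assign one fixed complex to each object, compatibly with every arrow in and out of it, and your per-morphism re-truncation cannot provide this coherence. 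In fact truncation is unnecessary: the definition of a perfect complex in the paper (Definition~\ref{K.G.def}) is local and does not require global boundedness, so the (unbounded-below) sum-totalization of the termwise resolution of the bounded complex $M^{\bullet}$ is perfectly acceptable; because $M^{\bullet}$ is bounded, each total degree has only finitely many nonzero summands, and the column-filtration spectral sequence (which converges because the complex is bounded in the $p$-direction) shows the totalization is quasi-isomorphic to $M^{\bullet}$, hence perfect. The paper's proof takes exactly this route and never truncates; deleting the truncation step from your proposal removes the gap.
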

\begin{proof}
Recall that the stack $\S$ is of finite type over the regular Noetherian base scheme $\rmS$. Therefore, the lisse-\'etale site $\S_{lis-et}$ is essentially small. Given an $M \in {\rm  {Mod}}(\S, \O_{\S})$, one may define 
\[{\mathfrak F}(M) = {\underset {U \in \S_{lis-et}} \oplus} {\underset {\phi \in Hom(j_{U!}j_U^*(\O_{\S}), M)} \oplus} j_{U!, \phi}j_U^*(\O_{\S}).\]
(Here $j_{U!, \phi}j_U^*(\O_{\S}) = j_{U!}j_U^*(\O_{\S})$.) We define a surjection $\epsilon_{-1}:{\mathfrak F}(M) \ra M$
by mapping the summand indexed by $\phi \in Hom(j_{U!}j_U^*(\O_{\S}), M)$ to $M$ by the map $\phi$. Given a map $f: M' \ra M$ in ${\rm {Mod}}(\S, \O_{\S})$, one defines the induced map ${\mathfrak F}(f): {\mathfrak F}(M') \ra {\mathfrak F}(M)$ by sending the summand $j_{U!, \phi}j_U^*(\O_{\S})$
to $j_{U!, f\circ \phi}j_U^*(\O_{\S})$ by the identity map. Now one may readily see that
the assignment $M \mapsto {\mathfrak F}(M)$ is functorial in $M$. Moreover, if $M' \ra M$ is a split monomorphism in ${\rm {Mod}}(\S, \O_{\S})$,
the induced map ${\mathfrak F}(M') \ra {\mathfrak F}(M)$ is also a split monomorphism. One may repeatedly apply the functor
${\mathfrak F}$ to the kernel of $\epsilon _{-1}$ to obtain a resolution ${\mathfrak F}_{\bullet}(M) \ra M$. It follows, therefore, that the functor ${\mathfrak F}$ induces a
functor $\rmS_{\bullet}{\rm {Perf}}_{\S_0}(\S) \ra \rmS_{\bullet}{\rm {Perf}}_{fl, \S_0}(\S)$ that preserves cofibrations (that is, degree-wise split monomorphisms) and weak equivalences.\end{proof}
 \vskip .2cm
 Recall that we only consider complexes of sheaves of $\O$-modules whose cohomology sheaves are cartesian. The following proposition shows that 
 the exterior power operations preserve cofibrations, weak equivalences and the property that the cohomology sheaves are cartesian.
\begin{proposition} 
\label{cart.coh.sheaves}
The functor $\Lambda^{\it k}: {\rm {\wS}}_{\it k}{\rm {Perf}}_{fl, \S_0}(\S) \ra {\rm {wPerf}}_{fl, \S_0}(\S)$ is a functor of categories with cofibrations and weak equivalences.
\end{proposition}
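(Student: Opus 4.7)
The plan is to verify simultaneously that $\Lambda^{\it k}$ preserves cofibrations (degree-wise split monomorphisms), preserves weak equivalences (quasi-isomorphisms), and lands in $w\,{\rm Perf}_{fl, \S_0}(\S)$, i.e.\ produces a perfect complex of flat $\O_\S$-modules whose cohomology sheaves are cartesian and have support in $\S_0$. The natural strategy is to factor $\Lambda^{\it k}$ as $\Tot \circ \rmN \circ \Lambda^{\it k}_{cs} \circ {\it i}$ and to analyze each step, exploiting that $\Lambda^{\it k}_{cs}$ is defined by applying the ordinary exterior power $\Lambda^{\it k} : Filt_{\it k}(\rmP_{fl}(\S)) \to \rmP_{fl}(\S)$ in each cosimplicial-simplicial bi-degree.

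First I would handle cofibrations. The embedding ${\it i}$ clearly sends degree-wise split monomorphisms to objects of ${\rm Cos.mixt}(\rmP_{fl}(\S))$ that are split monomorphisms in each bi-degree, and $\Tot \circ \rmN$ preserves degree-wise split monomorphisms by construction. So the question reduces to showing that an object of $Filt_{\it k}(\rmP_{fl}(\S))$ given by split monomorphisms $M_{0,1} \rightarrowtail \cdots \rightarrowtail M_{0,k}$ of flat modules that are direct limits of finitely generated flat submodules is sent by $\Lambda^{\it k}$ to a split monomorphism (in fact an object) of $\rmP_{fl}(\S)$. Working at a stalk on $\S_{lis-et}$, flat plus finitely generated equals free over the local ring, so filtering each $M_{0,i}$ by its finitely generated flat (hence free) submodules and passing to the direct limit reduces the statement to the corresponding well-known multilinear algebra fact for free modules. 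This local argument simultaneously shows that $\Lambda^{\it k}(K)$ is flat in each degree, since an exterior power of free modules is free, and that these modules remain direct limits of finitely generated flat submodules.

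The preservation of weak equivalences is the main obstacle, and is the sole reason we have passed to cosimplicial-simplicial objects in the first place. Since $\Lambda^{\it k}$ is non-additive, its naive extension does not preserve quasi-isomorphisms, but the Dold-Puppe construction (see \cite{DP} and \cite[Chapitre I, section 4]{Ill}) shows that applying $\Lambda^{\it k}$ dimension-wise to a simplicial resolution by flat modules, then normalizing, computes the left derived functor and in particular sends quasi-isomorphisms of complexes of flat modules to quasi-isomorphisms. Applied in each of the two simplicial directions of ${\rm Cos.mixt}(\rmP_{fl}(\S))$ (using ${\mathfrak F}$ from Lemma~\ref{flat.res} to replace by flat resolutions whenever necessary), this shows that $\Tot \circ \rmN \circ \Lambda^{\it k}_{cs}$ sends quasi-isomorphisms to quasi-isomorphisms; combined with the fact that ${\it i}$ preserves quasi-isomorphisms (by construction, since $\Tot \circ \rmN \circ {\it i} = id$), this yields preservation of weak equivalences.

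It remains to check that $\Lambda^{\it k}(K)$ actually lies in ${\rm Perf}_{fl, \S_0}(\S)$. Perfectness follows because locally $K$ is quasi-isomorphic to a bounded complex of locally free coherent modules, and exterior powers of such bounded complexes are again perfect (via the classical Koszul-type description of the derived exterior power of a bounded flat complex). The support condition is immediate, since $\Lambda^{\it k}(K)$ vanishes wherever all constituents $K_{0,i}$ vanish, hence is supported in $\S_0$. Finally, the cartesian property of cohomology sheaves, which is the point emphasized in the preamble before the proposition, follows from the fact that the exterior power functor commutes with pullback along the smooth maps in $\S_{lis-et}$: since restriction along such a map is exact on flat modules and commutes with the constructions $\Tot$, $\rmN$, $\Lambda^{\it k}_{cs}$, and ${\it i}$, the hypothesis that each $K_{0,i}$ has cartesian cohomology sheaves transfers to $\Lambda^{\it k}(K)$, completing the verification.
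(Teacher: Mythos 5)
Your overall route mirrors the paper's: preservation of cofibrations is elementary, preservation of weak equivalences comes from the Dold--Puppe / Illusie derived-functor philosophy, and the cartesian property is checked by pulling back along smooth maps in $\S_{lis-et}$ and using exactness of $f^*$. The cofibration/flatness discussion is fine (and the paper indeed records the ``flat $+$ finitely generated $=$ free at a stalk'' argument earlier, when first defining $\Lambda^{\it k}$), and your closing paragraph on cartesian cohomology sheaves is essentially verbatim the paper's argument.

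However, your treatment of weak equivalences has a gap that the paper explicitly flags and takes pains to repair. The operation $\Lambda^{\it k}_{cs}$ is not a functor of a single complex: it takes as input a filtered object $\rmK_{0,1}\rightarrowtail\cdots\rightarrowtail\rmK_{0,k}$ with the subquotient data, and a weak equivalence in ${\rm {\wS}}_{\it k}{\rm {Perf}}_{fl,\S_0}(\S)$ is a map that is a quasi-isomorphism on each $\rmK_{0,i}$. The Dold--Puppe/Illusie statement you are invoking (\cite[Chapitre I, Proposition 4.2.1.3]{Ill}) is proved for a single complex, i.e.\ as the paper phrases it, for the case where all the structure monomorphisms $\rmK_{0,i}\rightarrowtail\rmK_{0,i+1}$ are identities. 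Simply saying ``apply it dimension-wise in each simplicial direction'' does not address why the filtered, multi-argument version preserves quasi-isomorphisms. The paper's fix is concrete: after reducing (via the mapping cone) to the case where each $\rmK_{0,i}$ is acyclic, it applies Illusie's proposition to each $\rmK_{0,i}$ separately to write each as a locally filtered colimit of bounded free complexes with chain null-homotopies, re-indexes to a common filtered system, and replaces $\rmK_{0,i}(\alpha)$ by mapping cylinders to force the transition maps to be cofibrations, so that $\Lambda^{\it k}$ (which commutes with stalks and filtered colimits and preserves chain homotopies) can be applied. You should supply this (or an equivalent) bridging step. A minor additional point: invoking Lemma~\ref{flat.res} is unnecessary here, since the input is already in the flat subcategory ${\rm P}_{fl}(\S)$; and ``$\Lambda^{\it k}(K)$ vanishes wherever the $K_{0,i}$ vanish'' should be phrased in terms of acyclicity (the $K_{0,i}$ are acyclic, not zero, off $\S_0$), which does then follow from the quasi-isomorphism invariance you are establishing.
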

\begin{proof} First observe that exterior powers  preserve degree-wise split monomorphisms, 
so that these functors in fact preserve cofibrations. That they preserve weak equivalences follows essentially 
from the observation that the exterior power  $\Lambda^{\it k}: {\rm {\wS}}_{\it k}{\rm {Perf}}_{fl, \S_0}(\S) \ra {\rm {wPerf}}_{fl, \S_0}(\S)$ is in fact a derived functor (see \cite[Chapter I, Proposition 4.2.1.3]{Ill} ).  \cite[Proposition 4.2.1.3]{Ill}  applies to the case where all the monomorphisms $\rmK_{0, i} \ra \rmK_{0, i+1}$ are the identity maps. This Proposition may be extended to apply to
the situation in hand as follows. Recall that a map $f:K = \xymatrix{{\rmK_{0,1}\,} \ar@{>->} @<2pt>[r]& {\rmK_{0,2}\,} \ar@{>->} @<2pt>[r] &{\cdots \, \,} \ar@{>->} @<2pt> [r]&{\rmK_{0,k}}} \ra
L = \xymatrix{{L_{0,1}\,} \ar@{>->} @<2pt>[r]& {L_{0,2}\,} \ar@{>->} @<2pt>[r] &{\cdots \, \,} \ar@{>->} @<2pt> [r]&{L_{0,k}}}$ in  ${\rm {\wS}}_{\it k}{\rm {Perf}}_{fl, \S_0}(\S)$ is a weak equivalence, if the induced maps $f_{0, i}: \rmK_{0, i} \ra L_{0, i}$ are all quasi-isomorphisms.
  One may replace the mapping cone $\rmCone(\itf)$ by $K$, and assume each $\rmK_{0, i}$ is acyclic. 
  Then the same argument as in the proof of \cite[Chapitre I, Proposition 4.2.1.3]{Ill}
  applies to show that $K$ is the locally filtered colimit (locally filtered in the sense of \cite[2.2.5, Chapitre I]{Ill})
of  complexes $\rmK_{\alpha}= \xymatrix{{\rmK_{0,1}(\alpha)\,} \ar@{>->} @<2pt>[r]& {\rmK_{0,2}(\alpha)\,} \ar@{>->} @<2pt>[r] &{\cdots \, \,} \ar@{>->} @<2pt> [r]&{\rmK_{0,k}(\alpha)}}$, with the property that  each $\rmK_{0, i}(\alpha)$ is a bounded complex of finitely
generated free $\O_{\S, p}^{str.h}$-modules at each geometric point $p$, and that 
 there is a chain null-homotopy of each $\rmK_{\alpha}$ at each stalk. (Here $\O_{\S, p}^{str.h}$ denotes the strict henselization of $\O_{\S}$ at the geometric point $p$.)
\vskip .2cm
In fact, one may apply  \cite[Chapitre I, Proposition 4.2.1.3]{Ill} to see that each $\rmK_{0, i}$ is
such a locally filtered colimit. By re-indexing, we may assume that we obtain a locally filtered direct system of complexes
$\{\rmK_{0, 1}(\alpha) \ra \rmK_{0, 1}(\alpha) \ra \cdots \ra \rmK_{0, k}(\alpha)|\alpha\}$ so that each $\rmK_{0, i}$ is such a filtered colimit. 
Then one may replace $\rmK_{0,i}(\alpha)$ for $i\ge 2$ by the mapping cylinder (see \cite[1.1.2]{T-T}) of the given map $\rmK_{0, i-1}(\alpha) \ra \rmK_{0, i}(\alpha)$, so that 
one may assume the maps $\rmK_{0, i-1}(\alpha) \ra \rmK_{0, i}(\alpha)$, for all $i \ge 2$ are cofibrations.
This observation will then provide the required extension of \cite[Chapitre I,  Proposition 4.2.1.3]{Ill} to complexes provided with
a finite increasing filtration by subcomplexes. Since $\Lambda^{\it k}$ commutes with taking stalks,  filtered colimits, and preserves
chain homotopies, it follows that $\Lambda^{\it k}: {\rm {\wS}}_{\it k}{\rm {Perf}}_{fl, \S_0}(\S) \ra {\rm {wPerf}}_{fl, \S_0}(\S)$ preserves weak equivalences.
\vskip .2cm
Now it suffices to show that $\Lambda^{\it k}(K)$ has cohomology sheaves which are cartesian. In view of
\cite[ Lemma 3.6]{Ol}, it suffices to show the following: if $f:U \ra V$ denotes a {\it smooth} map
between schemes in $\S_{lis-et}$,  then $f^*{\mathcal H}^i(\Lambda^{\it k}(\rmK_{|V_{et}})) \simeq {\mathcal H}^i(\Lambda^{\it k}(\rmK_{|U_{et}}))$ for all $i$. The definition of $\Lambda^{\it k}$ above shows that $f^*(\Lambda^{\it k}(\rmK_{|V_{et}})) \cong
\Lambda^{\it k}(f^*(\rmK_{|V_{et}})) \cong \Lambda^{\it k}(\rmK_{|U_{et}})$: the last quasi-isomorphism follows
from the observation that $\Lambda^{\it k}$ preserves quasi-isomorphisms. Next observe that
$f^*$ is an exact functor since ${\it f}$ is smooth. Therefore, taking cohomology sheaves commutes
with $f^*$, proving that ${\mathcal H}^i( \Lambda^{\it k}(\rmK_{|U_{et}})) \cong {\mathcal H}^i(f^*(\Lambda^{\it k}(\rmK_{|V_{et}}))) \cong f^*({\mathcal H}^i(\Lambda^{\it k}(\rmK_{|V_{et}})))$: this proves that $\Lambda^{\it k}(K)$ has cohomology sheaves which are cartesian.
\end{proof}
In order to show that we obtain power operations in $K$-theory, 
one needs to verify that certain conditions are satisfied by the exterior powers.
These are the conditions denoted (E1) through (E5) in \cite{Gray}, in \cite[section 2]{GSch} and also in \cite[Definition 1.1]{KZ}. We summarize them here:
\begin{itemize}
\item[(E1)] Given \xymatrix{{V_{1}\,} \ar@{>->} @<2pt>[r]& {V_{2}\,} 
\ar@{>->} @<2pt>[r] &{\cdots \, \,} \ar@{>->} @<2pt> [r]&{V_{k} \,} 
\ar@{>->} @<2pt> [r]&{W_{1} \,} \ar@{>->} @<2pt> [r]&{\cdots \, \,} \ar@{>->} @<2pt> [r] &{W_n}}  $ \in \wS_{k+n}{\rm {Perf}}_{fl, \S_0}(\S)$, there is a natural map
$\Lambda ^{\it k}(V_1, ..., V_{\it k}) \otimes \Lambda ^n(W_1,..., W_n) \ra \Lambda^{k+n}(V_1,..., V_{\it k}, W_1,..., W_n)$. These maps are  associative in the obvious sense. 
\item[(E2)] Given \xymatrix{{V_{1}\,} \ar@{>->} @<2pt>[r]& {V_{2}\,} 
\ar@{>->} @<2pt>[r] &{\cdots \, \,} \ar@{>->} @<2pt> [r]&{V_{k}\, } \ar@{>->} @<2pt> [r]& {W_{1} \,} \ar@{>->} @<2pt> [r]&{\cdots \, \,} \ar@{>->} @<2pt> [r] &{W_n}}  
$ \in \wS_{k+n}{\rm {Perf}}_{fl, \S_0}(\S)$, there is a natural map
\[\Lambda ^{k+n}(V_1,..., V_{\it k}, W_1,..., W_n) \ra 
\Lambda^{\it k}(V_1,..., V_{\it k}) \otimes \Lambda^n(W_1/V_{\it k},..., W_n/V_{\it k}).\]
These maps are
associative in the obvious sense. The above conditions are for any choice of quotient
objects $W_1/V_{\it k},..., W_n/V_{\it k}$.
\item[(E3)] Given \[\xymatrix{{V_{1}\, } \ar@{>->} @<2pt>[r]& {V_{2}\,} 
\ar@{>->} @<2pt>[r] &{\cdots \, \,} \ar@{>->} @<2pt> [r]&{V_{k}} 
\ar@{>->} @<2pt> [r]&{W_{1}\,} \ar@{>->} @<2pt> [r] &{\cdots \, \,} \ar@{>->} @<2pt> [r] &{W_n \,}\ar@{>->} @<2pt> [r] &{U_1 \,}
\ar@{>->} @<2pt> [r] & {\cdots \, \,} \ar@{>->} @<2pt> [r] & {U_l} } \]
$ \in \wS_{k+n+l}{\rm {Perf}}_{fl, \S_0}(\S)$, the following diagram commutes
\vskip .2cm \hskip -.5cm
\xymatrix{{\Lambda^{k+n}(V_1,..., V_{\it k}, W_1,..., W_n) \otimes \Lambda^l(U_1,..., U_l)}  \ar@<-1ex>[d]  \ar@<1ex>[r] & X \ar@<1ex>[d]\\
{\Lambda^{\it k}(V_1,..., V_{\it k}) \otimes \Lambda^n(W_1/V_{\it k},..., W_n/V_{\it k}) \otimes \Lambda ^l(U_1/V_{\it k},..., U_l/V_{\it k})}  \ar@<1ex>[r] & Y,}
\vskip .2cm \noindent
where 
\vskip .2cm
$X = {\Lambda^{k+n+l}(V_1,..., V_{\it k}, W_1,..., W_n, U_1,..., U_l)}$, and 
\vskip .2cm
$Y =  {\Lambda ^{\it k}(V_1,..., V_{\it k}) \otimes \Lambda^{n+l}(W_1/V_{\it k},.., W_n/V_{\it k}, U_1/V_{\it k},..., U_l/V_{\it k})}$.
\item[(E4)] 
Given 
\[\xymatrix{{V_{1}\,} \ar@{>->} @<2pt>[r]& {V_{2}\,} 
\ar@{>->} @<2pt>[r] &{\cdots \, \,} \ar@{>->} @<2pt> [r]&{V_{k}\,} 
\ar@{>->} @<2pt> [r]&{W_{1}\,} \ar@{>->} @<2pt> [r] &{\cdots \, \,} \ar@{>->} @<2pt> [r] &{W_n \,}\ar@{>->} @<2pt> [r] &{U_1 \,}
\ar@{>->} @<2pt> [r] & {\cdots \, \,} \ar@{>->} @<2pt> [r] & {U_l} } \] 
$ \in \wS_{k+n+l}{\rm {Perf}}_{fl, \S_0}(\S)$, the following diagram commutes
\vskip .2cm \small
\xymatrix{{\Lambda^{\it k}(V_1,..., V_{\it k}) \otimes \Lambda^{n+l}(W_1,..., W_n , U_1,..., U_l)} \ar@<1ex>[r] \ar@<-1ex>[d] &{\Lambda^{k+n+l}(V_1,..., V_{\it k}, W_1,..., W_n, U_1,..., U_l)} \ar@<1ex>[d]\\
{\Lambda^{\it k}(V_1,..., V_{\it k}) \otimes \Lambda^n(W_1,..., W_n) \otimes \Lambda ^l(U_1/W_n,..., U_l/W_n)} \ar@<1ex>[r] & Z,}
\vskip .2cm \noindent \normalsize
where
\vskip .2cm
$Z = { \Lambda ^{k+n}(V_1,..., V_{\it k}, W_1,.., W_n) \otimes \Lambda^l(U_1/W_n,..., U_l/W_n)}$.
\item[(E5)] Given 
\[ \xymatrix{{V_{1}\,} \ar@{>->} @<2pt>[r]& {V_{2}\,} 
\ar@{>->} @<2pt>[r] &{\cdots \, \,} \ar@{>->} @<2pt> [r]&{V_{k}\,} 
\ar@{>->} @<2pt> [r]&{W_{1}\,}  \ar@{>->} @<2pt> [r] &{W_2\,}\ar@{>->} @<2pt> [r] &{U_1\,}
\ar@{>->} @<2pt> [r] & {\cdots \, \,} \ar@{>->} @<2pt> [r] & {U_l} } \]
$ \in \wS_{k+2+l}{\rm {Perf}}_{fl, \S_0}(\S)$, the following sequence of perfect complexes is an {\it exact} sequence:
\vskip .2cm
$\Lambda^{k+l+1}(V_1,..., V_{\it k}, {W_1}, U_1,..., U_l) \ra
{\Lambda^{k+l+1}(V_1,..., V_{\it k}, W_2, U_1,..., U_l)} $
\vskip .2cm
$\ra {{\Lambda^{\it k}(V_1,..., V_{\it k}) \otimes \Lambda^{l+1}(W_2/{ W_1}, U_1/{W_1},..., U_l/{W_1})}}$,
\vskip .2cm \noindent
that is, the first map is a cofibration and the second is its quotient. 
\item[$({\rm E5})_0$]
We will also allow the case $k=0$, which is the statement 
that we get an exact sequence 
\[\Lambda^{l+1}(W_1, U_1, \ldots, U_l) \ra \Lambda ^{l+1}(W_2, U_1, \ldots, U_l) \ra \Lambda^{l+1}(W_2/W_1, U_1/W_1, \ldots, U_l/W_1). \]
\end{itemize}
\vskip .1cm \noindent
Here is an outline of how to establish these properties. 
\subsubsection{}
\label{tensor.TotN}
Let $\rmN= \rmN^{\it v} \circ \rmN_{\it h}$ as before. Now we observe from ~\eqref{pairings.EZ.AW} that the functor $\Tot \circ \rmN$ is
compatible with the obvious tensor structures: that is, given $C$, $C' \in {\rm {Cos.mixt}}({\rm P}_{fl}(\S))$, there 
are natural maps $\Tot(\rmN(C)) \otimes \Tot (\rmN(C')) \ra \Tot\circ \rmN(C \otimes C') $, 
and $\Tot\circ \rmN(C \otimes C') \ra \Tot(\rmN(C)) \otimes \Tot (\rmN(C'))$,
that are associative (and are in fact quasi-isomorphisms). (Observe that the tensor structure on ${\rm {Cos.mixt}}({\rm P}_{fl}(\S))$
is given by sending $C=\{C^i_j|i, j\}$ and $C'=\{{C'}^i_j|i,j\}$ to $C\otimes C' = \{C^i_j \otimes {C'}^i_j|i, j\}$. The tensor structure 
on complexes is the obvious one.)
\vskip .2cm 
We will now consider 
the statement in (E1). Since the functor $\Lambda^{\it k}_{cs}$ is induced by the functor $\Lambda ^{\it k}$ (as in 
 (~\ref{Lambda.0})), the existence of the corresponding map in (E1) when  
\[\xymatrix{{V_{1}\, } \ar@{>->} @<2pt>[r]& {V_{2}\,} 
\ar@{>->} @<2pt>[r] &{\cdots \, \,} \ar@{>->} @<2pt> [r]&{V_{k}} 
\ar@{>->} @<2pt> [r]&{W_{1}\, }\ar@{>->} @<2pt> [r]& {\cdots \, \,} \ar@{>->} @<2pt> [r] &{W_n}} \]
belongs
to ${\rm {\wS}}_{k+n}{\rm {Cos.mixt}}({\rm P}_{fl}(\S))$ is clear. (In fact, this follows readily from the case where
each $V_i$ and $W_j$ is an $\O_S$-module in ${\rmP}_{fl}(\S)$.) Now one applies the functor $\Tot \circ \rmN$ to both sides and makes
use of the observation ~\ref{tensor.TotN} above to obtain the map in (E1), when 
\[\xymatrix{{V_{1} \, } \ar@{>->} @<2pt>[r]& {V_{2} \,} 
\ar@{>->} @<2pt>[r] &{\cdots \, \,} \ar@{>->} @<2pt> [r]&{V_{k}\,} 
\ar@{>->} @<2pt> [r]&{W_{1}\,} \ar@{>->} @<2pt> [r] &{\cdots \, \,} \ar@{>->} @<2pt> [r] &{W_n}\,}  \in \wS_{k+n}{\rm {Perf}}_{fl, \S_0}(\S).\]
\vskip .2cm
One proves (E2) by first observing the corresponding statement is true when $\Lambda^{\it k}$ is replaced
by $\Lambda^{\it k}_{cs}$ and for  \[\xymatrix{{V_{1}\, } \ar@{>->} @<2pt>[r]& {V_{2}\,} 
\ar@{>->} @<2pt>[r] &{\cdots\, \, } \ar@{>->} @<2pt> [r]&{V_{k}\,} 
\ar@{>->} @<2pt> [r]&{W_{1}\, } \ar@{>->} @<2pt> [r]&{\cdots \, \,} \ar@{>->} @<2pt> [r] &{W_n}}  \in \wS_{k+n}({\rm {Cos.mixt}}({\rm P}_{fl}(\S))).\]
This follows readily from the definition of the functor $\Lambda^{\it k}$ in ~\eqref{Lambda.0}. Next apply
$\Tot \circ \rmN$ and use the observation in ~\ref{tensor.TotN} to obtain the associativity of the maps there.
The remaining assertions (E3) through (E5) are established similarly: one observes these are true for
the functor $\Lambda^{\it k}_{cs}$, and then applies the functor $\Tot \circ \rmN$ along with the observation in
~\ref{tensor.TotN}.
\subsubsection{}
\label{tensor.1}
Let $\otimes: {\rm {\wS_{\bullet}Perf}}_{fl, \S_0}(\S) \times {\rm {\wS_{\bullet}Perf}}_{fl, \S_0}(\S) \ra {\rm {\wS_{\bullet}Perf}}_{fl, \S_0}(\S)$
denote the functor taking two perfect complexes of flat $\O_{\S}$ modules and sends it to their  tensor product. Observe that this preserves cofibrations and weak equivalences
in each argument. 
\subsubsection{}
Following \cite[sections 3 and 5]{Gray} and \cite[sections 1 and 2]{GSch}, we next define for each $A \in \Delta$ and each integer $k \ge 1$, a category (actually a partially ordered set) $\Gamma ^{\it k}(A)$. First one defines $\gamma(A)$ to be the partially ordered
set $\{\rmL, \rmR\} \sqcup A$ with $\rmL <a$, $\rmR <a$ for all $ a \in A$. For $c, d \in A$, we have $c<d$ if $c<d$ in the usual order in $A$. 
In fact, if $A$ is the category $0 \ra 1 \ra 2 \ra \cdots \ra n$, then $ \gamma(A)$ is given by the diagram:
\[ \xymatrix{{L} \ar@<1ex>[dr]\\
              & 0 \ar@<1ex>[r] & {1} \ar@<1ex>[r] & 2 \ar@<1ex>[r] & {\cdots} \ar@<1ex>[r] & n\\
              {R} \ar@<1ex>[ur]}
\]

$\Gamma (A)$ is the category whose objects are the morphisms in the category $\gamma (A)$, except for the identity morphisms
$\rmL \ra \rmL$ and $\rmR \ra \rmR$. The morphism $j \ra i$ will be denoted $i/j$. The morphisms in the category
$\Gamma (A)$, $i'/j' \ra i/j$ are the obvious commutative squares. We define a sequence of categories $\Gamma^k(A)$, for $k \ge 1$, with
$\Gamma ^1(A) = \Gamma (A)$. We take for the objects of $\Gamma^{\it k}(A)$, the collections
$\alpha = (i_1/l_1, *_2, i_2/l_2, *_3,..., *_{\it k}, i_{\it k}/l_{\it k})$, where for each $r$ the following conditions
are satisfied: 
\vskip .2cm
(A1)  $i_r \in \gamma (A)$, $l_r \in \gamma (A)$, and $*_r \in \{ \wedge, \otimes\}$,
\vskip .2cm
(A2) $l_r \le i_r$, $i_r \in A$, and
\vskip .2cm
(A3) if $*_r= \wedge$ and $r>1$, then $l_{r-1} = l_r$ and $i_{r-1} \le i_r$.
\vskip .2cm \noindent
(Note: in \cite[section 2]{GSch}, $\wedge$ ($\otimes$) is replaced by $\diamond$ ($\boxtimes$, \res).)
One defines morphisms and {\it exact sequences} in the category $\Gamma ^{\it k}(A)$ as in \cite[sections 1 and 2]{GSch} or \cite[section 5]{Gray}.
One may call these exact sequences {\it cofibration sequences}.
With this structure, the categories $\Gamma ^{\it k}(A)$ may be viewed as  categories with cofibrations. 
Moreover, one defines a functor 
\be \begin{equation}
\label{Xi}
\Xi: \Gamma(A_1) \times \cdots \times \Gamma (A_{\it k}) \ra \Gamma ^{\it k}(A_1 \cdots A_{\it k}),
\end{equation} \ee
where $A_1 \cdots A_{\it k}$ is the concatenation. (See \cite[2.4]{GSch} or \cite[section 5]{Gray}.)
\vskip .2cm
Now one considers the categories ${\rm {Exact}}(\Gamma ^{\it k}(A), {\rm {Perf}}_{fl, \S_0}(\S))$ of exact functors (that is, functors preserving cofibrations)
$F:\Gamma ^{\it k}(A) \ra {\rm {Perf}}_{fl, \S_0}(\S)$, for $k \ge 1$, $n \ge 0$. One may define the subcategory
${\rm {{\it w}Exact}}(\Gamma ^{\it k}(A), {\rm {Perf}}_{fl, \S_0}(\S))$ to have the same objects as  ${\rm {Exact}}(\Gamma ^{\it k}(A), {\rm {Perf}}_{fl, \S_0}(\S))$, and where a morphism $\phi:F' \ra F$ is a natural transformation, so that  for each object $\gamma \in \Gamma ^{\it k}(A)$, the induced map $\phi(\gamma): F'(\gamma) \ra F(\gamma)$
belongs to the subcategory ${\rm {{\it w}Perf}}_{fl, \S_0}(\S)$. 
As in \cite[section 1, p. 5]{GSch} one
obtains the identification 
\[{\it w}\rmG_A{\rm {Perf}}_{{\it fl}, \S_0}(\S) = {\rm {{\it w}Exact}}(\Gamma (A), {\rm {Perf}}_{{\it fl}, \S_0}(\S)),\]
for each $A \in \Delta$. Next one
defines 
\be \begin{equation}
\label{bigwedge}
\wedge^{\it k}:{\rm { {\it w}Exact}}(\Gamma (A), {\rm {Perf}}_{fl, \S_0}(\S)) \ra {\rm {{\it w}Exact}} (\Gamma ^{\it k}(A), {\rm {Perf}}_{fl, \S_0}(\S))
\end{equation} \ee
\vskip .2cm \noindent
by the same formula as in \cite[section 7]{Gray}. We will recall this here: first, we  denote 
\[\Lambda^{\it k}:Filt_{\it k}({\rm {Perf}}_{fl}(\S)) \ra {\rm {Perf}}_{fl}(\S)\]
applied to 
an object \xymatrix{{\rmK_{0,1}\,} \ar@{>->} @<2pt>[r]& {\rmK_{0,2}\,} \ar@{>->} @<2pt>[r] &{\cdots \, \,} \ar@{>->} @<2pt> [r]&{\rmK_{0,k}}} $ \in Filt_{\it k}({\rm {Perf}}_{fl, \S_0}(\S))$ by 
$\rmK_{0,1} \Lambda \rmK_{0,2} \Lambda \cdots \Lambda \rmK_{0, k}$. If 
 $*_{i}$ denotes
either $\Lambda $ or $\otimes$ for each $1 \le i \le k$, we will let 
$\rmK_{0, 1} *_1 \rmK_{0,2}*_2  \cdots *_{\it k} \rmK_{0,k}$  denote an
iterated product involving $\Lambda$ and $\otimes $ with $\Lambda$ always
having higher precedence than $\otimes$. Let $M \in {\rm {{\it w}Exact}}(\Gamma (A), {\rm {Perf}}_{fl, \S_0}(\S))$. Now $\wedge^{\it k}(M)$ applied to the object 
$(i_1/l_1, *_2, i_2/l_2, \cdots, *_{\it k}, i_{\it k}/l_{\it k}) \in \Gamma ^{\it k}(A)$ is given by 
\[M(i_1/l_1)*_2 M(i_2/l_2) \cdots *_{\it k}M(i_{\it k}/l_{\it k}). \]
\vskip .2cm 
Observe that if $f:\S' \ra \S$ is a map of algebraic stacks, the induced map 
\[f^*: {\rm {\wS_{\bullet}Perf}}_{fl, \S_0}(\S)  \ra {\rm {\wS_{\bullet}Perf}}_{fl, \S_0'}(\S')\]
(where $\S_0' = \S'\times _{\S} \S_0$), 
commutes with $\wedge ^{\it k}$. This follows from the above definition, the observation that
$f^*$ commutes with $\otimes$ and $\Lambda^{\it k}$ (as in ~\ref{Lambda.0}),
as well as from Proposition
~\ref{N.DN} (in Appendix B) which shows it commutes with the functors ${\rm N}$ and ${\rm DN}$. 
\vskip .2cm
Let ${\it w}Exact(\Xi, \quad)$ denote the functor obtained by pre-composing ${\it w}Exact(\quad, \quad)$ and $\Xi$ 
(with $\Xi$ applied to the first factor of ${\it w}Exact(\quad, \quad)$).
On replacing $A$ by the concatenation $A_1...A_{\it k}$, and
following $\wedge ^{\it k}$ by the composition with $Exact(\Xi, \quad)$, we obtain 
\be \begin{equation}
\label{lambda.1}
\lambda ^{\it k}: {\rm {{\it w}Exact}}(\Gamma (A_1... A_{\it k}), {\rm {Perf}}_{fl, \S_0}(\S)) \ra {\rm {{\it w}Exact}}(\Gamma (A_1) \times ... \times \Gamma (A_{\it k}), {\rm {Perf}}_{fl, \S_0}(\S)).
\end{equation} \ee
\vskip .2cm \noindent
(Observe that since all cofibrations are maps of complexes that are degree-wise split injective, the extension
condition in \cite[ 4.3 Definition and 4.4 Remark]{GSch} is satisfied. Therefore, the last term may be identified with ${\rm {{\it w}\rmG}}_{\bullet}^{\it k}{\rm Perf}_{fl}(\S)$, which is the $k$-th iterate of the construction in ~\eqref{G.constr}.)
Therefore, identifying the first term with ${\rm {sub}}_{\it k}{\rm {{\it w}\rmG}}_{\bullet}{\rm {Perf}}_{fl, \S_0}(\S)$ (where ${\rm {sub}}_{\it k}$ denotes the $k$-th subdivision  which produces a multi-simplicial set of order $k$: see \cite[section 4]{Gray}), one obtains the exterior power operation:
\be \begin{equation}
\label{lambda.2}
\lambda^{\it k}: {\rm {sub}}_{\it k}{\rm {\it w}\rmG}_{\bullet}{\rm {Perf}}_{fl, \S_0}(\S) \ra {\rm {{\it w}\rmG}}_{\bullet}^{\it k}{\rm {Perf}}_{fl, \S_0}(\S).
\end{equation} \ee
\vskip .2cm \noindent
It is shown in \cite[section 4]{Gray} that the realization of the first term is homeomorphic to $|{\rm {\wG_{\bullet}Perf}}_{fl, \S_0}(\S)| \simeq \rmK_{\S_0}(\S)$. 
It is shown in \cite[p. 264]{GSVW} that the realization of the last is homeomorphic to $|{\it w}\rmG_{\bullet}{\rm {Perf}}_{\S_0}(\S)| \simeq |\Omega {\rm {\wS_{\bullet}Perf}}_{fl, \S_0}(\S)| \simeq \rmK_{\S_0}(\S)$. Therefore, 
~\eqref{lambda.2} defines
the exterior power operations on $\rmK_{\S_0}(\S)$ as the map:
\be \begin{equation}
\label{lambda.3}
\lambda^{\it k}: |{\rm {\it w}\rmG}_{\bullet}{\rm {Perf}}_{fl, \S_{\rm 0}}(\S)| \ra |{\rm {{\it w}\rmG}}_{\bullet}{\rm {Perf}}_{fl, \S_0}(\S)|.
\end{equation} \ee
\vskip .2cm \noindent

Moreover, the naturality of the above operations shows 
that they are (strictly) compatible with pull-back maps associated to morphisms
$\rmf:\S' \ra \S$ of algebraic stacks, that is, the operations $\lambda^{\it k}$ are compatible with the induced map
\[\xymatrix{{|{\it w}\rmG_{\bullet}({\rm {Perf}}_{\it fl, \S_{\rm 0}}(\S))|} \ar@<1ex>[r]^{|{\it w}\rmG _{\bullet}(f^*)|} & {|{\it w}\rmG_{\bullet}({\rm {Perf}}_{\it fl, \S'_{\rm 0}}(\S'))|} }, \]
 where $ {\S'}_0 = \S' \times_{\S} \S_0$. Therefore, in view of the pull-back square
\be \begin{equation}
 \label{Gwf}
\xymatrix{ {{\it w}\rmG(f)} \ar@<1ex>[d]^{\pi'} \ar@<1ex>[r] & {\rmP(|{\it w}\rmG_{\bullet}({\rm {Perf}}_{\it fl, \S'_{\rm 0}}(\S'))_*|)} \ar@<1ex>[d]^{\pi}\\
           {|{\it w}\rmG_{\bullet}({\rm {Perf}}_{\it fl, \S_{\rm 0}}(\S))|} \ar@<1ex>[r]^{|{\it w}\rmG _{\bullet}(f^*)|} & {|{\it w}\rmG_{\bullet}({\rm {Perf}}_{\it fl, \S'_{\rm 0}}(\S'))|},}
\end{equation} \ee
\vskip .1cm \noindent
and the observation that the map induced by $\lambda^k$ on the path space ${\rmP(|{\it w}\rmG_{\bullet}({\rm {Perf}}_{\it fl, \S'_{\rm 0}}(\S'))_*|)}$ is compatible with the
map $\pi$, one obtains induced maps 
\be \begin{equation}
 \label{lambda.on.ho.fiber}
\lambda^k: {{\it w}\rmG(f)} \ra {{\it w}\rmG(f)},
\end{equation} \ee
\vskip .1cm \noindent
compatible under the map $\pi'$ with the corresponding operation $\lambda ^k$ on ${|{\it w}\rmG_{\bullet}({\rm {Perf}}_{fl, \S_0}(\S))|}$.
Taking the map $\rmf$ to be the closed immersion $\S' \ra \S$ of algebraic stacks, shows one may define 
exterior power operations in relative K-theory, that is, on $\rmK (\S, \S')$ which is defined as the canonical homotopy fiber of
the restriction map $\rmK (\S) \ra \rmK (\S')$. 
\vskip .2cm
We proceed to verify these satisfy the usual relations so that $\pi_*\rmK_{\S_0}(\S)$ is a pre-$\lambda$-ring without unit 
 when $\S$ is smooth. For this it is necessary to define the pullback squares, for each fixed $k  \ge 1$:
\be \begin{equation}
 \label{Gwf.k.1}
\xymatrix{ {{\it w}\rmG_k(f)} \ar@<1ex>[d]^{\pi'} \ar@<1ex>[r] & {\rmP(|{\rm sub}_{\it k}{\it w}\rmG_{\bullet}({\rm {Perf}}_{\it fl, \S'_{\rm 0}}(\S'))_*|)} \ar@<1ex>[d]^{\pi}\\
           {|{\rm sub}_k{\it w}\rmG_{\bullet}({\rm {Perf}}_{\it fl, \S_{\rm 0}}(\S))|} \ar@<1ex>[r]^{|{\rm sub}_k{\it w}\rmG _{\bullet}(f^*)|} & {|{\rm sub}_k{\it w}\rmG_{\bullet}({\rm {Perf}}_{\it fl, \S'_{\rm 0}}(\S'))|} \mbox{ and }}
\end{equation} \ee
\vskip .1cm \noindent
\be \begin{equation}
 \label{Gwf.k.2}
\xymatrix{ {{\it w}\rmG^k(f)} \ar@<1ex>[d]^{\pi'} \ar@<1ex>[r] & {\rmP(|{\it w}\rmG^k_{\bullet}({\rm {Perf}}_{\it fl, \S'_{\rm 0}}(\S'))_*|)} \ar@<1ex>[d]^{\pi}\\
           {|{\it w}\rmG^k_{\bullet}({\rm {Perf}}_{\it fl, \S_{\rm 0}}(\S))|} \ar@<1ex>[r]^{|{\it w}\rmG^k _{\bullet}(f^*)|} & {|{\it w}\rmG^k_{\bullet}({\rm {Perf}}_{\it fl, \S'_{\rm 0}}(\S'))|}.}
\end{equation} \ee
\vskip .1cm \noindent
Here ${\rm sub}_k{\it w}\rmG_{\bullet}({\rm {Perf}}_{\it fl, \S'_{\rm 0}}(\S'))_*$ and ${\it w}\rmG^k_{\bullet}({\rm {Perf}}_{\it fl, \S'_{\rm 0}}(\S'))_*$
denote the corresponding path component containing the base point.
Then $\lambda^k$ defines a map from the bottom two vertices and the top right vertex of the first square to the corresponding vertices of the second square, making the corresponding diagrams commute, so
that one obtains induced maps:
\be \begin{equation}
 \label{lambda.on.ho.fiber.1}
\lambda^k: {{\it w}\rmG_k(f)} \ra {{\it w}\rmG^k(f)}.
\end{equation} \ee
Making use of the observation that the vertices of the square ~\eqref{Gwf.k.1} are homeomorphic to the corresponding vertices of 
the square ~\eqref{Gwf} and further observing that the vertices of the square ~\eqref{Gwf.k.2} are weakly-equivalent to the
corresponding vertices of the square ~\eqref{Gwf}, one sees that the maps $\lambda^k$ in ~\eqref{lambda.on.ho.fiber.1}
are variants of the same maps $\lambda^k$ considered in ~\eqref{lambda.on.ho.fiber}, but able to handle $k$-different arguments,
$(\rmK_1, \cdots, \rmK_k)$ of complexes in ${\rm {Perf}}_{fl, \S_0}(\S))$.
Moreover the above extension of the operations $\lambda^k$ enable us to verify these satisfy the usual relations, so that we will show $\pi_*\rmK_{\S_0}(\S)$ is a pre-$\lambda$-ring without unit.
 Making use of the above observations and Example ~\ref{eg}, 
we are able to reduce this to verifying the above 
relations hold in certain relative
Grothendieck groups.
\vskip .2cm
Let $\S$ denote a given smooth algebraic stack over the given base scheme $\rmS$. 
Let 
\[\Delta[n] = Spec ({\mathcal O}_{
S}[x_0,..., x_n]/(\Sigma _i x_i -1)),\]
let $\delta^i \Delta [n]$ denote its $i$-th face, and let $\delta \Delta [n]$ denote its boundary that is, $\cup _{i=0}^n \delta^i \Delta [n]$. 
 The relative
K-theory space 
 $\rmK_{\S' \times \Delta[n]}(\S \times \Delta [n], \S \times  {\underset {i=0, \cdots, k} \cup} \delta^i \Delta [n])$ 
 is defined as $w\rmG({\it i}_{\it k}^*)$, where 
 \be \begin{equation}
 \label{i*k}
 i_k^*: {\rm Perf}_{{\it fl},\S' \times \Delta[n]}(\S \times \Delta[n]) \ra {\rm Perf}_{{\it fl},\S' \times  \Delta[n]}(\S \times   {\underset {i=0, \cdots, k} \cup} \delta^i \Delta[n]) 
 \end{equation} \ee
 is the functor 
 of categories with cofibrations and weak equivalences  induced by the closed immersion 
 \[ \S \times {\underset {i=0, \cdots, k} \cup}\delta^i \Delta[n] \ra \S \times \Delta [n]. \]
 
\begin{lemma}
\label{red.pi0}
$\rmK_{\S' \times \Delta [n]}(\S \times \Delta [n], \S \times  ({\underset {i=0, \cdots, k} \cup} \delta^i \Delta [n]))$ is contractible for all $n$ and all $0 \le k  \le n-1$.
\end{lemma}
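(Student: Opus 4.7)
The plan is to proceed by induction on the pair $(n,k)$ with $k \le n-1$, ordered by $n+k$. The base case is $k = 0$: here $\S \times \delta^0\Delta[n]$ sits inside $\S \times \Delta[n]$ as a linear hyperplane, since algebraically $\Delta[n] \cong \mathbb{A}^n$ and $\delta^0\Delta[n] \cong \mathbb{A}^{n-1}$. The obvious algebraic projection $\pi: \S \times \Delta[n] \to \S \times \delta^0\Delta[n]$ splits the inclusion $i_0$, and the pullback $\pi^*$ is a weak equivalence on K-theory by the homotopy property (Theorem~\ref{loc.homotopy.prop}(iii) combined with the K-G comparison of Theorem~\ref{K=G}, since $\S$ is smooth). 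Since $i_0^*\circ\pi^* = \operatorname{id}$, the restriction $i_0^*$ is also a weak equivalence, so $w\rmG(i_0^*)$ is contractible.

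For the inductive step with $k \ge 1$: from the chain of closed immersions $\S \times Y_{k-1} \hookrightarrow \S \times Y_k \hookrightarrow \S \times \Delta[n]$ (writing $Y_j = \bigcup_{i=0}^j \delta^i\Delta[n]$), the standard octahedral identification of homotopy fibers produces a fiber sequence
\[
w\rmG(i_k^*) \longrightarrow w\rmG(i_{k-1}^*) \longrightarrow w\rmG(j^*),
\]
where $j^*:\rmK(\S \times Y_k) \to \rmK(\S \times Y_{k-1})$ is restriction along $\S \times Y_{k-1} \hookrightarrow \S \times Y_k$. The middle term is contractible by the inductive hypothesis applied to $(n,k-1)$. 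For the right-hand term, I use the decomposition $Y_k = Y_{k-1} \cup \delta^k\Delta[n]$ with intersection $Z = Y_{k-1}\cap \delta^k\Delta[n]$, and invoke an excision identification $w\rmG(j^*) \simeq w\rmG(j_Z^*)$, where $j_Z:\S \times Z \hookrightarrow \S \times \delta^k\Delta[n]$. Under the natural identification $\delta^k\Delta[n] \cong \Delta[n-1]$, one checks that $Z$ corresponds to $\bigcup_{i=0}^{k-1}\delta^i\Delta[n-1]$, so that $w\rmG(j_Z^*)$ is contractible by the inductive hypothesis at $(n-1,k-1)$ (which is available since $k \le n-1$ gives $k-1 \le n-2$). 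This closes the induction.

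The main obstacle will be justifying the excision identification $w\rmG(j^*) \simeq w\rmG(j_Z^*)$, because neither $Y_{k-1}$ nor $Y_k$ is smooth, so the G-theory localization of Theorem~\ref{loc.homotopy.prop}(ii) and the K-G comparison of Theorem~\ref{K=G} do not apply directly to the spaces involved. The plan is to exhibit the square formed by the restriction functors between $\rmK(\S\times Y_k)$, $\rmK(\S\times Y_{k-1})$, $\rmK(\S\times \delta^k\Delta[n])$, and $\rmK(\S\times Z)$ as a homotopy pullback of K-theory spaces of perfect complexes. This should follow from a Mayer--Vietoris-style gluing: a perfect complex on $Y_k$ is equivalent data to compatible perfect complexes on $Y_{k-1}$ and on $\delta^k\Delta[n]$ together with an isomorphism of their restrictions to the transverse intersection $Z$. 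The justification uses the functorial flat resolutions of Lemma~\ref{flat.res} (to perform the gluing at the level of flat complexes) together with the Waldhausen approximation theorem to transfer the equivalence back to K-theory of perfect complexes.
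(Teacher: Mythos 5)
Your proof is essentially the paper's proof. The paper also argues by induction, handles the base case $k=0$ via the $\rmK = \rmG$ identification for the smooth pair $(\S \times \Delta[n], \S \times \delta^0\Delta[n])$ together with the homotopy invariance of $\rmG$-theory, and for the inductive step asserts the fiber sequence
\[
\rmK_{\S'\times\Delta[n]}(\S\times\Delta[n],\S\times Y_k)\ra \rmK_{\S'\times\Delta[n]}(\S\times\Delta[n],\S\times Y_{k-1})\ra \rmK_{\S'\times\delta^k\Delta[n]}(\S\times\delta^k\Delta[n],\S\times Z),
\]
which is exactly your octahedral step combined with your excision identification $w\rmG(j^*)\simeq w\rmG(j_Z^*)$. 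So you have not taken a different route, but you have made explicit what the paper leaves tacit: its fiber sequence is really a Mayer--Vietoris/excision claim along the Milnor square $Y_k = Y_{k-1}\cup\delta^k\Delta[n]$, and since $Y_{k-1}$ and $Y_k$ are singular, this does not follow directly from Theorem~\ref{K=G} or from the $\rmG$-theory localization of Theorem~\ref{loc.homotopy.prop}(ii). Your flagging of this as the crux is correct.

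One caution about your proposed fix: establishing that $\mathrm{Perf}_{fl}(\S\times Y_k)$ is the strict pullback of the Waldhausen categories over $\S\times Y_{k-1}$, $\S\times\delta^k\Delta[n]$, $\S\times Z$ (via Milnor patching for flat modules) does \emph{not} by itself imply the induced square of $\rmK$-theory spaces is homotopy cartesian; $\rmK$-theory does not send pullbacks of Waldhausen categories to homotopy pullbacks, and precisely this is the classical failure of excision for Milnor squares of rings. To close the argument you would instead want to identify both $w\rmG(j^*)$ and $w\rmG(j_Z^*)$, via the localization theorem~\ref{loc.thm}, with the $\rmK$-theory of the Waldhausen category of perfect complexes on $\S\times Y_k$ (resp.\ $\S\times\delta^k\Delta[n]$) that become acyclic on $\S\times Y_{k-1}$ (resp.\ $\S\times Z$), and then show via the approximation theorem~\ref{W.app.thm} that restriction along $\S\times\delta^k\Delta[n]\hookrightarrow\S\times Y_k$ induces an equivalence of the associated derived categories. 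The gluing observation is a useful input to that last step, but is not a substitute for it.
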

\begin{proof} One proves this using ascending induction on $k$. Observe that $\Delta [n]$ is isomorphic to the affine space ${\mathbb A}^n _{\S}$. 
The case $k=0$ follows from the fact $\S \times \Delta [n]$ and $\S \times \delta ^0 \Delta [n] \cong \S \times \Delta [n-1]$ are smooth, and therefore 
$\rmK_{S' \times \Delta [n]}(\S \times \Delta [n], \S \times \delta ^0 \Delta [n]) \simeq G(\S' \times \Delta [n], \S' \times \Delta [n-1])$, and because G-theory has been shown to have the homotopy property. (See  \cite[Theorem 5.17]{J-6}.) To continue the induction, one uses the fibration sequence: 
\vskip .2cm
$ \rmK_{\S' \times \Delta [n]}(\S \times \Delta [n], \S \times {\underset {i=0, \cdots, k} \cup}\delta ^i \Delta [n]) \ra \rmK_{\S' \times \Delta [n]}( \S \times \Delta [n], \S \times {\underset {i=0, \cdots, k-1} \cup}\delta ^i \Delta [n]) $
\vskip .1cm
$ \ra \rmK_{\S' \times \delta ^{\it k}\Delta [n]}(\S \times \delta ^{\it k} \Delta [n], \S \times ((\delta ^0 \Delta [n] \cup \cdots \cup \delta ^{k-1} \Delta [n]) \cap  \delta ^{\it k} \Delta [n])) \cong $
\vskip .1cm
$\rmK_{\S' \times \Delta [n-1]}(\S \times \Delta [n-1], \S \times ({\underset {i=0, \cdots, k-1} \cup}\delta ^i \Delta [n-1]))$.
\vskip .2cm \noindent
The last two terms are contractible by the inductive hypothesis, so that the first one is also. 
\end{proof}
\vskip .1cm
Let
\be \begin{equation}
i^*:  {\rm Perf}_{{\it fl},\S' \times \Delta[n]}(\S \times \Delta[n])  \ra  {\rm Perf}_{{\it fl},\S' \times \delta \Delta[n]}(\S \times \delta \Delta[n])
\end{equation} \ee
denote the functor induced by the closed immersion $\S \times \delta \Delta[n] \ra \S \times \Delta [n]$.
\begin{proposition}
\label{reduct.pi0}
One obtains the isomorphism:
\[\pi_n\rmK_{\S'}(\S) \cong \pi_0\rmK_{\S' \times \Delta [n]}(\S \times \Delta [n], \S \times \delta \Delta [n]) \cong \pi_0({\it w}\rmG({\it i}^*)).\]
\end{proposition}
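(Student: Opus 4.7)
The second isomorphism is essentially by definition: the relative K-theory space $\rmK_{\S' \times \Delta[n]}(\S \times \Delta[n], \S \times \delta\Delta[n])$ was defined as $w\rmG({\it i}^*)$, so passing to $\pi_0$ gives the claim for free. The content is therefore the first isomorphism, $\pi_n\rmK_{\S'}(\S) \cong \pi_0\rmK_{\S' \times \Delta[n]}(\S \times \Delta[n], \S \times \delta\Delta[n])$, which will follow from the stronger assertion that these two spaces are weakly equivalent up to $n$-fold looping, namely
\[
\rmK_{\S' \times \Delta[n]}(\S \times \Delta[n], \S \times \delta\Delta[n]) \;\simeq\; \Omega^{n}\rmK_{\S'}(\S).
\]

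The plan is to prove this by induction on $n$, iterating the fibration sequence already recorded in the proof of Lemma~\ref{red.pi0}. Applied with $k=n$ (so that $\Sigma_n = \delta\Delta[n]$), it reads
\begin{align*}
\rmK_{\S'\times\Delta[n]}(\S\times\Delta[n], \S\times\delta\Delta[n]) &\to \rmK_{\S'\times\Delta[n]}(\S\times\Delta[n], \S\times\Sigma_{n-1}) \\
&\to \rmK_{\S'\times\delta^{n}\Delta[n]}(\S\times\delta^{n}\Delta[n], \S\times(\Sigma_{n-1}\cap\delta^{n}\Delta[n])).
\end{align*}
The middle term is contractible by Lemma~\ref{red.pi0} (since $n-1$ falls in the allowed range $0\le k\le n-1$), and the standard identifications $\delta^{n}\Delta[n]\cong\Delta[n-1]$ and $\Sigma_{n-1}\cap\delta^{n}\Delta[n] = \delta\Delta[n-1]$ identify the last term with $\rmK_{\S'\times\Delta[n-1]}(\S\times\Delta[n-1], \S\times\delta\Delta[n-1])$. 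Thus the fibration collapses to a homotopy equivalence
\[
\rmK_{\S'\times\Delta[n]}(\S\times\Delta[n], \S\times\delta\Delta[n]) \;\simeq\; \Omega\,\rmK_{\S'\times\Delta[n-1]}(\S\times\Delta[n-1], \S\times\delta\Delta[n-1]).
\]

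For the base case $n=0$, one has $\Delta[0]=\mathrm{Spec}(\O_{\rmS})$ and $\delta\Delta[0]=\emptyset$, so the relative K-theory space degenerates to $\rmK_{\S'}(\S)$. Iterating the displayed equivalence $n$ times down to the base case yields $\rmK_{\S'\times\Delta[n]}(\S\times\Delta[n], \S\times\delta\Delta[n])\simeq\Omega^{n}\rmK_{\S'}(\S)$, and passage to $\pi_{0}$ gives the desired isomorphism with $\pi_{n}\rmK_{\S'}(\S)$.

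The only delicate point, and the one requiring Lemma~\ref{red.pi0}, is the contractibility of the middle term at each stage; everything else is a formal consequence of the localization fibration of Theorem~\ref{loc.homotopy.prop}(ii) applied in the relative setting (made accessible by the smoothness of $\S$, which via Theorem~\ref{K=G} identifies $\rmK$ with $\rmG$ so that the localization and homotopy properties apply). The argument is exactly the reduction of higher K-theory to a relative Grothendieck group that is alluded to in the introduction as the Bloch-Lichtenbaum technique, and it is precisely this reduction that will later allow us to define $\lambda$-operations on $\pi_{n}\rmK_{\S'}(\S)$ by first defining them on the right-hand side $\pi_{0}({\it w}\rmG({\it i}^*))$.
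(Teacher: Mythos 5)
Your proof is correct and takes essentially the same route as the paper: both rely on the fibration sequence recorded in the proof of Lemma~\ref{red.pi0} (with $\Sigma = \cup_{i=0}^{n-1}\delta^i\Delta[n]$), the contractibility of the middle term for $k \le n-1$, and iteration down to the base case $n=0$; the paper phrases the iteration as an isomorphism of homotopy groups $\pi_{k-1}$ of the level-$n$ term with $\pi_k$ of the level-$(n-1)$ term, while you state it as the delooping $\rmK_{\S'\times\Delta[n]}(\ldots) \simeq \Omega\,\rmK_{\S'\times\Delta[n-1]}(\ldots)$, which is the same statement.
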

\begin{proof}
The key idea is the observation that one obtains a fibration sequence:
\vskip .2cm
$ \rmK_{\S' \times \Delta [n]}(\S \times \Delta [n], \S \times \delta \Delta [n]) \ra \rmK_{\S' \times \Delta [n]}(\S \times \Delta [n], \S \times \Sigma ) \ra \rmK_{\S' \times \Delta [n-1]}(\S \times \Delta [n-1], \S \times \delta \Delta [n-1]),$
\vskip .2cm \noindent
where $\Sigma = {\underset {i=0, \cdots, n-1} \cup} \delta ^i \Delta [n]$, the last map is the restriction to the
last face of $\Delta [n]$, and the first map is the obvious inclusion of the fiber. The middle term is 
contractible by the above lemma, so that the long exact sequence associated to the above fibration provides us with 
an isomorphism
\vskip .2cm
$\pi_{k-1}(\rmK_{\S' \times \Delta[n]}(\S \times \Delta [n], \S \times \delta \Delta [n])) \cong \pi_{k}(\rmK_{\S' \times \Delta [n-1]}(\S \times \Delta [n-1], \S \times \delta \Delta [n-1]).$
\vskip .2cm \noindent
Repeating this $n$-times, we obtain the first isomorphism in the Proposition. The second isomorphism in the Proposition follows from the fact ${\it w}\rmG({\it i}^*)$ is the
homotopy fiber of the $\rmG$-construction applied to the functor $i^*$ defined as in ~\eqref{Gwf.1} and ~\eqref{Gwf}.
\end{proof}
\begin{remarks}
 \begin{enumerate}[\rm(i)]
\item Lemma ~\ref{red.pi0} and Proposition ~\ref{reduct.pi0} are clearly inspired by \cite[Lemmas (1.2.1) and (1.2.2)]{BL}, which play
a key role in the construction of the Bloch-Lichtenbaum spectral sequence.
\vskip .1cm
\item
The product structure on relative $K$-theory 
may now be viewed as the pairing: 
\begin{multline}
 \begin{split}
  \label{pairing}
\pi_0\rmK_{\S' \times \Delta [n]}(\S \times \Delta[n], \S \times \delta \Delta[n]) \otimes \pi_0\rmK_{\S' \times \Delta [m]}(\S \times \Delta[m], \S \times \delta \Delta[m])\\
\ra \pi_0\rmK_{\S' \times \Delta[n] \times \Delta [m]}(\S \times \Delta[n] \times \Delta[m], \S \times (\delta \Delta[n] \times \Delta[m] \cup \Delta[n] \times \delta \Delta [m]))\\
\cong  \pi_0\rmK_{\S' \times \Delta [n+m]}(\S \times \Delta[n+m], \S \times \delta \Delta[n+m]). 
\end{split}
\end{multline}
\end{enumerate}
\end{remarks}
\vskip .2cm \noindent
{\bf Proof of Theorem ~\ref{mainthm}(i) and (ii).} 
Recall  $\S$ denotes a smooth algebraic stack with $\S'$ a closed algebraic sub-stack. 
\vskip .1cm
Then we need to show
{\it $\pi_0(\rmK (\S))$ is a pre-$\lambda$-ring, and that
each
$\pi_n(\rmK_{\S'}(\S))$ is a pre-$\lambda$-algebra over the pre-$\lambda$-ring $\pi_0(\rmK (\S))$}. Moreover, we need to show {\it the pre-$\lambda$-algebra structure is compatible with pull-backs}: that is, if $f: \tilde \S \ra \S$ is a map of smooth algebraic stacks and $\tilde \S' = \tilde S {\underset {\S} \times} \S'$,  
then the induced map $f^*:\pi_0(\rmK (\S)) \ra \pi_0(\rmK (\tilde \S))$ is a map of pre-$\lambda$-rings, and the induced map 
$f^*: \pi_n(\rmK_{\S'}(\S)) \ra \pi_n(\rmK_{\tilde S'}(\tilde \S))$ is a map of  pre-$\lambda$-algebras over $\pi_0(\rmK (\S))$, for each fixed $n \ge 0$.
In addition, we need to show {\it the $\lambda$-operations are homomorphisms on $\pi_n(\rmK_{\S'}(\S))$ for all $n>0$}.
\vskip .2cm
Observe first that the isomorphisms in the last Proposition are compatible with respect to the
$\lambda$-operations defined in ~\eqref{lambda.2}: this follows from the naturality of these operations with respect
to pull-backs. In fact, one may verify readily that one has the following homotopy commutative diagram of fibration
sequences:
\[\xymatrix{
{\rmK_{\S' \times \Delta [n]}(\S \times \Delta [n], \S \times \delta \Delta [n])} \ar@<1ex>[r] \ar@<-1ex>[d]^{\lambda ^{\it k}} & {\rmK_{\S' \times \Delta [n]}(\S \times \Delta [n], \S \times \Sigma )} \ar@<1ex>[d]^{\lambda ^{\it k}}  \\
{\rmK_{\S' \times \Delta [n]}(\S \times \Delta [n], \S \times \delta \Delta [n])} \ar@<1ex>[r] & 
{\rmK_{\S' \times \Delta [n]}(\S \times \Delta [n], \S \times \Sigma)} }\]
\[{\xymatrix{{}  \ar@<1ex>[r] & {\rmK_{\S' \times \Delta [n-1]}(\S \times \Delta [n-1], \S \times \delta \Delta [n-1] )} \ar@<1ex>[d]^{\lambda ^{\it k}}\\
{} \ar@<1ex>[r] & {\rmK_{\S' \times \Delta [n-1]}(\S \times \Delta [n], \S \times \delta \Delta [n-1])}.}}\]
Therefore, it follows that the $\lambda$-operations are compatible with the boundary maps of the corresponding
long exact sequence of homotopy groups. Using the isomorphism in Proposition ~\ref{reduct.pi0}, it suffices to show that
$\pi_0\rmK_{\S' \times \Delta [n]}(\S \times \Delta [n], \S \times \delta \Delta [n])$ is a pre-$\lambda$-ring without unit, and for
$\S'= \S$ and $n=0$, it is a pre-$\lambda$-ring. This may be done as for vector bundles: that is, the proof of this theorem follows along the same lines as the proof in \cite[section 8]{Gray}. Here are some details.
\vskip .2cm
Using the identification of $\pi_0(\rmK_{ \S' \times \Delta [n]}(\S \times \Delta [n], \S \times \delta \Delta [n]))$ as 
$\pi_0({\it w}\rmG({\it i}^*))$, the definition of ${\it w}\rmG({\it i}^*)$ as in ~\eqref{Gwf}
shows that a connected component of ${\it w}\rmG({\it i}^*)$ corresponds to a pair of perfect complexes $V$ and $W$ on $S \times \Delta[n]$, with supports contained in $\S' \times \Delta[n]$
together with a zig-zag path (as in ~\eqref{zig.zag.path}) 
$p$ joining the restriction $(i^*(V), i^*(W))$ to the base point, namely the pair $(0, 0)$ in $w\rmG(Perf_{{\it fl}, \S' \times \delta \Delta [n]}(\S\times \delta \Delta[n])$.
 This pair corresponds to the difference $[V]-[W]$ in the above Grothendieck group. We will begin with the special case where $W=0$.
\vskip .2cm
Viewing $\lambda^k$ as a map ${\it w}\rmG_{\it k}({\it i}^*) \ra {\it w}\rmG^{\it k}({\it i}^*) \simeq {\it w}\rmG({\it i}^*)$, we see that
\be \begin{equation}
\label{compat}
[\lambda^{\it k}(V)] = [\Lambda^{\it k}(V)], 
\end{equation} \ee
\vskip .2cm \noindent
where $\Lambda^{\it k}$ denotes the functor defined in ~\eqref{Lambda.1} and $[\Lambda^{\it k}(V)]$, $[\lambda^k(V)]$ denote
the corresponding classes 
in  $\pi_0({\it w}\rmG({\it i}^*)) =\pi_0(\rmK_{\S' \times \Delta [n]}(\S \times \Delta [n], \S \times \delta \Delta [n]))$. This follows from the observations as in \cite[section 8]{Gray}, but we will
provide the relevant details. The vertices of $sub_k{\it w}\rmG(i^*)$ correspond to pairs $(\rmV, \rm W)$ of perfect complexes on $\S \times \Delta[n]$ with
 supports contained in $\S' \times \Delta[n]$ (together with a zig-zag path {\it p} (as in the last paragraph) joining the restriction 
 $(i^*(V), i^*(W))$ to $(0,0)$), positioned at the vertices of a $k$-dimensional cube. Then the multi-simplicial map
 $\lambda ^k$ sends such a vertex $(V, W)$ to a sequence, each term of which is of the form 
 $\Lambda ^a(V) \otimes \Lambda^{b_1}(W) \otimes \cdots \otimes \Lambda ^{b_u}(W)$, for some choice of $a, b_1, \cdots, b_u \ge 0$, so that
 $a+b_1+ \cdots + b_u = k$. When $W=0$ as we have chosen, then this has only one nonzero term, namely $\Lambda^k(V)$.
\vskip .2cm 
Next let \xymatrix{{K'} \quad \ar@{>->} @<2pt>[r] & K} denote a cofibration in ${\rm {Perf}}_{fl, \S' \times \Delta[n]}(\S\times \Delta [n])$, so that together with the choice of paths
 joining their restrictions to $(0, 0)$, both
belong to ${\it w}\rmG({\it i}^*)$. Then one obtains the following formula in $\pi_0({\it w}\rmG({\it i}^*))$:
\be \begin{equation}
\label{lambda.on.sum}
[\lambda^m(K )] = \Sigma _{\rm k=0}^m [\lambda ^{\it k}({K'}) \otimes \lambda^{m-k}(K/K')] = \Sigma _{\rm k=0}^m [\lambda ^{\it k}({K'})]\cdotp [\lambda^{m-k}(K/K')]
\end{equation} \ee
\vskip .2cm \noindent
with the understanding that $\lambda^0(K') \cdotp \lambda^m(K/K') = \lambda^m(K/K')$ and $\lambda^m(K') \cdotp \lambda^0(K/K') = \lambda^m(K')$. In view of ~\eqref{compat} above, it suffices to prove this with $\lambda^{\it k}(K )$ replaced by $\Lambda^{\it k}(K )$. This holds 
by repeatedly applying Corollaries ~\ref{additivity.0}, ~\ref{additivity.cor} and (E5) by taking, $m=k+l+1$,  first with $k=0$, $W_1=K'$, $W_2=K = U_j$, $j=1, ..., l$, which gives
\[[\Lambda^m({\overset {m} {\overbrace {K, K, \cdots, K}}})] = [\Lambda^m({\overset {m} {\overbrace {K', K, \cdots, K}}})] + [\Lambda^m({\overset {m} {\overbrace {K/K', K/K', \cdots, K/K'}}})].\]
Then
with $k=1$, $V_1 =K'$,  $W_1=K'$, $W_2=K  =U_j$, $j=1, ..., l-1$, enables us to obtain
\[[\Lambda^m({\overset {m} {\overbrace {K', K, \cdots, K}}})] = [\Lambda^1(K') \otimes \Lambda^{m-1}({\overset {m-1} {\overbrace {K/K', K/K', \cdots, K/K'}}})]+ [\Lambda^m({\overset {m} {\overbrace{K', K', K, \cdots, K}}})], \cdots ,\]
ending with
\[[\Lambda^m({\overset {m-1} {\overbrace {K', \cdots, K'}}}, K)] = [\Lambda^{m-1}({\overset {m-1} {\overbrace {K', \cdots, K'}}})\otimes \Lambda^1(K/K')] + [\Lambda^m({\overset {m} {\overbrace {K', \cdots, K'}}})].\]
 Moreover, in view of ~\eqref{compat}, one observes readily that if $n=0$,  $\Lambda ^0(K) = {\mathcal O}_{\S}$ and in general, $\Lambda ^1(K) = K$, $ K \in \pi_0\rmK_{\S' \times \Delta[n]}(\S \times \Delta[n], \S \times \delta \Delta [n])$.
\vskip .2cm
\subsubsection{}
\label{neg.classes}
At this point we make the following important observation. Given a  perfect complex $K$ on $S\times \Delta[n]$ and acyclic on 
$(\S-\S') \times \Delta[n]$, and so that the pair $(K, 0)$ denotes a class in $\pi_0({\it w}\rmG({\it i}^*))$, the canonical construction of its cone (that is, the
mapping cone of the identity map $K \ra K$) along with Theorem ~\ref{add.thm} (see also Corollary ~\ref{additivity.cor})  shows that
the class of $K[1]$ denotes the additive inverse of the class of $K$ in the above Grothendieck group. (Given a perfect complex
$K$ in ${\rm {Perf}}_{fl, \S' \times \Delta[n]}(\S\times \Delta [n])$, so that $(K, 0)$ represents a class in
$\pi_0({\it w}\rmG({\it i}^*))$, we will
let $[K]$ denote its class in the above Grothendieck group.)
Therefore, given two perfect complexes $K', K$ in ${\rm {Perf}}_{fl, \S' \times \Delta[n]}(\S\times \Delta [n])$, so that $(K, 0)$ and $(0, K')$ represent classes in
$\pi_0({\it w}\rmG({\it i}^*))$, the class $[K]-[K']$ in the above Grothendieck group is represented
by the class of the perfect complex $K\oplus K'[1]$. It follows that the identity in ~\eqref{lambda.on.sum} 
suffices to prove 
the identity 
\[\lambda ^n(r+s) = \Sigma_{i=0}^n \lambda ^i(r) \cdotp \lambda^{n-i}(s)\]
holds for all $r, s$ in the  group $\pi_0\rmK_{\S' \times \Delta [n]}(\S \times \Delta [n], \S \times \delta \Delta [n]) \cong \pi_n\rmK_{\S'}(\S)$, with the
understanding that $\lambda^0(r) \cdotp \lambda^n(s) = \lambda^n(s)$ and $\lambda^n(r) \cdotp \lambda^0(s) = \lambda^n(r)$.
These observations prove that there is
the structure of a pre-$\lambda$-ring without unit on each $\pi_n(\rmK_{\S'}(\S)) \cong \pi_0(\rmK_{\S' \times \Delta[n]}(\S \times \Delta [n], \S \times \delta \Delta [n]))$
and the structure of a pre-$\lambda$-ring on $\pi_0(\rmK (\S))$ (that is, on $\pi_n(\rmK_{\S'}(\S)) \cong \pi_0(\rmK_{\S' \times \Delta[n]}(\S \times \Delta [n], \S \times \delta \Delta [n]))$, when $n=0$ and $\S' = \S$).
\vskip .2cm
 In view of these observations, one may define a pre-$\lambda$ algebra structure on $\pi_0(\rmK (\S)) \oplus \pi_n(\rmK_{\S'}(\S))$ by defining
$\lambda^m$ on $\pi_0(\rmK (\S)) \oplus \pi_n(\rmK_{\S'}(\S))$ by $ \lambda ^m(r, s) =
(\lambda ^m(r), \Sigma _{i=0}^{m-1}\lambda ^i(r)  \cdotp \lambda^{m-i}(s))$. (See Lemma ~\ref{prelambda.alg} below.) Observe that
each $\pi_n(\rmK_{S'}(\S))$ has the structure of a module over $\pi_0(\rmK (\S))$ in the obvious manner using the tensor product of perfect complexes.
\vskip .2cm 
 The naturality with
respect to pull-back is clear from the construction. It may be also important to point out the following: the product
structure on each $\pi_n\rmK_{\S'}(\S)$ is trivial for all $n>0$. This is because this product structure is defined making use
of the pull-back by the diagonal map $\Delta_1: \S\times \Delta [n] \ra \S \times \Delta [n] \times \S \times \Delta [n]$, and
involves the co-H-space structure on $S^n \simeq \Delta[n]/(\delta \Delta[n])$: see \cite[Lemme 5.2]{Kr-2}. (This is distinct from
the product $\pi_n(\rmK_{\S'}(\S)) \otimes \pi_m(\rmK_{\S'}(\S)) \ra \pi_{n+m}(\rmK_{S'}(\S))$, which makes use of pull-back by the
diagonal, $ \Delta_2: \S \times \Delta [n] \times \Delta[m] \ra (\S \times \Delta [n]) \times (\S \times \Delta [m])$.) In view of
this observation, the $\lambda$-operations are all {\it homomorphisms} on $\pi_n(\rmK_{\S'}(\S))$, for all $n>0$.
\vskip .1cm
In view of the following lemma, these prove the first two statements of Theorem ~\ref{mainthm}.
\begin{lemma}
\label{prelambda.alg}
 Let $\rmR$ denote a pre-lambda ring and $\rmS$ an $\rmR$-module, so that it is also provided with the structure of
 a pre-lambda ring without unit. Then $\rmR \oplus \rmS$ has the structure of a pre-lambda ring, where
 \vskip .2cm
\be \begin{align}
 \label{lambda.def}
(r, s) + (r', t) &=(r+r', s+t), \\
(r, s) \circ (r', t) &= (r\cdotp r', r\cdotp t+r'\cdotp s+s \cdotp t), \mbox{ and} \notag \\
\lambda ^n(r,s) &= (\lambda_{\rmR}^n(r), \Sigma_{i=0}^{n-1}\lambda_{\rmR}^i(r).\lambda_{\rmS}^{n-i}(s)), \notag
\end{align} \ee
and where $\lambda^i_{\rmR}$ ($\lambda^j_{\rmS}$) denote the pre-lambda operations of $\rmR$ ($\rmS$, \res).
\end{lemma}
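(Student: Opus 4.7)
The plan is to verify the three defining axioms of a pre-$\lambda$-ring (Definition ~\ref{def1.1}) for the triple $(\rmR \oplus \rmS, +, \circ, \lambda^n)$ defined in ~\eqref{lambda.def}. First I would check the ring axioms: the distinguished element $(1_{\rmR}, 0)$ is a two-sided unit for $\circ$, and associativity and commutativity follow from a direct expansion using the $\rmR$-module axioms for $\rmS$ together with the given pre-$\lambda$-ring-without-unit product on $\rmS$. These reduce to the corresponding identities already holding in $\rmR$ and in $\rmS$ separately, plus the compatibility of the $\rmR$-action on $\rmS$ with the product on $\rmS$, which is part of the hypothesis that $\rmS$ is an $\rmR$-module pre-$\lambda$-algebra structure.

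Axioms (i) and (ii) of Definition ~\ref{def1.1} are immediate: $\lambda^0(r,s) = (\lambda_{\rmR}^0(r), 0) = (1, 0)$, since the defining sum in ~\eqref{lambda.def} is empty when $n=0$; and for $n=1$, $\lambda^1(r,s) = (\lambda_{\rmR}^1(r), \lambda_{\rmR}^0(r)\cdot \lambda_{\rmS}^1(s)) = (r, s)$ using $\lambda_{\rmR}^1 = \mathrm{id}$ on $\rmR$, $\lambda_{\rmS}^1 = \mathrm{id}$ on $\rmS$, and the convention $\lambda_{\rmR}^0(r)\cdot \lambda_{\rmS}^1(s) = \lambda_{\rmS}^1(s)$ from the pre-$\lambda$-ring-without-unit structure on $\rmS$.

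The essential content is axiom (iii), the addition formula. I would compute both sides of $\lambda^n((r,s) + (r',t)) = \sum_{i=0}^n \lambda^i(r,s)\circ \lambda^{n-i}(r',t)$ coordinate-wise. The first coordinate reduces immediately to $\lambda_{\rmR}^n(r+r') = \sum_{i=0}^n \lambda_{\rmR}^i(r)\lambda_{\rmR}^{n-i}(r')$, which is axiom (iii) for $\rmR$. For the second coordinate, on the left I expand $\lambda_{\rmR}^j(r+r')$ using (iii) in $\rmR$ and $\lambda_{\rmS}^{n-j}(s+t)$ using (iii) in $\rmS$ (respecting the convention for $\lambda^0$), obtaining a sum over quadruples $(a,b,c,d)$ with $a+b+c+d = n$ and $c+d \geq 1$, of $\lambda_{\rmR}^a(r)\lambda_{\rmR}^b(r')\lambda_{\rmS}^c(s)\lambda_{\rmS}^d(t)$. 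On the right, I expand each $\lambda^i(r,s)\circ \lambda^{n-i}(r',t)$ via the definition of the product in ~\eqref{lambda.def}: this yields three types of summands, corresponding to the three terms $r\cdot t$, $r'\cdot s$, and $s\cdot t$ of the product, and each unpacks to sums indexed by exactly the same quadruples.

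The main obstacle, as one might expect, is the bookkeeping caused by the convention that in $\rmS$ the symbol $\lambda_{\rmS}^0$ is not a genuine element but a placeholder which makes $\lambda_{\rmS}^0(u)\cdot \lambda_{\rmS}^n(v) = \lambda_{\rmS}^n(v)$ and symmetrically. Thus one must carefully track which quadruples $(a,b,c,d)$ correspond to the ``pure $\rmR$-piece'' (the $c = d = 0$ contribution, which belongs to the first coordinate and is excluded here since $c + d \geq 1$), the ``mixed $r \cdot t$ piece'' ($c = 0$, $d \geq 1$), the ``mixed $r'\cdot s$ piece'' ($d = 0$, $c \geq 1$), and the ``pure $\rmS$-piece'' ($c,d \geq 1$), and check that each appears with the correct multiplicity on both sides. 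Once this accounting is made explicit, the two sides agree term by term, completing the verification.
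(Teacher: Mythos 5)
Your proposal is correct and follows essentially the same route as the paper: both verify axiom (iii) by expanding $\lambda^n((r,s)+(r',t))$ and $\sum_i \lambda^i(r,s)\circ\lambda^{n-i}(r',t)$ coordinate-wise, reduce the first coordinate to axiom (iii) in $\rmR$, and match the second-coordinate terms, with the only real labor being the bookkeeping around the $\lambda^0$ convention. The paper organizes the comparison via the preliminary identification $\lambda^n(r,s)=\sum_{i=0}^n\lambda^i(r,0)\circ\lambda^{n-i}(0,s)$ and then writes out the two expansions explicitly, whereas you index by quadruples $(a,b,c,d)$ -- a cleaner way to present the same term-by-term matching.
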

\begin{proof} We define $\lambda ^0(r, s) =1$, where $1$ denotes the multiplicative unit in $\rmR$. We also let
$\lambda^i(r, 0) = \lambda_{\rmR}^i(r)$ and $\lambda^j(0, s) = \lambda_{\rmS}^j(s)$, for all $r \in \rmR$, $s \in \rmS$ and $i\ge 0$, $j >0$. 
We also let $\lambda ^i(r, 0)\cdotp \lambda^0(0,s) = \lambda ^i(r, 0)$ for all $i \ge 0$ and $r \in \rmR, s \in \rmS$.
Next we let
$\lambda ^1(r,s ) = (\lambda_{\rmR}^1(r), \lambda_{\rmS}^1(s))$. In general, we define $\lambda ^n$ on $\rmR \oplus \rmS$, by
\[\lambda^n(r, s) = (\lambda_{\rmR}^n(r), \lambda_{\rmR}^0(r)\cdotp \lambda_{\rmS}^n(s)+ \cdots + \lambda_{\rmR}^{n-1}(r)\cdotp \lambda_{\rmS}^1(s)) = (\lambda^n_{\rmR}(r), \Sigma_{i=0}^{n-1}\lambda_{\rmR}^i(r). \lambda_{\rmS}^{n-i}(s)).\]
In view of the above definitions, clearly we may identify the right-hand-side above with 
$\Sigma_{i=0}^n \lambda^i(r)\cdotp \lambda^{n-i}(s).$ One may also verify that if $r, r' \in \rmR$ and $s, s' \in \rmS$,
then $\lambda^n(r+r') = \Sigma_{i=0}^n\lambda^i(r)\cdotp \lambda^{n-i}(r')$ and $\lambda^n(s+s') = \Sigma_{i=0}^n \lambda^i(s)\cdotp \lambda^{n-i}(s')$ since both $\rmR$ and $\rmS$ are assumed to be pre-lambda rings.
In view of these observations, it suffices to check that 
\be \begin{equation}
\label{eq.lemma5.5}
\lambda^n((r, s) +(r', s')) = \Sigma_{i=0}^n \lambda^i(r,s)\cdotp \lambda^{n-i}(r',s').
\end{equation} \ee
In fact the term on the left side is given by:
\be \begin{align}
 \label{lemma5.5.1}
 \lambda^n(r+r', s+s') &= (\lambda^n(r+r'), \lambda^{n-1}(r+r')\cdotp \lambda^1(s+s'), \lambda^{n-2}(r+r')\cdotp \lambda^2(s+s'), \cdots, \\
                       & \mbox{\, \, \, }   \lambda^1(r+r')\cdotp  \lambda^{n-1}(s+s'), \lambda^n(s+s')) \notag\\
                       &=(\Sigma_{i=0}^n \lambda^i(r)\cdotp \lambda^{n-i}(r'), \Sigma_{j=0}^{n-1}\lambda^j(r)\cdotp \lambda^{n-1-j}(r')[\lambda^1(s) + \lambda^1(s')], \notag \\
                       & \mbox{\, \, \, } \Sigma_{j=0}^{n-2}\lambda^j(r)\cdotp \lambda^{n-2-j}(r')[\lambda^2(s) + \lambda^1(s)\cdotp \lambda^1(s')+ \lambda^2(s')], \cdots, \notag \\
                       & \mbox{\, \, \, }[\lambda^1(r)+ \lambda^1(r')]\cdotp(\lambda^{n-1}(s')+ \lambda^1(s)\lambda^{n-2}(s')+ \cdots + \lambda^{n-1}(s)), \notag \\
                       & \mbox{\, \, \,} \lambda^n(s') + \lambda^1(s)\cdotp\lambda^{n-1}(s') + \cdots + \lambda^{n-1}(s)\cdotp \lambda^1(s') + \lambda^n(s)). \notag
\end{align} \ee
The term on the right side of ~\eqref{eq.lemma5.5} is given by
\be \begin{align}
 \label{lemma5.5.2}
 \Sigma_{i=0}^n \lambda^i(r,s)\cdotp \lambda^{n-i}(r',s') &= \Sigma_{i=0}^n(\lambda^i(r), \lambda^{i-1}(r)\cdotp \lambda ^1(s) + \cdots + \lambda^1(r).\lambda^{i-1}(s) + \lambda^i(s))*\\
                                                     & \mbox{\, \, \, } ( \lambda^{n-i}(r') , \lambda^{n-i-1}(r')\cdotp \lambda^1(s')\cdots + \lambda^1(r')\cdotp\lambda^{n-i-1}(s')+ \lambda^{n-i}(s')). \notag 
\end{align} \ee                                                 
Now it is straightforward to check that we obtain equality in ~\eqref{eq.lemma5.5}.
(Moreover, in case the multiplication on $\rmS$ is trivial, $\lambda^n(s+s') =\lambda^n(s)+ \lambda^n(s')$ as well.)
 
\end{proof}

\begin{remark}
 In dealing with the K-theory of exact categories, there is no analogue of the suspension functor $K \mapsto K[1]$ (used in ~\ref{neg.classes} above), and
 as a result it takes much more effort to deduce that the $\lambda$-operations defined as in ~\eqref{lambda.1} and ~\eqref{lambda.2}
 define a pre-$\lambda$-ring structure even on the Grothendieck groups: see \cite[section 8]{Gray}.
\end{remark}
\section{\bf The Lambda-ring structure on higher K-theory:  Proof of Theorem ~\ref{mainthm}(iii)}
In this section, we consider statement (iii)  in Theorem ~\ref{mainthm}. 
Recall that this is the following statement: {\it in case every coherent sheaf on the smooth stack $\S$ is the quotient of a vector bundle, then  each $\pi_n(\rmK_{\S'}(\S))$ is a  $\lambda$-algebra over $\pi_0(\rmK (\S))$ in the sense of Definition ~\ref{def1.1}.}
\vskip .1cm
The additional assumption that every coherent sheaf is the quotient of a
vector bundle first enables one to restrict to strictly perfect complexes. This observation, together with an
adaptation of some arguments of Gillet, Soul\'e and Deligne (see \cite[section 4]{GS}) enable us to show that the $\lambda$ operations we define satisfy all the expected relations, so that we obtain the structure of a 
$\lambda$-ring on the higher K-groups of all smooth stacks satisfying this hypothesis.
\vskip .1cm
We begin by recalling  the framework from \cite{Kr-1} and \cite{GS}. (See also \cite[pp. 22-27]{SABK} for a somewhat simplified account of this, as well as \cite{Ser}.) Let $\rmH ={\mathbb Z}[\rm\rmM_N \times \rm\rmM_N]$ denote the bialgebra
${\mathbb Z}[\rm\rmM_N \times \rm\rmM_N]$ of the multiplicative monoid of pairs of $\rmN \times \rmN$ matrices for some fixed integer $N \ge 1$, that is, it is the algebra of polynomials ${\mathbb Z}[X_{11}, X_{12}, \cdots , X_{\rm NN}; Y_{11}, Y_{12}, \cdots Y_{\rm NN}]$ with the 
co-product $\mu: \rmH \ra \rmH \otimes \rmH$ satisfying $\mu(X_{ij}) = \Sigma _{k=1}^{\rm N} X_{ik} \otimes X_{kj}$ and $\mu(Y_{ij}) = \Sigma_{k=1}^{\rm N} Y_{ik} \otimes Y_{kj}$. Let $\rmP_{\mathbb Z}(\rm\rmM_N \times \rm\rmM_N)$ denote the exact category of left-${\mathbb Z}[\rmM_N \times \rmM_N]$ comodules that are free and finitely generated over ${\mathbb Z}$: an element of this category
is called {\it a representation} of $\rmM_N \times \rmM_N$. We let $\rmR_{\mathbb Z}(\rm\rmM_N \times \rm\rmM_N)$ denote the Grothendieck group of $\rmP_{\mathbb Z}(\rm\rmM_N \times \rm\rmM_N)$, which is a ring via the tensor product of comodules.
\vskip .2cm
Let $p_i$, $i=1, 2$
denote the representation of $\rmM_N \times \rmM_N$ corresponding to the projection to the $i$-th factor. It is shown in \cite[Theorem 4.2]{GS} (see also \cite[Proposition 4.3]{Kr-1}) that the following are true:
\begin{proposition}
\label{Gillet-Soule-prop1}
 The ring $\rmR_{\mathbb Z}(\rmM_N \times \rmM_N)$ is isomorphic to the polynomial ring 
\vskip .2cm
${\mathbb Z}[\lambda ^1(p_1), \cdots \lambda ^N(p_1); \lambda ^1(p_2), \cdots, \lambda ^N(p_2)]$. 
\vskip .1cm \noindent
Exterior powers make it a $\lambda$-ring.
\end{proposition}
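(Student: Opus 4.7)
The plan is to reduce the computation of $R_{\mathbb{Z}}(M_N \times M_N)$ first to that of $R_{\mathbb{Z}}(M_N)$ via a Künneth-type isomorphism, and then to identify $R_{\mathbb{Z}}(M_N)$ with a polynomial ring by passing to characters, following the classical strategy for polynomial representations of the general linear monoid.

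First, I would establish that the natural ring homomorphism
\[
\Phi: R_{\mathbb{Z}}(M_N) \otimes_{\mathbb{Z}} R_{\mathbb{Z}}(M_N) \longrightarrow R_{\mathbb{Z}}(M_N \times M_N), \qquad [V] \otimes [W] \longmapsto [V \boxtimes W],
\]
is an isomorphism. The key observation is that $\mathbb{Z}[M_N \times M_N] = \mathbb{Z}[M_N] \otimes_{\mathbb{Z}} \mathbb{Z}[M_N]$ as bialgebras, and any comodule over this bialgebra that is free and finitely generated over $\mathbb{Z}$ admits a canonical decomposition with respect to each factor, so that on the level of classes in the Grothendieck group every element is a $\mathbb{Z}$-linear combination of external tensor products. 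Injectivity of $\Phi$ will follow from the character argument below, by comparing with the ring of symmetric polynomials in two disjoint sets of variables.

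Next, I would identify $R_{\mathbb{Z}}(M_N)$ with $\mathbb{Z}[\lambda^1(p), \ldots, \lambda^N(p)]$, where $p$ is the standard $N$-dimensional representation. The plan is to use the character homomorphism
\[
\chi: R_{\mathbb{Z}}(M_N) \longrightarrow \Lambda_N := \mathbb{Z}[x_1, \ldots, x_N]^{S_N},
\]
which sends a representation $V$ to the trace of the diagonal matrix $\mathrm{diag}(x_1, \ldots, x_N)$ acting on $V$. Under $\chi$, the class $\lambda^i(p)$ maps to the elementary symmetric polynomial $e_i(x_1, \ldots, x_N)$, and since $\Lambda_N = \mathbb{Z}[e_1, \ldots, e_N]$ with the $e_i$ algebraically independent, surjectivity onto the subring generated by $\lambda^i(p)$ is immediate. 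To see that $\chi$ itself is an isomorphism, one uses the classical fact that the category of $\mathbb{Z}$-free comodules over $\mathbb{Z}[M_N]$ (equivalently, polynomial representations of $GL_N$ over $\mathbb{Z}$) has a filtration whose subquotients are Schur modules $S^\lambda(p)$ indexed by partitions $\lambda$ with at most $N$ parts. The classes $[S^\lambda(p)]$ form a $\mathbb{Z}$-basis of $R_{\mathbb{Z}}(M_N)$, and $\chi$ sends them to the Schur polynomials $s_\lambda$, which form a $\mathbb{Z}$-basis of $\Lambda_N$; hence $\chi$ is a $\mathbb{Z}$-module isomorphism.

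The main obstacle will be verifying the integral basis statement for $R_{\mathbb{Z}}(M_N)$, since over $\mathbb{Z}$ the category of polynomial representations is not semisimple. This is handled either by invoking the quasi-hereditary structure on the Schur algebra $S(N, r)$ over $\mathbb{Z}$ (so every finitely generated $\mathbb{Z}$-free module has a Weyl filtration up to a relation in the Grothendieck group) or, more directly, by constructing the Schur modules $S^\lambda(p)$ as explicit $\mathbb{Z}$-forms via Young symmetrizers and verifying that any $\mathbb{Z}$-free $M_N$-representation admits a filtration by sub-representations whose graded quotients are such Schur modules. Once the isomorphism $R_{\mathbb{Z}}(M_N) \cong \Lambda_N$ is in hand, combining with the first step yields
\[
R_{\mathbb{Z}}(M_N \times M_N) \cong \mathbb{Z}[\lambda^1(p_1), \ldots, \lambda^N(p_1); \lambda^1(p_2), \ldots, \lambda^N(p_2)].
\]
Finally, the $\lambda$-ring structure follows since the universal $\lambda$-ring identities for exterior powers of representations can be verified after applying $\chi$, where they reduce to well-known identities among elementary symmetric polynomials, and $\chi$ is injective.
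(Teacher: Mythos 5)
The paper does not itself prove this statement; it is cited to Gillet--Soul\'e (Theorem 4.2 of [GS]) and Kratzer (Proposition 4.3 of [Kr-1]), both of which, like your proposal, run through characters and symmetric functions, so your overall strategy is the right one. However, one of your two suggested routes through the ``main obstacle'' is false, and the K\"unneth step has a gap. The claim that ``any $\mathbb{Z}$-free $M_N$-representation admits a filtration by sub-representations whose graded quotients are Schur modules $S^\lambda(p)$'' is not true: pick $\lambda$ for which the reduction $\Delta(\lambda)_{\mathbb{F}_p}$ of the Weyl module is reducible for some prime $p$. Then the costandard lattice $\nabla(\lambda)_{\mathbb{Z}}$ is a $\mathbb{Z}$-free comodule with the same character as $\Delta(\lambda)_{\mathbb{Z}}$, yet $\nabla(\lambda)_{\mathbb{Z}} \not\cong \Delta(\lambda)_{\mathbb{Z}}$ (their reductions mod $p$ have $L(\lambda)$ sitting in them in opposite ways), and since $\nabla(\lambda)_{\mathbb{Q}}$ is irreducible, a $\Delta$-filtration of $\nabla(\lambda)_{\mathbb{Z}}$ would have to be a single step, forcing the false isomorphism. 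What is true, and suffices, is that the classes $[S^\lambda(p)]$ form a $\mathbb{Z}$-basis of the Grothendieck group; your first alternative (quasi-hereditariness of the integral Schur algebras $S_{\mathbb{Z}}(N,r)$) gives exactly this, as does the theorem of Serre cited in the paper's bibliography that for a split reductive group scheme the map $R_{\mathbb{Z}} \to R_{\mathbb{Q}}$ is an isomorphism, transported to the polynomial setting via the central bidegree decomposition.

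Separately, surjectivity of your K\"unneth map $\Phi$ is asserted rather than proved: the bidegree decomposition $V = \oplus_{p,q} V^{p,q}$ does not express each piece as an external tensor product, and over $\mathbb{Z}$ (where $S_{\mathbb{Z}}(N,p)\otimes S_{\mathbb{Z}}(N,q)$ is far from semisimple) the needed K\"unneth statement for Grothendieck groups is itself something to prove. You can avoid this detour: run the character homomorphism directly on $R_{\mathbb{Z}}(M_N\times M_N)$ into $\mathbb{Z}[x_1,\ldots,x_N]^{S_N}\otimes_{\mathbb{Z}}\mathbb{Z}[y_1,\ldots,y_N]^{S_N}$, show it is injective using the basis of external products of Schur modules mapping to the products $s_\lambda(x)s_\mu(y)$, and identify the image with the polynomial ring on the $e_i(x)$ and $e_j(y)$; the K\"unneth isomorphism then comes out as a corollary rather than being needed as an input. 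Your closing remark on the $\lambda$-ring structure -- verify the universal identities after applying the injective character map, where they become symmetric-function identities -- is correct.
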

\vskip .2cm
It is shown in \cite[4.3]{GS} that the center of the monoid $\rm\rmM_N \times \rm\rmM_N$ is $\rmM_1 \times \rmM_1$ imbedded diagonally,  and that the category of representations of $\rmM_1 \times \rmM_1$ is equivalent to the category of positively bi-graded ${\mathbb Z}$-modules. For any representation $\rmE$ of $\rm\rmM_N \times \rm\rmM_N$, the decomposition $\rmE = {\underset {p, q} \oplus}\rmE^{p,q}$ over the center of $\rm\rmM_N \times \rmM_N$ is stable under the action of $\rmM_N \times \rm\rmM_N$. $\rmE$ has {\it degree at most d} if $\rmE^{p,q}=0$ unless $p+q \le d$.
Let $\rmR_{\mathbb Z}(\rm\rmM_N \times \rm\rmM_N)^d$ denote the Grothendieck group of the category of representations of $\rm\rmM_N \times \rm\rmM_N$ of degree at most $d$.
The following is also known (see \cite[Lemma 4.3]{GS}):
\begin{proposition}
\label{Gillet-Soule-prop2}
The group $\rmR_{\mathbb Z}(\rm\rmM_N \times M\rm_N)^d$ maps injectively into $\rmR_{\mathbb Z}(\rmM_N \times \rmM_N)$ and the image of this map consists of the elements $\rmR(\lambda^1(p_1), \cdots, \lambda ^N(p_1); \lambda ^1(p_2), \cdots, \lambda^N(p_2))$, where $\rmR$ runs over all polynomials of weight at most $d$.
\end{proposition}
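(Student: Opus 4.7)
The plan is to deduce the statement from Proposition \ref{Gillet-Soule-prop1} by tracking the natural bigrading on $R_{\mathbb Z}(\rmM_N \times \rmM_N)$ induced by the action of the central submonoid $\rmM_1 \times \rmM_1$. First I would establish that the eigenspace decomposition $E = \bigoplus_{p,q \ge 0} E^{p,q}$ of any representation $E$ under the diagonal $(t_1, t_2) \in \rmM_1 \times \rmM_1$ (acting on $E^{p,q}$ by the scalar $t_1^p t_2^q$) is functorial and preserved by the full action of $\rmM_N \times \rmM_N$, since $\rmM_1 \times \rmM_1$ is central. Functoriality implies that any short exact sequence $0 \to E' \to E \to E'' \to 0$ decomposes as a direct sum of short exact sequences in each bidegree. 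Hence the Grothendieck group inherits a bigrading $R_{\mathbb Z}(\rmM_N \times \rmM_N) = \bigoplus_{p,q \ge 0} R^{p,q}$ as an abelian group, and the tensor formula $(E \otimes F)^{p,q} = \bigoplus_{p_1+p_2=p,\, q_1+q_2=q} E^{p_1, q_1} \otimes F^{p_2, q_2}$ makes this into a bigrading of rings.

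Next I would compute the bidegrees of the generators appearing in Proposition \ref{Gillet-Soule-prop1}. The standard representation $p_1$ lies in $R^{1,0}$ because $(t, 1)$ acts by $t$ and $(1, t)$ acts trivially, and symmetrically $p_2 \in R^{0, 1}$. Since the center acts on $\Lambda^i p_j$ through the $i$-th power of its action on $p_j$, we have $\lambda^i(p_1) \in R^{i, 0}$ and $\lambda^j(p_2) \in R^{0, j}$. Consequently any monomial $\prod_i \lambda^i(p_1)^{a_i} \prod_j \lambda^j(p_2)^{b_j}$ lies in $R^{\sum i a_i,\ \sum j b_j}$, and its total bidegree $p+q$ equals its weight as a polynomial in the generators $\lambda^i(p_1), \lambda^j(p_2)$.

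Under the isomorphism of Proposition \ref{Gillet-Soule-prop1}, the subgroup $\bigoplus_{p+q \le d} R^{p,q}$ therefore corresponds precisely to the subgroup of polynomials all of whose monomials have weight at most $d$. On the representation side, a representation $E$ has degree at most $d$ exactly when $E^{p,q} = 0$ for all $p + q > d$, and by functoriality of the bigraded decomposition this subcategory is closed under subrepresentations, quotients, and extensions. Hence $R_{\mathbb Z}(\rmM_N \times \rmM_N)^d$ identifies canonically with $\bigoplus_{p+q \le d} R^{p,q}$, which is a direct summand of the full Grothendieck group. This simultaneously yields both the injectivity of the natural map and the description of its image.

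The only point requiring genuine care is the verification that the polynomial isomorphism of Proposition \ref{Gillet-Soule-prop1} is strictly compatible with the bigrading; once the bidegrees of the generators are pinned down and one observes that the subcategory of degree at most $d$ is closed under the relations defining the Grothendieck group, the assertion follows by direct inspection. No deep obstacle is anticipated, as the content is bookkeeping with the bigrading coming from the center.
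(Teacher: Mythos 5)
The paper does not give a proof of this statement; it simply cites \cite[Lemma 4.3]{GS}, and the surrounding text already records the bigraded decomposition $E = \oplus_{p,q}E^{p,q}$ over the center $\rmM_1 \times \rmM_1$, its stability under the $\rmM_N \times \rmM_N$-action, and the definition of degree $\le d$ in terms of it. Your argument is a correct reconstruction of the underlying Gillet--Soul\'e proof: since the central bigrading is functorial, morphisms and extensions of representations decompose along the bidegrees, so the category of representations is the product of its bigraded components, and $R_{\mathbb Z}(\rmM_N\times\rmM_N)^d$ is literally the direct summand $\oplus_{p+q\le d}R^{p,q}$ (which gives injectivity for free). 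Your bidegree bookkeeping ($p_1 \in R^{1,0}$, $p_2 \in R^{0,1}$, hence $\lambda^i(p_1) \in R^{i,0}$, $\lambda^j(p_2) \in R^{0,j}$, so that weight of a monomial matches total bidegree) is correct, and combined with Proposition \ref{Gillet-Soule-prop1} it identifies the image with the polynomials of weight $\le d$. This is essentially the same route as the cited source; nothing is missing.
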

\vskip .2cm
\subsection{\bf The functor $\rmT_{\rmE}$: see \cite[4.4, 4.5 and Lemma 4.5]{GS}}\footnote{It needs to be pointed out that this functor does not extend to one on pairs of all perfect complexes, but only on to pairs of strictly perfect complexes. This is the reason the results of this section hold only under the strong assumption that every coherent sheaf is a quotient of a vector bundle.}
\label{TE}
Next, for any representation $\rmE$ of $\rmM_N \times \rmM_N$, a scheme $\rmX$ and
two locally free coherent sheaves $\rmP$, $\rmQ$ of rank at most $\rmN$ on $\rmX$, a vector bundle
${\rm T}_{\rm E}(\rmP, \rmQ)$ on $\rmX$ is defined. It is shown that for fixed $\rmP$ and $\rmQ$, the functor $\rmE \mapsto {\rm T}_{\rmE}(\rmP, \rmQ)$ 
has the following properties:
\begin{enumerate}[\rm(i)]
\item if the representation $\rmE$ has degree at most
$d$, the functor $\rmE \mapsto {\rm T}_E(\rmP, \rmQ)$ has degree at most $d$ (that is,  the cross-effect functor $\rmE \mapsto {\rm \rmT_{\rmE}}(\rmP, \rmQ)_s=0$ for $s>d$), 
\item the above functor is exact in $\rmE$ and it commutes with direct sum, tensor product and exterior powers in $\rmE$ for a fixed $\rmP$ and $\rmQ$, 
\item $\rmT_{\rm p_1}(\rmP, \rmQ) = \rmP$, $\rmT_{\rm p_2}(\rmP, \rmQ) = \rmQ$, and ${\rm \rmT_{\rmE}}(0, 0) =0$,
\item commutes with base-change of the scheme $\rmX$, that is, the following holds: if $\rmp: \rmY \ra \rmX$ is a map of schemes, 
$\rmT_{\rmE}(p^*(\rmP), p^*(\rmQ))$ is canonically isomorphic to $\rmp^*(\rmT_{\rmE}(\rmP, \rmQ))$, and
\item ${\rm T}_{\rmE}(\rmP, \rmQ)$ is functorial in $\rmP$ and $\rmQ$ for a fixed $\rmE$. 
\end{enumerate}
\begin{proposition} 
\label{TE.st}
The functor ${\rm T}_{\rm E}$ extends to algebraic stacks with the same properties.
Given two bounded complexes of vector bundles $\rmP$, $\rmQ$ on an algebraic stack $\S$
 the functor $\rmE \mapsto \rmT_{\rmE}(\rmP, \rmQ)$ defines a bounded complex of vector bundles on the stack $\S$. The functor $\rmT_{\rm E}$ preserves quasi-isomorphisms in either argument.
 If $\rmP$ and $\rmQ$ have cartesian cohomology sheaves, then
 $\rmT_{\rmE}(\rmP, \rmQ)$ also has cartesian cohomology sheaves.
\end{proposition}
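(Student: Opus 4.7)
The plan is to bootstrap from the scheme-level construction of $\rmT_{\rmE}$ using the base-change property (iv) listed just above, then extend to bounded complexes via the Dold-Puppe denormalization machinery already invoked for $\Lambda^{\it k}$ in ~\eqref{Lambda.1}. First I would observe that for any smooth atlas $\rmU \ra \S$ with $\rmU$ a scheme (or, more functorially, for any object $u:\rmU \ra \S$ of $\S_{lis\text{-}et}$), the pair of vector bundles $(\rmP_{|\rmU_{et}}, \rmQ_{|\rmU_{et}})$ gives rise to $\rmT_{\rmE}(\rmP_{|\rmU_{et}}, \rmQ_{|\rmU_{et}})$ as a vector bundle on $\rmU$. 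The base-change compatibility (iv) on schemes says that for any morphism $f:\rmU' \ra \rmU$ in $\S_{lis\text{-}et}$, there is a canonical isomorphism $f^*\rmT_{\rmE}(\rmP_{|\rmU_{et}}, \rmQ_{|\rmU_{et}}) \cong \rmT_{\rmE}(\rmP_{|\rmU'_{et}}, \rmQ_{|\rmU'_{et}})$. This is exactly the cartesian descent data defining a vector bundle on the stack $\S$; moreover the properties (i)--(v) are local on $\S$, so they transport to the stack-theoretic $\rmT_{\rmE}$.

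Next I would treat bounded complexes of vector bundles by imitating the derived-functor procedure already used for $\Lambda^{\it k}$ in Section 5. Namely, given bounded complexes $\rmP, \rmQ$ of vector bundles on $\S$, embed the pair $(\rmP, \rmQ)$ into ${\rm Cos.mixt}({\rm P}_{fl}(\S)) \times {\rm Cos.mixt}({\rm P}_{fl}(\S))$ via the functor $i$ in ~\eqref{i}, apply the polynomial functor $\rmT_{\rmE}$ degree-wise in the cosimplicial-simplicial direction (this is well-defined by the scheme-level case and Step 1 since each component is a vector bundle), and finally apply $\Tot \circ \rmN$. The boundedness of the output is guaranteed by property (i): if $\rmE$ has degree $\le d$, then $\rmT_{\rmE}$ is polynomial of degree $\le d$, which collapses the cosimplicial-simplicial bi-object to something bounded after normalization. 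The resulting object lies in ${\rm Perf}_{fl}(\S)$ since each stage consists of flat $\O_{\S}$-modules.

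For preservation of quasi-isomorphisms in either argument, I would follow the argument of Proposition ~\ref{cart.coh.sheaves} verbatim. The essential ingredient is \cite[Chapitre I, Proposition 4.2.1.3]{Ill}, which applies to any polynomial functor defined on flat modules: one replaces an acyclic complex by a locally filtered colimit of complexes that are chain-contractible at each geometric stalk, notes that $\rmT_{\rmE}$ commutes with stalks, filtered colimits, and chain homotopies (this last being a formal consequence of $\rmT_{\rmE}$ being polynomial of bounded degree and exact in $\rmE$), and concludes acyclicity is preserved. The hypothesis that $\rmP$ and $\rmQ$ be complexes of vector bundles (rather than arbitrary perfect complexes) is precisely what allows one to feed them into $\rmT_{\rmE}$; this explains the footnote attached to the subsection heading ~\ref{TE}.

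Finally, for the cartesian property of cohomology sheaves, I would repeat the last paragraph of the proof of Proposition ~\ref{cart.coh.sheaves} mutatis mutandis: for a smooth map $f:\rmU \ra \rmV$ in $\S_{lis\text{-}et}$, the base-change property (iv) gives $f^*\rmT_{\rmE}(\rmP_{|\rmV_{et}}, \rmQ_{|\rmV_{et}}) \cong \rmT_{\rmE}(\rmP_{|\rmU_{et}}, \rmQ_{|\rmU_{et}})$, and $f^*$ is exact (smoothness), so $f^*$ commutes with taking cohomology sheaves; combined with \cite[Lemma 3.6]{Ol}, this yields the cartesian property. The main obstacle I expect is a bookkeeping one: ensuring that the denormalize-apply-normalize-totalize construction for the bi-variate polynomial functor $\rmT_{\rmE}$ is genuinely functorial in pairs and compatible with the base-change isomorphisms used in Step 1. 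This is where one must exploit the fact that $\rmT_{\rmE}$ depends only on the abstract $(\rmN,\rmN)$-frame data and commutes with pullback, so the normalization and totalization (which are purely simplicial constructions, cf.\ Proposition ~\ref{N.DN}) are compatible with restriction along smooth morphisms.
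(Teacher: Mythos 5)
Your descent argument for extending $\rmT_{\rmE}$ to the stack (checking the cocycle condition on the simplicial object $\rmX\times_{\S}\rmX\substack{\rightarrow\\ \rightarrow}\rmX$ via the base-change isomorphisms) and your argument for the cartesian property of cohomology sheaves both coincide with what the paper does, so those parts are on target. Where you diverge is in the quasi-isomorphism preservation: you reach for the heavy artillery of \cite[Chapitre I, Proposition 4.2.1.3]{Ill}, expressing an acyclic complex as a locally filtered colimit of stalkwise-contractible finite free complexes, which is the machinery the paper needs in Proposition~\ref{cart.coh.sheaves} because there the inputs are merely flat. Here, however, $\rmP$ and $\rmQ$ are bounded complexes of \emph{vector bundles}, so locally on an affine cover they are bounded complexes of finite free modules over a local ring, and an acyclic such complex has an actual null chain-homotopy outright — no filtered colimit argument is needed. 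The paper exploits exactly this: one observes that $\rmT_{\rmE}(\quad,\quad)$ preserves degree-wise split short exact sequences in each argument and preserves chain homotopies, so local contractibility of an acyclic $\rmP$ or $\rmQ$ transports directly to local contractibility of $\rmT_{\rmE}(\rmP,\rmQ)$, and then the degree-wise split exactness reduces a general quasi-isomorphism to the acyclic cone case. Your route via Illusie is correct but costlier than necessary precisely because the restriction to vector bundles (which you rightly identify as the reason for the footnote in~\ref{TE}) hands you contractibility for free. One small caveat on your Step~2: the claim that the $\Tot\circ\rmN$ output lies in ${\rm Perf}_{fl}(\S)$ because its terms are flat is not by itself sufficient to conclude boundedness or perfection — it is the degree bound from property (i) together with the truncation $t_{\it k}$ in the paper's construction~\eqref{Sk.1} that controls this, and since $\rmP,\rmQ$ are complexes of vector bundles the output is in fact a bounded complex of vector bundles, which is stronger than perfect.
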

\begin{proof}
One may first consider bounded complexes of vector bundles on a scheme. Then one may show that the
functor $\rmT_{\rmE}(\quad, \quad)$ preserves degree-wise split short-exact sequences of bounded complexes of vector bundles in both arguments and that if $\rmP$ or $\rmQ$ is acyclic, then $\rmT_{\rmE}(\rmP, \rmQ)$ is also acyclic. The latter follows
by working locally on a given scheme $\rmX$, where we may assume that there is a null chain-homotopy for both $\rmP$ or $\rmQ$ and by using the observation that the functor preserves chain homotopies. Since the
functor $\rmT_{\rmE}(\quad, \quad)$ preserves degree-wise split short exact sequences in each argument, it follows that it preserves 
quasi-isomorphisms in either argument. 
\vskip .1cm
Next we show this functor extends to algebraic stacks.
 Let $x: \rmX \ra \S$ denote an atlas for the stack $\S$ with $\rmX$ a scheme and let $\rmP$, $\rmQ$ denote two bounded complexes of vector bundles on the stack $\S$. Then $\rmP_0 = \x^*(\rmP)$ and $\rmQ_0 = \x^*(\rmQ)$ define
 two bounded complexes of vector bundles on the scheme $\rmX$. The property that the functor $\rmE \mapsto \rmT_{\rmE}(\rmP, \rmQ)$ commutes
 with respect to base-change on schemes, shows that
 \[{\rm pr}_1^*\rmT_{\rmE}(\rmP_0, \rmQ_0) \cong \rmT_{\rmE}(pr_1^*\x^*(\rmP), pr_1^*\x^*(\rmQ)) = \rmT_{\rmE}(pr_2^*\x^*(\rmP), pr_2^*\x^*(\rmQ)) \cong pr_2^*\rmT_{\rmE}(\rmP, \rmQ),\]
 where ${\rm pr}_i: \rmX\times_{\S} \rmX \ra \rmX$, $i=1,2$ are the two projections. (We skip the verification that the
  above isomorphism satisfies a co-cycle condition on further pull-back to $\rmX \times_{\S} \rmX \times_{\S} \rmX$.) Therefore, it follows readily that
 $\rmT_{\rmE}(\rmP, \rmQ)$ defines a bounded complex of vector bundles on the stack $\S$.  
 The properties (i) through (v) may be checked by pulling back to 
 \[\rm  X \times_{\S} \rm X \substack{\overset{\rm pr_1}\rightarrow \\ \underset{\rm pr_2} \rightarrow} \rm X,\]
 where $x:\rmX \ra \S$ is an atlas for the stack.
 \vskip .1cm
 Next we consider the last statement in the Proposition. Assume $\rmP$ and $\rmQ$ have cartesian cohomology sheaves.
 As in  the proof of Proposition ~\ref{cart.coh.sheaves}, it suffices to show that for a smooth map $f: \rmU \ra \rmV$ in $\S_{lis-et}$,
 one obtains isomorphisms $f^*{\mathcal H}^i(\rmT_{\rmE}(\rmP, \rmQ)) \cong {\mathcal H}^i(\rmT_{\rmE}({\it f}^*(\rmP), {\it f}^*(\rmQ))$, for all $i$.
 The base-change property shows $f^*\rmT_{\rmE}(\rmP, \rmQ) \cong \rmT_{\rmE}({\it f}^*(\rmP), {\it f}^*(\rmQ))$. Since ${\it f}$ is a smooth map, $f^*$ commutes with 
 taking cohomology sheaves, as it 
 is an exact functor.  This completes the proof.
\end{proof}
\vskip .2cm \indent \indent
Throughout the following discussion we will fix a smooth algebraic stack $\S$ with $\S'$ a closed
sub-stack. {\it We will assume throughout the rest of the discussion that every coherent sheaf on $\S$ is the quotient of a vector bundle.} Let $Vect(\S)$ ($Vect_N(\S)$) denote the category of vector bundles on the stack $\S$ (vector bundles on the stack $\S$ with
rank $\le N$, \res). Let $t_{\it k}$ denote the {\it naive truncation} functor that sends a simplicial object to the corresponding truncated simplicial object, truncated in degree $\le k$. If ${\bold E}$ is an exact category and $\rmC$ denotes a chain complex with differentials of degree $-1$ in ${\bold E}$ and
trivial in negative degrees, we will also use $t_{\it k}(\rmC)$ to denote the corresponding truncated chain complex, truncated to degrees $\le k$.  
We let $Simp({\bold E})$ ($Simp_{\it k}({\bold E})$) denote the category of all simplicial objects in ${\bold E}$  (simplicial objects
truncated to degrees $\le k$, \res). Similarly, ${\rm C}({\bold E})$ (${\rm C}_{\it k}({\bold E})$) will denote the category of complexes in ${\bold E}$ that
are trivial in negative degrees and with differentials of degree $-1$ ( the category of complexes in ${\bold E}$ that
are trivial in negative degrees and in degrees $>k$ and with differentials of degree $-1$, \res). We let $e_{\it k}: {\rm C}_{\it k}({\bold E}) \ra {\rm C}({\bold E})$
denote the obvious inclusion functor. 
\vskip .1cm
Next let $\rm\rmN(1)$ denote the normalized chain complex of the standard $1$-simplex: $\rmN(1)_n=0$ if $n>1$, $\rmN(1)_1 = {\mathbb Z}[e]$, $\rmN(1)_0 = {\mathbb Z}[e_0] \oplus {\mathbb Z}[e_1]$ with $\delta (e) = e_0 -e_1$. Let $\rmK (1) = {\rm DN}_{\it h}(\rmN(1))$, where ${\rm DN}_{\it h}$ denotes the de-normalizing functor
as in appendix B: this is a simplicial abelian group. Given 
an object $\rmS_{\bullet} \in {\it Simp(Vect(\S))}$, $\rmK (1) \otimes S_{\bullet}$ will denote the obvious simplicial object: $(\rmK (1) \otimes S_{\bullet})_n
= \rmK (1)_n \otimes S_n = {\underset {{k_n} \in \rmK (1)_n} \oplus } S_n$ and with the obvious structure maps. Observe that $\rmK (1) \otimes S_{\bullet} \in {\it Simp(Vect(\S))}$. 
\vskip .2cm
 Let $\rmA$, $\rmB$ denote two strictly perfect complexes on $\S \times \Delta[n]$ and let $\rmC$, $\rmD $ denote two strictly perfect complexes on $\S \times \Delta[n]$ acyclic on $(\S-\S') \times \Delta[n]$.
Choose  $m$ so that $\rmA^n =\rmB^n=0= \rmC^n =\rmD^n$ if $n>m$. We will first apply the shift $[m]$ so that $ \rmA[m]^n =\rmB[m]^n= \rmC[m]^n=\rmD[m]^n=0$ for all $n>0$.
Therefore we may view   $\rmA[m]$, $\rmB[m]$, $\rmC[m]$ and $\rmD[m]$ as complexes in non-negative degrees with differentials of degree $-1$. We denote these by
 $\rmA'$, $\rmB'$, $\rmC'$ and $\rmD'$ \res. Choose the integer $k$ so that $k>md$. Choose the integer ${ N}$ so that all the components of $t_{\it k}(\rmK (1)\otimes ({\rm DN}(\rmA')))$, $t_{\it k}(\rmK (1) \otimes {\rm DN} (\rmB')) $ lie in $Simp_{\it k}(Vect_N(\S))$ 
and all the components of $t_{\it k}(\rmK (1) \otimes ({\rm DN}(\rmC')))$, $t_{\it k}(\rmK (1) \otimes ({\rm DN}(\rmD')))$ lie
in $Simp_{\it k}(Vect_N(\S \times \Delta[n], (\S-\S') \times \Delta[n])$: the latter denotes the full subcategory of 
$Simp_{\it k}(Vect_N(\S \times \Delta[n]))$ with supports contained in $\S' \times \Delta [n]$. For any representation $\rmE$ of $\rmM_N \times \rmM_N$ of degree at most $d$,  we define
\be \begin{equation}
\label{Sk.1}
\rmS_{\rmE}({\it k}; \rmA', \rmB') = {\it e}_{\it k} \rmN_{\it k}\rmT_{\rmE}{\it t}_{\it k}\Delta ({\rm DN}(\rmA'), {\rm DN}(\rmB')), \mbox{ and }
\end{equation} \ee
\vskip .1cm
\be \begin{equation}
\label{Sk.2}
 {\rm S}_{\rm E}({\it k}; \rmC', \rmD') = {\it e}_{\it k} \rmN_{\it k}\rmT_{\rmE}{\it t}_{\it k}\Delta (({\rm DN}(\rmC'), {\rm DN}(\rmD')), 
\end{equation} \ee
\vskip .2cm \noindent
where $\Delta$ denotes the diagonal of the bisimplicial object appearing there, and $\rmN_{\it k}$ denotes the normalization functor for truncated simplicial objects in an abelian category: this is defined as the normalization functor for simplicial objects in appendix B. 
\vskip .2cm
Throughout the following discussion we will let $i: \S \times \delta \Delta[n] \ra \S \times \Delta [n]$ denote the obvious closed immersion.
Let $\S'$ denote a closed algebraic substack of $\S$. 
Then we let 
\[i^*:{\it w}\rmG_{\bullet}StPerf_{{\it fl}, \S' \times \Delta [n]}(\S \times \Delta [n]) \ra {\it w}\rmG_{\bullet}StPerf_{{\it fl},\S' \times \Delta [n]}(\S \times \delta \Delta [n])\]
denote the corresponding pull-back functor, and let ${\it w}\rmG(i^*)$ denote its homotopy fiber defined as in
~\eqref{Gwf} or ~\eqref{Gwf.1}.
\vskip .2cm
\begin{proposition} 
\label{Gillet-Soule-prop3}
(See also \cite[Lemma 4.8]{GS}.) Let $\rmC$, $\rmD $ denote two strictly perfect complexes on $\S \times \Delta[n]$ acyclic on $(\S-\S') \times \Delta[n]$, and provided with an explicit zig-zag path $p$ as in ~\eqref{zig.zag.path} running from  
the restriction of the pair $(\rmC, \rmD)$ to $\S \times \delta \Delta [n]$, to the base point $(0, 0)$
in ${\it w}\rmG(i^*)$. Let $d \ge 1$ be an integer. Let $\rmA$, $\rmB$ denote two strictly perfect complexes on $\S $. Then there exists an integer $N\ge 1$ and  homomorphisms
\vskip .2cm 
$\alpha: R_{\mathbb Z}(\rmM_N \times \rmM_N) ^d \ra \pi_0(\rmK (\S))$ and
\vskip .2cm 
$ \beta: R_{\mathbb Z}(\rmM_N \times \rmM_N) ^d \ra \pi_0(\rmK_{\S' \times \Delta [n]}(\S \times \Delta[n], \S \times \delta \Delta [n]))$,
\vskip .2cm \noindent
which  preserve the additive, 
multiplicative and pre-$\lambda$-ring structures. Moreover, the following holds:
\vskip .2cm \noindent
(i) $[\rmA] = \alpha (p_1)$, $[\rmB] = \alpha(p_2)$, $[\rmC] = \beta (p_1)$, $[\rmD] = \beta (p_2)$
\vskip .2cm \noindent
(ii) $\alpha (xy) = \alpha (x) \alpha (y) $ and $\beta(xy) = \beta(x) \beta (y)$ if $x, y$ and $xy$ are in $\rmR_{\mathbb Z}(\rmM_N \times \rmM_N)^d$ (where the product on $\pi_0(\rmK (\S))$ ($\pi_0(\rmK_{\S' \times \Delta [n]}(\S \times \Delta[n], \S \times \delta \Delta [n]))$ is given by the tensor products of
perfect complexes.
\vskip .2cm \noindent
(iii) $\alpha (\lambda^{\it k}(x)) = \lambda^{\it k}(\alpha(x))$ and $\beta (\lambda ^{\it k}(x)) = \lambda^{\it k}(\beta (x))$ if $x$ and $\lambda^{\it k}(x)$ are in $\rmR_{\mathbb Z}(\rmM_N \times \rmM_N)^d$. 
\end{proposition}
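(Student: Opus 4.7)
The plan is to build $\alpha$ and $\beta$ by transporting the explicit polynomial presentation of $\rmR_{\mathbb Z}(\rmM_N \times \rmM_N)$ from Proposition ~\ref{Gillet-Soule-prop1} into the two target Grothendieck groups through the universal functor $\rmT_{\rmE}$ of Proposition ~\ref{TE.st}. First, shift the four given strictly perfect complexes $\rmA$, $\rmB$, $\rmC$, $\rmD$ by a common $[m]$ (with $m$ bounding their amplitudes) so that $\rmA'$, $\rmB'$, $\rmC'$, $\rmD'$ are concentrated in non-negative degrees with differentials of degree $-1$, then de-normalize via ${\rm DN}$, smash with $\rmK(1)$, truncate at level $k > md$, and choose $N$ large enough so all components land in $Vect_N$. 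With $\rmS_{\rmE}(k;\rmA',\rmB')$ and $\rmS_{\rmE}(k;\rmC',\rmD')$ as defined in ~\eqref{Sk.1} and ~\eqref{Sk.2}, set
\[
\alpha(\rmE) = [\rmS_{\rmE}(k;\rmA',\rmB')] \in \pi_0(\rmK(\S)),
\]
\[
\beta(\rmE) = [\rmS_{\rmE}(k;\rmC',\rmD')] \in \pi_0(\rmK_{\S'\times \Delta[n]}(\S\times \Delta[n], \S\times \delta\Delta[n])),
\]
for each representation $\rmE$ of $\rmM_N\times \rmM_N$ of degree $\le d$, with an appropriate un-shifting by $[-m]$. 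For $\beta$, the zig-zag path $p$ attached to $(\rmC,\rmD)$ is pushed forward through $\rmT_{\rmE}$ by functoriality (Proposition ~\ref{TE.st}(v)) combined with the vanishing $\rmT_{\rmE}(0,0) = 0$, yielding a path from the restriction of $\rmS_{\rmE}(k;\rmC',\rmD')$ back to the base point $(0,0)$ in ${\it w}\rmG(i^*)$, which identifies the class under Proposition ~\ref{reduct.pi0}.

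For additivity, a short exact sequence $0 \to \rmE' \to \rmE \to \rmE'' \to 0$ in $\rmP_{\mathbb Z}(\rmM_N\times \rmM_N)^d$ is sent by the exact-in-$\rmE$ functor $\rmT_{(-)}(\rmP,\rmQ)$ (Proposition ~\ref{TE.st}(ii)) to a degree-wise split short exact sequence of strictly perfect complexes; the subsequent operations $e_k\rmN_k t_k \circ \Delta \circ {\rm DN}$ preserve this cofibration structure, showing that $\alpha$ and $\beta$ are well-defined group homomorphisms. Multiplicativity uses the natural isomorphism $\rmT_{\rmE\otimes \rmF}(\rmP,\rmQ) \cong \rmT_{\rmE}(\rmP,\rmQ) \otimes \rmT_{\rmF}(\rmP,\rmQ)$ (Proposition ~\ref{TE.st}(ii)) together with the Eilenberg-Zilber/Alexander-Whitney comparison of Appendix B, which identifies the normalization of a diagonal tensor product with the tensor product of normalizations up to a natural chain-homotopy equivalence; the calibration $k > md$ is exactly what ensures that when $x, y, xy \in \rmR_{\mathbb Z}(\rmM_N\times \rmM_N)^d$, the truncations do not alter the resulting classes. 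The analogous argument with $\rmT_{\lambda^k\rmE}(\rmP,\rmQ) \cong \lambda^k\rmT_{\rmE}(\rmP,\rmQ)$ from Proposition ~\ref{TE.st}(ii) establishes (iii). Finally, (i) follows from $\rmT_{p_1}(\rmP,\rmQ) = \rmP$ and $\rmT_{p_2}(\rmP,\rmQ) = \rmQ$ (Proposition ~\ref{TE.st}(iii)) together with the Dold-Kan equivalence $\rmN \circ {\rm DN} = id$, which recovers the original complexes after un-truncating and un-shifting.

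The main obstacle will be two-fold. On the one hand, keeping track of the truncation $t_k$ through tensor products and exterior powers requires the precise bound $k > md$: only then do compositions of $\rmT_{\rmE}$'s of degree $\le d$ applied to simplicial objects built from complexes of length $\le m$ stay within the truncation window, so that the chain-level isomorphisms descend to K-theory classes. On the other hand, for $\beta$ one must verify that the zig-zag path $p$ for $(\rmC,\rmD)$ produces, via term-wise application of $\rmT_{\rmE}$ along each leg, an honest zig-zag path for $\rmS_{\rmE}(k;\rmC',\rmD')$ terminating at $(0,0)$; this rests on the fact that $\rmT_{\rmE}$ preserves both degree-wise split monomorphisms and quasi-isomorphisms (Proposition ~\ref{TE.st}) in each argument, together with $\rmT_{\rmE}(0,0) = 0$, and that this assignment of paths respects the sum/product/$\lambda^k$ structures used in the defining identities of $\rmR_{\mathbb Z}(\rmM_N\times \rmM_N)^d$.
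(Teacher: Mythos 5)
Your construction of $\alpha$ and $\beta$ via the Gillet--Soul\'e bi-functor $\rmT_{\rmE}$, transported through the shift/de-normalize/truncate/re-normalize pipeline $\rmS_{\rmE}$ of~\eqref{Sk.1}--\eqref{Sk.2}, is essentially the route the paper takes, and properties (i)--(iii) are deduced in both from the exactness and monoidal behavior of $\rmE\mapsto \rmT_{\rmE}(\rmP,\rmQ)$ together with the polynomial presentation of $\rmR_{\mathbb Z}(\rmM_N\times\rmM_N)^d$ supplied by Propositions~\ref{Gillet-Soule-prop1} and~\ref{Gillet-Soule-prop2}.

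However, you omit the one step that does not follow purely formally and without which $\beta$ is not even defined: you must show that $\rmS_{\rmE}(k;\rmC',\rmD')$ is acyclic on $(\S-\S')\times\Delta[n]$, so that $\beta(\rmE)$ actually lies in $\pi_0(\rmK_{\S'\times\Delta[n]}(\S\times\Delta[n],\S\times\delta\Delta[n]))$. Neither the base-change compatibility nor the preservation of quasi-isomorphisms recorded in Proposition~\ref{TE.st} yields this directly; one further needs that $\rmC'$ and $\rmD'$, being bounded complexes of vector bundles acyclic over the open complement, admit local contracting homotopies there, that $\rmT_{\rmE}$ preserves chain homotopies, and then a reduction to the affine case via a presentation of the stack and \cite[Lemma 3.5(iii)]{GS} --- this is precisely the argument the paper runs. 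A secondary, minor misreading: $\rmK(1)\otimes(-)$ enters the paper's argument only in the choice of $N$ large enough, and does not appear inside the definition of $\rmS_{\rmE}$ itself.
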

\begin{proof} In \cite[Lemma 4.8]{GS} they consider a similar result for complexes $\rmC$ and $\rmD$ that are acyclic {\it off} of a closed sub-scheme of the given
scheme: therefore the above result does not follow by simply extending their result to stacks. Instead one needs to argue as follows.
Choose  $m$ so that ${\rmC}^n =\rmD^n=\rmA^n =\rmB^n=0$ if $n>m$, and the integer $k$ so that $k>md$. We will first apply the shift $[m]$ so that $\rmC[m]^n= \rmD[m]^n= \rmA[m]^n =\rmB[m]^n=0$ for all $n>0$.
Therefore we may view  $\rmC[m]$, $\rmD[m]$, $\rmA[m]$ and $\rmB[m]$ as complexes in non-negative degrees with differentials of degree $-1$. We denote these by
$\rmC'$, $\rmD'$, $\rmA'$ and $\rmB'$ \res. Choose the integer ${\rm N}$, and 
for any representation 
$\rmE$ of $\rmM_N \times \rmM_N$ of degree at most $d$,  we define the functors $\rmE \mapsto \rmS_{\rmE}({\it k}, \rmA', \rmB')$ and $\rmE \mapsto \rmS_{\rmE}({\it k}, \rmC', \rmD')$ as in
~\eqref{Sk.1} and ~\eqref{Sk.2}.
\vskip .2cm
That $ \rmE \mapsto \rmT_{\rmE}({\it k}; \rmA', \rmB')$ defines a map $\rmR_{\mathbb Z}(\rmM_N \times \rmM_N)^d \ra Simp_{\it k}(Vect_N(\S))$  is clear from the definition.
The property that the functor $\rmT_{\rmE}$ commutes with base-change shows there is a natural isomorphism
$ i^* \circ {\rm S}_{\rmE}({\it k};  \rmC',  \rmD') \cong {\rm S}_{\rmE}({\it k}; i^*(\rmC'), i^*( \rmD'))$. 
 Next assume that $\rmC'$ and $\rmD'$ are {\it acyclic} on $\S' \times \Delta [n]$.
Since they are both
complexes of locally free coherent sheaves, locally on the stack $\S' \times  \Delta[n]$, one may find a contracting homotopy for the restriction of 
$\rmC'$ and $\rmD'$ to $\S' \times \Delta[n]$. Therefore, again the same argument  applied to 
a presentation of the stack $\S$ (by an affine scheme) proves that ${\rm S}_{\rmE}(k;  \rmC',  \rmD')$ is {\it acyclic} on restriction $\S' \times \Delta[n]$. 
(In more detail: let $x:X \ra \S$ be a presentation of the stack $\S$ with $\rmX$ affine, $y: Y \ra \S'$ be a presentation of $\S'$. Now apply \cite[ Lemma 3.5(iii)]{GS} to $\rmX-Y$.) 
It follows that ${\rm S}_{\rmE}({\it k};  \rmC', \rmD')$ is a strictly perfect complex on $ \S \times \Delta[n]$ so that it is acyclic on 
$\S' \times  \Delta [n]$. 
It follows that $\rmE \mapsto {\rm S}_{\rmE}({\it k}; \rmC', \rmD')$ defines a map $\rmR_{\mathbb Z}(\rmM_N \times \rmM_N)^d \ra Simp_{\it k}(Vect_N(\S \times \Delta [n], (\S-\S') \times \Delta[n])$.
\vskip .2cm 
Recall that $ \pi_0(\rmK (\S))$ has been
proven to be a pre-lambda ring, and $\pi_0(\rmK_{\S' \times \Delta [n]}(\S \times \Delta[n], \S \times \delta \Delta [n]))$ has been
proven to be a pre-lambda ring without unit by the first two statements in Theorem ~\ref{mainthm}. At this point we recall from
Proposition ~\ref{Gillet-Soule-prop1} that the ring $\rmR_{\mathbb Z}(\rmM_N \times \rmM_N)$ is isomorphic to the polynomial ring 
${\mathbb Z}[\lambda ^1(p_1), \cdots \lambda ^N(p_1); \lambda ^1(p_2), \cdots, \lambda ^N(p_2)]$. One may also recall from Proposition 
 ~\ref{Gillet-Soule-prop2} that the group $\rmR_{\mathbb Z}(\rm\rmM_N \times M\rm_N)^d$ maps injectively into 
 $\rmR_{\mathbb Z}(\rmM_N \times \rmM_N)$ and the image of this map consists of the elements 
\[\rmR(\lambda^1(p_1), \cdots, \lambda ^N(p_1); \lambda ^1(p_2), \cdots, \lambda^N(p_2)),\]
where $\rmR$ runs over all polynomials of weight at most $d$.
Therefore, it should be clear now that  
the maps $\alpha $  and $\beta$ {\it are completely 
determined by their values on the representations $p_1$ and $p_2$}, that is, assuming both $\alpha$ and $\beta$ commute with $\lambda$-operations.
\vskip .1cm
Therefore, for a representation $\rmE$ of $\rmM_{\rmN} \times \rmM_{\rmN}$ of degree at most $d$, 
we define: 
\be \begin{align}
 \label{alpha.beta.def}
\alpha (\rmE) \in\pi_0(\rmK (\S)) &\mbox{ to be the class of } \rmS_{\rmE}({\it k}; \rmA', \rmB')[-m] \mbox{ and }\\
\beta(\rmE) \in \pi_0(\rmK_{\S' \times \Delta [n]}(\S \times \Delta [n], \S \times \delta \Delta [n])) &\mbox { to be the class of } {\rm S}_{\rmE}({\it k}; \rmC', \rmD')[-m], \res)\notag.
\end{align} \ee
The property that $\rmT_{\rm p_1}(\rmA', \rmB') = \rmA'$ and $\rmT_{\rm p_2}(\rmA', \rmB') =\rmB'$ shows that $\alpha(\rmp_1) =[\rmA]$ and $\alpha(\rmp_2) = [\rmB]$. Similarly, $\beta(\rmp_1) = [\rmC]$ and $\beta(\rmp_2) = [\rmD]$.
This proves (i). 
\vskip .2cm
By Proposition ~\ref{Gillet-Soule-prop2}, any element in $\rmR_{\mathbb Z}(\rmM_N \times \rmM_N)^d$ is a polynomial of weight at most $d$ in the exterior powers of $p_1$ and $p_2$. 
Since the functor $\rmE \mapsto \rmT_{\rmE}(\rmP, \rmQ)$ (for a fixed $\rmP$ and $\rmQ$) is exact in $\rmE$ and preserves
sums, products and exterior powers in $\rmE$ as already observed (see ~\ref{TE} and Proposition ~\ref{TE.st}), and
both $\pi_0(\rmK (\S))$ and $\pi_0(\rmK_{\S' \times \Delta [n]}(\S \times \Delta [n], \S \times \delta \Delta [n]))$ are pre-$\lambda$-rings,
we see that we obtain additive homomorphisms
\vskip .2cm 
$\alpha: \rmR_{\mathbb Z}(\rmM_N \times \rmM_N) ^d \ra \pi_0(\rmK (\S))$ and
\vskip .2cm 
$\beta:  \rmR_{\mathbb Z}(\rmM_N \times \rmM_N) ^d \ra \pi_0(\rmK_{\S'\times \Delta [n]}(\S \times \Delta [n], \S \times \delta \Delta [n]))$,
\vskip .1cm \noindent
which are also multiplicative, and preserve the $\lambda$-operations. 
Moreover, each element in the image of $\alpha$ ($\beta$) can be written
as $\rmT_{\rmE}({\it k}; \rmA, \rmB)$ (${\rm S}_{\rmE}({\it k}; \rmC, \rmD)$, \res) for some $\rmE \in \rmR_{\mathbb Z}(\rmM_N \times \rmM_N)^d$.
\vskip .2cm 
By the properties of the functor $\rmT_{\rmE}$ discussed before, it follows that if \\
$\rmR(X_1, \cdots, X_N; Y_1, \cdots, Y_N)$ is a polynomial with integral coefficients and
of weight at most $d$, we obtain:
\be \begin{equation}
\label{equality.lambda}
\alpha (\rmR(\lambda^1(\rmp_1), \cdots, \lambda^N(\rmp_1); \lambda ^1(\rmp_2), \cdots, \lambda^{\rm N}(\rmp_2))) = \rmR(\lambda^1({\it x}), \cdots, \lambda^{\rmN}({\it x}); \lambda^1({\it y}), \cdots, \lambda^{\rmN}({\it y})) \end{equation} \ee
\vskip .2cm \noindent
where $x= \alpha ([\rmA])$ and $y= \alpha([\rmB])$. Similarly, 
\be \begin{equation}
\beta (\rmR(\lambda^1(\rmp_1), \cdots, \lambda^{\rmN}(\rmp_1); \lambda ^1(\rmp_2), \cdots, \lambda^{\rmN}(\rmp_2))) =R(\lambda^1({\it x}), \cdots, \lambda^{\rmN}({\it x}); \lambda^1({\it y}), \cdots, \lambda^{\rmN}({\it y}))
\end{equation} \ee
\vskip .2cm \noindent
where $x= \beta([\rmC])$ and $y= \beta([\rmD])$.   Since the functors $\rmE \mapsto \rmT_{\rmE}({\it k}; \rmA, \rmB)$ and $\rmE \mapsto {\rm S}_{\rmE}({\it k}; \rmC, \rmD)$ are compatible with
tensor products and exterior powers, (ii) and (iii) of the Proposition follow readily.
\end{proof}
First we draw the following consequences of the last proposition
\begin{corollary}
\label{lambda.ring.0}
\label{cor1} $\pi_0(\rmK (\S))$ is a $\lambda$-ring and for each $n \ge 0$, $\pi_n(\rmK_{\S'}(\S)) \cong \pi_0(\rmK_{\S'\times \Delta [n]}(\S\times \Delta [n], \S \times \delta \Delta [n]))$ is a $\lambda$-ring without a unit element.
\end{corollary}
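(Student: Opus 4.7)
The plan is to upgrade the pre-$\lambda$-ring structure already given by Theorem~\ref{mainthm}(i),(ii) to a genuine $\lambda$-ring (respectively $\lambda$-ring without unit) structure by transporting the universal identities from the bialgebra $\rmR_{\mathbb Z}(\rmM_N \times \rmM_N)$, which is a bona fide $\lambda$-ring by Proposition~\ref{Gillet-Soule-prop1}. The homomorphisms $\alpha$ and $\beta$ of Proposition~\ref{Gillet-Soule-prop3} will serve as the bridge. The vanishing $\lambda^n([\O_{\S}]) = 0$ for $n>1$ is immediate from the definition~\eqref{Lambda.0} of the exterior power, since $\Lambda^n$ of a line bundle vanishes above degree one.

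To verify the identity $\lambda^{\it k}(r\cdotp s) = \rmP_{\it k}(\lambda^1(r),\ldots,\lambda^{\it k}(r);\lambda^1(s),\ldots,\lambda^{\it k}(s))$ in $\pi_0(\rmK(\S))$, I would proceed as follows. Given $r,s \in \pi_0(\rmK(\S))$, first use the resolution property together with the trick in~\ref{neg.classes} (write $[A] - [A'] = [A \oplus A'[1]]$) to represent $r$ and $s$ by strictly perfect complexes $A$ and $B$ on $\S$. Fix $d \ge \max(2k, kl)$ large enough that both sides of each identity to be checked have weight at most $d$. Proposition~\ref{Gillet-Soule-prop3} then supplies a homomorphism $\alpha: \rmR_{\mathbb Z}(\rmM_N \times \rmM_N)^d \to \pi_0(\rmK(\S))$, preserving sums, products, and $\lambda$-operations within the weight-$d$ piece, with $\alpha(p_1) = [A] = r$ and $\alpha(p_2) = [B] = s$. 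In the universal $\lambda$-ring $\rmR_{\mathbb Z}(\rmM_N \times \rmM_N)$ the identities $\lambda^{\it k}(p_1\cdotp p_2) = \rmP_{\it k}(\lambda^i(p_1);\lambda^j(p_2))$ and $\lambda^{\it k}(\lambda^l(p_1)) = \rmP_{\it k,l}(\lambda^i(p_1))$ hold, and by the weight bound both sides lie in $\rmR_{\mathbb Z}(\rmM_N \times \rmM_N)^d$. Applying $\alpha$ and invoking parts (ii) and (iii) of Proposition~\ref{Gillet-Soule-prop3} transfers these identities to $\pi_0(\rmK(\S))$.

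For the relative case, the argument is parallel. Given a class in $\pi_n(\rmK_{\S'}(\S)) \cong \pi_0(\rmK_{\S' \times \Delta[n]}(\S\times\Delta[n], \S\times\delta\Delta[n]))$, use the fact that $\S \times \Delta[n]$ inherits the resolution property from $\S$ (since $\Delta[n] \cong {\mathbb A}^n$ is affine over the base), together with the same shifting device of~\ref{neg.classes}, to represent the class by a strictly perfect complex $C$ on $\S\times\Delta[n]$ acyclic on $(\S - \S')\times\Delta[n]$, equipped with the requisite zig-zag path to the basepoint. Then run the identical argument using $\beta$ in place of $\alpha$. To establish the $\lambda$-algebra structure of $\pi_n(\rmK_{\S'}(\S))$ over $\pi_0(\rmK(\S))$, I would apply Proposition~\ref{Gillet-Soule-prop3} simultaneously in both its forms, pairing $A,B$ representing classes in $\pi_0(\rmK(\S))$ with $C,D$ representing classes in the relative K-group, and pulling back the corresponding identities in $\rmR_{\mathbb Z}(\rmM_N \times \rmM_N)^d \oplus \rmR_{\mathbb Z}(\rmM_N \times \rmM_N)^d$ equipped with the pre-$\lambda$-algebra structure of Lemma~\ref{prelambda.alg}.

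The main obstacle is purely bookkeeping: one must choose the integer $d$ (and correspondingly $N$ and $k$) large enough that all the universal polynomials appearing in both the hypothesis and conclusion of each identity remain within the weight-bounded piece $\rmR_{\mathbb Z}(\rmM_N \times \rmM_N)^d$, so that clause (iii) of Proposition~\ref{Gillet-Soule-prop3} applies. The essential inputs --- that every class can be represented by a strictly perfect complex (via the resolution property plus the shift trick), and that $\rmT_{\rmE}$ extends to stacks (Proposition~\ref{TE.st}) with the necessary exactness properties --- have already been set up, so no substantial new ideas are required beyond assembling these pieces.
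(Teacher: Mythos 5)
Your proposal is correct and follows essentially the same route as the paper: take strictly perfect representatives (invoking the shift trick of~\ref{neg.classes} to get a single complex), choose $d$ and $N$ as in Proposition~\ref{Gillet-Soule-prop3}, and transport the universal $\lambda$-ring identities from $\rmR_{\mathbb Z}(\rmM_N\times\rmM_N)$ through $\alpha$ (absolute case) and $\beta$ (relative case). Your extra observations --- explicitly noting $\lambda^n([\O_{\S}])=0$ from rank one, and spelling out the weight bookkeeping and the passage from differences to single complexes that the paper leaves implicit --- are accurate and only sharpen the exposition.
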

\begin{proof} We already know from Theorem ~\ref{mainthm}(i) that $\pi_0(\rmK (\S))$ is a pre-$\lambda$-ring with unit and that $\pi_n(\rmK_{\S'}(\S))\\ \cong \pi_0(\rmK_{\S'\times \Delta [n]}(\S\times \Delta [n], \S \times \delta \Delta [n]))$ is a pre-$\lambda$-ring (without a unit): the $\lambda$-operations in both cases are defined by the
exterior powers of perfect complexes. Therefore, what remains to be shown is that they satisfy the relations in ~\eqref{lambda.ids}. 
 This is a formal consequence of the last proposition.
Let $\rmC$, $\rmD$ denote two strictly perfect complexes on $\S \times \Delta[n]$ acyclic on $(\S-\S') \times \Delta[n]$ 
and provided with an explicit zig-zag path $p$ as in ~\eqref{zig.zag.path} running from  
their restriction to $\S \times \delta \Delta [n]$, to the base point $(0, 0)$
in ${\it w}\rmG(i^*)$.
 Let $\rmA$, $\rmB$ denote two
strictly perfect complexes on $\S$ and let $x =[\rmA]$, $y =[\rmB]$. 
\vskip .2cm
To check the
identity $\lambda^{\it k} (\lambda^l({\it x})) = {\rmP}_{\it k,l}(\lambda ^1({\it x}), \cdots, \lambda ^{\it kl}({\it x}))$ for
a certain universal polynomial $\rmP_{{\it k,l}}$, let
$d= kl$ and choose ${\rm N}$ as in the last proposition. Then 
\[\lambda ^{\it k} (\lambda ^l({\it x})) = \alpha (\lambda ^{\it k}( \lambda ^l(\rmp_1))) \mbox{ and } \rmP_{{\it k,l}}(\lambda ^1({\it x}), \cdots, \lambda ^{\it kl}({\it x})) = \alpha (\rmP_{{\it k,l}}(\lambda ^1(\rmp_1), \cdots, \lambda ^{\it kl}(\rmp_1))).\]
Since $\rmR_{\mathbb Z}(\rmM_N \times \rmM_N)^d$ is contained in the $\lambda$-ring $\rmR_{\mathbb Z}(\rmM_N \times \rmM_N)$, 
we obtain
the equality 
\[\lambda^{\it k}(\lambda^l(\rmp_1)) = \rmP_{\it k,l}(\lambda^1(\rmp_1), \cdots, \lambda ^{\it kl}(\rmp_1)).\] In view of properties of the functor $\rmE \mapsto \rmT_{\rmE}$ as discussed
above, it follows that $\alpha$ is an additive homomorphism that commutes with products and exterior powers. Therefore, we obtain the formula 
\[\lambda ^{\it k}(\lambda ^l(x)) = \rmP_{\it k,l}(\lambda^1({\it x}), \cdots, \lambda ^{\it kl}({\it x})).\]
Similarly, one checks the identity $\lambda ^{\it k}(xy) = \rmP_{\it k}(\lambda ^1({\it x}), \cdots, \lambda ^{\it k}({\it x}); \lambda^1({\it y}), \cdots, \lambda ^{\it k}({\it y}))$ for a certain universal polynomial $\rmP_{\it k}$. These prove that $\pi_0(\rmK (\S))$ is a $\lambda$-ring.
\vskip .2cm
Next one lets $x =[\rmC]$ and $y=[\rmD]$, and repeats the above argument with $\beta$ in the place of
$\alpha$ to prove  $\pi_0(\rmK_{\S' \times \Delta [n]}(\S\times \Delta [n], \S \times \delta \Delta [n]))$ is a $\lambda$-ring without a unit element. 
\end{proof}
\vskip .2cm
Let ${\mathbb Z}$ denote the ring of integers with its canonical $\lambda$-ring structure: see \cite[section 1]{AT}.
\begin{lemma}
\label{lambda.ring.1}
 Assume the above framework. Then 
${\mathbb Z} \oplus \pi_0(\rmK_{\S'\times \Delta [n]}(\S\times \Delta [n], \S \times \delta \Delta [n]))$ is a $\lambda$-ring where the operations are defined as follows.  If $n, m \in {\mathbb Z}$ and $s, t \in \pi_0(\rmK (\S\times \Delta [n], \S \times \delta \Delta [n]))$, 
\vskip .2cm
$(n, s) + (m, t) =(n+m, s+t)$, $(n, s) \circ (m, t) = (n.m, n.t+m.s+s.t)$ and
\vskip .2cm
$\lambda ^n(k,s) = (\lambda ^n(k), \Sigma _{i=0}^{n-1}\lambda ^i(k). \lambda ^{n-i}(s))$, $n>0$.
\vskip .2cm \noindent
Here $\circ$ denotes the multiplication in the graded ring ${\mathbb Z} \oplus \pi_0(\rmK_{\S'\times \Delta [n]}(\S\times \Delta [n], \S \times \delta \Delta [n]))$.
\end{lemma}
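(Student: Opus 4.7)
The plan is to combine Lemma~\ref{prelambda.alg} with Proposition~\ref{Gillet-Soule-prop3} to establish first the pre-$\lambda$-ring structure and then the two universal identities in~\eqref{lambda.ids}. I would first apply Lemma~\ref{prelambda.alg} with $\rmR = \Z$ (the canonical $\lambda$-ring, in which $\lambda^k(n) = \binom{n}{k}$) and $\rmS = \pi_0(\rmK_{\S' \times \Delta[n]}(\S \times \Delta[n], \S \times \delta \Delta[n]))$. The hypotheses are satisfied because $\rmS$ is a pre-$\lambda$-ring without unit by Theorem~\ref{mainthm}(i),(ii), and it carries the natural structure of a $\Z$-module. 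This yields the pre-$\lambda$-ring structure on the direct sum with the formulas stated in the lemma. Moreover, since $\lambda^k(1) = 0$ in $\Z$ for $k \geq 2$, the defining formula immediately gives $\lambda^k((1,0)) = (0,0)$ for $k \geq 2$, confirming that $(1,0)$ behaves as a proper unit for the $\lambda$-operations.

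To upgrade to a full $\lambda$-ring, I would verify the two identities in~\eqref{lambda.ids} for arbitrary $x = (n,s)$ and $y = (m,t)$. Using the defining formulas $\lambda^j((a,b)) = (\lambda^j(a), \sum_{i=0}^{j-1} \lambda^i(a)\cdot\lambda^{j-i}(b))$ and $(a,b)\cdot(a',b') = (aa', ab'+a'b+bb')$, each side of $\lambda^k(xy) = \rmP_k(\lambda^{\bullet}(x); \lambda^{\bullet}(y))$ decomposes into a pair whose first coordinate is a polynomial expression in $\lambda^{\bullet}(n), \lambda^{\bullet}(m)$, and whose second coordinate is a polynomial expression in all of $\lambda^{\bullet}(n), \lambda^{\bullet}(m), \lambda^{\bullet}(s), \lambda^{\bullet}(t)$ together with products coming from the $\Z$-module action of $n, m$ on $s, t$. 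The first-coordinate identities match because $\Z$ is a $\lambda$-ring, and the analogous decomposition handles $\lambda^k(\lambda^l(x)) = \rmP_{k,l}(\lambda^{\bullet}(x))$.

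The nontrivial verification, and the principal obstacle, concerns the second coordinate. To handle it I would realize $s$ and $t$ by pairs of strictly perfect complexes on $\S \times \Delta[n]$ acyclic on $(\S - \S') \times \Delta[n]$ (with chosen zig-zag paths to the base point of the $\rmG$-construction), choose $d$ larger than the total degree of the relevant $\rmP_k$ or $\rmP_{k,l}$, and invoke Proposition~\ref{Gillet-Soule-prop3} to obtain a $\lambda$-preserving ring homomorphism $\beta: \rmR_{\Z}(\rmM_N \times \rmM_N)^d \to \rmS$ sending $p_1 \mapsto s$ and $p_2 \mapsto t$. Since $\rmR_{\Z}(\rmM_N \times \rmM_N)^d$ embeds into the genuine $\lambda$-ring $\rmR_{\Z}(\rmM_N \times \rmM_N)$ by Propositions~\ref{Gillet-Soule-prop1} and~\ref{Gillet-Soule-prop2}, the universal identities $\lambda^k(p_1 \cdot p_2) = \rmP_k(\lambda^{\bullet}(p_1); \lambda^{\bullet}(p_2))$ and $\lambda^k(\lambda^l(p_i)) = \rmP_{k,l}(\lambda^{\bullet}(p_i))$ hold there, and they transfer to identities among the corresponding elements of $\rmS$ via $\beta$. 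The residual mixed terms, arising from products of the form $(n,0)\cdot(0,t) = (0, nt)$, reduce via the $\Z$-module action to evaluating $\lambda^k$ of the $n$-fold sum of $t$; expanding this by the pre-$\lambda$-ring addition formula and comparing with $\rmP_k(\lambda^{\bullet}(n); \lambda^{\bullet}(t))$ after substituting $\lambda^i(n) = \binom{n}{i}$ matches both sides. The bookkeeping of these binomial expansions against the universal polynomials $\rmP_k$ and $\rmP_{k,l}$ is the technical heart of the proof; it follows formally from the universality of the polynomials and the $\lambda$-preserving nature of $\beta$, but requires care to assemble all the integer-weighted contributions correctly.
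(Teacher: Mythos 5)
Your strategy is sound and touches the same two pillars the paper uses — Lemma~\ref{prelambda.alg} for the pre-$\lambda$-ring structure and the $\lambda$-ring property of the relative $K_0$ group (via Proposition~\ref{Gillet-Soule-prop3}) for the universal identities — but the route you take to assemble them is genuinely different, and you stop short of carrying out its hardest step. The paper first isolates Corollary~\ref{lambda.ring.0}, which asserts that $\rmS = \pi_0(\rmK_{\S'\times\Delta[n]}(\S\times\Delta[n],\S\times\delta\Delta[n]))$ is a $\lambda$-ring without unit; Proposition~\ref{Gillet-Soule-prop3} is used there and then never touched again. The present Lemma is then proved entirely by the power-series formalism: one passes to $\hat G(\rmR\oplus\rmS)=1+(\rmR\oplus\rmS)[[t]]^+$, uses that $\lambda_t$ is always an additive homomorphism, and reduces the first identity in~\eqref{lambda.ids} to the single mixed case $\lambda_t((m,0)\circ(0,x))=\lambda_t(m,0)\bullet\lambda_t(0,x)$, which is dispatched by induction on $m$ starting from $\lambda_t(1,0)$ being the $\bullet$-unit; the second identity is reduced, via the SGA6 Expos\'e V~(3.7.1) commuting square and the fact that every map there is a group homomorphism, to checking it separately on elements $(n,0)$ and $(0,x)$, which is already known from Corollary~\ref{lambda.ring.0} and from $\Z$ being a $\lambda$-ring. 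At no point does the paper expand or compare the universal polynomials $\rmP_k$, $\rmP_{k,l}$ coordinatewise.

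Your proposal, by contrast, wants to verify $\lambda^k(xy)=\rmP_k(\dots)$ and $\lambda^k(\lambda^l(x))=\rmP_{k,l}(\dots)$ directly in the two coordinates of $\Z\oplus\rmS$, invoking $\beta$ from Proposition~\ref{Gillet-Soule-prop3} inside this proof rather than packaging it as a prior corollary, and then treating the mixed terms $(n,0)\circ(0,t)=(0,nt)$ by expanding $\lambda^k$ of an $n$-fold sum and matching against $\rmP_k$ with $\lambda^i(n)=\binom{n}{i}$ substituted. This can be made to work, but the final paragraph of your argument is precisely the content that needs proof: that the binomial expansion of $\lambda^k(t+\cdots+t)$ coincides with $\rmP_k(\binom{n}{1},\dots;\lambda^1(t),\dots)$ is not a formal consequence of $\beta$ being $\lambda$-preserving (which only sees $p_1,p_2$, not integer scalars), nor does it follow immediately from universality without an argument that the integers embed as the trivial $\lambda$-subring compatibly with the $\Z$-module action on $\rmS$. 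That is exactly the gap that the paper's power-series induction on $m$ closes in a few lines, and which you defer to "bookkeeping." Similarly, "the analogous decomposition handles" the composition identity is optimistic: in the paper that step is not analogous but relies on a separate device (the commuting square) reducing to pure generators. So: correct skeleton, genuinely different (more computational) route, with the technical heart — the mixed-term identity and the full $\rmP_{k,l}$ verification — left as a claim rather than a proof.
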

\begin{proof} It is straightforward to verify that these define a pre-$\lambda$-ring structure, 
where $(1, 0)$ is the unit element: see Lemma ~\ref{prelambda.alg}. We proceed to verify the relations in ~\eqref{lambda.ids} are satisfied. For these the following observations will be helpful.
\vskip .2cm
For any ring  $\rmR$ with unit, let $\hat G(\rmR) = 1+\rmR[[t]]^+$ = the power series in $t$ with coefficients in $\rmR$ and with the starting term 1. This is a $\lambda$ ring with the addition (which  will be denoted by $\boxplus$) being the product of power series, and multiplication (denoted $\bullet$) and exterior power
operations defined as in \cite[p. 258]{AT}. If $\rmR$ is also a pre-$\lambda$ ring, then the map $r \mapsto \lambda_t(r) = \Sigma _i \lambda^i(r) t^i$ is an {\it additive homomorphism of abelian groups} from $\rmR$ to $\hat G(\rmR)$. The same map is a ring homomorphism (a map of pre-$\lambda$-rings) if and only if the first  relation in ~\eqref{lambda.ids} is satisfied (both the relations in ~\eqref{lambda.ids}, \res \, are satisfied).
\vskip .2cm 
Therefore, to prove the first relation in ~\eqref{lambda.ids}, it suffices to show that 
\be \begin{equation}
\label{ring.1}
\lambda_t((n, x) \circ (m, y)) = \lambda_t(n, x) \bullet \lambda_t(m, y), \quad n, m \in {\mathbb Z}, \quad x, y \in \pi_0(\rmK_{\S'\times \Delta [n]}(\S\times \Delta [n], \S \times \delta \Delta [n])).
\end{equation} \ee
\vskip .2cm \noindent
 Using the product on the ring ${\mathbb Z} \oplus \pi_0(\rmK_{\S' \times \Delta[n]}(\S\times \Delta [n], \S \times \delta \Delta [n]))$, the left-hand-side identifies with
\vskip .2cm 
$\lambda_t((n, 0)\circ (m, 0) +(n, 0) \circ (0, y)+  (0,x)\circ (m, 0) + (0, x)\circ (0, y))$.
\vskip .2cm \noindent
Since $\lambda_t$ is an additive homomorphism, this identifies with 
\vskip .2cm 
$\lambda_t((n, 0) \circ (m, 0)) \boxplus \lambda_t((n, 0) \circ (0, y)) \boxplus \lambda_t((0, x)\circ (m, 0)) \boxplus \lambda_t((0, x) \circ (0, y))$. 
\vskip .2cm \noindent
The term on the right-hand-side of ~\eqref{ring.1} identifies with
\vskip .2cm
$[(\lambda_t(n, 0)) \boxplus (\lambda_t(0, x))] \bullet [ (\lambda_t(m, 0)) \boxplus (\lambda_t(0, y))] $
\vskip .2cm
$= (\lambda_t(n, 0) \bullet \lambda_t(m, 0)) \boxplus (\lambda_t(n, 0) \bullet (\lambda_t(0, y)) \boxplus  ((\lambda_t(0, x)\bullet (\lambda_t(m, 0)) \boxplus (\lambda_t(0, x)) \bullet (\lambda_t(0, y))$. 
\vskip .2cm 
Since ${\mathbb Z}$ and $\pi_0(\rmK_{\S'\times \Delta [n]}(\S\times \Delta [n], \S \times \delta \Delta [n]))$ are $\lambda$-rings,  $\lambda_t((n, 0) \circ (m, 0)) = \lambda _t(n, 0) \bullet \lambda _t(m, 0)$ and $\lambda_t((0, x) \circ (0,y)) = \lambda _t(0, x) \bullet \lambda _t(0, y)$. 
Moreover, observe that $(0, x) \circ (m, 0) = (0, mx) = (m, 0) \circ (0, x)$.
Therefore, it suffices to show that $\lambda_t((m, 0) \circ (0, x)) = \lambda_t(m, 0) \bullet \lambda_t(0, x)$ for any positive integer $m$. However, $(m, 0) \circ (0, x) = (0, mx)$. Since $\lambda_t(1, 0) = 1+(1, 0)t$ is the
multiplicative unit in $\hat G({\mathbb Z} \oplus \pi_0(\rmK_{\S'\times \Delta [n]}(\S\times \Delta [n], \S \times \delta \Delta [n])))$, it follows that $\lambda_t(0, 1.x) = \lambda_t(1, 0) \bullet \lambda_t(0, x)$.
\vskip .2cm 
Assuming that $\lambda_t(0, nx) = \lambda_t(n,0) \bullet \lambda_t(0,x)$ for all $n < m$,
we observe that 
\vskip .2cm 
$\lambda_t(0, mx) = \lambda_t(0, x + (m-1)x) = \lambda _t(0, x) \boxplus \lambda_t(0, (m-1)x) $
\vskip .2cm
$= \lambda _t(0,x) \boxplus \lambda_t((m-1, 0) \circ (0, x)) = \lambda_t(1,0) \bullet \lambda _t(0, x) \boxplus \lambda_t((m-1), 0)\bullet \lambda _t(0,x)$
\vskip .2cm 
$ = (\lambda_t(1, 0) \boxplus \lambda _t((m-1), 0)) \bullet \lambda_t(0,x) = \lambda_t(m, 0) \bullet \lambda_t(0,x )$.
\vskip .2cm \noindent
This completes the proof of the first relation in ~\eqref{lambda.ids}. 
\vskip .2cm
To prove the second, we observe the square in \cite[ Expos\'e V, (3.7.1)]{SGA6}
\[\xymatrix{S \ar@<1ex>[r]^{\lambda _u} \ar@<-1ex>[d]_{\lambda_t} & {{\hat G}^u(S)} \ar@<1ex>[d]^{{\hat G}^u(\lambda_t)}\\
{\hat G}^t(S) \ar@<1ex>[r]^{\lambda_u} & {{\hat G}^u({\hat G}^t(S))},}\]
where $S = {\mathbb Z} \oplus \pi_0(\rmK_{\S' \times \Delta [n]}(\S\times \Delta [n], \S \times \delta \Delta [n]))$. Given a 
pre-$\lambda$-ring $\rmR$, ${\hat G}^u(R)$ (${\hat G}^t(R)$) denotes the power series ring considered above in the variable $u$ ($t$, \res). The second relation in ~\eqref{lambda.ids} holds if and only if
the above square {\it commutes}: see \cite[ Expos\'e V, 3.7]{SGA6}. 
Since all the maps in the above diagram are group homomorphisms, it suffices to show the square above commutes separately
for elements of the form $(n, 0)$ and $(0, x)$ with
$n \in {\mathbb Z}$ and $x \in \pi_0(\rmK_{\S' \times \Delta [n]}(\S\times \Delta [n], \S \times \delta \Delta [n]))$.
But this is equivalent to showing 
the required relations hold separately for elements of the form $(n, 0)$ and $(0, x)$ with
$n \in {\mathbb Z}$ and $x \in \pi_0(\rmK_{\S' \times \Delta [n]}(\S\times \Delta [n], \S \times \delta \Delta [n]))$.
This is clear since we already know from Corollary ~\ref{lambda.ring.0} that the elements in  $\pi_0(\rmK_{\S' \times \Delta [n]}(\S\times \Delta [n], \S \times \delta \Delta [n]))$ satisfy the second relation in ~\eqref{lambda.ids}. (Clearly the elements in ${\mathbb Z}$ also satisfy this relation since ${\mathbb Z}$ is a $\lambda$-ring with its canonical structure.)
This completes the proof of the lemma. \end{proof} 
\vskip .2cm
\begin{proposition}
\label{lambda.ring.2}
Assume the above framework. Then 
$ \pi_0(\rmK (\S)) \oplus \pi_0(\rmK_{\S'\times \Delta [n]}(\S\times \Delta [n], \S \times \delta \Delta [n]))$ is a $\lambda$-ring where the operations are defined as follows.  If $u, v \in \pi_0(\rmK (\S))$ and $s, t \in \pi_0(\rmK (\S\times \Delta [n], \S \times \delta \Delta [n]))$, 
\vskip .2cm
$(u, s) + (v, t) =(u+v, s+t)$, $(u, s) \circ (v, t) = (u.v, u.t+v.s+s.t)$ and
\vskip .2cm
$\lambda ^n(u,s) = (\lambda ^n(u), \Sigma _{i=0}^{n-1}\lambda ^i(u). \lambda ^{n-i}(s))$, $n>0$.
\vskip .2cm \noindent
Here $\circ$ denotes the multiplication in the graded ring $\pi_0(\rmK (\S)) \oplus \pi_0(\rmK_{\S'\times \Delta [n]}(\S\times \Delta [n], \S \times \delta \Delta [n]))$.
\end{proposition}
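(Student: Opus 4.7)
My plan is to adapt the strategy of Lemma~\ref{lambda.ring.1} (which handled the analogous statement with ${\mathbb Z}$ in place of $\pi_0(\rmK(\S))$), where the essential new input is that the mixed compatibility identities can no longer be obtained by induction on positive integers and must instead be extracted from the Gillet--Soul\'e machinery of Proposition~\ref{Gillet-Soule-prop3}. By Lemma~\ref{prelambda.alg}, the stated formulas already make the direct sum a pre-$\lambda$-ring with unit $([\O_\S], 0)$, so the only remaining task is to verify the two identities of~\eqref{lambda.ids}.

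For the first identity I would pass to the power-series ring $\hat G$ and recall that $\lambda^n(xy) = \rmP_n(\ldots)$ is equivalent to the additive homomorphism $\lambda_t$ being multiplicative. Decomposing $(u,s) = (u,0) + (0,s)$ and $(v,t) = (v,0) + (0,t)$ and expanding both sides using additivity of $\lambda_t$ together with distributivity of $\bullet$ over $\boxplus$, the verification splits into four pieces. The two pure pieces $\lambda_t((u,0) \circ (v,0)) = \lambda_t(u,0) \bullet \lambda_t(v,0)$ and $\lambda_t((0,s) \circ (0,t)) = \lambda_t(0,s) \bullet \lambda_t(0,t)$ hold by Corollary~\ref{lambda.ring.0}, which furnishes the $\lambda$-ring structure (with, respectively without, unit) on the two summands. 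Each mixed piece, such as $\lambda_t((u,0) \circ (0,t)) = \lambda_t(u,0) \bullet \lambda_t(0,t)$, unpacks (using that $\lambda^n((0,s)) = (0, \lambda^n(s))$ and that every monomial of $\rmP_n$ has positive degree in both its $x$- and $y$-variables, so that the cross-terms automatically land in the second summand via the module action) to the single identity
\[\lambda^n(ut) = \rmP_n(\lambda^1(u), \ldots, \lambda^n(u); \lambda^1(t), \ldots, \lambda^n(t))\]
in the relative K-group, where the polynomial is evaluated via the $\pi_0(\rmK(\S))$-module structure on $\pi_0(\rmK_{\S'\times\Delta[n]}(\S\times\Delta[n], \S\times\delta\Delta[n]))$.

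To establish this mixed identity I would write $u = [\rmA]$ and $t = [\rmC]$ with $\rmA$ strictly perfect on $\S$ and $\rmC$ strictly perfect on $\S\times\Delta[n]$ acyclic on $(\S-\S')\times\Delta[n]$ (general classes being reduced to this case by the shift trick of Section~\ref{neg.classes}), and then construct a pre-$\lambda$-ring homomorphism
\[\phi: \rmR_{\mathbb Z}(\rmM_N\times \rmM_N)^d \ra \pi_0(\rmK(\S)) \oplus \pi_0(\rmK_{\S'\times\Delta[n]}(\S\times\Delta[n], \S\times\delta\Delta[n])),\quad p_1\mapsto (u,0),\ p_2\mapsto (0,t),\]
adapted from the construction of $\alpha$ and $\beta$ in Proposition~\ref{Gillet-Soule-prop3}. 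For each representation $\rmE$ of weight at most $d$, uniquely decompose $\rmE = \rmE_1 + \rmE_2$ in the basis of monomials in the $\lambda^i(p_1), \lambda^j(p_2)$, so that $\rmE_1$ lies in the subring generated by the $\lambda^i(p_1)$ and every monomial of $\rmE_2$ has positive degree in some $\lambda^j(p_2)$, and put
\[\phi(\rmE) = \bigl([\rmT_{\rmE_1}(\rmA)], \; [\rmT_{\rmE_2}(\bar \rmA, \rmC)]\bigr),\]
with $\bar\rmA$ the pullback of $\rmA$ to $\S\times\Delta[n]$. Each monomial of $\rmE_2$ contributes a tensor factor of the form $\Lambda^j(\rmC)$ with $j\ge 1$, so $\rmT_{\rmE_2}(\bar\rmA, \rmC)$ is acyclic on $(\S-\S')\times\Delta[n]$. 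Using the multiplicativity and base-change properties of $\rmT_\rmE$ in the $\rmE$-variable (Proposition~\ref{TE.st}), one then verifies that $\phi$ preserves $+$, $\circ$, and the $\lambda^i$, and applying $\phi$ to the identity $\lambda^n(p_1 p_2) = \rmP_n(\lambda^1(p_1), \ldots; \lambda^1(p_2), \ldots)$ valid in the $\lambda$-ring $\rmR_{\mathbb Z}(\rmM_N\times\rmM_N)$ by Proposition~\ref{Gillet-Soule-prop1} yields the desired mixed identity. The second identity $\lambda^k(\lambda^l(x)) = \rmP_{k,l}(\ldots)$ is handled exactly as in Lemma~\ref{lambda.ring.1}: it amounts to the commutativity of the SGA~6 square $\lambda_u \circ \lambda_t = \hat G^u(\lambda_t) \circ \lambda_u$, and since all four maps in that square are additive, commutativity on the direct sum reduces via $(u,s) = (u,0) + (0,s)$ to commutativity separately on pure elements, which is again Corollary~\ref{lambda.ring.0}.

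The hard part I anticipate is the verification that the $\phi$ just constructed is genuinely $\lambda$-compatible. This rests on the observation that the decomposition $\rmE = \rmE_1 + \rmE_2$ is preserved by $\lambda^k$ for $k\ge 1$ in the sense that $\lambda^k$ of a $p_2$-positive element remains $p_2$-positive, yielding
\[\lambda^k(\rmE_1 + \rmE_2) \;=\; \lambda^k(\rmE_1) \;+\; \sum_{i+j=k,\,j\ge 1}\lambda^i(\rmE_1)\,\lambda^j(\rmE_2),\]
which must then be matched against the formula defining $\lambda^k$ on the direct sum in Lemma~\ref{prelambda.alg}; this matching is precisely where the multiplicativity and base-change properties of $\rmT_\rmE$ from Proposition~\ref{TE.st} come in.
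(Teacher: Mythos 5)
Your approach to the two pure cases and to the second relation in~\eqref{lambda.ids} matches the paper: both sides of the SGA 6 square are additive, so the second relation reduces to the pure cases, which are Corollary~\ref{lambda.ring.0}. For the first relation the decomposition into four pieces via $\boxplus$-additivity and $\bullet$-distributivity is also the paper's, and it reduces everything to the single mixed identity $\lambda_t((v,0)\circ(0,x)) = \lambda_t(v,0)\bullet\lambda_t(0,x)$. It is on this mixed identity that you diverge from the paper, and I think your route is both harder and not fully closed.

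The paper handles the mixed case by the \emph{splitting principle}: after pulling back along a suitable projective bundle $\rmP(\E)\ra\S$ (which preserves the lambda operations and is injective on $\pi_0\rmK$), one may assume $v$ is a sum of classes of line bundles $[\L_i]$. By $\boxplus$-additivity this reduces to a single line bundle $\L$, where the identity unwinds to the elementary functorial isomorphism $\Lambda^n(\L\otimes\rmP)\cong\L^{\otimes n}\otimes\Lambda^n(\rmP)$. No auxiliary representation theory is needed. Your proposal instead builds a third Gillet--Soul\'e-type homomorphism $\phi$ (alongside $\alpha,\beta$) sending $p_1\mapsto(u,0)$, $p_2\mapsto(0,t)$. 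That can in principle work, but your account leaves a genuine gap precisely at the place you flag as hard: the assertion that $\lambda^k$ of a $p_2$-positive element is again $p_2$-positive, i.e.\ that the ideal generated by $\lambda^1(p_2),\dots,\lambda^N(p_2)$ is a $\lambda$-ideal of $\rmR_{\mathbb Z}(\rmM_N\times\rmM_N)$. This is true, but proving it requires an argument — for instance, using the bi-grading $\rmE=\oplus\rmE^{p,q}$ from \cite[4.3]{GS}: for an honest representation $\rmV$ with $\rmV^{*,0}=0$ one has $(\Lambda^k\rmV)^{*,0}=0$, and for virtual classes one passes through $\lambda_t$ and observes that a power series $1+c_1t+\cdots$ with $c_i$ in the ideal has inverse with the same property. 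You state the claim as an ``observation'' without this reasoning, and without it $\phi$ is not shown to be $\lambda$-compatible. There is also a smaller issue of bookkeeping that $\phi$ preserves $\circ$ (requiring base-change to identify $[\rmT_{\rmE_1}(\rmA)]\cdot[\rmT_{\rmF_2}(\bar\rmA,\rmC)]$ with $[\rmT_{\rmE_1}(\bar\rmA,\cdot)\otimes\rmT_{\rmF_2}(\bar\rmA,\rmC)]$), and a notational slip: $\rmT_{\rmE_1}$ still formally has two arguments, even though its value is independent of the second when $\rmE_1$ lies in the $p_1$-subring. In short: your strategy can be completed, but the paper's splitting-principle argument bypasses all of this and is the route you should compare against before committing to a proof that manufactures a new hybrid $\alpha/\beta$.
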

\begin{proof} The proof of this Proposition will be very similar to the proof of Lemma ~\ref{lambda.ring.1}. Observe first that the proof of the
 second relation in ~\eqref{lambda.ids} given in the proof of Lemma ~\ref{lambda.ring.1} carries over verbatim with the 
 ring ${\mathbb Z}$ replaced by $ \pi_0(\rmK (\S))$. Therefore, it suffices to consider the proof of the first relation in ~\eqref{lambda.ids}, that is, it suffices to prove:
 \be \begin{equation}
  \label{ring.2}
\lambda_t((u, x) \circ (v, y)) = \lambda_t(u, x) \bullet \lambda_t(v, y), \quad u, v \in \pi_0(\rmK (\S)), \quad {\it x}, {\it y} \in \pi_0(\rmK_{\S'\times \Delta [n]}(\S\times \Delta [n], \S \times \delta \Delta [n])).
\end{equation} \ee
Using the product on the ring $\pi_0(\rmK (\S)) \oplus \pi_0(\rmK_{\S' \times \Delta[n]}(\S\times \Delta [n], \S \times \delta \Delta [n]))$, the left-hand-side identifies with
\vskip .2cm 
$\lambda_t((u, 0)\circ (v, 0) +(u, 0) \circ (0, y)+ (v,0) \circ (0,x) + (0, x)\circ (0, y))$.
\vskip .2cm \noindent
Since $\lambda_t$ is an additive homomorphism, this identifies with 
\vskip .2cm 
$\lambda_t((u, 0) \circ (v, 0)) \boxplus \lambda_t((u, 0) \circ (0, y)) \boxplus \lambda_t((v, 0) \circ (0, x)) \boxplus \lambda_t((0, x) \circ (0, y))$. 
\vskip .2cm \noindent
The term on the right-hand-side of ~\eqref{ring.2} identifies with
\vskip .2cm
$[(\lambda_t(u, 0)) \boxplus (\lambda_t(0, x))] \bullet [ (\lambda_t(v, 0)) \boxplus (\lambda_t(0, y))] $
\vskip .2cm
$= (\lambda_t(u, 0) \bullet \lambda_t(v, 0)) \boxplus (\lambda_t(u, 0) \bullet (\lambda_t(0, y)) \boxplus  ((\lambda_t(v, 0)\bullet (\lambda_t(0, x)) \boxplus (\lambda_t(0, x)) \bullet (\lambda_t(0, y))$. 
\vskip .2cm 
Since $\pi_0(\rmK (\S))$ and $\pi_0(\rmK_{\S'\times \Delta [n]}(\S\times \Delta [n], \S \times \delta \Delta [n]))$ are $\lambda$-rings,  $\lambda_t((u, 0) \circ (v, 0)) = \lambda _t(u, 0) \bullet \lambda _t(v, 0)$ and $\lambda_t((0, x) \circ (0,y)) = \lambda _t(0, x) \bullet \lambda _t(0, y)$. 
Moreover, observe that $(0, x) \circ (v, 0) = (0, vx) = (v, 0) \circ (0, x)$.
Therefore, it suffices to show that $\lambda_t((v, 0) \circ (0, x)) = \lambda_t(v, 0) \bullet \lambda_t(0, x)$ for any class $v \in \pi_0(\rmK (\S))$. 
\vskip .1cm
At this point, one may apply the splitting principle to elements of $\pi_0(\rmK (\S))$ (by taking the projective space bundle associated to a given
vector bundle on $\S$), so that we may assume the class $v$ breaks up into a finite sum of the classes of line bundles: $v= \Sigma_{i=1}^m[\L_i]$,
where each $\L_i$ is a line bundle on $\S$. 
\vskip .2cm 
Assuming that $\lambda_t(0, (\Sigma_{i=1}^n[\L_i])x) = \lambda_t(\Sigma_{i=1}^n[\L_i],0) \bullet \lambda_t(0,x)$ for all $n < m$,
we observe that 
\vskip .2cm 
$\lambda_t(0, (\Sigma_{i=1}^m[\L_i])x) = \lambda_t(0, [\L_m]x + (\Sigma_{i=1}^{m-1}[\L_i])x) = \lambda _t(0, [\L_m]x) \boxplus \lambda_t(0, (\Sigma_{i=1}^{m-1}[\L_i])x) $
\vskip .2cm
$= \lambda _t(0, [\L_m]x) \boxplus \lambda_t((\Sigma_{i=1}^{m-1}[\L_i]), 0) \circ (0, x)) = \lambda_t([\L_m],0) \bullet \lambda _t(0, x) \boxplus \lambda_t((\Sigma_{i=1}^{m-1}[\L_i]), 0)\bullet \lambda _t(0,x)$
\vskip .2cm 
$ = (\lambda_t([\L_m], 0) \boxplus \lambda _t((\Sigma_{i=1}^{m-1}[\L_i]), 0)) \bullet \lambda_t(0,x) = \lambda_t((\Sigma_{i=1}^{m}[\L_i]), 0) \bullet \lambda_t(0,x )$.
\vskip .2cm \noindent
Therefore, it suffices to prove that if $v=[\L]$ is the class of a line bundle on $\S$, and $x$ denotes a class in $\pi_0(\rmK_{\S' \times \Delta[n]}(\S\times \Delta [n], \S \times \delta \Delta [n]))$, then one obtains:
\[\lambda_t(0, [\L]x) = \lambda_t([\L], 0) \bullet \lambda_t(0,x).\]
We may assume the class $x$ is represented by the class of a perfect complex $\rmP$  on $\S\times \Delta[n]$ acyclic on $ (\S - \S') \times \Delta [n]$, 
provided with a zig-zag path (as in ~\eqref{zig.zag.path}) 
$p$ joining the restriction $(i^*(\rmP), 0)$ to the base point, namely the pair $(0, 0)$ in ${\it w}\rmG(i^*)$.
Now verifying the above relation amounts to verifying the first relation in ~\eqref{lambda.ids}: as is well known, since $\L$ is a line bundle on $\S$,
this amounts to observing the (functorial) isomorphism: 
\[ [\Lambda^n(\L \otimes \rmP)] = [\L^{\otimes n} \otimes \Lambda^n(\rmP)], n\ge 0. \]
as classes in $\pi_0(\rmK_{\S' \times \Delta[n]}(\S\times \Delta [n], \S \times \delta \Delta [n]))$.
This is clear since there is a functorial isomorphism $\Lambda^n(\L \otimes \rmP) \cong \L^{\otimes n} \otimes \Lambda ^n(\rmP)$.
This completes the proof of the first relation in ~\eqref{lambda.ids} and hence the proof of the proposition.
 \end{proof}
\vskip .1cm
{\it This concludes the
proof of Theorem ~\ref{mainthm}}. \qed
\vskip .2cm
\begin{remark} 
\label{end.proof.mainthm}
 Observe also that the restriction to smooth stacks becomes necessary so that one has the homotopy property for K-theory. (This fails, in general, even for non-regular schemes.)  \end{remark} 
\vskip .2cm
\section{\bf $\gamma$-operations and Absolute cohomology}
\subsection{\bf Standing hypothesis}
\label{std.hyp}
For the rest of the paper we will assume  that all algebraic stacks $\S$ we consider are smooth, and every coherent sheaf on $\S$ is 
the quotient of a vector bundle. If  $\S'$ is a closed substack of $\S$, Theorem ~\ref{mainthm}(iii) shows that 
there is the structure of a  $\lambda$-algebra (in the sense of Definition ~\ref{def1.1}) on each $\pi_n(\rmK_{\S'}(\S))$ over $\pi_0(\rmK (\S))$.
\begin{definition}
\vskip .1cm \noindent
(a) Recall that each $ \pi_0(\rmK (\S)) \oplus \pi_n(\rmK_{\S'}(\S))$ is a $\lambda$-ring with the operations defined above. Therefore, one may define the operations $\gamma^n$ on $\pi_0(\rmK (\S)) \oplus \pi_n(\rmK_{\S'}(\S))$ as follows:
\be \begin{equation}
\label{gamma}
\gamma ^n(\alpha, \beta)  = \lambda ^n((\alpha + (n-1).\O_{\S}), \beta), \alpha \in \pi_0\rmK (\S), \ \beta \in \pi_n(\rmK_{\S'}(\S)).
\end{equation} \ee
\vskip .2cm \noindent
One may observe that if $\alpha = 0$, then $\gamma^n(0, \beta) = (0, \Sigma _{i=0}^{n-1}\lambda ^i((n-1). \O_{\S}). \lambda ^{n-i}(\beta))$ (see ~\eqref{lambda.def}), so that each $\gamma^n$ induces a map
on $\pi_n(\rmK_{\S'}(\S))$ which we will also denote by $\gamma ^n$. 
 \vskip .1cm \noindent
(b) One defines the $\gamma$-filtration on each $ \pi_n(\rmK_{\S'}(\S))$ as follows. Let $\epsilon:\pi_0(\rmK (\S)) \ra {\mathbb Z}$
denote the augmentation given by the {\it rank}-map: the function $\epsilon$ is the rank of a strictly perfect complex defined as an obvious  Euler characteristic involving the ranks of the constituent terms of the complex. Then we define $\rmF^m( \pi_n(\rmK_{\S'}(\S)))$ to be generated by
$\gamma^{i_1}a_1 \cdots \gamma^{i_{\it k}}a_{i_{\it k}}\gamma^{j_1}x_1 \cdots \gamma^{j_p}x_p$, where $a_i \in \pi_0(\rmK (\S))$ with
$\epsilon(a_i)=0$, for all $i=1, \cdots, k$, and $x_{j_i} \in \pi_n(\rmK_{\S'}(\S))$, so that $i_1+ \cdots i_{\it k} +j_1 +\cdots j_p \ge m$.
(See \cite[section 6]{Kr-2} and/or \cite[p. 105]{Wei}.) 
\vskip .1cm
(c) One may define the Adams operations $\psi^{\it k}$ using ascending induction on $k$ and the formula:
$\psi^{\it k} = \psi^{k-1}  \lambda^1 - \cdots + (-1)^{\it k} \psi^1 \lambda ^{k-1} + 
(-1)^{k+1} k \lambda^{\it k}$: see \cite[p. 102]{Wei}.
\end{definition}
\vskip .2cm
Then one may readily verify the following properties of the $\gamma$-filtration for each $n$:
\begin{enumerate}[\rm(i)]
 \item $\rmF^{m+1}( \pi_n(\rmK_{\S'}(\S))) \subseteq \rmF^{m}( \pi_n(\rmK_{\S'}(\S)))$, for each $m\ge 0$ and 
 \item $\rmF^1( \pi_n(\rmK_{\S'}(\S))) \subseteq \rmF^0( \pi_n(\rmK_{\S'}(\S))) =  \pi_n(\rmK_{\S'}(\S))$.
\end{enumerate}
Since the product on each $\pi_n(\rmK_{\S'}(\S))$ is trivial for all $n>0$, one may observe that the
$\gamma$-filtration $\rmF^m( \pi_n(\rmK_{\S'}(\S)))$, for $n>0$, is generated by $\gamma^{i_1}a_1 \cdots \gamma^{i_{\it k}}a_{i_{\it k}}\gamma^{j_1}x_{j_1} $, where $a_i \in \pi_0(\rmK_{\S'}(\S))$ with
$\epsilon(a_i)=0$, for all $i=1, \cdots, k$, and $x_{j_1} \in \pi_n(\rmK_{\S'}(\S))$, so that $i_1+ \cdots i_{\it k} +j_1 + \ge m$.
Now one may readily verify the following additional properties for each $m, \, m'\ge 0$:
\begin{enumerate}
\item one has a pairing: $\rmF^{m}(\pi_0(\rmK_{\S'}(\S))) \otimes \rmF^{m'}( \pi_0(\rmK_{\S'}(\S))) \ra \rmF^{m+m'}( \pi_0(\rmK_{\S'}(\S)))$, \mbox{ and}
\item $\rmF^{m}( \pi_0(\rmK_{\S'}(\S)))$ is a $\lambda$-ideal in $ \pi_0(\rmK_{\S'}(\S))$. 
\end{enumerate}

\vskip .2cm \noindent
{\bf Proof of Theorem ~\ref{thm.2}}.
The  properties of the $\gamma$-operations follow from the observation that
$\pi_0(\rmK (\S)) \oplus \pi_n(\rmK_{\S'}(\S))$ is a $\lambda$-ring.
Again one observes that each $\psi^{\it k}$ induces a self-map of
$\pi_n(\rmK_{\S'}(\S)) $ for each closed sub-stack $\S'$ of $\S$. The last but one
statement in (i) follows from the functoriality of the $\lambda$ and $\gamma$-operations with respect to pull-back. 
The last statement in (i) is a pure consequence of the $\lambda$-ring structure on $\pi_0(\rmK (\S)) \oplus \pi_n(\rmK_{\S'}(\S))$.

These prove the statements in (i); the proof of statements in (ii) are clear
since the $\lambda$-operations are compatible with respect to pull-backs. \qed
\begin{remarks} 
\label{non.nilp}
1. It is important to point out that the action of $\psi_{\it k}$ above is {\it not} locally nilpotent, which is necessary to conclude that $\pi_*\rmK (\S) \otimes \Q$ is
isomorphic to the sum of the associated graded terms of the $\gamma$-filtration. This is false in general as may be seen from the following simple
counter-example: consider $\S = [(Spec \,k)/\rmG]$ where $\rmG$ is a finite group and $k$ is a field. In this case, it is shown in \cite[Proposition (6.13)]{A}
 that the $\gamma$-filtration has just two terms modulo torsion. 
\vskip .2cm
2. Observe also that the $\gamma$-operations on $\pi_0(\rmK (\S))$ are compatible with the
$\gamma$-operations on  $\pi_n(\rmK_{\S'}(\S))$ in the following sense. Let $\alpha \in
\pi_0(\rmK (\S))$ and $\beta \in \pi_n(\rmK_{\S'}(\S))$. Then $(\alpha, 0) .(0, \beta) = (0, \alpha. \beta)$ using the module structure of $\pi_n(\rmK_{\S'}(\S))$ over $\pi_0(\rmK (\S))$. Now
\[\gamma ^i(\alpha, 0). \gamma^j(0, \beta ) = (\gamma ^i(\alpha), 0). (0, \gamma ^j(\beta)) = (0, \gamma^i(\alpha). \gamma ^j(\beta)).\] 
Moreover, since $(0, \alpha. \beta) = (\alpha, 0).(0, \beta)$, it follows that 
\be \begin{align}
(0, \gamma ^{\it k}(\alpha. \beta)) = \gamma^{\it k}(0, \alpha. \beta) &=\gamma ^{\it k}((\alpha, 0). (0, \beta)) \\
 &= Q_{\it k}((\gamma ^1(\alpha), 0), \cdots, (\gamma ^{\it k}(\alpha), 0); (0, \gamma ^1(\beta)), \cdots , (0, \gamma ^{\it k}(\beta ))) \notag\\
 &= (0, \rmQ_{\it k}(\gamma ^1(\alpha), \cdots, \gamma ^{\it k}(\alpha); \gamma ^1(\beta), \cdots , \gamma ^{\it k}(\beta ))) \notag.
 \end{align} \ee
\end{remarks}
\begin{definition}
\label{def.abs.coh}
Let ${\rm gr}^n(\pi_j\rmK_{\S'}(\S) \otimes \Q)$ denote the $n$-th graded piece of the $\gamma$-filtration. We let $\rmH^i_{\S', abs}(\S, \Q(j)) =
gr^j(\pi_{2j-i}(\rmK_{\S'}(\S)) \otimes \Q)$. We define the $i$-th Chern class 
\[c_i(j): \pi_0(\rmK (\S)) \oplus \pi_i(\rmK (\S)) \ra \rmH^{2j}_{abs}(\S; \Q(j)) \oplus \rmH^{2j-i}_{abs, \S'}(\S; \Q(j))\]
\noindent
by $c_i(j)(\alpha, \beta) = \gamma^j(\alpha - rk(\alpha). {\mathcal O}_{\S}, \beta)$ where $\gamma ^j$ is the $j$-th $\gamma$-operation on $\pi_0(\rmK (\S)) \otimes \Q \oplus \pi_i(\rmK_{\S'}(\S)) \otimes \Q$. 
 If $i=0$ and $\beta =0$, we let the Chern class $c_i(j)$ be denoted
$C(j)$. If $\alpha =0$, we obtain Chern classes $c_i(j):\pi_i(\rmK_{\S'}(\S)) \ra \rmH^{2j-i}_{abs, \S'}(\S; \Q(j))$. We define the  Chern-character into ${\underset i \Pi} \rmH^{2j-i}_{abs}(\S; \Q(j))$ by the usual formula: see \cite[Expos\'e 0: Appendix]{SGA6}. (Observe we are taking the product in the last expression and not the sum, only because the $\gamma$-filtration is not locally nilpotent.) For a vector bundle ${\mathcal E}$, one may define its Todd class by the usual Todd polynomial in the Chern classes: see \cite[Chapter 1, section 4]{FL}. \end{definition}
\vskip .2cm \noindent
{\bf Proof of Theorem ~\ref{loc.abs.coh}}. Recall that the statement we want to prove is the existence of 
the long exact sequence of absolute cohomology groups:
\vskip .1cm
$\cdots \ra \rmH^n_{\S_0', abs}(\S, \Q(i)) \ra \rmH^n_{\S_1', abs}(\S, \Q(i)) \ra \rmH^n_{\S_1'-\S_0', abs}(\S - \S_0', \Q(i)) \ra \rmH^{n+1}_{\S_0', abs}(\S, \Q(i)) \ra \cdots$,
\vskip .1cm \noindent
where $\S$ is a smooth algebraic stack with the property that every coherent sheaf is the quotient of a vector bundle and that
$\S_0' \subseteq \S_1'$ are two closed algebraic substacks. 
\vskip .2cm 
We begin with the fibration sequence (localized at $\Q$):
\vskip .2cm
$\Omega( \rmK_{\S_1'- \S_0'}(\S - \S_0')_{\Q}) \ra \rmK_{\S_0'}(\S)_{\Q} \ra \rmK_{\S_1'}(\S)_{\Q} \ra \rmK_{\S_1'- \S_0'}(\S - \S_0')_{\Q} $
\vskip .2cm \noindent
On taking the associated homotopy groups one obtains a long exact sequence
\be \begin{equation}
\label{l.ex.seq}
\xymatrix{{\cdots} \ar@<1ex>[r] & {\pi_{\it k}(\rmK_{\S_0'}(\S))\otimes \Q} \ar@<1ex>[r]^{\alpha} & {\pi_{\it k}(\rmK_{\S_1'}(\S))\otimes \Q} \ar@<1ex>[r]^{\beta} & {\pi_{\it k}(\rmK_{\S_1'-\S_0'}(\S-\S_0'))\otimes \Q} \ar@<1ex>[r]^{\qquad \qquad \gamma} & \cdots}
\end{equation} \ee
Since the $\gamma$-filtration is compatible with respect to pull-backs, one obtains  the commutative diagram:
\vskip .2cm
\be \begin{equation}
\label{exact.diagm}
\xymatrix{{\cdots} \ar@<1ex>[r] &{A_{n+2}^{i+1}} \ar@<1ex>[r]^{\alpha^{i+1}} \ar@<-1ex>[d]^{f^{i+1}_{n+2}} & {B_{n+2}^{i+1}} \ar@<1ex>[r]^{\beta^{i+1}} \ar@<-1ex>[d]^{g^{i+1}_{n+2}} & {C_{n+2}^{i+1}} \ar@<1ex>[r]^{\gamma^{i+1}} \ar@<-1ex>[d]^{h^{i+1}_{n+2}} & {A_{n+3}^{i+1}} \ar@<1ex>[r] \ar@<-1ex>[d]^{f^{i+1}_{n+3}} & {\cdots}\\
{\cdots} \ar@<1ex>[r] &{A_n^{i}} \ar@<1ex>[r]^{\alpha ^i} & {B_n^{i}} \ar@<1ex>[r]^{\beta^i}  & {C_n^{i}} \ar@<1ex>[r]^{\gamma ^i}  & {A_{n+1}^{i}} \ar@<1ex>[r]  & {\cdots}}
\end{equation} \ee
\vskip .2cm \noindent
where $A_n^i = \rmF^i(\pi_{2i-n}(\rmK_{\S'_0}(\S))\otimes \Q)$, $B_n ^i = \rmF^i(\pi_{2i-n}(\rmK_{\S_1'}(\S)) \otimes \Q)$ and $C_n^i = \rmF^i(\pi_{2i-n}(\rmK_{\S_1'-\S_0'}(\S - \S_0')) \otimes \Q)$. The maps $\alpha^i$ ($\beta ^i$, $\gamma^i$) are the maps induced by
$\alpha$ ($\beta$, $\gamma$, \res). Observe that {\it all the vertical maps are given by the 
inclusion of $\rmF^{i+1}$ into $\rmF^i$, and are therefore injective} and that
\[\rmH^n_{\S_0', abs}(\S, \Q(i)) = coker (f^{i+1}_{n+2}), \rmH^n_{\S_1', abs}(\S, \Q(i)) = coker (g^{i+1}_{n+2}) \mbox{ and }\]
\[\rmH^n_{\S_1'-\S_0', abs}(\S-\S_0', \Q(i)) = coker (h^{i+1}_{n+2}).\]
\vskip .2cm
We proceed to show that both rows in the diagram ~\eqref{exact.diagm} are {\it exact}. For example, we will show that
$ ker(\beta^{i}) = Im (\alpha ^{i})$. Let $b \in B_n^{i}$ so that $\beta^{i} (b) =0$.
Then the exactness of the long exact sequence of homotopy groups in ~\eqref{l.ex.seq} shows that there is a class $a \in \pi_{2i-n}(\rmK_{\S_0'}(\S)) \otimes \Q$ so that $\alpha (a) = b$.
 Now $A_n^i$ is a direct factor of  $\pi_{2i-n}(\rmK_{\S_0'}(\S)) \otimes \Q$. We let $a'$ denote the projection of $a$ to the factor $A_n^i$. 
Now, both $b=\alpha (a)$ and $\alpha (a')$ belong to $B_n^i$. It suffices to show
$\alpha (a) -\alpha (a') = \alpha(a-a') =0$. Observe that the associated graded terms in the $\gamma$-filtration of $ a-a'$
are of weight {\it strictly lower} than $i$. In particular, when  one breaks $a-a' $ into the sum of terms $a_j$ belonging to eigen spaces for the Adams operations $\psi^{\it k}$, the eigen values will
all be of the form $k^j$, $0 \le j <i$. (Observe that this also means $a-a'$ breaks up into a finite sum $\Sigma a_j$, with $a_j$
belonging to the eigenspace for $\psi^k$ with eigenvalue $k^j$, $0 \le j <i$.) 
\vskip .1cm
Since $\alpha$ preserves the $\gamma$-filtrations,  the Adams operations act on $\alpha (a_j)$ with eigen value $k^j$, $j <i$. The eigen values of $\psi^{\it k}$ on $B_n^i= F^i( \pi_{2i-n}(\rmK_{\S'_1}(\S)) \otimes \Q)$ are all of the form $k^j$, $j \ge i$. 
 Therefore, the projection of $\alpha (a) - \alpha (a')$ to $B_n^i$ is zero and $b=\alpha (a) = \alpha (a') = \alpha^{i}(a')$ as classes in $B_n^i$. 
A similar argument shows the exactness of both rows. Now a diagram-chase shows that the sequence
\vskip .2cm
$\cdots \ra \rmH^n_{\S'_0, abs}(\S, \Q(m)) \ra \rmH^n_{\S'_1, abs}(\S, \Q(m)) \ra \rmH^n_{\S'_1 - \S'_0, abs}(\S - \S'_0, \Q(m)) \ra \rmH^{n+1}_{\S'_0, abs}(\S, \Q(m)) \ra \cdots$
\vskip .2cm \noindent
is exact at the second term. See, for example, \cite[Proposition 1.4]{Iv}: observe that the sequence of absolute cohomology groups above is obtained by taking the cokernels of each column in the diagram ~\eqref{exact.diagm}. The exactness at the remaining terms may be proved similarly. \qed
\begin{remark} 
Assume  that the stack $\S$ has a coarse moduli space ${\mathfrak M}$. 
In this case, the observation that the $\gamma$-filtration on  $\pi_*\rmK (\S) \otimes \Q$ is
compatible with the $\gamma$-filtration on $\pi_*\rmK ({\mathfrak M}) \otimes \Q$ shows that
the absolute cohomology of the stack  we have defined is an algebra over the (usual) absolute
cohomology of the moduli space when the latter is defined.  \end{remark}
\section{\bf Examples}
We begin with the following theorem of Thomason as a source of several examples.
\begin{theorem} (Thomason: see \cite[Lemmas 2.4, 2.6, 2.10 and 2.14]{T-3}.)
\label{thomason.thm}
 Let $k$ denote a field, $\rmX$ a normal Noetherian scheme over $k$ with an ample
family of line bundles (for example, a smooth separated Noetherian scheme). Let $\rmG$ denote an affine flat group scheme of finite type over $k$ which is an extension of a finite flat group scheme by a 
 smooth connected group-scheme; let $\rmG$ act on $\rmX$. Then the
quotient stack $[\rmX / \rmG]$ has the resolution property. \end{theorem}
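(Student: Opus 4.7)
The plan is to reduce the problem to producing a $\rmG$-equivariant ample family of line bundles on $\rmX$ and then to run a standard global-generation argument. Writing the given extension as
\[
1 \to \rmG^0 \to \rmG \to \Gamma \to 1,
\]
with $\rmG^0$ smooth connected and $\Gamma$ finite flat, I would first produce a $\rmG^0$-equivariant refinement of the ample family via Mumford--Sumihiro linearization, and then promote it to a $\rmG$-equivariant family via a norm-type construction along the induced finite flat morphism of quotient stacks $\pi : [\rmX/\rmG^0] \to [\rmX/\rmG]$.

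Once a $\rmG$-equivariant ample family is in hand, the main step is the following. Given a $\rmG$-equivariant coherent sheaf $F$ on $\rmX$ and a $\rmG$-equivariant ample line bundle $L$, choose $n \gg 0$ so that $F \otimes L^{\otimes n}$ is globally generated. The space of global sections $V = \rmH^0(\rmX, F \otimes L^{\otimes n})$ is then naturally a $\rmG$-module, and the crucial fact is the \emph{local finiteness} of this action: $V$ is a filtered union of finite-dimensional $\rmG$-stable subspaces, a standard consequence of $\rmG$ being an affine flat group scheme of finite type (each $v \in V$ maps under the coaction $V \to V \otimes k[\rmG]$ into a finitely generated subcomodule). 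Picking a finite-dimensional $\rmG$-stable $W \subseteq V$ whose sections still generate, the natural map $W \otimes_{k} L^{\otimes -n} \twoheadrightarrow F$ exhibits $F$ as a quotient of a $\rmG$-equivariant vector bundle.

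For the linearization step in the smooth connected case, I would invoke an extension of Mumford--Sumihiro to normal Noetherian schemes over $k$: given a line bundle $L$ on $\rmX$, some tensor power $L^{\otimes N}$ carries a $\rmG^0$-linearization, and one upgrades an entire ample family this way with care. The normality of $\rmX$ is essential in controlling the Picard group of $\rmG^0 \times \rmX$ and thereby in obtaining the required cocycle. For the finite flat passage from $\rmG^0$ to $\rmG$, given a $\rmG^0$-equivariant line bundle $L$ one forms the norm $\rmN_\pi(L) := \det(\pi_* L)$, which is a $\rmG$-equivariant line bundle on $\rmX$, and checks that ampleness is preserved since $\pi$ is finite and surjective.

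The main obstacle I anticipate is the linearization step, specifically the adaptation of Mumford--Sumihiro beyond its original quasi-projective context to the setting of a normal Noetherian scheme equipped with only an ample \emph{family} of line bundles, and for a general smooth connected affine group scheme over an arbitrary (not necessarily algebraically closed) field. Once the $\rmG$-equivariant ample family has been constructed, local finiteness together with the finite-flat norm construction handle the rest of the argument in a comparatively routine manner.
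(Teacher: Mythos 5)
The paper does not give its own proof of this theorem: it cites Lemmas 2.4, 2.6, 2.10 and 2.14 of Thomason's paper \cite{T-3} directly. Your proposal reconstructs Thomason's strategy essentially correctly: Thomason's Lemma 2.10 is exactly the Sumihiro-type linearization for a smooth connected affine $\rmG^0$ acting on a normal Noetherian scheme with an ample family of line bundles (you are right to flag this as the substantial step rather than a routine adaptation), Lemma 2.14 handles the passage from $\rmG^0$-equivariance to $\rmG$-equivariance across the finite flat quotient, and Lemma 2.4 is the global-generation/local-finiteness argument that turns an equivariant ample family into the resolution property.

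Two points in your sketch deserve more care. First, the claim that $\det(\pi_*L)$ is ample because ``$\pi$ is finite and surjective'' is not a justification: one must pull $\det(\pi_*L)$ back along $X \to [X/\rmG]$ and compute the underlying line bundle on $X$. Doing so, one finds a twisted norm over the fibre $X\times_{[X/\rmG]}[X/\rmG^0]\cong X\times_k\Gamma$; when $\Gamma$ is \'etale it is a product $\bigotimes_\gamma g_\gamma^*L$ of ample line bundles, and when $\Gamma$ is infinitesimal it is $L^{\otimes d}$ by the infinitesimal invariance of Pic, so ampleness holds in each case, but the argument for a general finite flat $\Gamma$ over an imperfect field needs to be run through the connected-\'etale sequence rather than asserted from finiteness of $\pi$ alone. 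Second, the global-generation step should be phrased in terms of the full ample \emph{family} $\{L_\alpha\}$: one chooses, for the given coherent $F$, some $L_\alpha$ and $n\gg 0$ with $F\otimes L_\alpha^{\otimes n}$ globally generated, and only then invokes local finiteness of the $\rmG$-comodule $\rmH^0(X,F\otimes L_\alpha^{\otimes n})$ (which holds since $\rmG$ is affine flat over the field) to extract a finite-dimensional $\rmG$-stable generating subspace. With those repairs your outline matches Thomason's cited argument.
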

\vskip .2cm
To keep things simple, we will restrict to Noetherian schemes defined over a field $k$.
\begin{examples}
\label{egs}
\begin{enumerate}[\rm (i)]
\item{Let $\rmD$ denote a diagonalizable group scheme acting {\it trivially} on a smooth scheme $\rmX$. Then any
$\rmD$-equivariant vector bundle on $\rmX$ corresponds to giving a grading by the characters of $\rmD$ on
the vector bundle obtained by forgetting the action. It follows readily that $\pi_*\rmK ([\rmX/\rmD]) \cong \rmR(\rmD) \otimes \pi_*\rmK (\rmX)$. 
This is an isomorphism of $\lambda$-rings. Moreover on computing the absolute cohomology, we obtain: 
\[\rmH^*_{abs}([\rmX/\rmD], \Q(\bullet)) \cong grd(\rmR(\rmD) \otimes \Q) \otimes \rmH^*_{abs}(\rmX, \Q(\bullet))\]
\[=grd(\pi_0({\it K}([{\rm Spec \,} k/\rmD])\otimes \Q))\otimes \rmH^*_{abs}(\rmX, \Q(\bullet)),\]
where ${\rm grd}(\rmR(\rmD) \otimes \Q)  $ denotes the associated graded terms with respect to the $\gamma$-filtration.
\vskip .1cm
Clearly the graded ring ${\rm grd}(\rmR(\rmD) \otimes \Q) =grd(\pi_0({\it K}([{\rm Spec \,} k/\rmD]))\otimes \Q)$  has a natural decreasing filtration, 
and completing it with respect to this filtration
we obtain: 
\[\Pi_{n=0}^{\infty}grd_{n}(\pi_0({\it K}([{\rm Spec \,} k/\rmD]))\otimes \Q).\]
It is shown in \cite[(5.1) Proposition and (5.3) Proposition]{K}
that the latter is isomorphic to the completion $(\pi_0({\it K}([{\rm Spec \,} k/\rmD]))\otimes \Q) \compl_{I_D}$, where $\compl_{I_{\rmD}}$ denotes
completion at the augmentation ideal. Moreover, by \cite{EG}, the latter
 is isomorphic to $ \Pi_{i=0}^{\infty} {\rm CH}^i({\rm BD}, \Q)$, where ${\rm BD}$ denotes the {\it classifying space} for $\rmD$ defined as in \cite{Tot99} or \cite{MV99}.
 Thus we see that on completing the graded ring $\rmH^*_{abs}([\rmX/\rmD], \Q(\bullet)) \cong grd(\pi_0({\it K}([{\rm Spec \,} k/\rmD]))\otimes \Q)\otimes \rmH^*_{abs}(\rmX, \Q(\bullet))$ with respect to the 
 natural decreasing filtration induced from the one on $grd(\pi_0({\it K}([{\rm Spec \,} k/\rmD])\otimes \Q)$, we obtain the isomorphism:
 \[\rmH^*_{abs}([\rmX/\rmD], \Q(\bullet))\compl \, \cong (\Pi_{i=0}^{\infty} C\rmH^i(BD, \Q )) \otimes \rmH^*_{abs}(X, \Q(\bullet )).\]} 
\item{Let $\rmT$ denote a split torus acting on a smooth scheme $\rmX$. Assume further that there
is a {\it $\rmT$-stable stratification of $\rmX$ by strata which are all affine spaces}. In this case one obtains
the isomorphism of $\lambda$-rings: $\pi_*(K[\rmX/\rmT]) \cong \rmR(\rmT) \otimes \pi_*(\rmK (\rmX))$. One may obtain this isomorphism as follows: see \cite{J-1} for related results. One shows the obvious map of spectra $\rmK ([Spec \, k/\rmT]){\overset L {\underset {\rmK (Spec \, k)} \otimes}} \rmK (\rmX) \ra \rmK ([\rmX/\rmT])$ is a weak equivalence. Here one needs
to use the framework of \cite{J-1} of ring and module-spectra to be able to define the derived tensor product. 
Since $\rmX$ is smooth, its $K$-theory identifies with $\rmG$-theory and one uses the localization sequence associated to the 
stratification of $\rmX$ to show the above map is a weak equivalence. Now one obtains an associated spectral sequence with 
$E_2$-terms given by 
\[Tor^{\pi_*(\rmK (Spec \, k ))} (\pi_*(\rmK ([Spec \, k/\rmT])), \pi_*\rmK (\rmX)) \Ra \pi_*(\rmK ([\rmX/\rmT])).\]
This spectral sequence degenerates at the $E_2$-terms in view of the isomorphism 
\[\pi_*(\rmK ([Spec \, k/\rmT])) \cong \rmR(\rmT) \otimes \pi_*(\rmK (Spec \,k))\]
and provides the isomorphism $\pi_*\rmK ([\rmX/\rmT]) \cong R(T) \otimes \pi_*\rmK (X)$. This result applies to the case when $\rmX$ is a flag variety or a smooth projective variety on which $\rmT$ acts with finitely many fixed points. One also
obtains the isomorphism of absolute cohomology 
\[\rmH^*_{abs}([\rmX/\rmT], \Q(\bullet)) \cong (grd(\pi_0({\it K}([{\rm Spec \,} k/\rmT])\otimes \Q) \otimes \rmH^*_{abs}(\rmX, \Q(\bullet))\]
and therefore,
\[\rmH^*_{abs}([\rmX/\rmT], \Q(\bullet))\compl \, \cong (\Pi_{n=0}^{\infty}grd_n(\pi_0({\it K}([{\rm Spec \,} k/\rmT])\otimes \Q) \otimes \rmH^*_{abs}(\rmX, \Q(\bullet))\]
\[= (\Pi_{n=0}^{\infty}{\rm CH}^n({\rm BT}, \Q)) \otimes \rmH^*_{abs}(\rmX, \Q(\bullet)).\]
where $\compl \, $ denotes completion with respect to the decreasing filtration on the graded ring 
\newline \noindent
$(grd(\pi_0({\it K}([{\rm Spec \,} k/\rmT])\otimes \Q) \otimes \rmH^*_{abs}(\rmX, \Q(\bullet))$ and ${\rm BT}$ again denotes the 
classifying space of ${\rmT}$ in the sense of \cite{Tot99} or \cite{MV99}.  These isomorphisms follow along the same lines as in (i).}
\item{Next, let $\rmG$ denote any split reductive group over $k$ with $\pi_1(\rmG)$ torsion free. Let $\rmT$ denote fixed maximal torus in $\rmG$. Let $\rmX$ denote a smooth $\rmG$-scheme. 
Then \cite[ Proposition 4.1]{Merk} shows the isomorphism (of $\lambda$-rings): 
\[\pi_*\rmK ([\rmX/\rmT]) \cong \rmR(\rmT) {\underset {\rmR(\rmG)} \otimes} \pi_*\rmK ([\rmX/\rmG]), \mbox{ and therefore},\]
\[\rmH^*_{abs}([\rmX/\rmT], \Q(\bullet)) \cong grd(\rmR(\rmT) \otimes \Q){\underset {grd(\rmR(\rmG) \otimes \Q)} \otimes}\rmH^*_{abs}([\rmX/\rmG], \Q(\bullet)).\]
Observe that there is a natural conjugation action by ${\rm N(T)}$ on ${\rm T}$, which induces a ${\rm W}={\rm N(T)/T}$-action on $\pi_*\rmK ([\rmX/\rmT])$ and
on $\rmH^*_{abs}([\rmX/\rmT], \Q(\bullet))$. Moreover ${\rm R(T)}^{\rm W} \cong {\rm R(G)}$.
Therefore, taking the ${\rm W}$-invariants of both sides, one obtains  the isomorphism $\pi_*\rmK ([\rmX/\rmT])^{\rm W} \cong \pi_*(\rmK ([\rmX/\rmG]))$. At the level of absolute cohomology one obtains: 
\[\rmH^*_{abs}([\rmX/\rmT], \Q(\bullet))^W \cong \rmH^*_{abs}([\rmX/\rmG], \Q(\bullet)).\]}
\end{enumerate}
\end{examples}
\begin{example}({\it Hironaka's example}.)
Here is a well-known example due to Hironaka. (See \cite[p. 15]{Kn}.) Assume the base field is algebraically closed. (We may also
assume the characteristic is $0$ as in the original example of Hironaka.) Let $V_0$ be the projective 3-space and $\gamma_1$ and $\gamma_2$ two conics intersecting normally in exactly two points $\rmP_1$ and $\rmP_2$. For $i=1, 2$, we construct $\bar V_i$ by blowing up first $\gamma_i$ and then $\gamma_{3-i}$ in the result. Let $V_i$ be the open set in $\bar V_i$ of points lying over $(V_0-P_{3-i})$. Let $\rmU$ be obtained by patching $V_1$ and $V_2$ together along the 
common open subset. Now $\rmU$ is a non-singular variety and over $\rmP_1$ and $\rmP_2$ the curves $\gamma _1$ and $\gamma _2$ have been blown up in opposite order. Let $\sigma _0:V_0 \ra V_0$ denote the projective
transformation of order $2$ that permutes $\rmP_1$ and $\rmP_2$ and $\gamma _1$ and $\gamma _2$. $\sigma _0$ induces an automorphism $\sigma : U \ra U$ of order $2$. Therefore we may take the finite group $\rmG= {\mathbb Z}/2$ and let it act on $\rmU$ by the action of $\sigma $. In this case the geometric quotient $\rmU/G$ fails to exist in the category of schemes, but exists only in the category of algebraic spaces. Nevertheless
Theorem ~\ref{thomason.thm} shows that the quotient stack $[\rmU/\rmG]$ has the resolution property so that
for each $n \ge 0$, $\pi_n(\rmK ([\rmU/\rmG]))$ is a $\lambda$-ring.
\end{example}
\begin{example} For the next example let $\E$ denote an elliptic curve. Then there are no nontrivial
representations of $\E$ so that $\pi_*\rmK ([Spec \,k/\E]) \cong \pi_*\rmK (Spec \,k)$. 
It follows that 
$\rmH^*_{abs}([Spec \, k/\E], \Q(\bullet)) \cong \rmH^*_{abs}(Spec \, k, \Q (\bullet))$.
\end{example}
\subsection{Comparison with the higher equivariant Chow groups and further examples}
The comparison with the higher equivariant Chow groups (in the sense of \cite{EG} or \cite{Tot99}) is much more involved in {\it general} than is possible in the 
examples considered above. This is due to the fact that the absolute cohomology for algebraic stacks obtained above is a {\it Bredon-style}
cohomology theory in the sense of \cite{J-5}. In the case of quotient stacks this is related to the more familiar equivariant higher Chow groups defined by making use of a Borel-construction
(as in \cite{EG}, \cite{Tot99}) by a completion at the augmentation ideal of the representation ring of the given
linear algebraic group.  However, such a completion is not an exact functor in general, unless the modules that one considers are finitely generated
over the representation ring. In fact it shown in \cite{CJ23} that one needs to apply a {\it derived completion} to pass from the Algebraic
K-theory of smooth quotient stacks to the Algebraic K-theory of the corresponding Borel construction. One may apply results of
\cite{Lev} to the latter to define $\gamma$-operations and a form of absolute cohomology theory, which will then identify with the
equivariant higher Chow groups with rational coefficients, as in \cite{EG} or \cite{Tot99}.
\section{\bf Appendix A: Key theorems of Waldhausen K-theory}
\begin{definition} 
\label{Waldh.cat.def}
(See \cite[1.2.1]{T-T}.) A {\it category with cofibrations} ${\bold A}$ is a category with a zero object $0$, together with a chosen subcategory $co({\bold A})$ satisfying the following axioms: (i) any isomorphism in ${\bold A}$ is a morphism in $co({\bold A})$, (ii) for every object $A \in {\bold A}$, the unique map $0 \ra A$ belongs to $co({\bold A})$ and (iii) morphisms in $co({\bold A})$ are closed under co-base change by arbitrary maps in ${\bold A}$. The morphisms of $co({\bold A})$ are {\it cofibrations}. A category with fibrations is a category with 
a zero -object so that the dual category ${\bold A}^o$ is a category with cofibrations. 
A category with cofibrations and weak equivalences (or a {\it Waldhausen category}) is a category with cofibrations, $co({\bold A})$ together with a 
subcategory $w({\bold A})$ so that the following conditions are satisfied :(i) any isomorphism in ${\bold A}$ belongs to $w({\bold A})$, (ii) if 
\xymatrix{B \ar@<1ex>[d] & A \ar@<1ex>[d] \ar@<-1ex>[l]  \ar@<1ex>[r] & C \ar@<1ex>[d]\\
{B'} & {A'}\ar@<1ex>[l]  \ar@<-1ex>[r] & {C'}}
\vskip .2cm \noindent
 is a commutative diagram with the vertical maps
all weak equivalences and the horizontal maps in the left square are cofibrations, then the
induced map $B{\underset A \sqcup}C \ra B'{\underset {A'} \sqcup}C'$ is also a weak equivalence. (iii) If ${\it f}$, $g$ are two composable morphisms in $w({\bold A})$ and two of the three ${\it f}$, $g$ and $f \circ g$ are in $w({\bold A})$, then so is the third. A functor $F: {\bold A} \ra {\bold B}$ between categories with cofibrations and weak equivalences  is {\it exact} if it preserves cofibrations and weak equivalences.
\vskip .2cm
Given a Waldhausen category $({\bold A}, co({\bold A}), w({\bold A}))$, one associates to it the following simplicial category denoted $wS_{\bullet}{\bold A}$: see \cite[1.5.1 Definition]{T-T}. The objects of the category
$\wS_n{\bold A}$ are sequences of cofibrations 
\xymatrix{ {A_1\, } \ar@{>->} @<2pt> [r] & {A_{2}\, } \ar@{>->} @<2pt> [r]& {\cdots \, \,} \ar@{>->} @<2pt> [r] &{A_{ n}}}
in $co({\bold A})$ together with the choice of a quotient $A_{i,j}=A_j/A_i$ for each $i<j$ above. (The understanding is that  
$\wS_0{\bold A}$ is the category consisting of just the zero object $0$.) The morphisms between
two such objects 
\xymatrix{ {A_1\, } \ar@{>->} @<2pt> [r] & {A_{2}\, } \ar@{>->} @<2pt> [r]& {\cdots \, \,} \ar@{>->} @<2pt> [r] &{A_{ n}\, }} and
\xymatrix{ {B_1 \,} \ar@{>->} @<2pt> [r] & {B_{2}\, } \ar@{>->} @<2pt> [r]& {\cdots \, \,} \ar@{>->} @<2pt> [r] &{B_{ n}\, }}
are compatible collections of maps $A_{i,j} \ra B_{i, j}$ in $w{\bold A}$. Varying  $n$, one obtains the
simplicial category $\wS_{\bullet}{\bold A}$ as discussed in \cite[1.5.1 Definition]{T-T}.  
\vskip .1cm
Such  a Waldhausen category is {\it pseudo-additive} (see \cite[Definition 2.3]{GSVW}) if for each cofibration \xymatrix{{A\quad }\ar@{>->} @<2pt> [r] &{C}}, the induced maps
$C{\underset A \oplus}C \ra C \times C/A \leftarrow C\oplus C/A$ are weak equivalences. As pointed out earlier when
\xymatrix{{A\quad }\ar@{>->} @<2pt> [r] &{C}} is a degree-wise split injective map of complexes of sheaves of $\O$-modules, for a 
sheaf of commutative Noetherian rings with $1$ (on any site with enough points), it is easy to see that the maps $C{\underset A \oplus}C \ra C \times C/A \leftarrow C\oplus C/A$
are isomorphisms in each degree.
\end{definition} 
\vskip .2cm
The only  categories with cofibrations and weak equivalences  considered in this paper are {\it complicial } Waldhausen categories in the sense of \cite[1.2.11]{T-T}: in this situation the category ${\bold A}$ will be a full additive subcategory of the category of chain complexes with values in some abelian category. The cofibrations will be assumed to be maps of chain complexes that split degree-wise and weak equivalences will contain all quasi-isomorphisms. All the complicial Waldhausen categories we consider will be closed under the formation of the canonical homotopy pushouts and homotopy pull-backs as in \cite[1.9.6, 1.2.11]{T-T}. 
Therefore, all such categories with cofibrations and weak equivalences  are pseudo-additive.
\begin{definition}
 \label{Kth.sp}
 Given a category ${\bold A}$ with cofibrations and weak equivalences that is pseudo-additive, we define its K-theory space to be given by the simplicial set ${\it w}\rmG_{\bullet}({\bold A})$,
 where $\rmG_{\bullet}$ denotes the $\rmG$-construction discussed in \cite[Definition 2.2]{GSVW}. (See also \cite[section 3]{GG}.)
\end{definition}
Then one of the main results of \cite{GSVW} is the following:
\begin{theorem}(See \cite[Theorem 2.6]{GSVW}.) If ${\bold A}$ is pseudo-additive, there exists a natural map
 ${\it w}\rmG_{\bullet}({\bold A}) \ra \Omega \wS_{\bullet}{\bold A}$ that is a weak equivalence, where $\wS_{\bullet}{\bold A}$ is the
 simplicial set defined by the Waldhausen $\rmS_{\bullet}$-construction as in \cite[1.3]{Wald}.
\end{theorem}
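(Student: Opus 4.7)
The plan is to follow the structure of the original proof in \cite{GSVW}: first build the comparison map directly from the categorical construction, then show the two constituent path objects are contractible, and finally argue that the realization of the relevant fibered product in simplicial categories models the homotopy fiber product.

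First, I would recall that by definition ${\it w}\rmG_{\bullet}({\bold A}) = {\rm PwS}_{\bullet}({\bold A}) \times_{{\rm wS}_{\bullet}({\bold A}), d_0, d_0} {\rm PwS}_{\bullet}({\bold A})$, so there are two projection functors $p_1, p_2$ from ${\it w}\rmG_{\bullet}({\bold A})$ to ${\rm PwS}_{\bullet}({\bold A})$ satisfying $d_0 \circ p_1 = d_0 \circ p_2$. Taking nerves and realizing, this yields a canonical map from $|{\it w}\rmG_{\bullet}({\bold A})|$ into the homotopy pullback of the two copies of $|{\rm PwS}_{\bullet}({\bold A})|$ over $|{\rm wS}_{\bullet}({\bold A})|$. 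Thus there is a natural map ${\it w}\rmG_{\bullet}({\bold A}) \to |{\rm PwS}_{\bullet}({\bold A})| \times^h_{|{\rm wS}_{\bullet}({\bold A})|} |{\rm PwS}_{\bullet}({\bold A})|$, and the task is to identify the target with $\Omega |{\rm wS}_{\bullet}({\bold A})|$ and to prove the map is a weak equivalence.

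Next I would verify that $|{\rm PwS}_{\bullet}({\bold A})|$ is contractible. This follows from the classical extra-degeneracy argument: since ${\rm PwS}_{n}({\bold A}) = {\rm wS}_{n+1}({\bold A})$ with the face map $d_0$ deleted, the degeneracy $s_0$ of ${\rm wS}_{\bullet}({\bold A})$ plays the role of an extra degeneracy that contracts the simplicial category ${\rm PwS}_{\bullet}({\bold A})$ onto its $(-1)$-object, which is the zero object of ${\bold A}$. Once $|{\rm PwS}_{\bullet}({\bold A})|$ is known to be contractible, the homotopy pullback $|{\rm PwS}_{\bullet}({\bold A})| \times^h_{|{\rm wS}_{\bullet}({\bold A})|} |{\rm PwS}_{\bullet}({\bold A})|$ collapses to $* \times^h_{|{\rm wS}_{\bullet}({\bold A})|} * \simeq \Omega |{\rm wS}_{\bullet}({\bold A})|$, giving the desired target.

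The last and hardest step is to show the canonical comparison map is a weak equivalence, i.e.\ that the \emph{strict} pullback ${\it w}\rmG_{\bullet}({\bold A})$ of simplicial categories realizes to the \emph{homotopy} pullback. This is where the pseudo-additivity hypothesis is essential: the argument reduces, after passing to the nerve in the category direction, to verifying a Bousfield--Friedlander-style $\pi_*$-Kan condition on ${\rm wS}_{\bullet}({\bold A})$, equivalently that the simplicial space $\mathrm{Nerve}({\rm wS}_{\bullet}({\bold A}))$ is a quasi-fibrant enough object so that the realization of a levelwise pullback along $d_0$ against a contractible piece computes the homotopy fibre. Pseudo-additivity is precisely the input that supplies the required splitting of cofibration sequences up to weak equivalence, so that for each cofibration $A \rightarrowtail C$ the comparison $C \sqcup_A C \to C \times C/A$ is a weak equivalence; this lets one inductively identify the fibres of the relevant face maps with the correct homotopy-theoretic objects. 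I expect this realization-of-pullback/quasi-fibration step to be the main technical obstacle, while the contractibility of ${\rm PwS}_{\bullet}({\bold A})$ and the construction of the natural map are essentially formal.
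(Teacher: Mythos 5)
The paper does not actually prove this theorem; it is stated as a citation of \cite[Theorem 2.6]{GSVW}, so there is no ``paper's own proof'' to compare against. Judged on its own terms, your sketch captures the essential architecture of the argument in \cite{GSVW}: the $\rmG$-construction is by definition the strict fibre product $\rmP\wS_{\bullet}({\bold A}) \times_{\wS_{\bullet}({\bold A})} \rmP\wS_{\bullet}({\bold A})$; the path object $\rmP\wS_{\bullet}({\bold A})$ is contractible by the extra-degeneracy argument (crucially because $\wS_0({\bold A}) = *$); and once contractibility of the two path objects is known, the homotopy pullback collapses to $\Omega|\wS_{\bullet}{\bold A}|$, so the entire content is showing that the realization of the strict fibre-product square is a homotopy pullback. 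Your identification of pseudo-additivity as the hypothesis that makes the levelwise comparison maps $C\sqcup_A C \to C\times C/A$ into weak equivalences, which is exactly what lets one match the fibres of $d_0$ across simplicial degrees, is also on target.

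The one place where your sketch is looser than it should be is the invocation of a Bousfield--Friedlander $\pi_*$-Kan condition on $\mathrm{Nerve}(\wS_{\bullet}({\bold A}))$. While a $\pi_*$-Kan condition on the base bisimplicial set is indeed \emph{one} sufficient hypothesis for ``realization of strict pullback $=$ homotopy pullback,'' verifying it for $\mathrm{Nerve}(\wS_{\bullet}{\bold A})$ is not straightforward and is not what the original argument does. The argument in \cite{GSVW} is a more targeted ``simplicial fibration'' comparison (in the spirit of the Gillet--Grayson argument \cite{GG} for the $\rmQ$-construction, adapted to the $\wS_{\bullet}$ setting): one analyzes the fibre of $d_0: \rmP\wS_n({\bold A}) \to \wS_n({\bold A})$ degree by degree and uses pseudo-additivity to show that the inclusions of fibres across degeneracies are weak equivalences, so that the realized map is a quasi-fibration. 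Your statement ``this lets one inductively identify the fibres of the relevant face maps with the correct homotopy-theoretic objects'' is gesturing at exactly this, so the gap is one of precision in naming the technical device rather than a conceptual error. If you were to write this out in full, you should replace the generic appeal to Bousfield--Friedlander by the degreewise fibre-comparison lemma and exhibit where pseudo-additivity is actually consumed in that comparison.
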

In view of the above weak equivalence, various key results proved in \cite{Wald} extend readily to the K-theory spaces of
complicial Waldhausen categories. We state these below. 
\begin{theorem}
\label{W.app.thm}
(The Waldhausen approximation theorem :see \cite[1.9.8]{T-T}.) Let $F:{\bold A} \ra {\bold B}$ denote an exact functor between two complicial Waldhausen categories. Suppose $F$ induces an equivalence of the derived categories $w^{-1}({\bold A})$ and $w^{-1}({\bold B})$. Then $F$ induces a weak-homotopy 
equivalence of the associated K-theory spaces, $\rmK ({\bold A})$ and $\rmK ({\bold B})$. \end{theorem}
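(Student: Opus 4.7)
The plan is to reduce to Waldhausen's original approximation theorem (\cite[Theorem 1.6.7]{Wald}), which states that an exact functor $F:\bA \ra \bB$ between Waldhausen categories equipped with cylinder functors induces a weak equivalence on the $\wS_\bullet$-construction provided two conditions are satisfied:
\begin{itemize}
\item[(App1)] A morphism $f$ in $\bA$ is a weak equivalence iff $F(f)$ is a weak equivalence in $\bB$;
\item[(App2)] For any $A \in \bA$ and any morphism $b: F(A) \ra B$ in $\bB$, there is a cofibration $A \rightarrowtail A'$ in $\bA$ and a weak equivalence $F(A') \overset{\simeq}{\ra} B$ in $\bB$ such that the evident diagram commutes.
\end{itemize}
So the first step is to establish (App1) and (App2) from the given hypothesis that $F$ descends to an equivalence of triangulated categories $w^{-1}\bA \simeq w^{-1}\bB$.

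For (App1), observe that $f$ is a weak equivalence in $\bA$ iff it becomes an isomorphism in $w^{-1}\bA$ (by saturation), and similarly for $F(f)$ in $w^{-1}\bB$. Since the derived functor is fully faithful, $f$ is an isomorphism in $w^{-1}\bA$ iff $F(f)$ is an isomorphism in $w^{-1}\bB$, giving (App1). For (App2), essential surjectivity of the derived equivalence produces $A'' \in \bA$ and an isomorphism $F(A'') \cong B$ in $w^{-1}\bB$, and fullness promotes $b:F(A) \ra B$ to a morphism $\phi: A \ra A''$ in $w^{-1}\bA$, representable by a roof $A \leftarrow \tilde A \ra A''$ with the backward arrow a weak equivalence. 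Using the mapping cylinder construction, which is available in any complicial Waldhausen category (see \cite[1.2.11, 1.9.6]{T-T}), one factors the composite into a cofibration $A \rightarrowtail A'$ followed by a weak equivalence $A' \overset{\simeq}{\ra} A''$ in $\bA$; applying $F$ and composing with the chosen isomorphism $F(A'') \cong B$ in $w^{-1}\bB$ (which by (App1) and the saturation axiom may be represented by an actual weak equivalence after a further cylinder factorization) yields the required factorization of $b$.

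With (App1) and (App2) in hand, Waldhausen's theorem gives that $\wS_\bullet(F): \wS_\bullet \bA \ra \wS_\bullet \bB$ induces a weak equivalence on realizations. Since $\bA$ and $\bB$ are complicial, they are pseudo-additive, so the natural equivalence $|{\it w}\rmG_\bullet(\, \cdot \,)| \simeq \Omega |\wS_\bullet(\, \cdot \,)|$ of \cite[Theorem 2.6]{GSVW} applies, and looping the $\wS_\bullet$-equivalence once yields the desired weak equivalence $\rmK(\bA) \simeq |{\it w}\rmG_\bullet(\bA)| \ra |{\it w}\rmG_\bullet(\bB)| \simeq \rmK(\bB)$.

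The main obstacle will be Step 2, namely promoting the abstract derived-category data to an honest factorization in $\bA$ satisfying both the cofibration condition and the weak equivalence condition simultaneously. The roof-representation of a morphism in $w^{-1}\bA$ is non-canonical and the intermediate object $\tilde A$ need not map to $A$ via any cofibration, so several iterations of the mapping cylinder are needed to convert weak equivalences into cofibrations and to rigidify the diagram; keeping track of the compatibility with $F$ and with the chosen isomorphism $F(A'')\cong B$ in $w^{-1}\bB$ is the delicate bookkeeping step.
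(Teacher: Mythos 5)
The paper states this result with a bare citation to \cite[1.9.8]{T-T} and gives no proof of its own, so the proper comparison is with the argument behind that citation, which your sketch does follow in outline: verify the approximation conditions (App1) and (App2), invoke Waldhausen's approximation theorem to obtain a weak equivalence of $|\wS_{\bullet}(\cdot)|$-spaces, and transfer to the $\rmG_{\bullet}$-construction using pseudo-additivity and \cite[Theorem 2.6]{GSVW}. The deduction of (App1) from saturation and the final transfer to the $\rmG_{\bullet}$-construction are both fine.

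The derivation of (App2), however, contains a genuine error rather than deferrable bookkeeping. You represent $\phi: A \ra A''$ in $w^{-1}({\bold A})$ by a left roof $A \leftarrow \tilde A \ra A''$ (the backward arrow a weak equivalence) and then propose to factor ``the composite'' into a cofibration out of $A$. But a left roof gives no morphism out of $A$ in ${\bold A}$ at all---both arrows emanate from the auxiliary object $\tilde A$---so there is nothing to factor. What is needed is the opposite representation $A \ra \bar A \leftarrow A''$ with the second arrow a weak equivalence; this requires a calculus of right fractions, available because the categories are closed under canonical homotopy pullbacks (a biWaldhausen hypothesis the paper assumes as standing, but which you never invoke). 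Even granting that, after factoring $A \ra \bar A$ through the mapping cylinder as $A \rightarrowtail A' \ra \bar A$ and applying $F$, the comparison between $F(A')$ and $B$ is still only a zig-zag of weak equivalences in ${\bold B}$, not the single weak equivalence $b': F(A') \ra B$ with $b'\circ F(a)=b$ that Waldhausen's condition requires. Rectifying that zig-zag compatibly with $b$ is the real content of \cite[1.9.7]{T-T}, and it is the step your sketch elides; the roof-direction slip suggests the precise mechanism is not yet in hand.
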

\vskip .2cm
\begin{theorem}
\label{loc.thm}
(Localization Theorem :see \cite[1.8.2]{T-T} and \cite[1.6.4]{Wald}) Let ${\bold A}$ be a small category with cofibrations and provided with two subcategories of weak equivalences $v({\bold A}) \subseteq w({\bold A})$ so that both $({\bold A}, co({\bold A}), v({\bold A}))$ and
$({\bold A}, co({\bold A}), w({\bold A}))$ are complicial Waldhausen categories (as in \cite[section 1]{T-T} .) Let ${\bold A}^{w}$ denote the full subcategory of ${\bold A}$ of objects $A$ for which $0 \ra A$ is in $w({\bold A})$, that is, are $w$-acyclic. This is a Waldhausen category with 
$co({\bold A}^w) = co({\bold A}) \cap {\bold A}^w$ and $v({\bold A}^w) = v({\bold A}) \cap {\bold A}^w$. Then one obtains the fibration sequence of K-theory spaces: $\rmK (v{\bold A}^w) \ra \rmK (v({\bold A})) \ra \rmK (w({\bold A}))$. 
\end{theorem}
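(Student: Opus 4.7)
The statement is Waldhausen's fibration theorem in the complicial setting, essentially \cite[Theorem 1.6.4]{Wald} as reformulated by Thomason--Trobaugh in \cite[1.8.2]{T-T}. The plan is to first establish the corresponding homotopy fibration sequence at the level of the $\rmS_{\bullet}$-construction, namely
\[
|v\rmS_{\bullet}\bA^{w}|\longrightarrow |v\rmS_{\bullet}\bA|\longrightarrow |w\rmS_{\bullet}\bA|,
\]
and then loop once (or equivalently invoke the natural weak equivalence $|{\it w}\rmG_{\bullet}\bA|\simeq \Omega |w\rmS_{\bullet}\bA|$ from Theorem~2.6 of \cite{GSVW}) to obtain the fibration sequence of K-theory spaces as stated.

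A preliminary step is to check that $(\bA^w, co(\bA)\cap \bA^w, v(\bA)\cap \bA^w)$ is indeed a complicial Waldhausen category. The only nontrivial axiom is closure under cobase-change: if $A' \rightarrowtail A$ is a cofibration in $\bA^w$ and $A'\to B$ a map in $\bA^w$, then $A\cup_{A'}B$ is $w$-acyclic by the gluing lemma, which holds because $(\bA, co(\bA), w(\bA))$ is itself a Waldhausen category. The inclusions of weak equivalences and the saturation/extension axioms are inherited directly.

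The heart of the proof is the main step, which I would carry out by following Waldhausen's bisimplicial argument. Consider the bisimplicial category $v\rmS_{\bullet}(w\rmS_{\bullet}\bA)$, where the inner $\rmS_{\bullet}$ is built using $w$ as weak equivalences (so that each $w\rmS_n\bA$ inherits a complicial Waldhausen structure with weak equivalences $v$), and the outer $\rmS_{\bullet}$ is applied to the resulting simplicial Waldhausen category. The projection to the outer direction yields a map $|v\rmS_{\bullet}(w\rmS_{\bullet}\bA)| \to |w\rmS_{\bullet}\bA|$, whose fiber over a vertex can be analyzed via Quillen's Theorem~B provided one verifies the requisite pullback condition. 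The key contractibility input is that for each $n\ge 1$, the space $|v\rmS_{\bullet}(w\rmS_n\bA)|$ is contractible, because every object of $w\rmS_n\bA$ admits, via the mapping cylinder construction available in the complicial setting, a canonical filtered $v$-weak equivalence to a $w$-acyclic object. This is precisely where complicial structure is used: mapping cylinders of chain maps are always present, and the cofiber sequences they produce translate into the simplicial homotopies that collapse $|v\rmS_{\bullet}(w\rmS_n\bA)|$ to a point.

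The main obstacle is precisely the contractibility of $|v\rmS_{\bullet}(w\rmS_n\bA)|$ for $n\ge 1$ together with the verification of Quillen's Theorem~B hypothesis for the projection $|v\rmS_{\bullet}(w\rmS_{\bullet}\bA)| \to |w\rmS_{\bullet}\bA|$; once these are in place, the fiber over the basepoint is identified with $|v\rmS_{\bullet}\bA^{w}|$ by unwinding the definitions (a filtered object whose image under the map to $|w\rmS_{\bullet}\bA|$ is null corresponds, via a choice of path, to a filtered object whose subquotients are $w$-acyclic). Delooping then yields the fibration sequence $\rmK(v\bA^{w})\to \rmK(v\bA)\to \rmK(w\bA)$, completing the proof.
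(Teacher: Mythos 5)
The paper does not supply a proof of this statement; it is cited from \cite[1.6.4]{Wald} and \cite[1.8.2]{T-T}, so your proposal must be measured against the standard Waldhausen argument. Your high-level plan --- prove the fibration sequence at the $\rmS_{\bullet}$ level and then loop, after verifying that $\bA^w$ is again a complicial Waldhausen category --- has the right shape, and the check that $\bA^w$ inherits the Waldhausen structure is fine. The central step, however, contains a genuine error. You assert that for $n\ge 1$ the space $|v\rmS_\bullet(w\rmS_n\bA)|$ is contractible because ``every object of $w\rmS_n\bA$ admits a $v$-weak equivalence to a $w$-acyclic object.'' This cannot be right: a $v$-weak equivalence is in particular a $w$-weak equivalence, so the claim would force every object of $\bA$ to be $w$-acyclic, i.e.\ $\bA^w=\bA$, which collapses the theorem to a triviality. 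The contractibility fact that actually drives Waldhausen's argument is that $w\rmS_\bullet\bA^w$ is contractible (Waldhausen's Lemma~1.6.2), and the mechanism is not a weak equivalence from $A$ to an acyclic object but the \emph{cofibration} $A\rightarrowtail TA$ into the cone of $A$ --- which is $v$-acyclic, hence stays inside $\bA^w$ --- together with a simplicial null-homotopy built from the cylinder functor (equivalently, additivity applied to the cofiber sequence $A\rightarrowtail TA\twoheadrightarrow TA/A$).

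There is also a problem with the bisimplicial object you set up: $w\rmS_n\bA$ is only a category of weak equivalences, not a Waldhausen category, so ``$v\rmS_\bullet(w\rmS_n\bA)$'' is not defined as written. Waldhausen's actual comparison uses a bisimplicial object built from strings of composable $w$-weak equivalences in $\bA$ (each such category of strings \emph{is} a Waldhausen category with componentwise cofibrations and $v$-equivalences), for which the degree-zero vertex recovers $v\rmS_\bullet\bA$ and the nerve in the string direction recovers $w\rmS_\bullet\bA$; the Theorem~B-style fiber analysis and the additivity theorem are applied to that object. Before the proposal can be completed, both the construction of the bisimplicial object and the contractibility input need to be replaced by the correct ones.
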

\begin{theorem}
\label{add.thm}
 (Additivity theorem: see \cite[1.3.2, 1.4.2]{Wald} and \cite[Theorem 2.10]{GSVW}.) Let ${\bold A}$ and ${\bold B}$ be small complicial Waldhausen categories. Let $F, {F}', {F''}:{\bold A} \ra {\bold B}$ be three exact functors so that there are natural transformations $F' \ra F$ and $F \ra F''$ so that (i) for all $A$ in ${\bold A}$, $F'(A) \ra F(A)$ is a cofibration with its cofiber $\cong F''(A)$ and (ii) for any
cofibration $A' \ra A$ in ${\bold A}$, the induced map $F'(A) {\underset {F'(A')} \sqcup} F(A') \ra F(A)$ is a cofibration. Then the induced maps $KF$, $KF'$ and $KF''$ on K-theory spaces have the property that $KF \simeq KF' + KF''$. 
\end{theorem}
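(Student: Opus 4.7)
The plan is to follow the classical additivity argument of Waldhausen \cite[1.3.2, 1.4.2]{Wald}, adapted to the $\rmG_{\bullet}$-construction as in \cite[Theorem 2.10]{GSVW}. The strategy is to reduce additivity for the triple $(F', F, F'')$ to the core additivity statement that the natural projection from the category of cofibration sequences in $\bB$ onto its two endpoints induces a weak equivalence on K-theory.

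First, I would observe that hypotheses (i) and (ii) are precisely what is needed in order to turn the assignment $A \mapsto (F'(A) \rightarrowtail F(A) \twoheadrightarrow F''(A))$ into an exact functor $\Phi : \bA \to \rmE(\bB)$, where $\rmE(\bB)$ denotes the complicial Waldhausen category of cofibration sequences in $\bB$ (with termwise cofibrations and weak equivalences). Hypothesis (i) says that $\Phi$ is well-defined on objects, while (ii) is exactly the condition guaranteeing that $\Phi$ sends cofibrations $A' \rightarrowtail A$ in $\bA$ to cofibrations in $\rmE(\bB)$.

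Next, I would invoke the core additivity statement: the exact functor $\rmE(\bB) \to \bB \times \bB$ sending $(B' \rightarrowtail B \twoheadrightarrow B'') \mapsto (B', B'')$ induces a weak equivalence of K-theory spaces, with a homotopy inverse (up to natural weak equivalence) provided by the splitting functor $(B', B'') \mapsto (B' \rightarrowtail B' \vee B'' \twoheadrightarrow B'')$. This is \cite[Theorem 2.10]{GSVW} in the $\rmG_{\bullet}$-framework (equivalently, Waldhausen's classical additivity theorem for $\wS_{\bullet}$). Composing $\Phi$ with the projection to $\bB \times \bB$ yields the functor $(F', F'')$, whose contribution to $K$-theory is $KF' + KF''$; on the other hand, the middle-object functor $\rmE(\bB) \to \bB$ applied to $\Phi$ returns $F$, and under the equivalence $\rmE(\bB) \simeq \bB \times \bB$ the middle-object functor corresponds on the right-hand side to $(B', B'') \mapsto B' \vee B''$, which on $K$-theory realizes the sum. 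Comparing the two descriptions gives $KF \simeq KF' + KF''$.

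The main obstacle is of course the core additivity statement itself, which is where the real combinatorial work lies: in \cite{GSVW} it is proved by an explicit simplicial deformation on the bisimplicial object $\rmG_{\bullet}\rmE(\bB)$, using the fact that every cofibration sequence in $\bB$ admits a canonical path to its split version. For the proof of the theorem as stated here, this is invoked as a black box, and the only substantive verification specific to our setting is the reduction in the preceding paragraph, which is purely formal once (i) and (ii) have been interpreted as the exactness of $\Phi$.
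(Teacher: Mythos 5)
Your proposal is correct and takes essentially the same route as the paper: interpret hypotheses (i) and (ii) as saying that $A \mapsto (F'(A) \rightarrowtail F(A) \twoheadrightarrow F''(A))$ defines an exact functor $\bA \to \rmE(\bB)$, then reduce to \cite[Theorem~2.10]{GSVW} applied to $\rmE(\bB) \to \bB \times \bB$, following the strategy of \cite[Proposition~1.3.2]{Wald}. The paper's version of the proof is in fact terser than yours, leaving the final comparison of the middle-object functor with the wedge-sum functor implicit.
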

\begin{proof} As we make strong use of the above additivity theorem, we will explain how to deduce the above form
 of the additivity theorem from the form of the additivity theorem proven in \cite[Theorem 2.10]{GSVW}. Recall that
 \cite[Theorem 2.10]{GSVW} says the following: given a complicial Waldhausen category ${\bold A}$, let $\rmE({\bold A})$ denote the
 Waldhausen category whose objects are short exact sequences 
 \[\xymatrix{{A\quad }\ar@{>->} @<2pt> [r] &{B \quad }\ar@{->>} @<2pt> [r] &{C}}\]
 which are degree-wise split. 
 \vskip .1cm
 A cofibration from \xymatrix{{A'\quad }\ar@{>->} @<2pt> [r] &{B' \quad }\ar@{->>} @<2pt> [r] &{C'}}  to
 \xymatrix{{A\quad }\ar@{>->} @<2pt> [r] &{B \quad }\ar@{->>} @<2pt> [r]& {C}} will be a commutative diagram:
 \[\xymatrix{{A'\quad }\ar@{>->} @<2pt> [r] \ar@{>->} @<2pt> [d] &{B' \quad }\ar@{->>} @<2pt> [r]\ar@{>->} @<2pt> [d]& {C'}\ar@{>->} @<2pt> [d]\\
              {A\quad }\ar@{>->} @<2pt> [r] &{B \quad }\ar@{->>} @<2pt> [r] &{C}}
 \]
so that the vertical maps are all cofibrations in ${\bold A}$ and the induced map $A\sqcup_{A'} B' \ra B$ is also 
a cofibration in ${\bold A}$. Then, we have a functor
\be \begin{equation}
 \label{gsvw.additivity}
 \rmE({\bold A}) \ra {\bold A} \times {\bold A}, \xymatrix{{A\quad }\ar@{>->} @<2pt> [r] &{B \quad }\ar@{->>} @<2pt> [r] &{C}} \mapsto A \oplus C.
\end{equation} \ee
Then \cite[Theorem 21.0]{GSVW} shows that the map ${\it w}\rmG_{\bullet}(E({\bold A})) \ra {\it w}\rmG_{\bullet}({\bold A}) \times {\it w}\rmG_{\bullet}({\bold A})$
is a weak equivalence. Now, one may deduce Theorem ~\ref{add.thm} from the above form of the additivity theorem, by the same
strategy adopted in \cite[Proposition 1.3.2]{Wald}: giving three exact functors $F', F$ and $F''$ as in Theorem ~\ref{add.thm}
is equivalent to giving an exact functor $\tilde F: {\bold A} \ra E({\bold B})$. Therefore, Theorem ~\ref{add.thm} above follows
 from the additivity theorem \cite[Theorem 2.10]{GSVW} by naturality.
 
\end{proof}

\vskip .2cm

\section{\bf Appendix B: Simplicial objects, Cosimplicial objects and Chain complexes}
In this section a chain complex (a cochain complex) 
will denote a complex trivial in negative degrees  and where the
differentials are all of degree $-1$ ($+1$, \res). 
Let ${\bold A} = {\rm {Mod}}(\S, \O_{\S})$ denote the Abelian category of all modules over $\O_{\S}$ where $\S$
is a given algebraic stack (which, as always in this paper, is assumed to be Noetherian). Let ${\rm {Mod}}_{fl}(\S, \O_{\S})$ denote the full subcategory of
flat modules with finitely generated stalks. Recall that one has normalization functors 
$\rmN_{\it h}$: (Simplicial objects in ${\rm {Mod}}(\S, \O_{\S})) \ra $(Chain complexes in ${\rm {Mod}}(\S, \O_{\S})$) and
its inverse ${\rm DN}_{\it h}$:((Chain complexes in ${\rm {Mod}}(\S, \O_{\S})) \ra $(Simplicial objects in ${\rm {Mod}}(\S, \O_{\S})$). \footnote{In the literature, the 
inverse functor ${\rm DN}_{\it h}$ is often denoted ${\rm K}$. However, as we have reserved $K$ to denote complexes, our choice of ${\rm DN}_{\it h}$ seems preferable.}
Recall $\rmN_{\it h}$ is defined by sending the simplicial object $\rmS_{\bullet}$ to the chain complex $\rmK_{\bullet} = \rmN(S_{\bullet})$ defined by $\rmK_n = {\underset {0 \le i \le n-1} \bigcap}(ker (d_i:S_n \ra S_{n-1}))$. The differential
$\delta: \rmK_n \ra \rmK_{n-1}$ is defined by $\delta = (-1)^n d_n$. The functor ${\rm DN}_{\it h}$ is defined by ${\rm DN}_{\it h}(\rmK_{\bullet})_n
= {\underset {0 \le m \le n} \oplus}\, {\underset {s_{\alpha}:[n] \ra [m]} \oplus}\rmK_m$ where the $s_{\alpha}$ range
over all iterated degeneracies $[n] \ra [m]$ in the category $\Delta$. (See \cite{Cu} for more details.)
There are corresponding functors defined between the categories of cosimplicial objects in ${\rm {Mod}}(\S, \O_{\S})$ and
cochain complexes in ${\rm {Mod}}(\S, \O_{\S})$. These will be denoted $\rmN^{\it v}$ and ${\rm DN}^{\it v}$. Given a double complex $\rmK^{\bullet}_{\bullet}$ trivial
everywhere except the second quadrant (that is, we assume $\rmK^j_i = 0$ for $i>0$ or $j<0$), we let ${\rm Tot}(\rmK^{\bullet}_{\bullet})$ denote the complex defined by 
${\rm Tot}(\rmK^{\bullet}_{\bullet})^n = {\underset {i+j=n} \oplus} \rmK_i^j$. We will often use $\rmN$ (${\rm DN}$) to denote either one of $\rmN_{\it h}$ or $\rmN^{\it v}$ (${\rm DN}_{\it h}$ or ${\rm DN}^{\it v}$, \res) when there is no chance for confusion.
\begin{proposition}
\label{N.DN}
(i) The functors $\rmN_{\it h}$ and ${\rm DN}_{\it h}$ are strict inverses of each other, that is, $\rmN_{\it h} \circ {\rm DN}_{\it h} =id$ and ${\rm DN}_{\it h} \circ N_{\it h} =id$.
Similarly, $\rmN^{\it v} \circ {\rm DN}^{\it v} =id$ and ${\rm DN}^{\it v} \circ \rmN^{\it v} =id$.
\vskip .2cm \noindent
(ii) The functors $\rmN$ and ${\rm DN}$ associated to both simplicial and cosimplicial objects in ${\rm {Mod}}(\S, \O_{\S})$
preserve {\it degree-wise flatness} and the property of having finitely generated stalks. They also commute
with filtered colimits. 
\vskip .2cm \noindent
(iii) The functors $\rmN$ and ${\rm DN}$ associated to both simplicial and cosimplicial
objects in \\${\rm {Mod}}_{fl}(\S, \O_{\S})$ commute with the pull-back $f^*:{\rm {Mod}}_{fl}(\S, \O_{\S}) \ra {\rm {Mod}}_{fl}(\S', \O_{\S'})$ associated to a map $f: \S' \ra \S$of
algebraic stacks.
\end{proposition}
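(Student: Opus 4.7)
\textbf{Proof plan for Proposition \ref{N.DN}.}

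For part (i), this is the classical Dold--Kan correspondence, applied in the abelian category $\mathrm{Mod}(\S, \O_\S)$; since the constructions $\mathrm{N}_h$ and $\mathrm{DN}_h$ involve only finite limits, finite colimits, and formal combinatorics of the simplex category, the standard proof in any abelian category carries over verbatim. One verifies $\mathrm{N}_h \circ \mathrm{DN}_h = \mathrm{id}$ directly from the formula $\mathrm{DN}_h(K)_n = \bigoplus_{\sigma \colon [n] \twoheadrightarrow [m]} K_m$: among these summands only the one indexed by $\sigma = \mathrm{id}_{[n]}$ survives the intersection $\bigcap_{i<n} \ker(d_i)$, and the differential matches up to the sign $(-1)^n$ built into $\delta$. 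The opposite composition $\mathrm{DN}_h \circ \mathrm{N}_h = \mathrm{id}$ follows from the natural Eilenberg--Zilber splitting $S_n \cong \bigoplus_{\sigma \colon [n] \twoheadrightarrow [m]} \mathrm{N}_h(S)_m$. The cosimplicial statement is obtained by passing to the opposite category.

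For part (ii), each assertion reduces to the explicit formulas. The sum $\mathrm{DN}_h(K)_n = \bigoplus_{[n] \twoheadrightarrow [m]} K_m$ is finite, so it preserves flatness, finite generation of stalks, and filtered colimits (direct sums commute with all colimits). For $\mathrm{N}_h$, the Dold--Kan splitting from (i) exhibits $\mathrm{N}_h(S)_n$ as a natural direct summand of $S_n$, so it inherits flatness from $S_n$. Finite generation of stalks is local on $\S$ and holds because the stalks of $S_n$ are finitely generated over the Noetherian local rings $\O_{\S,p}$, hence any direct summand is finitely generated. Filtered colimits commute with the finite intersections of kernels in the definition of $\mathrm{N}_h$ (since filtered colimits are exact in module categories) and with the direct sums in $\mathrm{DN}_h$. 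The cosimplicial case is formally dual.

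For part (iii), the functor $\mathrm{DN}$ commutes with $f^*$ because $f^*$ is a left adjoint and hence preserves direct sums; thus $f^*\,\mathrm{DN}(K)_n = f^*\bigoplus_m K_m = \bigoplus_m f^*K_m = \mathrm{DN}(f^*K)_n$. For $\mathrm{N}$, two equivalent approaches are available. The first is to apply $f^*$ to the natural splitting $S_n \cong \bigoplus_{[n] \twoheadrightarrow [m]} \mathrm{N}(S)_m$; since this decomposition is induced by the simplicial structure maps (via iterated degeneracies) and $f^*$ preserves both direct sums and the simplicial maps, one obtains $f^*\mathrm{N}(S)_m \cong \mathrm{N}(f^*S)_m$. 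The second approach is to observe that on the full subcategory of flat $\O_\S$-modules, $f^*$ is exact (a short exact sequence of flat modules pulls back to an exact sequence, by the usual $\mathrm{Tor}$ vanishing argument), so $f^*$ commutes with the kernels $\bigcap_{i<n} \ker(d_i)$ defining $\mathrm{N}$.

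The only delicate point in the argument is the commutation of $\mathrm{N}$ with $f^*$ in part (iii), since $\mathrm{N}$ is built from kernels while $f^*$ is only right exact in general; the restriction to $\mathrm{Mod}_{fl}$, together with the naturality of the Dold--Kan splitting, is exactly what is needed to circumvent this. Everything else is formal once the Dold--Kan equivalence is in hand.
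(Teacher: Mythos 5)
Your proposal takes essentially the same route as the paper: both hinge on the Dold--Kan/Eilenberg--Zilber decomposition exhibiting $\mathrm{N}(S)_n$ as a natural direct summand of $S_n$, from which flatness, finite generation of stalks, and commutation with $f^*$ all follow; the paper packages exactly this observation into Lemma~\ref{norm.flat} (crediting B.~Koeck) and cites Curtis for part~(i). One small caveat on your alternative ``second approach'' to~(iii): invoking exactness of $f^*$ on flat modules via Tor-vanishing is not self-contained as stated, since to commute $f^*$ with the iterated kernels $\bigcap_{i\le m}\ker d_i$ one must first know the relevant images and intermediate intersections are themselves flat --- which is precisely what the splitting argument (your first approach, and the paper's) establishes. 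So the second route is best viewed as a corollary of the first rather than an independent proof.
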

\begin{proof}
(i) is a standard result and is therefore skipped. (See \cite{Cu} for the simplicial case.) 
We prove
(ii) first in the simplicial case. Let $\rmS_{\bullet}$ denote a simplicial object in ${\rm {Mod}}(\S, \O_{\S})$
where each $\rmS_n$ is a flat $\O_{\S}$-module with finitely generated stalks. We will now prove, using ascending induction on $n$ that
each $\rmK_n =\rmN(S_{\bullet})_n$ is a flat $\O_{\S}$-module. Since $\rmK_0 = \rmN(\rmS_{\bullet})_0 = S_0$ this is clear for $n=0$.
The general case follows from  Lemma ~\ref{norm.flat} below. The definition of the functor ${\rm DN}$ as a sum
in each degree shows that it preserves flatness. The situation for the cosimplicial objects and cochain
complexes is entirely similar and is therefore skipped.
\vskip .2cm 
Since the functors ${\rm DN}$ for simplicial and cosimplicial objects are defined
as iterated sums, it is clear $f^*$ commutes with ${\rm DN}$. The functor $\rmN$
for cosimplicial objects is defined as an iterated co-kernel and therefore it
commutes with $f^*$. The corresponding assertion for simplicial objects follows
from the lemma below.
\end{proof}
\begin{lemma}
\label{norm.flat}
 (i) Let $\rmS_{\bullet}$ denote a simplicial object in ${\rm {Mod}}(\S, \O_{\S})$ that is flat (with finitely generated stalks) in each degree.
Then for each integer $n \ge 1$, and $0 \le m \le n-1$, ${\underset {0 \le i \le m} \bigcap} (ker d_i:\rmS_n \ra \rmS_{n-1})$ is a flat $\O_{\S}$-module (with finitely generated stalks). 
\vskip .2cm
(ii) Let $f:\S' \ra \S$denote a map of algebraic stacks, let $x':X' \ra \S'$ ($x:X \ra \S$) denote an atlas with $B_{x'}\S'$ ($B_x\S$, \res) denoting the
associated simplicial classifying space. Assume the atlases are chosen so that
there is an induced map of simplicial algebraic spaces $Bf:B_{x'}\S' \ra B_x\S$. Let $\rmS_{\bullet}$ denote a 
simplicial object in ${\rm {Mod}}(B_x\S, \O_{B_x\S})$ that is flat in each degree. 
Then for each integer $n \ge 1$, and $0 \le m \le n-1$, $
f^*({\underset {0 \le i \le m} \bigcap} (ker d_i:\rmS_n \ra \rmS_{n-1})) \cong
{\underset {0 \le i \le m} \bigcap} (ker d_i:f^*(\rmS_n) \ra f^*(\rmS_{n-1}))$.
\end{lemma}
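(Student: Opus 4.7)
For part (i), the plan is to proceed by induction on $m$, showing at each stage that the intersection of kernels $K_m^{(n)} := \bigcap_{0 \le i \le m}\ker(d_i\colon \rmS_n \to \rmS_{n-1})$ is a direct summand of $\rmS_n$ (and in fact of $K_{m-1}^{(n)}$), which since flatness and having finitely generated stalks are preserved under passage to direct summands, will give the result. For the base case $m = 0$, the simplicial identity $d_0 s_0 = \mathrm{id}$ exhibits $s_0\colon \rmS_{n-1}\ra \rmS_n$ as a section of $d_0$, so $\rmS_n \cong K_0^{(n)} \oplus s_0(\rmS_{n-1})$, and since $\rmS_n$ is flat (with finitely generated stalks) so is $K_0^{(n)}$. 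This is essentially the Dold--Kan decomposition carried out one face map at a time.

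For the inductive step, assume $K_{m-1}^{(n')}$ is flat (with finitely generated stalks) for all relevant $n'$. The key observation is that the simplicial identities force the following compatibilities: first, $d_i d_m = d_{m-1} d_i$ for $i < m$ shows that $d_m$ restricts to a map
\[
d_m\colon K_{m-1}^{(n)} \;\lra\; K_{m-1}^{(n-1)};
\]
second, $d_i s_m = s_{m-1} d_i$ for $i < m$ together with $d_m s_m = \mathrm{id}$ shows that $s_m$ restricts to a map $K_{m-1}^{(n-1)} \ra K_{m-1}^{(n)}$ that provides a section of the above restricted $d_m$. Hence one obtains a split short exact sequence
\[
0 \;\lra\; K_m^{(n)} \;\lra\; K_{m-1}^{(n)} \;\xrightarrow{\;d_m\;}\; K_{m-1}^{(n-1)} \;\lra\; 0,
\]
that is, a direct sum decomposition $K_{m-1}^{(n)} = K_m^{(n)} \oplus s_m(K_{m-1}^{(n-1)})$. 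Since both $K_{m-1}^{(n)}$ and $s_m(K_{m-1}^{(n-1)}) \cong K_{m-1}^{(n-1)}$ are flat (with finitely generated stalks) by the inductive hypothesis, the summand $K_m^{(n)}$ inherits these properties, completing the induction.

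For part (ii), I would leverage the explicit direct sum decomposition constructed in part (i). Since the splittings $s_m$ used in that decomposition are pulled back functorially under $f^*$ (which commutes with the simplicial structure maps, being itself defined by pullback of $\O$-modules), and since $f^*$ trivially commutes with direct sums, applying $f^*$ to the decomposition $K_{m-1}^{(n)} = K_m^{(n)} \oplus s_m(K_{m-1}^{(n-1)})$ yields the analogous decomposition for $f^*(\rmS_\bullet)$. By induction on $m$, starting from $K_0^{(n)}(f^*\rmS) = \ker(d_0 \colon f^*\rmS_n \ra f^*\rmS_{n-1}) \cong f^*(K_0^{(n)})$, one obtains the required natural isomorphism $f^*(K_m^{(n)}) \cong K_m^{(n)}(f^*\rmS)$.

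The main technical point — and where one must be careful — is that $f^*$ is only right exact in general, so it does not automatically commute with the kernels $K_m^{(n)}$; the crux is that part (i) has converted the kernel description into a direct summand description via the canonical section $s_m$, and direct summands are preserved by any additive functor. Thus part (i) is doing the real work and part (ii) follows formally, provided one tracks the naturality of the splittings with respect to pullback.
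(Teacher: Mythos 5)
Your proposal is correct and follows the same line the paper indicates: an ascending induction on $m$ exhibiting each $K_m^{(n)} = \bigcap_{0 \le i \le m}\ker d_i$ as a direct summand via the degeneracy $s_m$, which is the Koeck observation the paper invokes (that the normalized object is a direct summand of $\rmS_n$), here carried out one face at a time. The paper itself skips the details; you have supplied them correctly, including the key point for part (ii) that $f^*$ need not preserve kernels in general but does preserve kernels of split surjections (and the splittings $s_m$ commute with $f^*$), which is exactly why the direct-summand description is essential.
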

\begin{proof} As observed by B. Koeck, this Lemma may be readily proven by observing that $\rmN(\rmS.)_n$ is a direct summand of $\rmS_n$
and that $\rmN(\rmS.)_n$ arises by taking the quotient of $\rmS_n$ by the image of the degeneracy maps. 
We may also prove (i) and (ii) simultaneously  using ascending induction on  $m$ making use of the above 
observation. We skip the remaining details.
\end{proof} 
\vskip .1cm
\subsection{The Eilenberg-Zilber and Alexander-Whitney pairings}
These are well-known between Chain complexes in any abelian category and the corresponding simplicial objects: see \cite[p. 129 and p. 133]{May}.
These readily extend to similar pairings for cochain complexes and cosimplicial objects in any abelian category: 
for example, one may interpret cosimplicial objects in an abelian category as simplicial objects in the dual 
abelian category and make use of the well-known pairings for simplicial objects and chain complexes. Therefore,
such pairings extend to similar pairings between cosimplicial-simplicial objects in an abelian category ${\bold A}$ and
the corresponding category of cochain complexes in ${\bold A}$, in the setting of section 5. In more detail, we obtain the
following.
\vskip .1cm
Let ${\bold A}$ denote an abelian category and let $Double({\bold A})$ denote the category of double 
cochain complexes in ${\bold A}$ concentrated in the second quadrant.
Given such a double cochain complex $\rmK$, and applying the composite functor ${\rm DN}^{\it v} \circ {\rm DN}_{\it h}$
produces a cosimplicial-simplicial object. The category of such objects will be denoted ${\rm Cos.mixt}({\bold A})$. 
The inverse functor, 
\begin{equation}
\label{NvNh}
\rmN= \rmN^{\it v} \circ \rmN_{\it h} = \rmN_{\it h} \circ \rmN^{\it v}
\end{equation}
sends such a cosimplicial simplicial object to a double cochain complex concentrated in the
second quadrant. Then we obtain associative pairings:
\be \begin{align}
     \label{pairings.EZ.AW}
  {\rm Tot}(\rmN (P)) \otimes {\rm Tot}( \rmN(Q)) &\ra {\rm Tot}(\rmN (P \otimes Q)) \\
   {\rm Tot}( \rmN(P \otimes Q)) &\ra {\rm Tot}(\rmN(P)) \otimes  {\rm  Tot} (\rmN(Q))\notag 
\end{align} \ee
\vskip .1cm \noindent
for any two objects $\rmP, \rmQ \in {\rm Cos.mixt}({\bold A})$ which are both functorial in $\rmP$ and $\rmQ$.  Here ${\rm Tot}$ denotes
the total complex: for a double cochain complex $\rmK = \{\rmK^i_j|i\ge 0, j \le 0\}$ concentrated in the second quadrant, 
\begin{equation}
\label{Tot}
{\rm Tot}(\rmK)^n = \oplus_{i+j=n}\rmK^i_j.
\end{equation}

%

\end{document}